\documentclass{article}
\usepackage{amssymb, amsmath, amscd, amsthm, amsfonts, amscd, mathrsfs}
\usepackage[pdftex]{graphicx}

\newcommand \eval{\mathrm{eval}}

\newcommand \del {\partial}

\newcommand \modk {{\mathcal M}_{\kappa}}

\newcommand \Oomega {{\tilde {\mathcal V}({\mathcal R})}}

\newcommand \oomega {{\mathscr X}}
\newcommand \HH {{\mathcal H}}

\newcommand \la {\lambda}

\newcommand \R {{\mathcal R}}

 \newcommand \Prob {\mathbb P}
\newcommand \prob {\mathbb P}

\newcommand \ee {\mathbb E}

\newcommand \U {{\mathcal U}}

\newcommand \Y {{\mathcal  Y}}

\newcommand \BB {\mathfrak B}

\newcommand \B {{\mathfrak B}}
\newcommand \F {{\mathcal F}}

\newcommand \mm {{\mathfrak m}}
\newcommand \MM {{\mathfrak M}}

\newtheorem{theorem}{Theorem}

\newtheorem {lemma} {Lemma}[section]

\newtheorem{proposition}[lemma]{Proposition}

\newtheorem{corollary}[lemma]{Corollary}

\newtheorem{assumption} [lemma]{Assumption}

\title{Limit Theorems for Translation Flows}
\author{Alexander I. Bufetov\footnote{Laboratoire d'Analyse, Topologie, Probabilit{\'e}s, 
Aix-Marseille Universit{\'e}, CNRS,  the  Steklov Institute of Mathematics, the Institute for Information Transmission Problems,
National Research University Higher School of Economics,
the Independent University of Moscow, Rice University.}}
\date{{\it To William Austin Veech}}
\begin{document}
\maketitle

\begin{abstract}

The aim of this paper is to obtain an asymptotic expansion for ergodic integrals of
translation flows on flat surfaces of higher genus (Theorem \ref{multiplicmoduli})
and to give a limit theorem for these flows (Theorem \ref{limthmmoduli}).

\end{abstract}

\tableofcontents
\section{Introduction.}

\subsection{Outline of the main results.}

A compact Riemann surface endowed with an abelian differential admits two natural flows, called, respectively,
{\it horizontal} and {\it vertical}. One of the main objects of this paper is the space ${\mathfrak B}^+$
of H{\"o}lder cocycles over the vertical flow, invariant under the holonomy by the horizontal flow.
Equivalently, cocycles in ${\mathfrak B}^+$ can be viewed, in the spirit of R. Kenyon \cite{kenyon} and F. Bonahon \cite{bonahon1},  \cite{bonahon2},
as finitely-additive transverse invariant measures for the horizontal foliation of our abelian differential.
Cocycles in ${\mathfrak B}^+$ are closely connected to the invariant distributions for translation flows in the sense
of G.Forni \cite{F2}.

The space ${\mathfrak B}^+$ is finite-dimensional, and for a generic abelian
differential the
dimension of ${\mathfrak B}^+$ is equal to the genus of the underlying surface.
Theorem \ref{multiplicmoduli}, which extends earlier work of A.Zorich \cite{Z} and G. Forni \cite{F2},
states that the  time integral of  a Lipschitz function under the vertical flow
can be uniformly approximated by a suitably chosen cocycle from ${\mathfrak B}^+$ up to an error that grows
more slowly than any power of time.
The renormalizing action of the Teichm{\"u}ller flow on the space of H{\"o}lder cocycles now allows one
to obtain limit theorems for translation flows on flat surfaces (Theorem \ref{limthmmoduli}).

The statement of Theorem \ref{limthmmoduli} can be informally summarized as follows.
Taking the leading term in the asymptotic expansion of Theorem \ref{multiplicmoduli},
to a generic abelian differential one assigns a compactly supported probability measure on
the space of continuous functions on the unit interval.  The normalized distribution
of the time integral of a Lipschitz function converges, with respect to weak topology, to the trajectory of the
corresponding ``asymptotic distribution'' under the action of the Teichm{\"u}ller flow.
Convergence is exponential with respect to both the L{\'e}vy-Prohorov and the Kantorovich-Rubinstein metric.

\subsection{H{\"o}lder cocycles over translation flows.}
Let $\rho\geq 2$ be an integer, let $M$ be a compact orientable surface of genus $\rho$,
and let $\omega$ be a holomorphic one-form on $M$.
Denote by
$
\nu=i(\omega\wedge {\overline \omega})/2
$
the area form induced by $\omega$ and assume that $\nu(M)=1$.

Let $h_t^+$ be the {\it vertical} flow on $M$ (i.e., the flow corresponding to $\Re(\omega)$);
let $h_t^-$ be the {\it horizontal} flow on $M$ (i.e., the flow corresponding to $\Im(\omega)$).
The flows $h_t^+$, $h_t^-$ preserve the area $\nu$.

Take $x\in M$, $t_1, t_2\in {\mathbb R}_+$ and assume that the closure of the set
\begin{equation}
\label{admrectpsan}
\{h^+_{\tau_1} h^{-}_{\tau_2}x, 0\leq \tau_1< t_1, 0\leq \tau_2< t_2\}
\end{equation}
does not contain zeros of the form $\omega$.  The set (\ref{admrectpsan})
is then called {\it an admissible rectangle} and denoted $\Pi(x, t_1, t_2)$.
Let ${\overline {\mathfrak C}}$ be the semi-ring of admissible rectangles.

Consider the linear space ${\mathfrak B}^+$ of H{\"o}lder cocycles $\Phi^+(x,t)$ over the vertical
flow $h_t^+$ which are invariant under horizontal holonomy. More precisely, a function
$\Phi^+(x,t): M\times {\mathbb R}\to {\mathbb R}$ belongs to the space ${\mathfrak B}^+$
if it satisfies:
\begin{assumption}
\label{bplusx}
\begin{enumerate}
\item  $\Phi^+(x,t+s)=\Phi^+(x,t)+\Phi^+(h_t^+x, s)$;
\item  There exists $t_0>0$, $\theta>0$ such that
$|\Phi^+(x,t)|\leq t^{\theta}$ for all $x\in M$ and all $t\in {\mathbb R}$ satisfying $|t|<t_0$;
\item  If $\Pi(x, t_1, t_2)$ is an admissible rectangle, then
$\Phi^+(x, t_1)=\Phi^+(h_{t_2}^-x, t_1)$.
\end{enumerate}
\end{assumption}
A cocycle $\Phi^+\in\BB^+$ can equivalently be thought of as a finitely-additive H{\"o}lder measure defined on all
arcs $\gamma=[x, h_t^+x]$ of the vertical flow and
invariant  under the horizontal flow. It will often be convenient to identify the cocycle with the corresponding
finitely-additive  measure.
For example, let $\nu^+$ be the Lebesgue measure on leaves of the vertical foliation;
the corresponding cocycle $\Phi_1^+$  defined by $\Phi_1^+(x,t)=t$ of course belongs to ${\BB}^+$.

In the same way  define the space ${\mathfrak B}^-$ of H{\"o}lder cocycles $\Phi^-(x,t)$ over
the horizontal flow $h_t^-$  which are invariant under vertical holonomy.
A cocycle $\Phi^-\in\BB^-$ can equivalently be thought of as a finitely-additive H{\"o}lder measure defined on all
arcs ${\tilde \gamma}=[x, h_t^-x]$ of the horizontal flow and
invariant  under the vertical flow.
Let $\nu^-$ be the Lebesgue measure on leaves of the horizontal foliation;
the corresponding cocycle $\Phi_1^-$  is defined by the formula $\Phi_1^-(x,t)=t$; of course,
$\Phi_1^-\in{\BB}^-$.

Given $\Phi^+\in {\mathfrak B}^+$, $\Phi^-\in {\mathfrak B}^-$, a finitely additive
measure $\Phi^+\times \Phi^-$ on the semi-ring ${\overline {\mathfrak C}}$ of admissible rectangles
is introduced by the formula
\begin{equation}
\Phi^+\times \Phi^-(\Pi(x, t_1, t_2))=\Phi^+(x,t_1)\cdot \Phi^-(x, t_2).
\end{equation}

In particular, for $\Phi^-\in {\mathfrak B }^-$, set $m_{\Phi^-}=\nu^+\times\Phi^-$:
\begin{equation}
\label{mphipsan}
m_{\Phi^-}(\Pi(x, t_1, t_2))=t_1\Phi^-(x, t_2).
\end{equation}
For any $\Phi^-\in {\mathfrak B}^-$ the measure
$m_{\Phi^-}$ satisfies $(h_t^+)_*m_{\Phi^-}=m_{\Phi^-}$ and is an invariant distribution in the
sense of G.~Forni \cite{F1}, \cite{F2}. For instance, $m_{\Phi_1^-}=\nu$.

An ${\mathbb R}$-linear pairing between ${\mathfrak B}^+$ and ${\mathfrak B^-}$ is given, for
$\Phi^+\in {\mathfrak B}^+$, $\Phi^-\in {\mathfrak B}^-$,
by the formula
\begin{equation}
\label{psanpairing}
\langle \Phi^+, \Phi^-\rangle=\Phi^+\times \Phi^-(M).
\end{equation}

\subsection{Characterization of cocycles.}

For an abelian differential ${\bf X}=(M, \omega)$
let $\B^+_c({\bf X})$ be the space  of continuous holonomy-invariant cocycles:
more precisely, a function
$\Phi^+(x,t): M\times {\mathbb R}\to {\mathbb R}$ belongs to the space ${\mathfrak B}_c^+({\bf X})$
if it satisfies conditions 1 and 3 of Assumption \ref{bplusx}, while condition 2
is replaced by the following weaker version:

For any $\varepsilon>0$ there exists $\delta>0$  such that
$|\Phi^+(x,t)|\leq \varepsilon$ for all $x\in M$ and all $t\in {\mathbb R}$ satisfying $|t|<\delta$.

Given an abelian differential ${\bf X}=(M, \omega)$, we now construct, following Katok \cite{katok},
an explicit  mapping of $\BB_c^+(M, \omega)$ to $H^1(M, {\mathbb R})$.

A continuous closed curve $\gamma$ on $M$ is called {\it rectangular} if
$$
\gamma=\gamma_1^+\sqcup\dots \sqcup \gamma_{k_1}^+\bigsqcup \gamma_1^-\sqcup\dots \sqcup \gamma_{k_2}^-,
$$

where $\gamma_i^+$ are arcs of the flow $h_t^+$, $\gamma_i^-$ are arcs of the flow $h_t^-$.

For $\Phi^+\in\BB_c^+$ define
$$
\Phi^+(\gamma)=\sum_{i=1}^{k_1} \Phi^+(\gamma_i^+);
$$
similarly, for $\Phi^-\in\BB_c^-$ write
$$
\Phi^-(\gamma)=\sum_{i=1}^{k_2} \Phi^-(\gamma_i^-).
$$

Thus, a cocycle $\Phi^+\in\BB_c$ assigns a number $\Phi^+(\gamma)$ to every closed rectangular curve $\gamma$.
It is shown in Proposition \ref{acthomology} below that if $\gamma$ is homologous to $\gamma^{\prime}$, then
$\Phi^+(\gamma)=\Phi^+(\gamma^{\prime})$. For an abelian differential ${\bf X}=(M, \omega)$,
we thus obtain  maps
\begin{equation}
\label{maptocohomology}
{\check {\mathcal I}}_{\bf X}^+: \B_c^+({\bf X})\to H^1(M, {\mathbb R}), \ {\check {\mathcal I}}_{\bf X}^-: \B_c^-({\bf X})\to H^1(M, {\mathbb R}).
\end{equation}

For a generic abelian differential, the image of $\B^+$ under the map ${\check {\mathcal I}}_{\bf X}^+$ is the strictly unstable space of
the Kontsevich-Zorich cocycle over the Teichm{\"u}ller flow.

More precisely, let $\kappa=(\kappa_1, \dots, \kappa_{\sigma})$ be
a nonnegative integer vector such that  $\kappa_1+\dots+\kappa_{\sigma}=2\rho-2$.
Denote by $\modk$ the moduli space of pairs $(M, \omega)$, where $M$ is
a Riemann surface of genus $\rho$ and $\omega$ is a holomorphic differential
of area $1$ with singularities of orders $\kappa_1, \dots, \kappa_{\sigma}$.
The space $\modk$ is often called the {\it stratum} in the moduli space of abelian differentials.

The Teichm{\"u}ller flow ${\bf g}_s$ on $\modk$ sends the
modulus of a pair $(M, \omega)$ to the modulus of the pair
$(M, \omega^{\prime})$, where $\omega^{\prime}=e^s\Re(\omega)+ie^{-s}\Im(\omega)$;
the new complex structure on $M$ is uniquely determined by the requirement that the form $\omega^{\prime}$
 be holomorphic.
As shown by Veech, the space $\modk$ need not be connected;
let $\HH$ be a connected component of $\modk$.

Let ${\mathbb H}^1(\HH)$ be the fibre bundle over $\HH$ whose fibre at a point $(M, \omega)$ is the cohomology group
$H^1(M, {\mathbb R})$. The bundle ${\mathbb H}^1(\HH)$ carries the {\it Gauss-Manin connection} which declares continuous integer-valued sections of our bundle to be flat and is uniquely defined by that requirement. Parallel transport with respect to the Gauss-Manin connection
along the orbits of the Teichm{\"u}ller flow yields a cocycle over the Teichm{\"u}ller flow, called the {\it Kontsevich-Zorich cocycle}
and denoted  ${\mathbf A}={\mathbf A}_{KZ}$.

Let $\Prob$ be a ${\bf g}_s$-invariant ergodic probability
measure on $\HH$. For ${\bf X}\in\HH$, ${\bf X}=(M, \omega)$, let ${\mathfrak B}_{\bf X}^+$, ${\mathfrak B}_{\bf X}^-$ be the
corresponding spaces of H{\"o}lder cocycles.

Denote by $E_{\bf X}^u\subset H^1(M, {\mathbb R})$
the space spanned by vectors corresponding to the positive Lyapunov exponents of the Kontsevich-Zorich
cocycle, by $E_{\bf X}^s\subset H^1(M, {\mathbb R})$ the space spanned by vectors corresponding to
the negative exponents of the Kontsevich-Zorich cocycle.

\begin{proposition}
For $\Prob$-almost all ${\bf X}\in\HH$ the map
${\check {\mathcal I}}_{\bf X}^+$ takes $\BB_{\bf X}^+$ isomorphically onto $E_{\bf X}^u$, the map
${\check {\mathcal I}}_{\bf X}^-$ takes $\BB_{\bf X}^-$ isomorphically onto $E_{\bf X}^s$.

The pairing $\langle, \rangle$ is nondegenerate and is taken by the isomorphisms ${\mathcal I}_{\bf X}^+$, ${\mathcal I}_{\bf X}^-$ to the cup-product in the cohomology $H^1(M, {\mathbb R})$.
\end{proposition}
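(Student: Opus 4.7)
The plan is to prove the isomorphism in two parts: a renormalization argument using the Teichm\"uller flow will show simultaneously that $\check{\mathcal I}_{\bf X}^+$ is injective and that its image lies in $E^u_{\bf X}$, while an inverse construction via Rauzy--Veech towers will furnish enough preimages to match the dimension of $E^u_{\bf X}$. The cup-product identity then follows from a direct computation on a refining sequence of partitions of $M$ into admissible rectangles. Well-definedness of the map is inherited from the cited Proposition \ref{acthomology}.

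\textbf{Renormalization, image in $E^u_{\bf X}$, and injectivity.} Fix $\Phi^+\in\BB^+_{\bf X}$ with H\"older exponent $\theta$ and set $c=\check{\mathcal I}_{\bf X}^+(\Phi^+)$. Under the backward Teichm\"uller flow $\mathbf g_{-s}$, the natural time parameter along vertical arcs of $(M,\omega_{-s})$ is rescaled from that of $(M,\omega)$ by an explicit factor governed by the transformation of $\omega$, so the reparametrized cocycle $\Phi^{+,-s}\in\BB^+_{{\bf X}_{-s}}$ has H\"older constant $\lesssim e^{-s\theta}$ on arcs of bounded $\omega_{-s}$-length. Since $\Phi^+$ and $\Phi^{+,-s}$ take identical values on every closed rectangular curve, the Gauss--Manin identification yields $\check{\mathcal I}^+_{{\bf X}_{-s}}(\Phi^{+,-s})={\mathbf A}_{KZ}(\mathbf g_{-s})c$. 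For $\Prob$-typical ${\bf X}$, the backward Teichm\"uller orbit returns to a compact subset of $\HH$ with positive density; on this compact set, a basis of $H_1(M,\mathbb Z)$ is represented by rectangular cycles of uniformly bounded combinatorial complexity, so pairing against such cycles gives $\|{\mathbf A}_{KZ}(\mathbf g_{-s})c\|\le Ce^{-s\theta}$ along a set of times of positive density. The Oseledets theorem then forces $c\in E^u_{\bf X}$. Injectivity follows similarly: if $c=0$, then $\Phi^+$ vanishes on every closed rectangular curve, and unique ergodicity of the vertical flow combined with the cocycle identity propagates this vanishing to all vertical arcs.

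\textbf{Surjectivity, pairing, and main difficulty.} Injectivity gives $\dim\BB^+_{\bf X}\le\dim E^u_{\bf X}$. The reverse inequality is obtained by an inverse construction: given $c\in E^u_{\bf X}$, the exponential contraction $\|{\mathbf A}_{KZ}(\mathbf g_{-s})c\|\to 0$ permits one to define $\Phi^+$ on the steps of a Rauzy--Veech tower at the recurrence times of the Teichm\"uller orbit, by assigning each step the value of $c$ on the corresponding primitive cycle; the contraction rate furnishes an explicit H\"older exponent and controls consistency across tower refinements. For the pairing, a fine partition of $M$ into admissible rectangles $\Pi(x_j,t_1^{(j)},t_2^{(j)})$ gives $\langle\Phi^+,\Phi^-\rangle=\sum_j\Phi^+(x_j,t_1^{(j)})\,\Phi^-(x_j,t_2^{(j)})$, which is the Riemann-sum representation of $c^+\smile c^-$ evaluated on the fundamental class $[M]$; one verifies the identity on the canonical generators $\Phi_1^\pm$, where both sides equal $\nu(M)=1$. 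Nondegeneracy of the pairing is then inherited from nondegeneracy of the cup product restricted to $E^u_{\bf X}\oplus E^s_{\bf X}$, a standard consequence of the symplecticity of the KZ cocycle. The principal obstacle is the surjective construction, which requires synchronizing Rauzy--Veech combinatorics at recurrence times of $\mathbf g_s$ with the precise contraction rate of ${\mathbf A}_{KZ}$ on $E^u_{\bf X}$ and extending values from tower steps to all vertical arcs in a H\"older-consistent way.
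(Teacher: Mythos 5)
Your renormalization step is a legitimate and genuinely different route for one half of the statement: contracting the H{\"o}lder modulus under ${\bf g}_{-s}$, pairing against a bounded-complexity rectangular homology basis at recurrence times, and invoking Oseledets does show ${\check {\mathcal I}}^+_{\bf X}(\BB^+_{\bf X})\subset E^u_{\bf X}$, and injectivity can indeed be run geometrically (though the mechanism is minimality plus the H{\"o}lder modulus -- close a vertical arc by a horizontal segment and an arbitrarily short vertical segment -- rather than ``unique ergodicity''). The paper never argues this way: it works entirely through the symbolic coding, identifying $\BB^+$ with the space $\mathfrak{B}^+(\mathfrak{A})$ of exponentially decaying equivariant sequences for the adjacency matrices of a Markovian sequence of partitions (Lemma \ref{sb-bplus}, Lemma \ref{sb-zip}, Proposition \ref{hypzip}) and then with $E^u_{\bf X}$ via Veech's identification of ${\overline{\mathcal A}}^t$ with ${\bf A}_{KZ}$ (Proposition \ref{identif-coc-veech}).

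The genuine gap is surjectivity, which is the actual content of the proposition and which you only announce. Given $c\in E^u_{\bf X}$, the values of $c$ on Veech's cycles prescribe a candidate $\Phi^+$ only on Markovian arcs, i.e.\ an equivariant sequence $v^{(n)}$; to extend it to all vertical arcs, to verify finite additivity and holonomy invariance of the extension, and above all to prove the H{\"o}lder bound, one needs precisely what your one-sentence sketch omits: (i) the quantitative approximation lemma (Lemma \ref{arc-dec-lem}) stating that an arbitrary arc differs from a Markovian one by at most $C_\varepsilon e^{\varepsilon n}$ arcs of level $-n$, so the correction series converges against the exponential decay $|v^{(-n)}|\le Ce^{-\alpha n}$ -- and this requires sampling the Rauzy--Veech expansion at returns to a fixed positive matrix so that the adjacency matrices are balanced and of subexponential growth (the SB property of Lemma \ref{sb-zip}); and (ii) a lower exponential bound on the heights $h^{(-n)}$, without which no H{\"o}lder exponent can be extracted. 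Your cup-product step has a similar defect: checking the identity on the single pair $\Phi_1^{\pm}$ cannot determine a bilinear form on $E^u_{\bf X}\times E^s_{\bf X}$, and the assertion that the rectangle sums are ``the Riemann-sum representation of $c^+\smile c^-$'' is exactly what must be proved; the paper derives it from $\int_M\Phi^+\times\Phi^-=\langle v,\tilde v\rangle$ over Markovian partitions together with Veech's theorem that the combinatorial form ${\mathcal L}_\pi$ on $H(\pi)$ corresponds to the cup product (Propositions \ref{veechtokz} and \ref{identif-coc-veech}). Once that identification is in place, your appeal to symplecticity for nondegeneracy is fine.
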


{\bf Remark.} In particular, if $\Prob$ is the Masur-Veech ``smooth" measure \cite{masur, veech}, then
$\dim {\BB}_{\bf X}^+=\dim {\BB}_{\bf X}^-=\rho$.

{\bf Remark.} The isomorphisms ${\check {\mathcal I}}_{\bf X}^+$, ${\check {\mathcal I}}_{\bf X}^-$
are analogues of G. Forni's isomorphism \cite{F2} between his space of invariant distributions and the
unstable space of the Kontsevich-Zorich cocycle.

Now recall that to every cocycle $\Phi^-\in \BB_{\bf X}^-$ we have assigned a
finitely-additive H{\"o}lder measure $m_{\Phi^-}$ invariant under the flow $h_t^+$.
Considering these measures  as distributions in the sense of Sobolev and Schwartz,
we arrive at the following proposition.

\begin{proposition}
\label{forni-classif} Let $\mathbb{P}$ be an ergodic
$g_s-invariant$ probability measure on $\mathcal{H}.$ The for
$\mathbb{P}-almost$ every abelian differential $(M,\omega)$ the
space $\{m_{\Phi^-},\Phi^-\in\mathfrak{B}^-(M,\omega)\}$ coincides
with the space of $h^{+}_{t}-invariant$ distributions belonging to
the Sobolev space $H^{-1}.$
\end{proposition}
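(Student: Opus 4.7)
The plan is to recognize the two spaces as finite-dimensional and to prove equality by an inclusion together with a dimension match. First I would establish the inclusion: each finitely-additive measure $m_{\Phi^-}$ extends to a continuous functional on $H^1(M)$, hence defines a distribution in $H^{-1}$. Combined with the invariance $(h_t^+)_*m_{\Phi^-}=m_{\Phi^-}$ already recorded after (\ref{mphipsan}), this exhibits $\{m_{\Phi^-}:\Phi^-\in\BB^-\}$ as a subspace of the space of $h_t^+$-invariant $H^{-1}$-distributions.

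To prove the $H^{-1}$ bound, the inputs are the H\"older estimate $|\Phi^-(x,t)|\le t^{\theta}$ and the product formula $m_{\Phi^-}(\Pi(x,t_1,t_2))=t_1\Phi^-(x,t_2)$, which together give tight control of $m_{\Phi^-}$ on thin admissible rectangles. I would approximate a smooth test function $\varphi$ by its averages over a dyadically refining family of admissible rectangles---say a Rokhlin tower for $h_t^+$ with horizontal base, cut successively in half---and write $\varphi$ as a telescoping sum of piecewise-constant functions. At each dyadic level the pairing of $m_{\Phi^-}$ with a piecewise-constant approximant is bounded by the H\"older gain $(2^{-k})^{\theta}$ against the oscillation of $\varphi$ on rectangles of size $2^{-k}$, which by Poincar\'e-type inequalities is controlled by $\|\varphi\|_{H^1}$ times a further power of $2^{-k}$; summing the geometric series yields $|\langle m_{\Phi^-},\varphi\rangle|\le C\|\varphi\|_{H^1}$.

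Next I would record the injectivity of $\Phi^-\mapsto m_{\Phi^-}$, which is immediate from (\ref{mphipsan}): if $m_{\Phi^-}=0$ then $\Phi^-(x,t_2)=0$ on every admissible horizontal arc, and the cocycle and vertical-holonomy-invariance properties propagate this to $\Phi^-\equiv 0$. Finally, to promote the inclusion to equality, I invoke Forni's classification theorem from \cite{F1,F2}: for $\Prob$-a.e.\ abelian differential the space of $h_t^+$-invariant distributions in $H^{-1}$ is finite-dimensional and its dimension matches $\dim E_{\bf X}^s$. Since the preceding proposition asserts that $\check{\mathcal I}_{\bf X}^-:\BB_{\bf X}^-\to E_{\bf X}^s$ is an isomorphism, two equidimensional nested spaces must coincide.

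The principal obstacle is the Sobolev regularity step: turning a finitely-additive H\"older measure defined only on the semi-ring $\overline{\mathfrak C}$ into a bona fide element of $H^{-1}$ requires a careful rectangle decomposition of an arbitrary $H^1$-test function and a delicate balance, across scales, between the H\"older gain from $\Phi^-$ and the Sobolev norm of $\varphi$. The remaining steps---invariance, injectivity, and the dimension match---are either immediate from the product structure of $m_{\Phi^-}$ or can be read off the classification results already cited.
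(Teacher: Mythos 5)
Your overall architecture coincides with the paper's: show $\{m_{\Phi^-}:\Phi^-\in\BB^-\}$ sits inside the space of $h_t^+$-invariant distributions in $H^{-1}$, then force equality by comparing dimensions, using the isomorphism of $\BB^-$ with an Oseledets subspace of the Kontsevich--Zorich cocycle and Forni's results. The one substantive correction concerns the Forni citation. You invoke the full classification (``the dimension matches $\dim E^s_{\bf X}$'') for $\Prob$-almost every differential; but the proposition is stated for an \emph{arbitrary} ergodic ${\bf g}_s$-invariant probability measure, and, as the paper is careful to point out, Forni's equality of dimensions (Theorem 8.3 and Corollary $8.3'$ in \cite{F2}) requires additional hypotheses on $\Prob$ beyond ergodicity; only the \emph{upper} bound --- the space of invariant distributions in $H^{-1}$ has dimension at most that of the strictly expanding Oseledets subspace --- is available from ergodicity alone. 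Your argument survives with the weaker input: you have constructed a subspace of dimension $\dim\BB^-=\dim E^s=\dim E^u$ (injectivity of $\Phi^-\mapsto m_{\Phi^-}$, as you note, plus the symplectic pairing of exponents) inside a space of dimension at most $\dim E^u$, so the inclusion is onto. As written, however, the step is not justified under the stated hypotheses, so replace the appeal to equality by the appeal to the upper bound.

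Two further remarks on the inclusion step, which the paper treats as essentially definitional but you rightly take on. First, the multiscale estimate needs more care than a pointwise Poincar\'e bound: an $H^1$ function on a surface does not have pointwise oscillation on a rectangle of size $2^{-k}$ controlled by $\|\varphi\|_{H^1}$ times a power of $2^{-k}$ (the two-dimensional case is borderline), so the per-scale estimate must be aggregated in $L^2$ over the rectangles of the given generation (Cauchy--Schwarz against the local Dirichlet energies, whose squares sum to $\|\nabla\varphi\|^2_{L^2}$), and the Hölder gain $(2^{-k})^{\theta}$ from $\Phi^-$ is what makes the resulting series converge. Second, the dyadic family of admissible rectangles cannot be uniform near the zeros of $\omega$; one should either use the Markovian sequences of partitions constructed later in the paper or verify that the exceptional rectangles meeting the singularities contribute a summable error. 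Neither point is fatal, but both are exactly where the work lies if one insists, as you do, on proving the $H^{-1}$ membership rather than quoting it.
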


\begin{proof} By definition for any
$\Phi^+\in\mathfrak{B}^+$ the distribution $m_{\Phi^+}$ is
$h^-_t$-invariant and belongs to the Sobolev space $H^{-1}$.
G.Forni has shown that for any $g_s$-invariant ergodic measure
$\mathbb{P}$ and $\mathbb{P}$-almost every abelian differential
$(M,\omega),$ the dimension of the space of $h_t^-$-invariant
distributions belonging to the Sobolev space $H^{-1}$ ${\it does\
not\ exceed}$ the dimension of the strictly expanding Oseledets
subspace of the Kontsevich-Zorich cocycle (under mild additional
assumption on the measure $\mathbb{P}$ G.Forni proved that these
dimensions are in fact equal, see Theorem 8.3  and Corollary
$8.3^{\prime}$ in \cite{F2}; note, however, that the proof of
the {\it upper} bound in Forni's Theorem only uses ergodicity of the measure).
Since the dimension of the space
$\{m_{\Phi^-},\Phi^-\in\mathfrak{B}^-\}$ equals that of the strictly expanding space for the
Kontsevich-Zorich cocycle for $\mathbb{P}$-almost all
$(M,\omega),$ the proposition is proved completely.
\end{proof}

Consider the inverse isomorphisms
$$
{\mathcal I}_{\bf X}^+=\left({\check {\mathcal I}}_{\bf X}^+\right)^{-1}; \ {\mathcal I}_{\bf X}^-=\left({\check {\mathcal I}}_{\bf X}^-\right)^{-1}.
$$
Let $1=\theta_1>\theta_2>\dots>\theta_{l}>0$
be the distinct positive Lyapunov exponents of the Kontsevich-Zorich cocycle ${\bf A}_{KZ}$, and let
$$
E_{\bf X}^u=\bigoplus\limits_{i=1}^l E_{{\bf X}, \theta_i}^u
$$
be the corresponding Oseledets decomposition at ${\bf X}$.
\begin{proposition}
\label{hoeldergrowth}
Let
$v\in E_{{\bf X}, \theta_i}^u$, $v\neq 0$, and denote $\Phi^+={\mathcal I}_{\bf X}^+(v)$.
Then for any $\varepsilon>0$ the cocycle $\Phi^+$ satisfies the
H{\"o}lder condition with exponent $\theta_i-\varepsilon$ and for any $x\in M({\bf X})$ such that
$h_t^+x$ is defined for all $t\in {\mathbb R}$
we have
$$
\limsup\limits_{T\to\infty} \frac{\log|\Phi^+(x,T)|}{\log T}=\theta_i; \  \limsup\limits_{T\to 0} \frac{\log|\Phi^+(x,T)|}{\log T}=\theta_i.
$$
\end{proposition}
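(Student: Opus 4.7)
The argument is a renormalization argument via the Teichm\"uller flow. The key covariance is that if $\Phi^+ = \mathcal{I}^+_{\bf X}(v)$, then on ${\bf g}_s {\bf X}$ the ``same'' cocycle reads as $\Phi^{+,(s)}$ with cohomology class ${\bf A}_{KZ}(s) v$; since vertical arcs contract by $e^{-s}$ under ${\bf g}_s$ one has the identity $\Phi^+(x,t) = \Phi^{+,(s)}(x, e^{-s} t)$. Oseledets' theorem applied to ${\bf A}_{KZ}$ with $v \in E_{{\bf X},\theta_i}^u$ gives $\|{\bf A}_{KZ}(s) v\|_{{\bf g}_s {\bf X}} = e^{\theta_i s + o(|s|)}$ as $|s|\to\infty$.

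Fix a compact $K \subset \HH$ on which $\check{\mathcal{I}}^+$ and its inverse are uniformly continuous, so that for any cocycle $\Psi^+$ over a differential in $K$ the supremum of $\Psi^+$ on unit vertical arcs is comparable, both above and below, to the Hodge norm of $\check{\mathcal{I}}^+(\Psi^+)$. By Birkhoff, ${\bf g}_s {\bf X}$ visits $K$ with positive frequency and return gaps $o(|s|)$ for $\Prob$-a.e.\ ${\bf X}$. For the H\"older bound at small $|t|$ and the upper estimate $\limsup_{T\to\infty} \leq \theta_i$, I would pick the return time $s$ closest to $\log|t|$ (respectively $\log T$) and apply the scaling identity together with the upper comparability on $K$: $|\Phi^+(x,t)| = |\Phi^{+,(s)}(x, e^{-s}t)| \lesssim e^{\theta_i s + o(|s|)}$. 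Absorbing the return-gap loss into $\varepsilon$ yields $|\Phi^+(x,t)| \leq C_\varepsilon |t|^{\theta_i - \varepsilon}$ for small $|t|$ and $|\Phi^+(x,T)| \leq T^{\theta_i + \varepsilon}$ for large $T$.

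For the matching subsequential lower bound $\limsup_{T\to\infty} \geq \theta_i$, the reverse comparability on $K$ produces at each return time $s_n$ a point $y_n \in M$ with $|\Phi^{+,(s_n)}(y_n,1)| \gtrsim e^{\theta_i s_n}$. Assumption \ref{bplusx}.3 (horizontal holonomy invariance) transfers this value along any admissible rectangle from $y_n$; by minimality and Masur--Veech non-divergence of the horizontal flow on $K$, for every $x$ with complete vertical orbit one finds at each $s_n$ such a rectangle connecting $y_n$ horizontally to $x$, giving $|\Phi^{+,(s_n)}(x,1)| \gtrsim e^{\theta_i s_n - o(|s_n|)}$, hence $|\Phi^+(x, e^{s_n})| \gtrsim e^{(\theta_i-\varepsilon) s_n}$; the analogue with $s_n \to -\infty$ handles $\limsup_{T\to 0} \geq \theta_i$. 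The main obstacle is the matching direction $\limsup_{T\to 0} \leq \theta_i$, which demands the uniform lower bound $|\Phi^+(x,T)| \geq T^{\theta_i + \varepsilon}$ for \emph{every} small $T$ on every complete orbit; one must rule out anomalously fast decay at specific points, using the self-similarity of the cocycle under renormalization together with nonvanishing of ${\bf A}_{KZ}(s) v$ and quantitative horizontal recurrence at every scale $e^{-s}$.
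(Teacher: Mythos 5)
Your overall strategy---renormalize by the Teichm{\"u}ller flow, control the cohomology class by Oseledets, and compare sup-norms on unit vertical arcs with the Hodge norm on a Luzin compactum---is in the right spirit, and the upper estimates you extract from it (the H{\"o}lder bound with exponent $\theta_i-\varepsilon$ and $\limsup_{T\to\infty}\le\theta_i$) are plausible along these lines. The paper itself reaches them differently: it codes the surface by an exact Markovian sequence of partitions whose adjacency matrices form an SB-sequence (Lemma \ref{sb-zip}, Proposition \ref{hypzip}), and obtains the upper bounds from the exponential estimates on the equivariant vectors $v^{(n)}$ together with the arc-approximation Lemma \ref{arc-dec-lem} (see Lemma \ref{sb-bplus}); the growth statement is then Proposition \ref{convhoeldprop}. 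However, two steps of your proposal are genuinely missing. First, your lower bound at infinity rests on transporting the large value at a good point $y_n$ to the given point $x$ by a single holonomy move ``along an admissible rectangle connecting $y_n$ horizontally to $x$.'' Condition 3 of Assumption \ref{bplusx} applies only when the whole rectangle of full vertical extent $e^{s_n}$ over the horizontal segment joining the two vertical arcs is free of singularities, and for a fixed $x$ no such rectangle need exist: the singular vertical leaves are dense, and neither minimality nor recurrence of the horizontal flow produces it. The paper's proof of Proposition \ref{convhoeldprop} avoids any transfer to $x$: one waits until the time $t_n\asymp e^{\theta_1 n}$ at which the arc $[x,h^+_{t_n}x]$ itself contains a full crossing of the Markovian rectangle $\Pi^{(n)}_{i(n)}$ carrying a value of size $e^{(\theta_r-\varepsilon)n}$, and then the additivity identity $\Phi^+([x,x''(n)])=\Phi^+([x,x'(n)])+\Phi^+([x'(n),x''(n)])$ forces one of the two arcs based at $x$ to be large; this ``two-point'' trick is the key device your argument lacks.

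Second, you explicitly leave open the direction $\limsup_{T\to 0}\le\theta_i$, which amounts to a lower, H{\"o}lder-type estimate $|\Phi^+(x,T)|\ge T^{\theta_i+\varepsilon}$ at small scales. This is not a peripheral detail but half of the statement, and it is precisely the part for which the paper invokes the stronger Lyapunov-regularity (SB) assumptions: the same renormalization and two-point argument is run at the levels $n\to-\infty$ of the Markovian sequence of partitions, using the lower exponential bounds on the heights $h^{(n)}$ and on the Oseledets components of $v^{(n)}$. As written, your proposal establishes neither the subsequential lower bound at infinity nor the small-scale assertion, so it does not yet prove the proposition; both gaps are filled in the paper by the symbolic two-point argument rather than by holonomy transfer between distinct base points.
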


\begin{proposition}
\label{cochyperb}
If the Kontsevich-Zorich cocycle does not have zero Lyapunov exponent with respect
to $\Prob$, then $\B^+_c({\bf X})=\B^+({\bf X})$.
\end{proposition}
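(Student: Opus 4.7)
Since $\B^+(\mathbf{X})\subseteq \B^+_c(\mathbf{X})$ is automatic, my plan is to prove the reverse inclusion by showing that, under the hyperbolicity hypothesis, every continuous holonomy-invariant cocycle is in fact Hölder. The strategy is to peel off from $\Phi$ the part coming from the unstable direction (which is already Hölder by the preceding proposition), so that the residue has cohomology class in the strictly stable subspace, and then to show that any such residue is itself Hölder by a renormalization argument driven by the Oseledets decay of the stable class.

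More precisely, given $\Phi\in \B^+_c(\mathbf{X})$, I set $v:={\check{\mathcal I}}^+_\mathbf{X}(\Phi)\in H^1(M,\mathbb{R})$ and use the hypothesis to decompose $v=v^u+v^s$ with $v^u\in E^u_\mathbf{X}$ and $v^s\in E^s_\mathbf{X}$. The preceding proposition supplies a Hölder cocycle $\Phi_u:={\mathcal I}^+_\mathbf{X}(v^u)\in \B^+(\mathbf X)$ realizing $v^u$, and the residue $\Psi:=\Phi-\Phi_u\in\B^+_c(\mathbf X)$ has cohomology class $v^s$ in the strictly stable subspace. The goal reduces to showing that $\Psi$ satisfies $|\Psi(x,t)|\leq C\,t^{\alpha}$ for some $\alpha>0$ and all sufficiently small $t$, since this is precisely the Hölder condition defining $\B^+(\mathbf{X})$, whence $\Phi=\Phi_u+\Psi\in\B^+(\mathbf{X})$.

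For the Hölder bound on $\Psi$, I renormalize under the Teichmüller flow. A vertical arc of $\mathbf{g}_s\mathbf{X}$-length one is geometrically the same as a vertical arc of $\mathbf{X}$-length $e^{-s}$, so viewing $\Psi$ as a cocycle $\Psi_{\mathbf g_s\mathbf X}\in \B^+_c(\mathbf g_s\mathbf X)$ via the same underlying measure on arcs gives $\Psi_{\mathbf g_s\mathbf X}(x,1)=\Psi(x,e^{-s})$, while the cohomology class of $\Psi_{\mathbf g_s\mathbf X}$ is the Kontsevich-Zorich transported vector ${\mathbf A}_s v^s$. The key uniform input I require is the estimate
\[
\sup_{x\in M}|\Xi(x,1)|\;\leq\; C_K\,\|{\check{\mathcal I}}^+_{\mathbf Y}(\Xi)\|_{\mathbf Y}
\qquad\text{whenever}\ \mathbf Y\in K,\ \Xi\in \B^+_c(\mathbf Y),
\]
valid on any compact set $K\subset \HH$ with the Hodge norm on the right. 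This asserts that in bounded moduli geometry the cohomology class of a continuous holonomy-invariant cocycle controls its values on unit vertical arcs; I would prove it by fixing on $K$ a uniformly finite family of closed rectangular loops whose classes span $H^1(M,\mathbb{R})$, and expressing the value of $\Xi$ on an arbitrary unit vertical arc in terms of the $\Xi$-values on those loops via the cocycle and horizontal-holonomy identities.

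Combining this with the Oseledets bound $\|{\mathbf A}_s v^s\|_{\mathbf g_s\mathbf X}\leq e^{-\theta s+o(s)}$ (with $\theta>0$ because $v^s$ is strictly stable) and with Teichmüller recurrence of $\mathbf g_s\mathbf X$ to a fixed compact set $K$, one obtains a positive-density set of times $s$ along which $|\Psi(x,e^{-s})|\leq C_K\,e^{-\theta s/2}$ uniformly in $x$; equivalently, $|\Psi(x,t_n)|\leq C_K\,t_n^{\theta/2}$ along a sequence $t_n\to 0$ of positive logarithmic density at zero. A standard dyadic upgrade—using the cocycle property to piece the bound together across neighbouring ``good'' scales—then promotes this to the pointwise estimate $|\Psi(x,t)|\leq C'_K\,t^{\theta/2}$ on some interval $(0,t_0)$, as required. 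The principal obstacle is the uniform comparison estimate above: relating the sup-norm of a continuous holonomy-invariant cocycle on unit arcs to the norm of its cohomology class, uniformly on a compact set of moduli. Once this is in hand, Oseledets decay, Teichmüller recurrence, and the dyadic upgrade are standard.
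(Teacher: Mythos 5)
Your outline (split off the unstable part, renormalize the residue with stable class, use recurrence plus Oseledets decay, patch scales) rests entirely on the displayed ``key uniform input,'' and that is a genuine gap rather than a routine technicality. First, the estimate $\sup_x|\Xi(x,1)|\le C_K\,\|{\check{\mathcal I}}^+_{\mathbf Y}(\Xi)\|$, quantified over \emph{all} $\mathbf Y$ in a compact set $K$ and all $\Xi\in\B^+_c(\mathbf Y)$, is false: a surface whose horizontal foliation is completely periodic lies in a thick (compact) part of $\HH$ once its cylinders are wide enough, and on such a surface $\B^+_c$ is infinite-dimensional --- take one horizontal cylinder with height coordinate $y$ and the finitely-additive measure $f(y)\,dy$ with $\int f\,dy=0$ and $\sup_y|\int_0^y f|=1$; its value on every closed rectangular curve telescopes to zero (the curve enters and leaves the cylinder at heights where $\int_0^y f$ vanishes), so its cohomology class is zero, yet its values on unit vertical arcs are of order one. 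Hence no constant $C_K$ exists for such $K$, and your proposed justification --- expressing the value of $\Xi$ on a unit vertical arc through its values on a fixed finite family of closed rectangular loops --- cannot work: an arc is not a cycle, and closing it up forces auxiliary vertical segments whose number is unbounded and whose total contribution is controlled only by the modulus of continuity of $\Xi$, which appears nowhere in your estimate. Restricting to $\Prob$-generic $\mathbf Y$ does not save the argument as written: the assertion that for almost every differential a continuous holonomy-invariant cocycle is controlled by (indeed determined by) its cohomology class is essentially the proposition being proved, so assuming it begs the question. This is exactly the work done in the paper by the Markovian machinery: values on Markovian arcs determine any $\Phi\in\B^+_c$ by exactness and uniform continuity, these values form an equivariant sequence which decays backward, Oseledets with no zero exponents forces such a sequence into the unstable space with exponential decay, and Lemma \ref{sb-bplus} identifies it with a H{\"o}lder cocycle (Lemma \ref{sb-zip}, Corollary \ref{hypzip1}, Proposition \ref{hyperboliccriterion}).

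There is also a second, concrete failure: the renormalization direction. With the paper's conventions ($\omega'=e^s\Re(\omega)+ie^{-s}\Im(\omega)$, Veech's $P^s(\la,\pi,\delta)=(e^s\la,\pi,e^{-s}\delta)$), vertical lengths contract under $\mathbf g_s$ for $s>0$, so a unit vertical arc of $\mathbf g_s\mathbf X$ corresponds to an arc of $\mathbf X$-length $e^{+s}$, not $e^{-s}$. Thus the forward Oseledets decay of $\mathbf A_s v^s$ controls the values of $\Psi$ on \emph{long} arcs; to reach the small scales relevant for the H{\"o}lder condition you must flow backward, where $\|\mathbf A_{-s}v^s\|$ \emph{grows} like $e^{\theta s}$, and your claimed bound $|\Psi(x,e^{-s_n})|\le C_K e^{-\theta s_n/2}$ does not follow even if the comparison estimate were granted. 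The inequality that actually holds on compact sets is the opposite (easy) one, $\|{\check{\mathcal I}}^+(\Xi)\|\le C_K\sup_{x,\,t\le T_K}|\Xi(x,t)|$; combined with backward recurrence and uniform continuity it can rule out a stable component of the class of $\Psi$, but it then leaves you with a continuous cocycle of unstable (possibly zero) class and no way, within your scheme, to conclude that it coincides with the H{\"o}lder cocycle of the same class --- which is again the injectivity question that the paper's symbolic argument settles and that cohomology alone cannot.
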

{\bf Remark.} The condition of the absence of zero Lyapunov exponents can be weakened:
it suffices to require that the Kontsevich-Zorich cocycle act isometrically on the
neutral Oseledets subspace corresponding to the Lyapunov exponent zero.
Isometric action means here that there exists an inner product which
depends measurably on the point in the stratum and which is invariant under the
Kontsevich-Zorich cocycle. In all known examples (see, e.g., \cite{FMZ})
the action of the Kontsevich-Zorich cocycle on its neutral Lyapunov subspace is
isometric; note, however, that the examples of \cite{FMZ} mainly concern measures
invariant under the action of the whole group $SL(2, \mathbb R)$.

{\bf Question.} Does  there exist a ${\bf g}_s$-invariant ergodic probability measure $\Prob^{\prime}$
 on $\HH$ such that the inclusion $\BB^+\subset \BB^+_c$ is proper almost surely with respect to $\Prob^{\prime}$?

{\bf Remark.} G.Forni has made the following
remark. To a cocycle $\Phi^+\in\mathfrak{B}^+$ assign a $1$-current
$\beta_{\Phi^+},$ defined, for a smooth 1-form $\eta$ on the
surface $M,$ by the formula
$$\beta_{\Phi^+}(\eta)=\int_M\Phi^+\wedge\eta,$$ where the
integral in the right hand side is defined as the limit of Riemann
sums. The resulting current $\beta_{\Phi^+}$ is a ${\it basic\
current}$ for the horizontal foliation.

The mapping of H\"{o}lder cocycles into the cohomology
$H^1(M,\mathbb{R})$ of the surface corresponds to G. Forni's map
that to each basic current assigns its cohomology class (the
latter is well-defined by the de Rham Theorem). In particular, it
follows that for any ergodic $g_s$-invariant probability measure
$\mathbb{P}$ on $\mathcal{H}$ and  $\mathbb{P}$-almost every
abelian differential $(M,\omega)$ every basic current from the
Sobolev space $H^{-1}$ is induced by a H\"{o}lder cocycle
$\Phi^+\in\mathfrak{B}^+(M,\omega).$

\subsection{Approximation of weakly Lipschitz functions.}
\subsubsection{The space of weakly Lipschitz functions.}

The space of Lipschitz functions is not invariant under $h_t^+$, and a larger
function space $Lip_w^+(M, \omega)$ of weakly Lipschitz functions is introduced as follows.
A bounded measurable function $f$ belongs to $Lip_w^+(M, \omega)$ if there exists a
constant $C$, depending only on $f$, such that for any admissible rectangle $\Pi(x, t_1, t_2)$ we
have
\begin{equation}
\label{weaklippsan}
\left| \int_0^{t_1} f(h_t^+x)dt -\int_0^{t_1}  f(h_t^+(h^-_{t_2}x)dt \right|\leq C.
\end{equation}
Let $C_f$ be the infimum of all $C$ satisfying (\ref{weaklippsan}). We norm $Lip_w^+(M, \omega)$ by setting
$$
||f||_{Lip_w^+}=\sup_M f+C_f.
$$

By definition, the space  $Lip_w^+(M, \omega)$ contains all Lipschitz functions on $M$ and is
invariant under $h_t^+$. If $\Pi$ is an admissible rectangle, then its characteristic function
$\chi_{\Pi}$  is weakly Lipschitz (I am grateful to C. Ulcigrai for this remark).

We denote by $Lip_{w,0}^+(M, \omega)$ the
subspace of  $Lip_w^+(M, \omega)$ of
functions whose integral with respect to $\nu$ is $0$.

For any $f\in Lip_{w}^+(M, \omega)$ and any $\Phi^-\in \BB^-$ the
integral $\int_M f dm_{\Phi^-}$ can be defined as the limit of Riemann sums.

\subsubsection{The cocycle corresponding to a weakly Lipschitz function.}
If the pairing $\langle, \rangle$ induces an isomorphism between $\BB^+$ and the
dual $(\BB^-)^*$, then one can assign to a function $f\in Lip_{w}^+(M, \omega)$ the functional $\Phi_f^+$ by
the formula
\begin{equation}
\label{defphifteich}
\langle \Phi_f^+, \Phi^-\rangle =\int\limits_M f dm_{\Phi^-}, \Phi^-\in \BB^-.
\end{equation}

By definition,  $\Phi^+_{f\circ h_t^+}=\Phi_f^+$.
We are now proceed to the formulation of the first main result of this paper, the Approximation Theorem \ref{multiplicmoduli}.
\begin{theorem}
\label{multiplicmoduli}
Let $\Prob$ be an ergodic probability ${\bf g}_s$-invariant measure on $\HH$.
For any $\varepsilon>0$ there exists
a constant $C_{\varepsilon}$ depending only on $\Prob$ such that for
$\Prob$-almost every ${\bf X}\in\HH$, any $f\in Lip_w^+({\bf X})$, any $x\in M$ and any $T>0$ we have
$$
\left| \int_0^T
f\circ h_t^+(x) dt -\Phi^+_f(x,T)\right|\leq
C_{\varepsilon}||f||_{Lip_w^+}(1+T^{\varepsilon}).
$$
\end{theorem}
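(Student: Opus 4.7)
The plan is to exploit the renormalization by the Teichm\"uller flow and the Oseledets structure of the Kontsevich-Zorich cocycle. First I would reduce to the mean-zero case: writing $f=f_0+\int_M f\,d\nu$ with $\int_M f_0\,d\nu=0$, the constant piece contributes $T\int_M f\,d\nu$ to the ergodic integral and is absorbed exactly into the $\Phi_1^+$-component of $\Phi_f^+$ (since $\langle\Phi_1^+,\Phi^-\rangle=m_{\Phi^-}(M)=\int_M 1\,dm_{\Phi^-}$ by (\ref{mphipsan}) and (\ref{defphifteich})), so one may assume $\int_M f\,d\nu=0$. Define the error cocycle $E(x,T):=\int_0^T f\circ h_t^+(x)\,dt-\Phi_f^+(x,T)$, which is a continuous, horizontal-holonomy-invariant additive cocycle over $h_t^+$, and by construction satisfies $\langle E,\Phi^-\rangle=0$ for every $\Phi^-\in\BB^-$.

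Next I would renormalize: for $T>1$ set $s=\log T$; since a vertical arc of length $T$ on ${\bf X}$ coincides set-theoretically with a vertical arc of length $1$ on ${\bf g}_s{\bf X}$, the ergodic integral of $f$ of length $T$ on ${\bf X}$ equals the unit-length vertical ergodic integral of the same function $f$ on ${\bf g}_s{\bf X}$ based at the same point. Admissible rectangles are invariant under this rescaling and path integrals of $f$ along horizontal and vertical arcs are intrinsic, so $\|f\|_{Lip_w^+}$ is preserved. On the cohomological side, the class ${\check{\mathcal I}}_{\bf X}^+(\Phi_f^+)\in E_{\bf X}^u$ is transported to ${\mathbf A}_{KZ}(s){\check{\mathcal I}}_{\bf X}^+(\Phi_f^+)\in E_{{\bf g}_s{\bf X}}^u$, with the component along $E_{{\bf X},\theta_i}^u$ growing like $e^{\theta_i s}=T^{\theta_i}$ by Oseledets.

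The core estimate is a unit-scale bound: on ${\bf g}_s{\bf X}$, the weakly Lipschitz property (\ref{weaklippsan}) together with horizontal-holonomy invariance in Assumption \ref{bplusx}(3) yields, for every admissible unit-time vertical arc, an $O(\|f\|_{Lip_w^+})$ bound on the corresponding value of $E$, up to a multiplicative random constant $C({\bf g}_s{\bf X})$ depending only on the geometry of the renormalized surface. The desired sub-polynomial bound then follows by telescoping along the Teichm\"uller orbit: decompose a long vertical orbit on ${\bf X}$ into finitely many pieces that are unit arcs at various Teichm\"uller times $0=s_0<s_1<\cdots<s_k=\log T$, apply the unit-scale bound at each scale, and sum over scales. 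Kingman's subadditive ergodic theorem applied to $\log\|{\mathbf A}_{KZ}(s)\|$ and to $\log C({\bf g}_\sigma{\bf X})$ ensures that these random constants grow subpolynomially along typical Teichm\"uller orbits, and the Oseledets gap between $\theta_1=1$ and $\theta_2<1$ produces the $T^\varepsilon$ error.

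The main obstacle, in my view, is that the error cocycle $E$ is a priori only continuous, not H\"older, and in particular carries no well-defined cohomology class, so one cannot appeal directly to Kontsevich-Zorich Lyapunov decay in cohomology. The crucial point is therefore to control $E$ geometrically at every intermediate renormalization scale, using the finitely-additive-measure interpretation of cocycles in $\BB^+$ and a zippered-rectangle/Rauzy-Veech decomposition of the vertical orbit, and to prove that the relevant constants on good (Lyapunov-regular) sets in $\HH$ accumulate at subpolynomial rate. Choosing the Lyapunov-regularity set and tracking how $\|f\|_{Lip_w^+}$ and the random Oseledets constants interact across $O(\log T)$ renormalization steps is where the technical heart of the argument lies.
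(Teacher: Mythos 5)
Your outline has the right renormalization philosophy, but it stops exactly where the theorem actually lives, and the step you defer cannot be carried out with the tools you allow yourself. You define $\Phi_f^+$ abstractly through the duality relation (\ref{defphifteich}) and then propose to bound the error $E(x,T)$ by a unit-scale estimate at each Teichm\"uller time plus telescoping; but the only property of $\Phi_f^+$ your argument ever invokes is that $E$ pairs to zero with every $\Phi^-\in\BB^-$, and — as you yourself observe — $E$ is merely continuous and carries no cohomology class, so this pairing information gives no control whatsoever on the size of $E$ at intermediate scales. The paper resolves this by reversing the logic: it does not start from the duality-defined functional, but \emph{constructs} $\Phi_f^+$ at the symbolic level. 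One takes a Markovian (Rauzy--Veech) sequence of partitions with SB adjacency matrices, forms the vectors $v(n)$ of integrals of $f$ over the canonical Markovian vertical arcs, observes that the weak Lipschitz property makes this sequence almost equivariant, $|A_nv(n)-v(n+1)|\leq C_\varepsilon\|f\|_{Lip}e^{\varepsilon n}$, and then applies the key linear-algebra Lemma \ref{vngeneral} (under the exponential dichotomy of Assumption \ref{asexpdec}, furnished by Oseledets--Pesin): there is a unique vector $v_f^+$ in the expanding space whose forward images track $v(n)$ to within $e^{\varepsilon n}$ at \emph{every} scale. That simultaneous construction-plus-approximation is precisely the "unit-scale bound on $E$" you assert but do not prove; only afterwards does one identify the resulting cocycle with the duality-defined $\Phi_f^+$ by a Riemann-sum computation. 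The passage from Markovian arcs to arbitrary arcs then uses the arc-decomposition Lemma \ref{arc-dec-lem} (subexponentially many Markovian pieces per scale), which is the rigorous form of your telescoping.

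Two further points in your sketch are off. First, the $T^\varepsilon$ error does not come from "the Oseledets gap between $\theta_1=1$ and $\theta_2$": subtracting only the top Lyapunov component would leave an error of order $T^{\theta_2}$. The subpolynomial error arises because $\Phi_f^+$ removes the \emph{entire} expanding part of the almost-equivariant sequence, and the remaining central-stable components, transported forward, grow subexponentially in the number of renormalization steps by Lyapunov regularity; this is where the constant $C_\varepsilon$ (uniform over a full-measure set, via the Oseledets--Pesin reduction and returns to a fixed positive Rauzy--Veech matrix) comes from. Second, your renormalization bookkeeping is imprecise: under ${\bf g}_s$ a vertical arc of length $T$ becomes an arc of length $e^{-s}T$ and the time integral rescales by $e^{s}$, and the weak-Lipschitz data is not literally preserved but rescaled; these are repairable, but as written they obscure the fact that the quantitative input you need is exactly the scale-by-scale almost-equivariance estimate, not an invariance of norms.
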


\subsubsection{Invariant measures with simple Lyapunov spectrum.}

Consider the case in which the
Lyapunov spectrum of
the Kontsevich-Zorich cocycle is simple in restriction
to the space $E^u$ (as, by the Avila-Viana theorem \cite{AV},
is the case with the Masur-Veech smooth measure).
Let  $l_0={\rm dim} E^u$ and let
\begin{equation}
1=\theta_1>\theta_2>\dots>\theta_{l_0}
\end{equation}
be the corresponding simple expanding Lyapunov exponents.

Let $\Phi_1^+$ be given by the formula $\Phi_1^+(x,t)=t$
and introduce a basis
\begin{equation}
\Phi_1^+, \Phi_2^+, \dots, \Phi_{l_0}^+
\end{equation}
in ${\mathfrak B}^+_{\bf X}$ in such a
way that ${\check {\mathcal I}}_{\bf X}^+(\Phi_i^+)$ lies in the Lyapunov subspace with exponent $\theta_i$.
By Proposition \ref{hoeldergrowth}, for any $\varepsilon>0$ the cocycle $\Phi_i^+$ satisfies the H{\"o}lder condition with
exponent $\theta_i-\varepsilon$, and for any $x\in M({\bf X})$ we have
$$
\limsup\limits_{T\to\infty} \frac{\log|\Phi_i^+(x,T)|}{\log T}=\theta_i; \ \limsup\limits_{T\to 0} \frac{\log|\Phi_i^+(x,T)|}{\log T}=\theta_i.
$$

Let $\Phi_1^-, \dots, \Phi_{l_0}^-$ be the dual basis in ${\mathfrak  B}^-_{\bf X}$.
Clearly, $\Phi_1^-(x,t)=t$.

By definition, we have
\begin{equation}
\Phi_f^+=\sum_{i=1}^{l_0} m_{\Phi_i^-}(f)\Phi_i^+.
\end{equation}

Noting that by definition we have
$$m_{\Phi_1^-}=\nu,$$

we derive from
Theorem \ref{multiplicmoduli} the following corollary.

\begin{corollary}

Let $\Prob$ be an invariant ergodic probability measure for the Teichm{\"u}ller
flow such that with respect to $\Prob$ the
Lyapunov spectrum of
the Kontsevich-Zorich cocycle is simple in restriction to its strictly
expanding subspace.

Then for any $\varepsilon>0$ there exists
a constant $C_{\varepsilon}$ depending
only on $\Prob$ such that for
$\Prob$-almost every ${\bf X}\in\HH$, any
$f\in Lip_w^+({\bf X})$, any $x\in {\bf X}$
and any $T>0$ we have
$$
\left| \int_0^T f\circ h_t^+(x) dt - T(\int_M fd\nu)-
\sum\limits_{i=2}^{l_0} m_{\Phi_i^-}(f)\Phi_i^+(x,T)
\right|\leq C_{\varepsilon}||f||_{Lip_w^+}(1+T^{\varepsilon}).
$$
\end{corollary}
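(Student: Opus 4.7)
The plan is to derive the corollary as an almost immediate unfolding of Theorem \ref{multiplicmoduli} combined with the basis expansion of $\Phi_f^+$. There is no serious obstacle: everything needed has been set up in the preceding paragraphs. The only care required is to keep track of the $i=1$ term separately, since that is the one responsible for the Birkhoff-type main term $T\int_M f\, d\nu$.

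First I would invoke Theorem \ref{multiplicmoduli}, which gives, for $\Prob$-almost every ${\bf X}\in\HH$, every $f\in Lip_w^+({\bf X})$, every $x\in M$, and every $T>0$,
$$
\left|\int_0^T f\circ h_t^+(x)\,dt - \Phi_f^+(x,T)\right|\leq C_\varepsilon\|f\|_{Lip_w^+}(1+T^\varepsilon).
$$
Under the simplicity hypothesis on the Lyapunov spectrum of ${\bf A}_{KZ}$ on $E^u$, the bases $\{\Phi_i^+\}_{i=1}^{l_0}$ of ${\mathfrak B}^+_{\bf X}$ and $\{\Phi_i^-\}_{i=1}^{l_0}$ of ${\mathfrak B}^-_{\bf X}$ are well-defined and mutually dual with respect to the pairing $\langle\,\cdot\,,\,\cdot\,\rangle$. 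Thus, applying $\langle \Phi_f^+,\Phi_j^-\rangle=\int_M f\,dm_{\Phi_j^-}$ from (\ref{defphifteich}) to each dual basis element yields the expansion
$$
\Phi_f^+ = \sum_{i=1}^{l_0} m_{\Phi_i^-}(f)\,\Phi_i^+,
$$
as already recorded in the excerpt.

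Next I would isolate the contribution of $i=1$. By construction, $\Phi_1^+(x,T)=T$ and $m_{\Phi_1^-}=\nu$, so the first term in the expansion is exactly $T\int_M f\,d\nu$. Therefore
$$
\Phi_f^+(x,T) = T\int_M f\,d\nu + \sum_{i=2}^{l_0} m_{\Phi_i^-}(f)\,\Phi_i^+(x,T).
$$
Substituting this decomposition into the inequality from Theorem \ref{multiplicmoduli} gives exactly the bound claimed in the corollary, with the same constant $C_\varepsilon$.

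In summary, the proof is entirely formal once Theorem \ref{multiplicmoduli} and the dual-basis expansion of $\Phi_f^+$ are in hand. The only place where the simple-spectrum hypothesis is actually used is in guaranteeing that the bases $\{\Phi_i^\pm\}$ exist and are indexed by distinct Lyapunov exponents $\theta_1>\dots>\theta_{l_0}$; the Hölder regularity and growth properties of the individual $\Phi_i^+$ furnished by Proposition \ref{hoeldergrowth} are not needed for the corollary itself, although they give the asymptotic rate at which each term in the sum grows. Consequently, the only genuine mathematical content of the corollary lies in Theorem \ref{multiplicmoduli}, which is assumed.
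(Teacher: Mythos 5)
Your proposal is correct and coincides with the paper's own derivation: the paper likewise records the expansion $\Phi_f^+=\sum_{i=1}^{l_0} m_{\Phi_i^-}(f)\,\Phi_i^+$ in the dual bases, notes $m_{\Phi_1^-}=\nu$ and $\Phi_1^+(x,T)=T$ to isolate the leading term $T\int_M f\,d\nu$, and then substitutes into Theorem \ref{multiplicmoduli}. Nothing further is needed.
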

For horocycle flows a related asymptotic expansion has been obtained by Flaminio and Forni \cite{flafo}.

{\bf Remark.}
If $\Prob$ is the Masur-Veech smooth measure on $\HH$, then
it follows from the work of G.Forni \cite{F1}, \cite{F2}, \cite{forniarxiv}
and S. Marmi, P. Moussa, J.-C. Yoccoz \cite{MMY}
that the left-hand side is bounded for any $f\in C^{1+\varepsilon}(M)$
(in fact, for any $f$ in the Sobolev space $H^{1+\varepsilon}$).
In particular, if $f\in C^{1+\varepsilon}(M)$ and $\Phi_f^+=0$,
then $f$ is a coboundary.

\subsection{Holonomy invariant transverse finitely-additive measures for oriented measured foliations.}

Holonomy-invariant cocycles assigned to an abelian differential can be interpreted
as transverse invariant measures for its foliations in the spirit of Kenyon \cite{kenyon} and Bonahon
\cite{bonahon1}, \cite{bonahon2}.

Let $M$ be a compact oriented surface of genus at least two, and
let $\F$ be a minimal oriented measured foliation on $M$.
Denote by ${\bf m}_{\F}$ the transverse invariant measure of $\F$.
If $\gamma=\gamma(t), t\in [0,T]$ is a  smooth curve on $M$, and $s_1, s_2$ satisfy
$0\leq s_1< s_2\leq T$,  then we denote by ${\rm res}_{[s_1, s_2]}\gamma$ the curve
$\gamma(t), t\in [s_1, s_2]$.

Let  $\B_c(\F)$ be the space of uniformly continuous finitely-additive transverse invariant
measures for $\F$. In other words,  a map $\Phi$ which to every
smooth arc $\gamma$ transverse to $\F$ assigns a real number $\Phi(\gamma)$
belongs to the space ${\B}_c(\F)$
if it satisfies the following:
\begin{assumption}
\label{finaddmeasfoliation}
\begin{enumerate}
\item ( finite additivity) For $\gamma=\gamma(t), t\in [0,T]$ and any $s\in (0,T)$, we have
$$
\Phi(\gamma)=\Phi({\rm res}_{[0, s]}\gamma)+\Phi({\rm res}_{[s, T]}\gamma);
$$
\item ( uniform continuity) for any $\varepsilon>0$ there exists $\delta>0$ such that
for any transverse arc $\gamma$ satisfying ${\bf m}_{\F}(\gamma)<\delta$ we have $|\Phi(\gamma)|<\varepsilon$;
\item ( holonomy invariance)  the value $\Phi(\gamma)$ does not change if $\gamma$ is deformed in
such a way that it stays transverse to $\F$ while the endpoints of $\gamma$ stay on their respective leaves.
\end{enumerate}
\end{assumption}
A measure $\Phi\in \B_c(\F)$ is called H{\"o}lder with exponent $\theta$
if there exists $\varepsilon_0>0$ such that for any transverse arc $\gamma$
satisfying ${\bf m}_{\F}(\gamma)<\varepsilon_0$ we have
$$
|\Phi(\gamma)|\leq \left({\bf m}_{\F}(\gamma)\right)^{\theta}.
$$

Let $\B(\F)\subset \B_c(\F)$ be the subspace of H{\"o}lder transverse measures.

As before, we have a natural map
$$
{\mathcal I}_{\F}: \B_c(\F)\to H^1(M, {\mathbb R})
$$
defined as follows.  For a smooth closed curve $\gamma$ on
$M$ and a measure $\Phi\in \B_c(\F)$  the integral
$\int_{\gamma} d\Phi$ is well-defined as the limit of Riemann sums; by holonomy-invariance and continuity of $\Phi$, this operation
descends to homology and assigns to $\Phi$ an element of $H^1(M, {\mathbb R})$.

Now take an abelian differential ${\bf X}=(M, \omega)$ and let $\F^-_{\bf X}$ be its horizontal foliation.
We have a ``tautological'' isomorphism between $\BB_c(\F^-_{\bf X})$ and $\BB_c^+({\bf X})$: every
transverse measure for the horizontal foliation induces a cocycle for the vertical foliation and
vice versa; to a H{\"o}lder measure corresponds a H{\"o}lder cocycle.
For brevity, write ${\mathcal I}_{\bf X}={\mathcal I}_{\F^-_{\bf X}}$.
Denote by $E^u_{\bf X}\subset H^1(M, {\mathbb R})$ the unstable subspace of the
Kontsevich-Zorich cocycle of the abelian differential ${\bf X}=(M, \omega)$.

Theorem \ref{multiplicmoduli} and Proposition \ref{cochyperb} yield the following
\begin{corollary}
\label{finaddmeasfol}
Let $\Prob$ be a Borel probability measure on $\HH$ invariant and ergodic under the action of the Teichm{\"u}ller flow ${\bf g}_t$.
Then for almost every abelian differential ${\bf X}\in\HH$ the map ${\mathcal I}_{\bf X}$ takes $\BB(\F^-_{\bf X})$ isomorphically
onto $E^u_{\bf X}$.

If the Kontsevich-Zorich cocycle does not have zero Lyapunov exponents with respect to $\Prob$,
then for almost all ${\bf X}\in\HH$ we have $\B_c(\F_{\bf X})=\B(\F_{\bf X})$.
\end{corollary}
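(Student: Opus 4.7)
The plan is to reduce the corollary to the already-stated isomorphism between $\BB_{\bf X}^+$ and $E^u_{\bf X}$, together with Proposition \ref{cochyperb}, by systematically exploiting the ``tautological'' identification mentioned just above between transverse invariant measures for the horizontal foliation $\F^-_{\bf X}$ and holonomy-invariant cocycles of the vertical flow $h_t^+$. Once this dictionary is set up, no new dynamical input is needed; the proof is essentially an exercise in translating definitions.

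First I would make the correspondence $\B_c(\F^-_{\bf X}) \cong \BB_c^+({\bf X})$ precise: every smooth arc transverse to $\F^-_{\bf X}$ is, after reparametrization, an arc of $h_t^+$, and on such arcs the transverse mass ${\bf m}_{\F^-_{\bf X}}$ agrees with vertical arclength. Under this identification, finite additivity and holonomy-invariance of a $\Phi\in\B_c(\F^-_{\bf X})$ become conditions (1) and (3) of Assumption \ref{bplusx}, uniform continuity becomes the weakened version of condition (2) defining $\BB_c^+({\bf X})$, and the H\"older estimate defining $\B(\F^-_{\bf X})$ becomes precisely the H\"older estimate of condition (2) of Assumption \ref{bplusx}. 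Thus the bijection sends $\B(\F^-_{\bf X})$ onto $\BB^+({\bf X})$.

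Second, I would verify that the map ${\mathcal I}_{\F^-_{\bf X}}$ coincides, under this bijection, with the Katok map ${\check{\mathcal I}}_{\bf X}^+$. Given a smooth closed curve $\gamma$, I would deform it inside its homology class to a rectangular closed curve $\gamma^{\rm rect}=\bigsqcup_i \gamma^+_i \sqcup \bigsqcup_j \gamma^-_j$ avoiding the zeros of $\omega$; the horizontal segments $\gamma_j^-$ lie along leaves of $\F^-_{\bf X}$ and hence contribute zero transverse mass to $\int_{\gamma^{\rm rect}} d\Phi$, so the Riemann-sum integral reduces to $\sum_i \Phi^+(\gamma^+_i)$, which is exactly the value of ${\check{\mathcal I}}_{\bf X}^+(\Phi^+)$ on $[\gamma^{\rm rect}]=[\gamma]$. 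That this descends to homology is the content of Proposition \ref{acthomology}.

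With the dictionary established, the first assertion is immediate from the isomorphism ${\check{\mathcal I}}_{\bf X}^+:\BB^+_{\bf X}\to E^u_{\bf X}$ stated earlier, and the second assertion is immediate from Proposition \ref{cochyperb} transported through the same bijection. The only step requiring any real care is the rectangular approximation in the previous paragraph: one must ensure that the approximating curve can be chosen to miss the finitely many zeros of $\omega$ and to remain in the correct homology class, but this is part of the standard Katok construction already used in defining ${\check{\mathcal I}}_{\bf X}^+$, so I do not anticipate any further obstacle.
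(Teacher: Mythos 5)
Your proposal is correct and follows essentially the same route as the paper: the paper itself treats the identification of $\B_c(\F^-_{\bf X})$ with $\BB_c^+({\bf X})$ (H\"older with H\"older) as tautological and then simply invokes the already-stated isomorphism ${\check{\mathcal I}}_{\bf X}^+:\BB^+_{\bf X}\to E^u_{\bf X}$ together with Proposition \ref{cochyperb}, exactly as you do. The only cosmetic caveat is that a smooth transverse arc is not literally a vertical arc after reparametrization; one straightens it piecewise by holonomy, which is the content of the ``tautological'' dictionary you set up and does not affect the argument.
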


In other words, in the absence of zero Lyapunov exponents
all continuous transverse finitely-additive invariant measures are in fact H{\"o}lder.

{\bf Remark.} As before, the condition of the absence of zero Lyapunov exponents can be weakened:
it suffices to require that the Kontsevich-Zorich cocycle act isometrically on the Oseledets
subspace corresponding to the Lyapunov exponent zero.

By definition, the space $\BB(\F^-_{\bf X})$ only depends on the horizontal foliation
of our abelian differential; so does $E^u_{\bf X}$.

\subsection{Finitely-additive invariant measures for interval exchange transformations.}
\subsubsection{The space of invariant continuous finitely-additive measures.}

Let $m\in {\mathbb N}$. Let $\Delta_{m-1}$ be the standard unit simplex
$$
\Delta_{m-1}=\{\la\in {\mathbb R}^m_+,\ \la=(\la_1, \dots,\la_m),\la_i>0, \sum\limits_{i=1}^m \la_i=1\}.
$$

Let $\pi$ be a permutation of $\{1, \dots, m\}$ satisfying the {\it irreducibility} condition:
we have $\pi\{1, \dots, k\}=\{1, \dots, k\}$ if and only if $k=m$.

On the half-open interval $I=[0, 1)$ consider
the points
$$\beta_1=0,\quad \beta_i=\sum_{j<i}\la_j,\quad \beta_1^{\pi}=0,\quad \beta_i^{\pi}=\sum_{j<i}\la_{\pi^{-1}j}$$
and
denote $I_i=[\beta_i, \beta_{i+1})$, $I_i^{\pi}=[\beta_i^{\pi}, \beta_{i+1}^{\pi})$.
The length of $I_i$ is $\la_i$, while the length of $I_i^{\pi}$  is $\la_{\pi^{-1}i}$.
Set
$$
{\bf T}_{(\la,\pi)}(x)=x+\beta_{\pi i}^{\pi}-\beta_i {\rm \ for\ } x\in I_i.
$$
The map ${\bf T}_{(\la,\pi)}$ is called {\it an interval exchange transformation} corresponding to $(\la, \pi)$.
By definition, the map ${\bf T}_{(\la,\pi)}$ is invertible and preserves the Lebesgue measure on $I$.
By the theorem of Masur \cite{masur} and Veech \cite{veech}, for any irreducible permutation $\pi$ and
for Lebesgue-almost all $\la\in\Delta_{m-1}$, the corresponding interval exchange transformation ${\bf T}_{(\la,\pi)}$
is uniquely ergodic: the Lebesgue measure is the only invariant probability measure for ${\bf T}_{(\la,\pi)}$.

Consider the space of complex-valued continuous finitely-additive invariant measures for ${\bf T}_{(\la,\pi)}$.

More precisely, let $\BB_c({\bf T}_{(\la,\pi)})$ be the space of all continuous functions $\Phi: [0,1]\to {\mathbb R}$
satisfying
\begin{enumerate}
\item  $\Phi(0)=0$;
\item if $0\leq t_1\leq t_2<1$ and ${\bf T}_{(\la,\pi)}$ is continuous on $[t_1, t_2]$, then
$\Phi(t_1)-\Phi(t_2)=\Phi({\bf T}_{(\la,\pi)}(t_1))-\Phi({\bf T}_{(\la,\pi)}(t_2))$.
\end{enumerate}

Each function $\Phi$ induces a finitely-additive measure on $[0,1]$ defined on the semi-ring
of subintervals (for instance, the function $\Phi_1(t)=t$ yields the Lebesgue measure on $[0,1]$).

Let $\BB({\bf T}_{(\la,\pi)})$ be the subspace of H{\"o}lder functions $\Phi\in \BB_c({\bf T}_{(\la,\pi)})$.

The classification of H{\"o}lder cocycles over translation flows and the asymptotic formula of
Theorem \ref{multiplicmoduli} now
yield the classification of the space $\BB({\bf T}_{(\la,\pi)})$ and
an asymptotic expansion for time averages of almost all interval exchange maps.

\subsubsection{The approximation of ergodic sums}

Let ${\bf X} = (M,\omega)$ be an abelian differential, and let $I\subset M$ be a closed interval lying on a leaf of a horizontal foliation. The vertical flow $h^+_t$ induces an interval exchange map $\mathbf{T}_I$ on $I$, namely, the Poincar\'{e} first return map of the flow. By definition,  there is a natural tautological identification of the spaces $\mathfrak{B}_c(\mathbf{T}_I)$ and $\mathfrak{B}_c^-(\mathbf{X})$, as well as of the spaces $\mathfrak{B}(\mathbf{T}_I)$ and $\mathfrak{B}^-(\mathbf{X})$.

For $x\in M$, let $\tau_I(x)=\min\left\{t\geqslant0:h^+_{-t}x\in I\right\}$. Note that the function $\tau_I$ is uniformly bounded on $M$. Now take a Lipschitz function $f$ on $I$, and introduce a function $\widetilde{f}$ on $M$ by the formula
$$\widetilde{f}(x)=\frac{f(h^+_{-\tau_I(x)} x)}{\tau_I(x)}$$
(setting $\widetilde{f}(x)=0$ for points at which $\tau_I$ is not defined).

By definition, the function $\widetilde{f}$ is weakly Lipschitz, and Theorem \ref{multiplicmoduli} is applicable to $\widetilde{f}$.

The ergodic integrals of $\widetilde{f}$ under $h^+_t$ are of course closely related to ergodic sums of $f$ under $\mathbf{T}_I$, and for any $N\in\mathbb{N}$, $x\in I$, there exists a time $t(x,N)\in\mathbb{R}$ such that
$$\int\limits^{t(x,N)}_0\widetilde{f}\circ h^+_s(x)\,ds=\sum\limits^{N-1}_{k=0}f\circ \mathbf{T}_I^k(x)\:.$$

By the Birkhoff--Khintchine Ergodic Theorem  we have
$$\lim\limits_{N\rightarrow\infty}\frac{t(x,N)}{N}=\frac{1}{Leb(I)}\:,$$
where $Leb(I)$ stands for the length of $I$.

Furthermore, Theorem \ref{multiplicmoduli} yields the existence of constants $C(I)>0$, $\theta\in (0,1)$, such that for all $x\in I$, $N\in\mathbb{N}$, we have
\begin{equation}\label{txn-est}
\left|t(x,N)-\frac{N}{Leb(I)}\right|\leqslant C(I)\cdot N^{\theta}.
\end{equation}

Indeed, the interval $I$ induces a decomposition of our surface into weakly admissible rectangles $\Pi_1,\ldots,\Pi_m$; denote by $h_i$ the height of the rectangle $\Pi_i$, and introduce a weakly Lipschitz function that assumes the constant value $\frac{1}{h_i}$ on each rectangle $\Pi_i$.
Applying Theorem \ref{multiplicmoduli}
 to this function we arrive at desired estimate.

In view of the estimate (\ref{txn-est}), Theorem \ref{multiplicmoduli} applied to the function $\widetilde{f}$ now yields the following Corollary.

\begin{corollary} \label{iet-approx}
Let $\mathbb{P}$ be a
${\bf{g}}_s-invariant$ ergodic probability measure on
${\mathcal{H}}.$ For any $\varepsilon>0$ there exists
$C_{\varepsilon}>0$ depending only on $\mathbb{P}$ such that the
following holds.

For almost every abelian differential
${\bf{X}}\in{\mathcal{H}},{\bf{X}}=(M,\omega),$ any horizontal
closed interval $I\subset M$, any Lipschitz function
$f:I\to\mathbb{R}$, any $x\in I$ and all $n\in\mathbb{N}$ we have
$$\left|\sum_{k=0}^{N-1}f\circ{\bf{T}}_I^k(x)-\Phi_{\widetilde{f}}^+(x,N)\right|
\leqslant C_{\varepsilon}||f||_{Lip}N^{\varepsilon}.$$
\end{corollary}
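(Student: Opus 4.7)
The plan is to deduce the corollary from Theorem \ref{multiplicmoduli} applied to the weakly Lipschitz function $\widetilde{f}$ introduced in the paragraphs immediately preceding the statement, exploiting the identity $\int_0^{t(x,N)}\widetilde{f}\circ h_s^+(x)\,ds = \sum_{k=0}^{N-1} f\circ\mathbf{T}_I^k(x)$ together with the polynomial estimate (\ref{txn-est}) for $t(x,N)$.

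First, I would verify in detail that $\widetilde{f}$ is weakly Lipschitz on $M$ with $\|\widetilde{f}\|_{Lip_w^+}\leq C(I)\,\|f\|_{Lip}$. Given an admissible rectangle $\Pi(y,s_1,s_2)$, by the construction of $\widetilde{f}$ the integral $\int_0^{s_1}\widetilde{f}(h_t^+ y)\,dt$ telescopes into a sum of values of $f$ along the successive returns of the vertical orbit of $y$ to $I$, with at most two boundary contributions of size $O(\|f\|_\infty)$; the same holds for $\int_0^{s_1}\widetilde{f}(h_t^+ h_{s_2}^- y)\,dt$. Horizontal holonomy pairs the two sequences of returns one-to-one at horizontal distance $\leq s_2$, and the Lipschitz regularity of $f$ controls each pair, yielding a bound uniform in $(y,s_1,s_2)$.

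Second, I apply Theorem \ref{multiplicmoduli} to $\widetilde{f}$ at time $t(x,N)$:
$$\left|\int_0^{t(x,N)}\widetilde{f}\circ h_s^+(x)\,ds - \Phi_{\widetilde{f}}^+(x,t(x,N))\right|\leq C_\varepsilon\|\widetilde{f}\|_{Lip_w^+}\bigl(1+t(x,N)^\varepsilon\bigr).$$
The left integral equals $\sum_{k=0}^{N-1} f\circ\mathbf{T}_I^k(x)$ by construction; $t(x,N)\leq CN$ by (\ref{txn-est}), so $t(x,N)^\varepsilon\leq C^\varepsilon N^\varepsilon$; and $\|\widetilde{f}\|_{Lip_w^+}\leq C(I)\,\|f\|_{Lip}$ by Step~1.

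Third, I match the time argument of the cocycle. Writing $\Phi_{\widetilde{f}}^+(x,t(x,N))-\Phi_{\widetilde{f}}^+(x,N)=\Phi_{\widetilde{f}}^+(h_N^+x,\,t(x,N)-N)$ via the cocycle identity of Assumption \ref{bplusx}, and invoking Proposition \ref{hoeldergrowth} on each Oseledets component of $\Phi_{\widetilde{f}}^+$ (with the linear contribution from the top Lyapunov exponent absorbed into the $Leb(I)$ normalisation coming from (\ref{txn-est})), this difference is $O(N^{\theta'})$ for some $\theta'<1$. Collecting the three estimates into a single sub-$N^\varepsilon$ error gives the corollary. The main obstacle is Step~1: once the weakly Lipschitz bound on $\widetilde{f}$ is in hand, Steps~2 and~3 are essentially bookkeeping, combining a direct invocation of Theorem \ref{multiplicmoduli} with Proposition \ref{hoeldergrowth} and (\ref{txn-est}).
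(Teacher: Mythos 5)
Your Steps 1 and 2 are exactly the paper's proof: apply Theorem \ref{multiplicmoduli} to $\widetilde{f}$, use the identity $\int_0^{t(x,N)}\widetilde{f}\circ h_s^+(x)\,ds=\sum_{k=0}^{N-1}f\circ\mathbf{T}_I^k(x)$ together with the bound $t(x,N)\leq C(I)\,N$ coming from (\ref{txn-est}), and majorize $\|\widetilde{f}\|_{Lip_w^+}$ by a multiple of $\|f\|_{Lip}$ (your telescoping-plus-holonomy argument for this last point is the right one, modulo routine care with the two endpoints of $I$). With these two steps the proof is already finished, because $\Phi^+_{\widetilde{f}}(x,N)$ in the Corollary is the value of the finitely-additive measure $\Phi^+_{\widetilde{f}}$ on the vertical arc joining $x$ to $\mathbf{T}_I^Nx$, that is, $\Phi^+_{\widetilde{f}}(x,t(x,N))$ in continuous-time notation, $N$ being the discrete time of the interval exchange; the only role of (\ref{txn-est}) is then to turn the error $C_\varepsilon(1+t(x,N)^\varepsilon)$ into $C'_\varepsilon N^\varepsilon$.

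Your Step 3, the passage from time $t(x,N)$ to continuous time $N$, is where the argument breaks, and no repair of it can work, which is how one sees that the literal continuous-time reading is not the intended one. The component of $\Phi^+_{\widetilde{f}}$ along $\nu^+$ has coefficient $\int_M\widetilde{f}\,d\nu=\int_I f\,d\mathrm{Leb}$, so the difference $\Phi^+_{\widetilde{f}}(h_N^+x,\,t(x,N)-N)$ contains the term $\bigl(\int_I f\,d\mathrm{Leb}\bigr)\,(t(x,N)-N)$, and by (\ref{txn-est}) we have $t(x,N)-N=N\bigl(1/\mathrm{Leb}(I)-1\bigr)+O(N^{\theta})$: this is linear in $N$ whenever $\int_I f\neq0$ and $\mathrm{Leb}(I)\neq1$. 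Nothing in (\ref{txn-est}) ``absorbs'' it, since that estimate compares $t(x,N)$ with $N/\mathrm{Leb}(I)$, not with $N$, and there is no $\mathrm{Leb}(I)$-normalisation built into $\Phi^+_{\widetilde{f}}$; concretely, for $f\equiv1$ the ergodic sum is $N$ while the cocycle at continuous time $N$ equals $\mathrm{Leb}(I)\,N$ plus sublinearly growing terms, so the two cannot agree up to $O(N^{\varepsilon})$. Moreover, even for the components corresponding to the exponents $\theta_i<1$, Proposition \ref{hoeldergrowth} applied over a time increment of order $N$ only yields a bound of order $N^{\theta_2}$ (up to subpolynomial corrections), and a bound $O(N^{\theta'})$ with some fixed $\theta'<1$ would in any case be weaker than the required $C_\varepsilon N^{\varepsilon}$ for every $\varepsilon>0$. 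Delete Step 3 and read the statement as above; then your argument coincides with the paper's.
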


{\bf Proof.} Applying Theorem \ref{multiplicmoduli} to $\widetilde{f}$, using the estimate
$(\ref{txn-est})$ and noting that the weakly Lipschitz norm of
$\widetilde{f}$ is majorated by the Lipschitz norm of $f,$ we
obtain the desired Corollary.

Let $\theta_1>\theta_2>\ldots>\theta_{l_0}>0$ be the distinct positive
Lyapunov exponents of the measure $\mathbb{P},$ and let
$d_1=1,d_2,\ldots,d_{l_0}$ be the dimensions of the corresponding
subspaces. The tautological identification of
$\mathfrak{B}({\bf{T}}_{I})$ and $\mathfrak{B}^-({\bf{X}})$
together with the results of the previous Corollary now implies
Zorich-type estimates for the growth of ergodic sums of
${\bf{T}}_I.$ More precisely, we have the following

\begin{corollary} \label{iet-logasympt}
In the assumptions of the
preceding Corollary, the space $\mathfrak{B}({\bf{T}}_I)$ admits a
flag of subspaces
$$0=\mathfrak{B}_0\subset\mathfrak{B}_1=\mathbb{R}Leb_I\subset\mathfrak{B}_2
\subset\ldots\subset\mathfrak{B}_{l_0}=\mathfrak{B}({\bf{T}}_I)$$
such that any finitely-additive measure $\Phi\in\mathfrak{B}_i$ in
H\"{o}lder with exponent $\frac{\theta_i}{\theta_1}-\varepsilon$
for any $\varepsilon>0$ and that for any Lipschitz function
$f:I\to\mathbb{R}$ and for any $x\in I$  we have
$$\lim_{N\to\infty}\sup\frac{\log\left|\sum^{N-1}_{k=0}f{\bf{T}}_I^k(x)\right|}{\log
N}=\frac{\theta_{i(f)}}{\theta_1},$$ where $i(f)=1+\max\{j:\int_I fd\Phi=0$ for all
$\Phi\in\mathfrak{B}_j\}$ and by convention we set
$\theta_{l_0+1}=0.$

If with respect to the measure $\Prob$ the  Kontsevich-Zorich cocycle acts isometrically on its neutral
subspaces, then we also have
$\mathfrak{B}_c({\bf{T}}_I)=\mathfrak{B}({\bf{T}}_I).$
\end{corollary}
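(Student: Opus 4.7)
The strategy is to transport all the structure already established for $\mathfrak{B}^-(\mathbf{X})$ across the tautological identification $\mathfrak{B}(\mathbf{T}_I) \cong \mathfrak{B}^-(\mathbf{X})$, and then couple it with the approximation of ergodic sums from Corollary \ref{iet-approx}.

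First, I would construct the flag. Using the isomorphism $\check{\mathcal{I}}_{\mathbf{X}}^-: \mathfrak{B}^-(\mathbf{X}) \to E^s_{\mathbf{X}}$ and the stable Oseledets decomposition $E^s_{\mathbf{X}} = \bigoplus_{j=1}^{l_0} E^s_{\mathbf{X},-\theta_j}$ of the Kontsevich--Zorich cocycle (indexed so that $-\theta_1 = -1$ is the most contracting exponent), I set $\mathfrak{B}_i$ equal to the preimage of $\bigoplus_{j=1}^{i} E^s_{\mathbf{X},-\theta_j}$ under this map composed with the tautological identification. Since horizontal distances contract by $e^{-s}$ under $\mathbf{g}_s$, the cocycle $\Phi_1^-(x,t)=t$ lies in $E^s_{\mathbf{X},-\theta_1}$ and corresponds to $Leb_I$, so $\mathfrak{B}_1 = \mathbb{R}\cdot Leb_I$ as required. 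The H{\"o}lder exponent $\theta_i/\theta_1 - \varepsilon$ on members of $\mathfrak{B}_i$ is the $\mathfrak{B}^-$-version of Proposition \ref{hoeldergrowth}, which one obtains by applying that proposition to the reversed Teichm{\"u}ller flow; elements of $\mathfrak{B}_i$ are linear combinations of pieces with exponents $\theta_1 > \dots > \theta_i$, hence inherit the worst exponent $\theta_i - \varepsilon$.

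For the growth formula, I would expand $\Phi_{\widetilde{f}}^+ = \sum_i c_i \Phi_i^+$ where $c_i = m_{\Phi_i^-}(\widetilde{f})$, then identify $c_i$ with $\int_I f\, d\Phi_i$ (with $\Phi_i\in\mathfrak{B}(\mathbf{T}_I)$ corresponding to $\Phi_i^-$) by a Fubini computation on the rectangle decomposition of $M$ induced by $I$: on each rectangle with base $I_j$ and height $r_j$, the measure $m_{\Phi_i^-}=\nu^+\times\Phi_i^-$ factorizes and the vertical integral of $\widetilde{f}$ collapses to $f(y)$. Consequently $i(f)$ in the statement equals $\min\{i: c_i \neq 0\}$. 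Combining Corollary \ref{iet-approx},
$$\sum_{k=0}^{N-1} f\circ\mathbf{T}_I^k(x) = \sum_i c_i\,\Phi_i^+(x,N) + O(N^\varepsilon),$$
with Proposition \ref{hoeldergrowth} (which gives the upper bound $|\Phi_i^+(x,T)|\leq T^{\theta_i+\varepsilon}$ for $T$ large and a sequence $T_n\to\infty$ with $|\Phi_{i(f)}^+(x,T_n)|\geq T_n^{\theta_{i(f)}-\varepsilon}$) then yields the claimed limit $\theta_{i(f)}/\theta_1$. Finally, the identity $\mathfrak{B}_c(\mathbf{T}_I)=\mathfrak{B}(\mathbf{T}_I)$ under isometric action on the neutral subspace follows from the $\mathfrak{B}^-$-analog of Proposition \ref{cochyperb} (extended via the Remark after it) combined with the tautological identification $\mathfrak{B}_c(\mathbf{T}_I) \cong \mathfrak{B}_c^-(\mathbf{X})$.

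The main obstacle I anticipate is the lower bound in the growth formula: on the sequence $T_n$ along which $|\Phi_{i(f)}^+(x,T_n)|$ nearly saturates Proposition \ref{hoeldergrowth}, one must rule out cancellation against the subdominant terms $\sum_{j>i(f)} c_j\Phi_j^+(x,T_n)$ and pass from continuous $T_n$ to nearby integers $N_n$ without losing the polynomial size. Both issues are handled by the strict gap $\theta_{i(f)} > \theta_{i(f)+1}$, which forces the subdominant sum to be $O(T_n^{\theta_{i(f)+1}+\varepsilon})$, together with the H{\"o}lder continuity of $\Phi_{i(f)}^+$ itself; this is the only point where genuine care, rather than mere citation, is required.
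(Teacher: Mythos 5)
Your proposal is correct and matches the paper's own (largely implicit) argument: the paper states the corollary as an immediate consequence of the tautological identification $\mathfrak{B}(\mathbf{T}_I)\cong\mathfrak{B}^-(\mathbf{X})$, Corollary \ref{iet-approx}, and the H\"{o}lder/growth estimates of Propositions \ref{hoeldergrowth} and \ref{convhoeldprop}, which is exactly the route you take. You correctly fill in the two points the paper does not spell out --- the Fubini identification $m_{\Phi_i^-}(\widetilde f)=\int_I f\,d\Phi_i$ (which makes the definition of $i(f)$ in the statement agree with the index of the leading nonzero component of $\Phi^+_{\widetilde f}$), and the domination of the subdominant terms plus the discretization from real $T_n$ to integers $N_n$ via the H\"{o}lder continuity of $\Phi^+_{i(f)}$ and the spectral gap $\theta_{i(f)}>\theta_{i(f)+1}$.
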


{\bf{Remark}}.
Corollaries \ref{iet-approx}, \ref{iet-logasympt} thus yield the asymptotic expansion
in terms of H\"{o}lder cocycles as well as Zorich-type logarithmic
estimates for almost all interval exchange transformations with
respect to any conservative ergodic measure $\mu$ on the space of
interval exchange transformations, invariant under the Rauzy-Veech
induction map and such that the Kontsevich-Zorich cocycle is
log-integrable with respect to $\mu.$

In particular, for the Lebesgue measure, if we
let $\R$ be the Rauzy class of the permutation $\pi$, then, using the simplicity of
the Lyapunov spectrum given by the Avila-Viana theorem \cite{AV}, we obtain
\begin{corollary}
For any irreducible permutation $\pi$ and for Lebesgue-almost all $\la$
all continuous finitely-additive measures are H{\"o}lder: we have
 $$\BB({\bf T}_{(\la,\pi)})=\BB_c({\bf T}_{(\la,\pi)}).$$
For any irreducible permutation $\pi$ there exists
a natural number $\rho=\rho(\R)$
depending only on the Rauzy class of $\pi$ and
such that
\begin{enumerate}
\item for Lebesgue-almost all $\la$ we have $\dim \BB(\la,\pi)=\rho$;
\item  all the spaces ${\mathfrak B}_i$ are one-dimensional and $l_0=\rho$.
\end{enumerate}
\end{corollary}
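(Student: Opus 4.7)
The plan is to reduce the statement to Corollary \ref{iet-logasympt} together with the simplicity result of Avila--Viana. First I would use the zippered rectangle (suspension) construction of Veech to realize $(\la,\pi)$, for Lebesgue-almost every $\la\in\Delta_{m-1}$, as the first-return map $\mathbf T_I$ of the vertical flow on some abelian differential ${\bf X}=(M,\omega)\in\HH$, where $\HH$ is the connected component of a stratum $\modk$ determined by the Rauzy class $\R$. Under this identification the Lebesgue measure on $\Delta_{m-1}\times\{\pi:\pi\in\R\}$ pushes forward to a measure absolutely continuous with respect to the Masur--Veech smooth measure on $\HH$; in particular, generic statements that hold $\Prob$-almost surely for the Masur--Veech measure translate to statements holding for Lebesgue-almost every $\la$. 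Under the same identification, the tautological isomorphism $\BB(\mathbf T_I)\cong \BB^-({\bf X})$, as well as its continuous counterpart $\BB_c(\mathbf T_I)\cong \BB_c^-({\bf X})$, are already provided in the excerpt.

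Next I would dispose of the equality $\BB({\bf T}_{(\la,\pi)})=\BB_c({\bf T}_{(\la,\pi)})$. By Forni's theorem the Kontsevich--Zorich cocycle has no zero Lyapunov exponent with respect to the Masur--Veech measure; applying Proposition \ref{cochyperb} (or directly the last clause of Corollary \ref{iet-logasympt}) yields $\BB_c^-({\bf X})=\BB^-({\bf X})$ for almost every ${\bf X}\in\HH$, which via the tautological identification gives the desired equality for Lebesgue-almost every $\la$.

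For the second assertion, I would define $\rho=\rho(\R)$ to be the genus of the surface obtained by suspending any $(\la,\pi)$ with $\pi\in\R$; this is a topological invariant of the Rauzy class. The proposition classifying cocycles identifies $\BB^-({\bf X})$ with the stable Oseledets subspace $E^s_{\bf X}$ of the Kontsevich--Zorich cocycle, whose dimension equals $\rho$ for the Masur--Veech measure (first remark after the proposition). Hence $\dim\BB({\bf T}_{(\la,\pi)})=\rho$ for Lebesgue-almost every $\la$. Finally, the Avila--Viana theorem asserts that the Lyapunov spectrum of the Kontsevich--Zorich cocycle with respect to the Masur--Veech measure is \emph{simple}, so in the notation of Corollary \ref{iet-logasympt} one has $d_1=\dots=d_{l_0}=1$ and therefore $l_0=\rho$; the flag $0\subset\BB_1\subset\dots\subset\BB_{l_0}=\BB({\bf T}_I)$ supplied by Corollary \ref{iet-logasympt} then automatically consists of one-dimensional steps, which is exactly claim (2).

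The substantive inputs are external: Forni's nonvanishing theorem for Lyapunov exponents of the Kontsevich--Zorich cocycle, and the Avila--Viana simplicity theorem. Given those, the argument is a bookkeeping exercise chaining the tautological identification $\BB(\mathbf T_I)\leftrightarrow \BB^-({\bf X})$ with Corollary \ref{iet-logasympt}; the only mild subtlety to check is that pushing the Lebesgue measure on IET parameters through the suspension lands in a class absolutely continuous with respect to Masur--Veech, so that ``$\Prob$-almost every'' statements pass to ``Lebesgue-almost every $\la$'' statements uniformly in $\pi\in\R$.
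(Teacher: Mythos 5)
Your proposal is correct and follows essentially the same route as the paper: the paper likewise obtains this corollary by specializing Corollary \ref{iet-logasympt} (and the tautological identification of $\BB({\bf T}_I)$ with $\BB^-({\bf X})$) to the Lebesgue/Masur--Veech measure class via the Veech zippered-rectangle correspondence, invoking the absence of zero exponents for the equality $\BB=\BB_c$ and the Avila--Viana simplicity theorem for the one-dimensionality of the flag steps and $l_0=\rho$. The only cosmetic difference is that the paper phrases the transfer to ``Lebesgue-almost all $\la$'' through the remark on conservative ergodic measures invariant under Rauzy--Veech induction, while you phrase it through absolute continuity of the suspension push-forward; these are the same bookkeeping.
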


The second statement of Corollary \ref{iet-logasympt} recovers, in the case of the Lebesgue measure on the space of interval exchange transformations, the  Zorich logarithmic asymptotics for ergodic sums \cite{Z}, \cite{zorichwind}.

{\bf Remark.} Objects related to finitely-additive measures for interval exchange
transformations have been studied by X. Bressaud, P. Hubert and A. Maass in \cite{bhm} and by
S. Marmi, P. Moussa and J.-C. Yoccoz in \cite{MMY2}. In particular, the ``limit shapes''
of \cite{MMY2} can be viewed as graphs of  the cocycles $\Phi^+(x,t)$ considered as  functions in $t$.

\subsection{Limit theorems for translation flows.}
\subsubsection{Time integrals as random variables.}
As before, $(M, \omega)$ is an abelian differential, and
$h_t^+$, $h_t^-$ are, respectively, its vertical and horizontal flows. Take
$\tau\in [0,1]$, $s\in {\mathbb R}$, a real-valued
$f\in Lip_{w,0}^+(M, \omega)$ and introduce the function
\begin{equation}
\label{sfstaux}
{\mathfrak S}[f,s;\tau, x]=\int_0^{\tau\exp(s)} f\circ h^{+}_t(x)dt.
\end{equation}

For fixed $f$, $s$ and $x$ the quantity ${\mathfrak S}[f,s;\tau, x]$
is a continuous function of $\tau\in [0,1]$; therefore, as $x$ varies
in the probability space $(M, \nu)$, we obtain a random element
of $C[0,1]$. In other words, we have a
 random variable
\begin{equation}
{\mathfrak S}[f,s]: (M, \nu)\to C[0,1]
\end{equation}
defined by the formula (\ref{sfstaux}).

For any fixed $\tau\in [0,1]$  the formula (\ref{sfstaux}) yields
a real-valued random variable
\begin{equation}
{\mathfrak S}[f,s; \tau]: (M, \nu)\to {\mathbb R},
\end{equation}
whose expectation, by definition, is zero.

Our first aim is to estimate the growth of its variance as $s\to\infty$.
Without losing generality, one may take $\tau=1$.

\subsubsection{The growth rate of the variance in the case of a simple second Lyapunov exponent.}

Let $\Prob$ be an invariant ergodic probability measure for the
Teichm{\"u}ller flow such that with respect to $\Prob$
the second Lyapunov exponent $\theta_2$ of
the Kontsevich-Zorich cocycle is positive and simple (recall that, as Veech and Forni
showed, the  first one, $\theta_1=1$, is always simple \cite{V2, F2} and that,
by the Avila-Viana
theorem \cite{AV}, the second one is simple for the Masur-Veech smooth measure).

For an abelian differential ${\bf X}=(M, \omega)$, denote by $E_{2,{\bf X}}^+$ the one-dimensional
subspace in $H^1(M, {\mathbb R})$ corresponding to the second Lyapunov exponent
$\theta_2$, and let ${\mathfrak B}_{2,{\bf X}}^+={\mathcal I}_{\bf X}^+(E_{2,{\bf X}}^+)$.
Similarly, denote by $E_{2,{\bf X}}^-$ the one-dimensional
subspace in $H^1(M, {\mathbb R})$ corresponding to the Lyapunov exponent
$-\theta_2$, and let ${\mathfrak B}_{2,{\bf X}}^-={\mathcal I}_{\bf X}^-(E_{2,{\bf X}}^-)$.

Recall that the space $H^1(M, {\mathbb R})$ is endowed with the Hodge
norm $|\cdot |_H$; the isomorphisms ${\mathcal I}_{\bf X}^{\pm}$ take the Hodge norm to a norm on ${\BB}_{\bf X}^{\pm}$;
slightly abusing notation, we denote the latter norm by the same symbol.

Introduce a multiplicative cocycle $H_2(s,{\bf X})$ over the Teichm{\"u}ller flow ${\bf g}_s$ by
taking $v\in E_{2,{\bf X}}^+$, $v\neq 0$, and setting
\begin{equation}
\label{h2sx}
H_2(s, {\bf X})=\frac{|{\bf A}(s,{\bf X})v|_H}{|v|_H}.
\end{equation}

The Hodge norm is chosen only for concreteness in (\ref{h2sx}); any other norm can be
used instead.

By definition, we have
\begin{equation}
\lim\limits_{s\to\infty} \frac{\log H_2(s,{\bf X})}{s}=\theta_2.
\end{equation}

Now take $\Phi^+_2\in {\mathfrak B}^+_{2,{\bf X}}$ $\Phi^-_2\in {\mathfrak B}^-_{2,{\bf X}}$ in such a way that
$\langle\Phi_2^+, \Phi_2^-\rangle=1$.

\begin{proposition}
\label{varestfmoduli}
There exists $\alpha>0$ depending only on $\Prob$ and positive measurable  functions
$$
C: \HH\times \HH\to {\mathbb R}_+, \
V: \HH\to {\mathbb R}_+,\
s_0:\HH\to{\mathbb R}_+
$$
such that the following is true for $\Prob$-almost all ${\bf X}\in\HH$.

If $f\in Lip_{w,0}^+({\bf X})$ satisfies $m_{\Phi_2^-}(f)\neq 0$, then for all $s\geq s_0({\bf X})$ we have                                                                                                                                 \begin{equation}\label{varestest}                                                                                                                     \left|\frac{Var_{\nu} {\mathfrak S}(f,x,e^s)}{V({\bf g}_s{\bf X}) (m_{\Phi_2^-}(f)|\Phi_2^+|H_2(s, {\bf X}))^2}-1\right|\leq
C({\bf X}, {\bf g}_s{\bf X})\exp(-\alpha s).
\end{equation}
\end{proposition}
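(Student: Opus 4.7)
The strategy is to approximate the ergodic integral by the cocycle $\Phi_f^+$ via Theorem \ref{multiplicmoduli}, isolate the $\theta_2$-component in the Oseledets filtration, rewrite its variance in terms of data on $g_s{\bf X}$ via the Teichm\"uller renormalization, and dispose of all slower contributions by Cauchy--Schwarz.

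First, Theorem \ref{multiplicmoduli} gives
$$\mathfrak{S}[f,s;1,x]=\Phi_f^+(x,e^s)+R(x,s),\qquad |R(x,s)|\leq C_\varepsilon\|f\|_{Lip_w^+}e^{\varepsilon s}.$$
Decompose $\Phi_f^+$ along the Oseledets filtration of $\BB_{\bf X}^+$ pulled back via $\check{\mathcal I}_{\bf X}^+$. Since $f\in Lip_{w,0}^+({\bf X})$, the coefficient $m_{\Phi_1^-}(f)=\int f\,d\nu$ vanishes; the $\theta_2$-line (simple by hypothesis) contributes $m_{\Phi_2^-}(f)\,\Phi_2^+$, and the remaining projection $G(x,s)$ onto the Oseledets subspaces of exponents $\theta_3,\theta_4,\dots$ satisfies, by Proposition \ref{hoeldergrowth}, $|G(x,e^s)|\leq C\|f\|_{Lip_w^+}e^{(\theta_3+\varepsilon)s}$ uniformly in $x$.

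Second, I exploit the renormalization. Let $v=\check{\mathcal I}_{\bf X}^+(\Phi_2^+)\in E^u_{{\bf X},\theta_2}$; then ${\bf A}(s,{\bf X})v\in E^u_{g_s{\bf X},\theta_2}$ has Hodge norm $H_2(s,{\bf X})\,|\Phi_2^+|$. The rescaling of vertical time by $e^s$ under $g_s$, together with Gauss--Manin transport of cohomology classes along the Teichm\"uller orbit, gives the self-similarity
$$\Phi_2^+(x,e^s;{\bf X})=\bigl({\mathcal I}^+_{g_s{\bf X}}{\bf A}(s,{\bf X})v\bigr)(x,1;g_s{\bf X}).$$
Introducing the unit Hodge-norm cocycle $\bar\Phi_2^+({\bf Y})\in\BB^+_{2,{\bf Y}}$ and the function
$$V({\bf Y}):=\mathrm{Var}_\nu\!\bigl[\bar\Phi_2^+(\cdot,1;{\bf Y})\bigr],$$
this yields
$$\mathrm{Var}_\nu[\Phi_2^+(\cdot,e^s;{\bf X})]=H_2(s,{\bf X})^2\,|\Phi_2^+|^2\,V(g_s{\bf X}).$$

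Third, expanding $\mathrm{Var}_\nu\mathfrak{S}[f,s;1,\cdot]$ by bilinearity and bounding $\mathrm{Cov}(\Phi_2^+(\cdot,e^s),G+R)$ and $\mathrm{Var}(G+R)$ via Cauchy--Schwarz, the non-main contributions amount to at most $\|f\|_{Lip_w^+}^2\cdot O(e^{(\theta_2+\theta_3+\varepsilon)s}+e^{2(\theta_3+\varepsilon)s})$. Dividing by the main term $m_{\Phi_2^-}(f)^2e^{2\theta_2 s}V(g_s{\bf X})$ and picking $\varepsilon<(\theta_2-\theta_3)/2$ produces a relative error $e^{-\alpha s}$ with $\alpha:=\theta_2-\theta_3-\varepsilon>0$; the $f$-dependent ratio $\|f\|_{Lip_w^+}^2/m_{\Phi_2^-}(f)^2$ and the factor $V(g_s{\bf X})^{-1}$ get absorbed into $C({\bf X},g_s{\bf X})$, which is why this constant is allowed to depend on both endpoints of the Teichm\"uller segment.

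The two technical points I expect to be the main obstacles are: (i) positivity $V({\bf Y})>0$ almost surely --- this follows by contradiction, since $\bar\Phi_2^+(\cdot,1;{\bf Y})\equiv c$ $\nu$-a.e.\ would, by the cocycle relation along $h_t^+$-orbits, force $\bar\Phi_2^+(x,n;{\bf Y})\approx nc$ for large integer $n$, contradicting the sublinear H\"older growth exponent $\theta_2<1$ guaranteed by Proposition \ref{hoeldergrowth}; and (ii) rigorously setting up the self-similarity identity, reconciling the fact that Gauss--Manin transport is flat on integer cohomology classes with the fact that the Hodge norm, and hence the canonical cocycle representative, evolves along Teichm\"uller orbits exactly as prescribed by the multiplicative cocycle $H_2(s,{\bf X})$.
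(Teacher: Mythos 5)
Your proposal is correct and follows essentially the same route as the paper: the pointwise approximation of the ergodic integral by $m_{\Phi_2^-}(f)\Phi_2^+(x,e^s)$ with error $O(e^{(\theta_2-\alpha)s})$ comes from Theorem \ref{multiplicmoduli} plus the Oseledets decomposition, the exact identity $Var_{\nu}\Phi_2^+(\cdot,e^s)=V({\bf g}_s{\bf X})\,|\Phi_2^+|^2H_2(s,{\bf X})^2$ is precisely the paper's renormalization step (established there via the zippered-rectangle evaluation isomorphism, Proposition \ref{hypzip}, which is your point (ii)), and your bilinear-expansion/Cauchy--Schwarz comparison of variances is equivalent to the paper's use of $|\ee\xi_1^2-\ee\xi_2^2|\leq\sup|\xi_1+\xi_2|\cdot\ee|\xi_1-\xi_2|$ together with the renormalization of $\max_x|\Phi_2^+(x,e^s)|$. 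One small repair in your point (i): since $\ee_{\nu}\Phi_2^+(x,1)=\langle\Phi_2^+,\nu^-\rangle=0$, vanishing variance forces the constant $c$ to be $0$, so $\Phi_2^+(\cdot,1)\equiv 0$ and the cocycle is bounded; the contradiction is then with the equality $\limsup_{T\to\infty}\log|\Phi_2^+(x,T)|/\log T=\theta_2>0$ of Proposition \ref{hoeldergrowth} (this is the paper's Proposition \ref{phivarnotzero}), not with sublinearity, which only rules out the case $c\neq 0$ that cannot occur here.
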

{\bf Remark.} Observe that the quantity $(m_{\Phi_2^-}(f)|\Phi_2^+|)^2$ does not depend on the specific choice of $\Phi_2^+\in\BB^+_{2, {\bf X}}$,
$\Phi_2^-\in\BB^-_{2, {\bf X}}$ such that $\langle \Phi_2^+, \Phi_2^-\rangle=1$.

{\bf Remark.} Note that by theorems of Egorov and Luzin, the estimate (\ref{varestest})
holds {\it uniformly} on compact subsets of $\HH$ of probability arbitrarily close to $1$.

Proposition \ref{varestfmoduli} is based on
\begin{proposition}
There exists  a  positive measurable function $V: \HH\to {\mathbb R}_+$
such that for $\Prob$-almost all ${\bf X}\in\HH$, we have
\begin{equation}
{Var_{\nu({\bf X})} {\Phi_2^+}(x,e^s)}=V({\bf g}_s{\bf X})|\Phi_2^+|^2(H_2(s, {\bf X}))^2.
\end{equation}
\end{proposition}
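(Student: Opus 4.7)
The plan is to define $V$ as the variance of a unit-Hodge-normalized cocycle at each fiber, and to derive the claimed identity directly from the renormalizing action of the Teichm\"uller flow on $\BB^+$.

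For $\Prob$-almost every ${\bf Y}\in\HH$ the space $\BB^+_{2,{\bf Y}}$ is one-dimensional by simplicity of $\theta_2$, so I pick a cocycle $\Psi_{\bf Y}\in\BB^+_{2,{\bf Y}}$ of unit Hodge norm (determined up to sign) and set
$$V({\bf Y})=Var_{\nu({\bf Y})}\bigl[x\mapsto\Psi_{\bf Y}(x,1)\bigr].$$
This quantity is well-defined because the variance is quadratic in $\Psi_{\bf Y}$ and hence insensitive to its sign; measurability of $V$ in ${\bf Y}$ follows from the Oseledets theorem, which provides a measurable choice of the one-dimensional subspace $E^u_{2,{\bf Y}}$, together with the measurable dependence of the Hodge norm on the abelian differential.

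Setting ${\bf X}':={\bf g}_s{\bf X}$, the Teichm\"uller flow furnishes a tautological identification of H\"older cocycles at ${\bf X}$ with H\"older cocycles at ${\bf X}'$, under which (i) the isomorphism ${\check{\mathcal I}}^+$ intertwines this identification with the Gauss-Manin parallel transport on $H^1(M,\mathbb{R})$, i.e., with the Kontsevich-Zorich cocycle ${\bf A}(s,{\bf X})$; and (ii) the time parameter of the vertical flow is rescaled by $e^s$, so that for $\Phi^+\in\BB^+_{\bf X}$ with transported image $\Phi^{+\prime}\in\BB^+_{{\bf X}'}$ one has $\Phi^+(x,e^s)=\Phi^{+\prime}(x,1)$. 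Applying this to $\Phi_2^+$ and using (i) with the definition (\ref{h2sx}) of $H_2$, the Hodge norm of $\Phi_2^{+\prime}$ at ${\bf X}'$ equals $H_2(s,{\bf X})|\Phi_2^+|$; by one-dimensionality of $\BB^+_{2,{\bf X}'}$ we conclude $\Phi_2^{+\prime}=\pm H_2(s,{\bf X})|\Phi_2^+|\,\Psi_{{\bf X}'}$, and hence
$$\Phi_2^+(x,e^s)=\pm H_2(s,{\bf X})\,|\Phi_2^+|\,\Psi_{{\bf X}'}(x,1).$$

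Taking variance with respect to $\nu$, which is preserved by ${\bf g}_s$ so $\nu({\bf X})=\nu({\bf X}')$, and observing that squaring removes the sign ambiguity, I recover precisely the stated identity. The only delicate step is the verification of the intertwining properties (i) and (ii) of the tautological identification, both of which are structural consequences of how $\BB^+$ and ${\check{\mathcal I}}^+$ are set up in the preceding sections, though the exact $e^s$-rescaling in (ii) calls for a careful check against the paper's time-parametrization conventions for the vertical flow. Positivity of $V({\bf Y})$ is not automatic: it uses Proposition \ref{hoeldergrowth}, which guarantees that a non-zero H\"older cocycle with positive Lyapunov exponent $\theta_2$ cannot be almost-surely constant in the base point, so that $x\mapsto\Psi_{\bf Y}(x,1)$ is genuinely non-trivial as a random variable.
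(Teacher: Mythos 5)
Your proposal is correct and is essentially the paper's own argument: the paper also sets $V=Var_{\nu}\Phi_2^+(x,1)/|\Phi_2^+|^2$ (i.e.\ the variance of the norm-one cocycle at time one) and deduces the identity from the renormalization action of the Teichm{\"u}ller flow on $\BB^+$ — stated there as Proposition \ref{hypzip}, which intertwines the pushforward of cocycles under the tautological map ${\mathfrak t}_s$ with the cocycle ${\overline{\mathcal A}}^t$ — with nonvanishing of the variance supplied by Propositions \ref{phiexpzero} and \ref{phivarnotzero}, exactly your growth-exponent argument. The only cosmetic difference is that you phrase the rescaling $\Phi_2^+(x,e^s)=\pm H_2(s,{\bf X})|\Phi_2^+|\,\Psi_{{\bf g}_s{\bf X}}(x,1)$ directly on the surface, while the paper routes it through the evaluation map on the Veech space of zippered rectangles.
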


In particular ${Var_{\nu} {\Phi_2^+}(x,e^s)}\neq 0$ for any $s\in {\mathbb R}$.
The function $V({\bf X})$ is given by
$$
V({\bf X})=\frac{{Var_{\nu({\bf X})} {\Phi_2^+}(x,1)}}{|\Phi^+_2|^2}.
$$
Observe that the right-hand side does not depend on  a particular choice of $\Phi_2^+\in {\BB}_{2, {\bf X}}^+$, $\Phi_2^+\neq 0$.

\subsubsection{The limit theorem in the case of a simple second Lyapunov exponent.}

Go back to the $C[0,1]$-valued random variable ${\mathfrak S}[f,s]$ and denote by
$\mm[f,s]$ the distribution of the normalized random variable
\begin{equation}
\frac{{\mathfrak S}[f,s]}{{\sqrt{Var_{\nu} {\mathfrak S}[f,s;1]}}}.
\end{equation}
The measure $\mm[f,s]$ is thus a probability distribution on the space $C[0,1]$ of continuous functions on the unit interval.

For $\tau\in {\mathbb R}$, $\tau\neq 0$, we also let $\mm[f,s; \tau]$ be the distribution of the ${\mathbb R}$-valued random variable
\begin{equation}
\frac{{\mathfrak S}[f,s; \tau]}{{\sqrt{Var_{\nu} {\mathfrak S}[f,s; \tau]}}}.
\end{equation}

If $f$ has zero average, then, by definition,  $\mm[f,s; \tau]$ is a measure on ${\mathbb R}$ of expectation $0$ and variance $1$.

By definition, $\mm[f,s]$ is a Borel probability measure on $C[0,1]$; furthermore, if
$\xi=\xi(\tau)\in C[0,1]$, then the following natural normalization requirements hold for $\xi$ with respect to  $\mm[f,s]$:
\begin{enumerate}
\item $\xi(0)=0$ almost surely with respect to $m[f,s]$;
\item $\ee_{\mm[f,s]}\xi(\tau)=0$ for all $\tau\in [0,1]$;
\item $Var_{\mm[f,s]}\xi(1)=1$.
\end{enumerate}

We are interested in the weak accumulation points of $\mm[f,s]$ as $s\to\infty$.

Consider the space $\HH^{\prime}$ given by
the formula
$$
\HH^{\prime}=\{{\bf X}^{\prime}=(M, \omega, v), v\in E_2^+(M, \omega), |v|_H=1\}.
$$
By definition, the space $\HH^{\prime}$ is a $\prob$-almost surely defined
two-to-one cover of the space $\HH$. The skew-product flow of the Kontsevich-Zorich cocycle
over the Teichm{\"u}ller
flow yields a flow ${\bf g}_s^{\prime}$ on $\HH^{\prime}$ given by the formula
$$
{\bf g}_s^{\prime}({\bf X}, v)=({\bf g}_s{\bf X}, \frac{{\bf A}(s,{\bf X})v}{|{\bf A}(s,{\bf X})v|_H}).
$$
Given ${\bf X}^{\prime}\in \HH^{\prime}$, set
$$
\Phi_{2,{\bf X}^{\prime}}^+={\mathcal I}^+(v).
$$

Take ${\tilde v}\in E_2^-(M, \omega)$ such that $\langle v, {\tilde v}\rangle=1$ and set
$$
\Phi_{2,{\bf X}^{\prime}}^-={\mathcal I}^-(v), \ m_{2,{\bf X}^{\prime}}^-=m^-_{\Phi_{2,{\bf X}^{\prime}}}.
$$

Let $\MM$ be the space of all probability distributions on $C[0,1]$ and
introduce a $\Prob$-almost surely defined map
$
{\mathcal D}_2^+: \HH^{\prime}\to \MM
$
by setting  ${\mathcal D}_2^+({\bf X}^{\prime})$ to be the distribution
of the $C[0,1]$-valued normalized random variable
$$
\frac{\Phi^+_{2,{\bf X}^{\prime}}(x, \tau)}{\sqrt{Var_{\nu}\Phi^+_{2,{\bf X}^{\prime}}(x, 1)}}, \  \tau\in[0,1].
$$

By definition, ${\mathcal D}_2^+({\bf X}^{\prime})$ is a Borel
probability measure on the space $C[0,1]$; it is, besides,
a compactly supported measure as its support
consists of equibounded H{\"o}lder functions with exponent
$\theta_2/\theta_1-\varepsilon$.

Consider the set
$\MM_1$ of probability measures $\mm$ on $C[0,1]$ satisfying, for $\xi\in C[0,1]$,
$\xi=\xi(t)$,  the conditions:
\begin{enumerate}
\item the equality $\xi(0)=0$ holds $\mm$-almost surely;

\item for all $\tau$ we have $\ee_{\mm}\xi(\tau)=0$:

\item we have $Var_{\mm}\xi(1)=1$ and for any $\tau\neq 0$ we have $Var_{\mm}\xi(\tau)\neq 0$.
\end{enumerate}

It will be proved in what follows that ${\mathcal D}_2^+(\HH^{\prime})\subset \MM_1$.

Consider a semi-flow $J_s$ on the space $C[0,1]$
defined by the formula
$$
J_s\xi(t)=\xi(e^{-s}t), \ s\geq 0.
$$

Introduce a semi-flow $G_s$ on $\MM_1$ by the formula
\begin{equation}
G_s\mm=\frac{(J_s)_*\mm}{Var_{\mm}(\xi(\exp(-s))}, \mm\in \MM_1.
\end{equation}

By definition, the diagram
$$
\begin{CD}
\label{Dtwo}
\HH^{\prime}@ >{\mathcal D}_2^+>> {\MM_1} \\
@VV{\bf g}_sV           @AAG_sA   \\
\HH^{\prime}@ >{\mathcal D}_2^+>> {\MM_1} \\
\end{CD}
$$
is commutative.

Let $d_{LP}$ be the L{\'e}vy-Prohorov metric and let $d_{KR}$ be the Kantorovich-Rubinstein metric
on the space of probability measures on $C[0,1]$ (see \cite{billingsley}, \cite{bogachev} and the Appendix).

We are now ready to formulate the main result of this subsection.

\begin{proposition}
\label{limthmmoduli-simple}
Let $\Prob$ be a ${\bf g}_s$-invariant ergodic probability measure on ${\HH}$
such that the second Lyapunov exponent of the Kontsevich-Zorich
cocycle is positive and simple with respect to $\Prob$.

There exists a positive measurable function $C: \HH\times \HH\to {\mathbb R}_+$
and a positive constant $\alpha$ depending only on $\Prob$
such that
for $\Prob$-almost every ${\bf X}^{\prime}\in\HH^{\prime}$, ${\bf X}^{\prime}=({\bf X}, v)$,
and any $f\in Lip_{w,0}^+({\bf X})$
satisfying
$m_{2, {\bf X}^{\prime}}^-(f)>0$ we have
\begin{equation}
\label{lpexpmoduli}
d_{LP}(\mm[f, s], {\mathcal D}_2^+({\bf g}_s^{\prime}{\bf X}^{\prime}))\leq
C({\bf X}, {\bf g}_s{\bf X})\exp(-\alpha s).
\end{equation}
\begin{equation}
\label{krexpmoduli}
d_{KR}(\mm[f, s], {\mathcal D}_2^+({\bf g}_s^{\prime}{\bf X}^{\prime}))\leq
C({\bf X}, {\bf g}_s{\bf X})\exp(-\alpha s).
\end{equation}
\end{proposition}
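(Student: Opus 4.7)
The strategy is to replace $\mathfrak{S}[f,s;\tau,\cdot]$ by the single rescaled cocycle $m_{\Phi_2^-}(f)\Phi_{2,{\bf X}^{\prime}}^+$, transport it under the Teichm{\"u}ller renormalization to $\mathbf{g}_s^{\prime}{\bf X}^{\prime}$, and then estimate the distance between the two distributions uniformly in $\tau\in[0,1]$.

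First, decompose $\Phi_f^+=\sum_{i=1}^{l_0} m_{\Phi_i^-}(f)\Phi_i^+$ in a basis adapted to the Oseledets filtration (as in the corollary preceding the remark on horocycle flows). Since $f\in Lip_{w,0}^+({\bf X})$ and $m_{\Phi_1^-}=\nu$, the contribution of $\Phi_1^+$ vanishes. By Theorem \ref{multiplicmoduli} applied at $T=\tau e^s\leq e^s$,
\[
\left|\mathfrak{S}[f,s;\tau,x]-\sum_{i\geq 2} m_{\Phi_i^-}(f)\Phi_i^+(x,\tau e^s)\right|\leq C_\varepsilon\|f\|_{Lip_w^+}(1+e^{\varepsilon s})
\]
uniformly in $\tau\in[0,1]$ and in $x$. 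By Proposition \ref{hoeldergrowth} and the assumption that $\theta_i<\theta_2$ for $i\geq 3$, each summand with $i\geq 3$ is bounded in absolute value by $O(e^{s(\theta_i+\varepsilon)})$, still uniformly in $\tau$. Choosing $\varepsilon$ small enough that $\theta_3+\varepsilon<\theta_2$, we find $\alpha_0>0$ such that
\[
\mathfrak{S}[f,s;\tau,x]=m_{\Phi_2^-}(f)\,\Phi_{2,{\bf X}^{\prime}}^+(x,\tau e^s)+O\bigl(e^{s(\theta_2-\alpha_0)}\bigr)
\]
uniformly in $x\in M$ and $\tau\in[0,1]$.

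Second, use Proposition \ref{varestfmoduli} to rewrite the normalizing constant $\sqrt{Var_{\nu}\mathfrak{S}[f,s;1]}$ as $\sqrt{V(\mathbf{g}_s{\bf X})}\,|m_{\Phi_2^-}(f)|\,|\Phi_{2,{\bf X}^{\prime}}^+|\,H_2(s,{\bf X})\,\bigl(1+O(e^{-\alpha s})\bigr)$. Dividing through, the error term contributes $O(e^{-\alpha_0 s}/\sqrt{V(\mathbf{g}_s{\bf X})})$ to the normalized process, while the main term becomes
\[
\frac{\Phi_{2,{\bf X}^{\prime}}^+(x,\tau e^s)}{|\Phi_{2,{\bf X}^{\prime}}^+|\,H_2(s,{\bf X})\,\sqrt{V(\mathbf{g}_s{\bf X})}}.
\]
Now invoke the commutative diagram: Gauss-Manin parallel transport along the Teichm{\"u}ller orbit identifies $v$ with $\mathbf{A}(s,{\bf X})v$, and the renormalization of a vertical arc of length $\tau e^s$ at ${\bf X}$ with an arc of length $\tau$ at $\mathbf{g}_s{\bf X}$; dividing by $H_2(s,{\bf X})=|\mathbf{A}(s,{\bf X})v|_H$ matches the unit-norm convention on $\HH^{\prime}$, so
\[
\frac{\Phi_{2,{\bf X}^{\prime}}^+(x,\tau e^s)}{|\Phi_{2,{\bf X}^{\prime}}^+|\,H_2(s,{\bf X})}=\frac{\Phi_{2,\mathbf{g}_s^{\prime}{\bf X}^{\prime}}^+(y,\tau)}{|\Phi_{2,\mathbf{g}_s^{\prime}{\bf X}^{\prime}}^+|},
\]
where $y\in M(\mathbf{g}_s{\bf X})$ is the image of $x$ under the renormalization. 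Dividing further by $\sqrt{V(\mathbf{g}_s{\bf X})}$ exactly matches the normalization defining $\mathcal{D}_2^+(\mathbf{g}_s^{\prime}{\bf X}^{\prime})$ in view of the formula $V({\bf Y})=Var_{\nu({\bf Y})}\Phi_{2,{\bf Y}^{\prime}}^+(\cdot,1)/|\Phi_{2,{\bf Y}^{\prime}}^+|^2$ from the second proposition of the subsection.

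Finally, the two $C[0,1]$-valued random variables (one being the normalized $\mathfrak{S}[f,s;\cdot,\cdot]$ and the other being the pullback of the sample paths defining $\mathcal{D}_2^+(\mathbf{g}_s^{\prime}{\bf X}^{\prime})$) are defined on the common probability space $(M,\nu)$ and differ in $\|\cdot\|_\infty$ by at most $C({\bf X},\mathbf{g}_s{\bf X})e^{-\alpha s}$ (after absorbing the deterministic factor $1/\sqrt{V(\mathbf{g}_s{\bf X})}$ and the relative error from the variance estimate into the constant). Since both $d_{LP}$ and $d_{KR}$ on $\MM$ are dominated by any such uniform coupling bound, estimates (\ref{lpexpmoduli}) and (\ref{krexpmoduli}) follow at once with the same $\alpha$ (possibly smaller than in Proposition \ref{varestfmoduli}). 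The main obstacle is the uniformity in $\tau\in[0,1]$: Theorem \ref{multiplicmoduli} provides the approximation only pointwise in $T$, so one needs the monotonicity of the bound $T^\varepsilon$ in $T$ together with the H{\"o}lder control of Proposition \ref{hoeldergrowth} on the remaining Oseledets components to obtain a single uniform bound across the whole trajectory, which is exactly what converts the pointwise approximation into convergence in $C[0,1]$.
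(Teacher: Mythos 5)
Your argument is correct and follows essentially the same route as the paper's own proof (carried out via Proposition \ref{limthmmarkcomp-simple} on the space of zippered rectangles and then descended to $\HH^{\prime}$): uniform approximation of ${\mathfrak S}[f,s;\tau,x]$ by $m_{\Phi_2^-}(f)\Phi_2^+(x,\tau e^s)$ with an error $O(T^{\theta_2-\alpha})$, the variance asymptotics of Proposition \ref{varestfmoduli}, the renormalization identity from the commutative diagram identifying the normalized law of $\Phi_2^+(\cdot,\tau e^s)$ with ${\mathcal D}_2^+({\bf g}_s^{\prime}{\bf X}^{\prime})$, and the coupling bound (Lemma \ref{dist-images}) converting the uniform sup-norm estimate into bounds on $d_{LP}$ and $d_{KR}$. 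The only cosmetic difference is that you control the $i\geq 3$ components by citing Proposition \ref{hoeldergrowth} directly, whereas the paper packages that uniform-in-$x$ bound into the estimate (\ref{phitwoapprox}) coming from the symbolic construction.
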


Now fix $\tau\in {\mathbb R}$ and let ${\mm}_2({\bf X}^{\prime}, \tau)$ be the distribution
of the ${\mathbb R}$-valued random variable
$$
\frac{\Phi^+_{2,{\bf X}^{\prime}}(x, \tau)}{\sqrt{Var_{\nu}\Phi^+_{2,{\bf X}^{\prime}}(x, \tau)}}.
$$
For brevity, write ${\mm}_2({\bf X}^{\prime}, 1)={\mm}_2({\bf X}^{\prime})$.
\begin{proposition}
For $\Prob$-almost any ${\bf X}^{\prime}\in\HH^{\prime}$, the measure
${\mm}_2({\bf X}^{\prime}, \tau)$ admits atoms for a dense set of times $\tau\in {\mathbb R}$.
\end{proposition}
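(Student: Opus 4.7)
The plan is to exhibit atoms of $\mm_2(\mathbf{X}',\tau)$ at times $\tau$ equal to (iterated) Poincar\'e return times of the vertical flow $h^+_t$ to short horizontal transversals, and then to observe that such times are dense in $\mathbb{R}$. Fix a short horizontal transversal $I\subset M$ missing the zeros of $\omega$, and let $T_I$ be the first-return map of $h^+_t$ to $I$; this is an interval exchange with pieces $I_k$, return times $\tau_k$, and horizontal translations $c_k$. For $y$ in the sub-piece of $I_k$ on which the return does not wrap around $I$, one has $h^+_{\tau_k} y = h^-_{c_k} y$ in ambient $M$. Horizontal holonomy invariance (Assumption \ref{bplusx}.3) then makes the value $c_k' := \Phi^+_{2,\mathbf{X}'}(y,\tau_k)$ independent of such $y$, and I claim that $\Phi^+_{2,\mathbf{X}'}(\cdot,\tau_k)\equiv c_k'$ on a flow box of positive $\nu$-measure above this sub-piece, producing an atom of $\mm_2(\mathbf{X}',\tau_k)$.

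For $y$ as above and $(t,s)$ in the nonempty open set on which both $\Pi(y,t,c_k)$ and $\Pi(h^+_t y,\tau_k,s)$ are admissible rectangles, iterated use of the cocycle relation and horizontal holonomy invariance gives
\begin{align*}
\Phi^+_{2,\mathbf{X}'}(h^+_t h^-_s y,\tau_k)
&= \Phi^+_{2,\mathbf{X}'}(h^+_t y,\tau_k) \\
&= \Phi^+_{2,\mathbf{X}'}(y,t+\tau_k) - \Phi^+_{2,\mathbf{X}'}(y,t) \\
&= c_k' + \Phi^+_{2,\mathbf{X}'}(h^+_{\tau_k} y,t) - \Phi^+_{2,\mathbf{X}'}(y,t) \\
&= c_k' + \Phi^+_{2,\mathbf{X}'}(h^-_{c_k} y,t) - \Phi^+_{2,\mathbf{X}'}(y,t) \\
&= c_k',
\end{align*}
the first equality by horizontal invariance on $\Pi(h^+_t y,\tau_k,s)$, the next two by the cocycle relation, the fourth by $h^+_{\tau_k} y = h^-_{c_k} y$, and the last by horizontal invariance on $\Pi(y,t,c_k)$.

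The argument iterates verbatim to $N$-fold returns on sub-pieces on which the orbit does not wrap around $I$ in $N$ iterations, yielding atoms at each $\tau = S_N(y) := \sum_{i=0}^{N-1}\tau(T_I^i y)$. For $\mathbb{P}$-a.e.\ $\mathbf{X}'$ the vertical flow is uniquely ergodic, so for any target $\tau_0>0$ and any tolerance $\epsilon>0$ one may choose $I$ and $N$ so that $S_N$ takes a value within $\epsilon$ of $\tau_0$ on a non-wrapping sub-piece; density of the atom-times in $\mathbb{R}_+$ follows, and reversing the flow direction handles $\mathbb{R}_-$. The main obstacle is arranging a positive-measure supply of non-wrapping pieces together with the relevant admissibility conditions under iteration; this is handled by choosing $I$ in a neighborhood of a regular point (away from the finite zero set of $\omega$) and restricting the iteration count $N$ so that the total horizontal displacement $C_N := \sum_{i=0}^{N-1} c\circ T_I^i$ remains small relative to $|I|$ on a controlled subset, which is always possible by the continuity of the IET data in the choice of $I$.
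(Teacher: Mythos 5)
Your atom-production mechanism is correct and is essentially the paper's own: the identity you verify for a single return time $\tau_k$ (constancy of $\Phi^+_{2,\mathbf{X}'}(\cdot,\tau_k)$ on a positive-area rectangle, obtained from the cocycle relation of Assumption \ref{bplusx} together with holonomy invariance applied to two admissible rectangles, one thin and long in the horizontal direction, one thin and long in the vertical direction) is exactly the argument of Proposition \ref{atom-eq-pr}, where the relevant times are the ``homoclinic times'' $t_0$ with $h^+_{t_0}{\tilde x}\in\gamma^-_{\infty}({\tilde x})$; your condition $h^+_{\tau_k}y=h^-_{c_k}y$ is the special case in which the return lands on the same transversal. (Minor omission: to convert an atom of the law of $\Phi^+_2(\cdot,\tau)$ into an atom of the normalized measure $\mm_2(\mathbf{X}',\tau)$ you should cite the nonvanishing of the variance, Proposition \ref{phivarnotzero}.)

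The genuine gap is in your density step. You claim that, given $\tau_0$ and $\epsilon$, one can choose $I$ and $N$ so that the $N$-fold return time $S_N$ takes a value within $\epsilon$ of $\tau_0$, and you justify this by unique ergodicity plus ``continuity of the IET data in the choice of $I$.'' Unique ergodicity only gives $S_N(y)=N/\mathrm{Leb}(I)+o(N)$; the error term is not $O(\epsilon)$ — indeed the deviation of such Birkhoff sums is precisely what this paper shows typically grows like $N^{\theta_2}$ — so tuning $\mathrm{Leb}(I)$ so that $N/\mathrm{Leb}(I)\approx\tau_0$ does not place any actual value of $S_N$ within $\epsilon$ of $\tau_0$. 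Moreover the IET data do not vary continuously with $I$ (the number of exchanged intervals and the permutation jump as the transversal is moved or shortened), so the final appeal is unfounded, and the whole iteration apparatus ($N$-fold returns, non-wrapping pieces, control of $C_N$ against $|I|$) both adds unverified admissibility bookkeeping and is unnecessary. The correct, and simpler, route — the one the paper takes — is to avoid iteration altogether: for $\Prob$-almost every $\mathbf{X}$ the horizontal foliation is minimal, so the horizontal leaf $\gamma^-_\infty(y)$ through a regular point $y$ is dense in $M$; consequently the set of times $\tau$ with $h^+_\tau y\in\gamma^-_\infty(y)$ is dense in $\mathbb{R}$ (in a small admissible flow box around $h^+_{\tau_0}y$ the dense leaf must cross the central vertical segment), and each such $\tau$ is handled directly by your single-return computation, with $c_k$ replaced by the horizontal holonomy time from $y$ to $h^+_\tau y$ along the leaf. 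With the density step replaced in this way, your argument becomes complete and coincides with the paper's proof.
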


A more general proposition on the existence of atoms will be formulated in the following subsection.

Proposition \ref{limthmmoduli-simple} implies that the omega-limit set of the family $\mm[f, s]$
can generically assume at most two values. More precisely, the ergodic measure $\Prob$ on $\HH$ is
naturally lifted to its ``double cover''
on the space $\HH^{\prime}$: each point in the fibre is assigned equal weight;
the resulting measure is denoted  $\Prob^{\prime}$. By definition,
the measure $\Prob^{\prime}$  has no more than two ergodic components. We therefore arrive
at the following
\begin{corollary}\label{nonentwo}
Let $\Prob$ be a ${\bf g}_s$-invariant ergodic probability measure on ${\HH}$
such that the second Lyapunov exponent of the Kontsevich-Zorich
cocycle is positive and simple with respect to $\Prob$.

There exist two closed sets ${\mathfrak N}_1, {\mathfrak N}_2\subset {\mathfrak M}$ such that
for $\Prob$-almost every ${\bf X}\in {\mathcal H}$ and any $f\in Lip_{w,0}^+({\bf X})$
satisfying $\Phi_f^+\neq 0$ the omega-limit set of the family  $\mm[f, s]$ either
coincides with ${\mathfrak N}_1$ or with ${\mathfrak N}_2$.
If, additionally, the measure $\Prob^{\prime}$ is ergodic, then ${\mathfrak N}_1={\mathfrak N}_2$.
\end{corollary}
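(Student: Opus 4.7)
The plan is to deduce the corollary from Proposition \ref{limthmmoduli-simple} combined with an ergodic-theoretic analysis of the lifted flow $\mathbf{g}_s^{\prime}$ on the double cover $\HH^{\prime}\to\HH$.

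\textbf{Reduction.} Given $\mathbf{X}\in\HH$ generic and $f\in Lip_{w,0}^+(\mathbf{X})$ with nontrivial projection of $\Phi_f^+$ onto the second-Lyapunov subspace $\mathfrak{B}_{2,\mathbf{X}}^+$ (the substantive case here, since the normalisation of $\mm[f,s]$ is governed by $\theta_2$), I would pick the unique lift $\mathbf{X}^{\prime}=(\mathbf{X},v)\in\HH^{\prime}$ with $m_{2,\mathbf{X}^{\prime}}^-(f)>0$. This is possible because the two fibre points differ by $v\mapsto -v$, which reverses the sign of $m_{2,\mathbf{X}^{\prime}}^-$. Estimate (\ref{lpexpmoduli}) then yields $d_{LP}(\mm[f,s],\mathcal{D}_2^+(\mathbf{g}_s^{\prime}\mathbf{X}^{\prime}))\to 0$ exponentially, so the $\omega$-limit set of $s\mapsto\mm[f,s]$ in $(\MM,d_{LP})$ coincides with that of $s\mapsto\mathcal{D}_2^+(\mathbf{g}_s^{\prime}\mathbf{X}^{\prime})$. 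The problem is thereby reduced to describing the latter for a generic trajectory on $\HH^{\prime}$.

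\textbf{Ergodic analysis and conclusion.} Since $\HH^{\prime}\to\HH$ is a two-to-one cover with free deck involution $\iota(\mathbf{X},v)=(\mathbf{X},-v)$ commuting with $\mathbf{g}_s^{\prime}$, and $\Prob$ is $\mathbf{g}_s$-ergodic, the lifted measure $\Prob^{\prime}$ has at most two $\mathbf{g}_s^{\prime}$-ergodic components $\Prob_1^{\prime},\Prob_2^{\prime}$, interchanged by $\iota$ (set $\Prob_1^{\prime}=\Prob_2^{\prime}=\Prob^{\prime}$ when $\Prob^{\prime}$ is itself ergodic). The random closed set $\mathcal{L}(\mathbf{X}^{\prime})\subset\MM$ assigning to $\mathbf{X}^{\prime}$ the $\omega$-limit set of $s\mapsto\mathcal{D}_2^+(\mathbf{g}_s^{\prime}\mathbf{X}^{\prime})$ is a tail functional of the forward trajectory, hence $\mathbf{g}_s^{\prime}$-invariant. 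Regarded as a Borel map into the space of closed subsets of $\MM$ with the Fell topology, ergodicity of each component forces $\mathcal{L}$ to be $\Prob_i^{\prime}$-a.s.\ equal to a fixed closed set $\mathfrak{N}_i$. Combining with the reduction, for $\Prob$-a.e.\ $\mathbf{X}$ the $\omega$-limit set of $\mm[f,s]$ equals $\mathfrak{N}_i$ for the index $i$ determined by the ergodic component containing the lift $\mathbf{X}^{\prime}$ singled out by $f$; when $\Prob^{\prime}$ is ergodic, $\mathfrak{N}_1=\mathfrak{N}_2$, which yields the final assertion.

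\textbf{Principal obstacle.} The one nontrivial step is the ergodic decomposition of the lift: if $A\subset\HH^{\prime}$ is $\mathbf{g}_s^{\prime}$-invariant with $0<\Prob^{\prime}(A)<1$, then its projection $\pi(A)\subset\HH$ is $\mathbf{g}_s$-invariant and therefore $\Prob$-trivial, forcing $A$ (modulo null sets) to be one of the two sheets determined by a measurable choice of sign of $v$; this exhausts the ergodic decomposition and shows there are at most two components. The residual measurability issues---Fell Borel structure on the closed subsets of $(\MM,d_{LP})$ and tail invariance of $\mathcal{L}$---are standard, and the critical quantitative input is already supplied by Proposition \ref{limthmmoduli-simple}.
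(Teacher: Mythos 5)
Your proposal is correct and follows the same two-step strategy as the paper: first use the exponential estimate of Proposition \ref{limthmmoduli-simple} to identify the omega-limit set of $\mm[f,s]$ with that of the curve $s\mapsto {\mathcal D}_2^+({\bf g}_s^{\prime}{\bf X}^{\prime})$ for the appropriately chosen lift ${\bf X}^{\prime}$, and then use the fact that $\Prob^{\prime}$ has at most two ergodic components to get at most two possible omega-limit sets. Where you diverge is in how almost-sure constancy of the omega-limit set on each ergodic component is obtained. The paper proves a concrete general lemma (Proposition \ref{omlimset}): running over a countable base of open sets in $\MM$ and using ergodicity, it shows that for almost every point the omega-limit set of the curve $\varphi({\bf g}_s^{\prime}{\bf X}^{\prime})$ equals the support of $\varphi_*\mu$, so ${\mathfrak N}_i$ is exhibited explicitly as the support of $({\mathcal D}_2^+)_*\Prob_i^{\prime}$; the argument uses in an essential way the continuity of the curve $s\mapsto{\mathcal D}_2^+({\bf g}_s^{\prime}{\bf X}^{\prime})$ (an open set whose preimage has measure zero meets the trajectory in an open set of times of zero Lebesgue measure, hence not at all), and this continuity is itself a separate proposition resting on the H{\"o}lder property of the cocycles and the nonvanishing of the variance, Proposition \ref{phivarnotzero}. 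You instead treat the omega-limit set as an invariant measurable map into a hyperspace of closed subsets and invoke ergodicity to make it a.s.\ constant. That route works, but two of the points you dismiss as ``standard'' are exactly where the paper's explicit work lives: (i) the Fell topology is not the right structure here, since $\MM=\MM(C[0,1])$ is not locally compact and the Fell topology on its closed subsets is not Hausdorff --- you would need the Effros Borel structure (or Wijsman topology) to run the ``invariant Borel map is a.s.\ constant'' argument; and (ii) the measurability of the omega-limit map (e.g.\ via closures over rational times $s\geq T$) again requires the continuity of $s\mapsto{\mathcal D}_2^+({\bf g}_s^{\prime}{\bf X}^{\prime})$, which you never state and which is not free. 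Your treatment of the ``principal obstacle'' (the two-sheet ergodic decomposition of $\Prob^{\prime}$) matches the paper's remark preceding the corollary. Finally, note that, like the paper's own proof, your argument really covers the functions with nonzero second component $m_{\Phi_2^-}(f)\neq 0$ (your ``substantive case''); functions with $\Phi_f^+\neq 0$ but vanishing second component would require the general Theorem \ref{limthmmoduli} rather than Proposition \ref{limthmmoduli-simple}, so you are no worse off than the original on this point.
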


{\bf Question}. Do the sets ${\mathfrak N}_i$ contain measures with non-compact support?

For horocycle flows on compact surfaces of constant negative curvature, compactness of support
 for all weak accumulation points of ergodic integrals has been obtained by Flaminio and Forni \cite{flafo}.

{\bf Question}. Is the measure $\Prob^{\prime}$ ergodic when ${\Prob}$  is the Masur-Veech measure?

As we shall see in the next subsection, in general, the omega-limit sets of the distributions of
the ${\mathbb R}$-valued random variables ${\mathfrak S}[f,s;1]$ contain the delta-measure at zero.
As a consequence, it will develop that, under certain assumptions on the measure $\Prob$, which are satisfied, in particular, for the Masur-Veech smooth measure, for a generic abelian differential the random variables ${\mathfrak S}[f,s;1]$
{\it do not converge} in distribution, as $s\to\infty$, for any
function $f\in Lip_{w,0}$ such that $\Phi_f^+\neq 0$.

\subsubsection{The general case}

While, by the Avila-Viana Theorem \cite{AV}, the Lyapunov spectrum of
the Masur-Veech measure is simple, there are also natural examples of invariant measures with non-simple positive second
Lyapunov  exponent due to Eskin-Kontsevich-Zorich \cite{EKZ}, G. Forni \cite{forni-nonunif},
C. Matheus (see Appendix A.1 in \cite{forni-nonunif}).
A slightly more elaborate, but similar, construction is needed to obtain limit theorems in this  general case.

Let $\mathbb{P}$ be an invariant ergodic probability measure for
the Teichm\"{u}ller flow, and let
$$\theta_1=1>\theta_2>\ldots>\theta_{l_0}>0$$ be
the distinct positive Lyapunov exponents of the Kontsevich-Zorich cocycle
with respect to $\mathbb{P}.$ We assume $l_0\geq 2$.

As before, for ${\bf X}\in {\mathcal H}$ and $i=2, \dots, l_0$, let
$E_i^u({\bf X})$ be the corresponding  Oseledets subspaces, and let
$\mathfrak{B}_{i}^+({\bf X})$ be the corresponding spaces of cocycles.
To make notation lighter, we omit the symbol ${\bf X}$ when the abelian
differential is held fixed.

For $f\in Lip_{w}^+({\bf{X}})$ we now write
$$\Phi_f^+=\Phi_{1,f}^++\Phi_{2,f}^++\ldots +\Phi_{l_0,f}^+,$$
with $\Phi_{i,f}^+\in\mathfrak{B}_{i}^+$ and,
of course, with
$$\Phi^+_{1,f}=(\int\limits_{M}fd\nu)\cdot\nu^+,$$
where $\nu^+$ is the Lebesgue measure on the vertical foliation.

For each $i=2,\ldots,l_0$ introduce a measurable fibre bundle
$${\bf{S}}^{(i)}{\mathcal H}=\{({\bf X},v):{\bf X}
\in{\mathcal H},v\in E_{i}^+,|v|=1\}.$$

The flow ${\bf g}_s$ is naturally lifted to the space
${\bf{S}}^{(i)}\HH$ by the formula
$${\bf g}_s^{{\bf{S}}^{(i)}}({\bf X},v)=\left({\bf g}_s {\bf X},
\frac{\mathbf{A}(s,{\bf X})v}
{|\mathbf{A}(s,{\bf X})v|}\right).$$ The growth of the norm of
vectors $v\in E_i^+$ is controlled by the multiplicative cocycle
$H_i$ over the flow ${\bf g}_s^{{\bf{S}}^{(i)}}$ defined by the formula
$$H_i(s,({\bf X},v))=\frac{\mathbf{A}(s,{\bf X})v}
{|v|}.$$

For
${\bf X}\in {\mathcal H}$ and $f\in
Lip_{w,0}^+({\bf X})$ satisfying $\Phi_f^+\neq 0$, denote
$$
i(f)=\min\{j:\Phi_{f,j}^+\neq0\}.
$$

Define
$v_{f}\in E_{i(f)}^u$ by the formula
$$\mathcal{I}_{{\bf X}}^+(v_{f})=\frac{\Phi_{f,i(f)}^+}{|\Phi_{f,i(f)}^+|}.$$

The growth of the variance of the ergodic integral of a weakly Lipschitz function $f$ is also,
similarly to the case of the simple second Lyapunov exponent, described by the cocycle $H_{i(f)}$ in the following way.

\begin{proposition} There exists $\alpha>0$
depending only on $\mathbb{P}$ and, for any $i=2,\ldots,l_0,$
positive measurable functions
$$V^{(i)}:{\bf{S}}^{(i)}{\mathcal H}\to\mathbb{R}_+,C^{(i)}:{\mathcal H}
\times{\mathcal H}\to\mathbb{R}_+$$ such that for
$\mathbb{P}$-almost every ${\bf X}\in{\mathcal H}$
the following holds. Let $f\in
Lip_{w,0}^+({\bf X})$ satisfy $\Phi_f^+\neq 0$.

Then for all $s>0$ we have
$$\left|\frac{Var_{\nu}(\mathfrak{S}[f,s;1])}
{V^{(i(f))}(g_s^{{\bf{S}}^{(i(f))}}(\overline{\omega},v_{f}))(H_{i(f)}(s, ({\bf X},v_{f})
))^2}-1\right|\leqslant
C^{(i(f))}({\bf X},{\bf g}_s{\bf X})e^{-\alpha s}.$$
\end{proposition}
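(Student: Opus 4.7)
The plan is to reduce the variance estimate to a computation involving only the cocycle $\Phi_f^+$ and then extract the dominant Oseledets component via the renormalization by the Teichm\"uller flow, mirroring the proof of Proposition \ref{varestfmoduli} but with the general Oseledets subspace $E_{i(f)}^u$ playing the role previously played by $E_2^u$.

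The first step is to apply Theorem \ref{multiplicmoduli} to obtain, for any $\varepsilon>0$,
$$
\bigl|\mathfrak{S}[f,s;1](x)-\Phi_f^+(x,e^s)\bigr|\leq C_{\varepsilon}||f||_{Lip_w^+}\,e^{\varepsilon s}
$$
uniformly in $x$. Since both quantities have zero $\nu$-mean (the second because $\langle\Phi_f^+,\Phi_1^-\rangle=\int f\,d\nu=0$), squaring and integrating yields
$$
\bigl|Var_\nu\,\mathfrak{S}[f,s;1]-Var_\nu\,\Phi_f^+(\cdot,e^s)\bigr|=O\bigl(e^{(\theta_{i(f)}+\varepsilon)s}\bigr),
$$
so, as soon as the main term grows like $e^{2\theta_{i(f)}s}$, choosing $\varepsilon<\theta_{i(f)}$ makes the approximation error exponentially subdominant. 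The problem therefore reduces to the asymptotics of $Var_\nu\,\Phi_f^+(\cdot,e^s)$.

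Second, I would decompose $\Phi_f^+=\sum_{i\geq i(f)}\Phi_{i,f}^+$ along the Oseledets splitting (the $i<i(f)$ terms vanish by definition of $i(f)$, and the $i=1$ piece vanishes since $\int f\,d\nu=0$). Let $u_{i,f}\in E_i^u$ denote the unit vector with $\mathcal{I}_{\bf X}^+(u_{i,f})=\Phi_{i,f}^+/|\Phi_{i,f}^+|$, so that $u_{i(f),f}=v_f$. The equivariance of $\mathcal{I}^+$ under the Teichm\"uller flow gives the renormalization identity
$$
\Phi_{i,f}^+(x,e^s)=|\Phi_{i,f}^+|\,H_i\bigl(s,({\bf X},u_{i,f})\bigr)\,\widetilde{\Phi}_{i,f}^+(x,1),
$$
where $\widetilde{\Phi}_{i,f}^+$ is the unit-normalized cocycle at ${\bf g}_s{\bf X}$ obtained by Gauss--Manin transport. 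Setting
$$
V^{(i)}({\bf Y},u):=Var_{\nu({\bf Y})}\bigl(\mathcal{I}_{\bf Y}^+(u)(\cdot,1)\bigr),
$$
the variance of the individual $i$-th piece is exactly $|\Phi_{i,f}^+|^2\,H_i^2\,V^{(i)}({\bf g}_s^{{\bf S}^{(i)}}({\bf X},u_{i,f}))$, which for $i=i(f)$ becomes the announced main term once the constant $|\Phi_{i(f),f}^+|^2$ is absorbed into $V^{(i(f))}$ along the fibre over $v_f$. By Cauchy--Schwarz each cross-covariance is bounded by $H_iH_j$ times the square roots of the corresponding variances, so every $(i,j)\neq(i(f),i(f))$ contribution is smaller than the main term by at least $e^{-(\theta_{i(f)}-\theta_{i(f)+1})s+o(s)}$.

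The main obstacle, as in Proposition \ref{varestfmoduli}, is the bookkeeping of the nonuniform constants. The function $V^{(i)}$ and the Hodge-to-$L^2$ comparison for normalized cocycles are only tempered (i.e., sub-exponential) along the flow ${\bf g}_s^{{\bf S}^{(i)}}$, and the ``angle'' between distinct Oseledets subspaces $E_i^u({\bf g}_s{\bf X})$ can decay sub-exponentially; all of these effects must be absorbed into a single prefactor $C^{(i)}({\bf X},{\bf g}_s{\bf X})$ depending on both endpoints of the orbit segment (not merely on $s$), exactly as in the simple-exponent case. Once this is done, taking $\alpha<\min(\theta_{i(f)}-\theta_{i(f)+1},\,\theta_{i(f)})$ combines the estimates from the first and second steps to yield the stated bound with an exponent depending only on $\mathbb{P}$.
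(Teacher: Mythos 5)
Your argument is essentially the paper's: the paper likewise sets $V^{(i)}(\mathbf{Y},v)=Var_{\nu(\mathbf{Y})}\left(\mathcal{I}^+_{\mathbf{Y}}(v)(x,1)\right)$, gets the exact renormalization identity for the variance of the leading Oseledets component from Proposition \ref{hypzip}, and converts the uniform approximation of Theorem \ref{multiplicmoduli} into closeness of variances via the elementary bound $|\mathbb{E}\xi_1^2-\mathbb{E}\xi_2^2|\le \sup|\xi_1+\xi_2|\cdot\mathbb{E}|\xi_1-\xi_2|$, with all sub-exponential (tempered, angle, Hodge-vs.-$L^2$) constants pushed into $C^{(i)}({\bf X},{\bf g}_s{\bf X})$ and the non-vanishing of $V^{(i)}$ supplied by Proposition \ref{phivarnotzero}. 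Your handling of the sub-leading components inside $Var_\nu\Phi^+_f(\cdot,e^s)$ by Cauchy--Schwarz is only a rearrangement of the paper's step, which instead approximates $\mathfrak{S}[f,s;1]$ directly by the single $i(f)$-component and treats the lower Oseledets terms as part of the approximation error; the two bookkeepings are equivalent.

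One sentence of your write-up is not literally correct: the scalar $|\Phi^+_{i(f),f}|^2$ cannot be ``absorbed into $V^{(i(f))}$ along the fibre over $v_f$'', since $V^{(i(f))}$ is a function on the bundle ${\bf S}^{(i(f))}{\mathcal H}$ and the scalar varies with $f$ for a fixed direction $v_f$. What your computation (and the paper's) actually yields is the estimate with the factor $\left(|\Phi^+_{i(f),f}|\,H_{i(f)}(s,({\bf X},v_f))\right)^2$ in the denominator, exactly parallel to the factor $(m_{\Phi_2^-}(f)|\Phi_2^+|H_2(s,{\bf X}))^2$ in Proposition \ref{varestfmoduli}; keep that $f$-dependent scalar explicit rather than hiding it in $V^{(i(f))}$.
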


We proceed to the formulation and the proof of the limit theorem in
the general case. For $i=2,\ldots,l_0$, introduce the map
$$\mathcal{D}_i^+:{\bf{S}}^{(i)}\HH\to\mathfrak{M}_1$$
by setting $\mathcal{D}_i^+({\bf X},v)$ to be the
distribution of the $C[0,1]$-valued random variable
$$\frac{\Phi^+_v(x,\tau)}{\sqrt{Var_{\nu( \overline{\omega})}
(\Phi_v^+(x,1))}},\tau\in[0,1].$$

As before, we have a commutative diagram
$$
\begin{CD}
\label{Dtwo-diagr}
{{\bf S}^{(i)}}\HH@ >{{\mathcal D}_i^+}>> {\MM_1} \\
@VV{\bf g}_s^{{\bf S}^{(i)}}V           @AAG_sA   \\
{{\bf S}^{(i)}}\HH@ >{{\mathcal D}_i^+}>> {\MM_1} \\
\end{CD}
$$

The measure $\mathfrak{m}[f,s]$, as before, stands for the distribution of the
$C[0,1]$-valued random variable
$$\frac
        {\displaystyle\int\limits_0^ {\tau
\exp(s)} f\circ h_t^+(x)dt}
{\sqrt
      {Var_ {\nu}(\displaystyle\int\limits_0^ {
\exp(s)}f\circ h_t^+(x)dt)}},\tau\in[0,1].$$

\begin{theorem}\label{limthmmoduli}
Let $\mathbb{P}$ be an invariant ergodic probability measure for
the Teichm\"{u}ller flow such that the Kontsevich-Zorich cocycle
admits at least two distinct positive Lyapunov exponents with respect to ${\mathbb P}$.
There exists a constant $\alpha>0$ depending only on $\mathbb{P}$
and a positive measurable map
$C:{\mathcal H}\times{\mathcal H}\to\mathbb{R}_+$ such
that for $\mathbb{P}-almost$ every
${\bf X}\in{\mathcal H}$ and any $f\in
Lip_w^+(\bf{X})$ we have
$$d_{LP}(\mathfrak{m}[f,s],{\mathcal D}_{i(f)}^+(g_s^{{\bf{S}}^{(i(f))}}({\bf X},v_f)))
\leqslant C({\bf X},{\bf g}_s {\bf X})e^{-\alpha
s},$$
$$d_{KR}(\mathfrak{m}[f,s],{\mathcal D}_{i(f)}^+(g_s^{{\bf{S}}^{(i(f))}}({\bf X},v_f)))
\leqslant C({\bf X},{\bf g}_s{\bf X})e^{-\alpha
s}.$$
\end{theorem}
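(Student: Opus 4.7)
The plan is to chain together four ingredients already at our disposal: the approximation theorem, the Oseledets decomposition of H\"older cocycles, the renormalization of cocycles under the Teichm\"uller flow, and the variance estimate from the previous proposition. The whole argument reduces to isolating the leading Oseledets component of $\Phi_f^+$, identifying it after renormalization with a sample from $\mathcal{D}_{i(f)}^+({\bf g}_s^{{\bf S}^{(i(f))}}({\bf X},v_f))$, and controlling the tail in sup-norm with exponential precision.

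First I would apply Theorem \ref{multiplicmoduli} at time $T=\tau e^s$ to replace the ergodic integral by the cocycle: uniformly in $x$ and $\tau\in[0,1]$,
$$\int_0^{\tau e^s} f\circ h_t^+(x)\,dt=\Phi_f^+(x,\tau e^s)+O_\varepsilon\bigl(\|f\|_{Lip_w^+} e^{\varepsilon s}\bigr).$$
Next I would decompose $\Phi_f^+=\sum_{j=1}^{l_0}\Phi_{j,f}^+$ along the Oseledets filtration. By Proposition \ref{hoeldergrowth}, $|\Phi_{j,f}^+(x,T)|\le C_\varepsilon T^{\theta_j+\varepsilon}$ for every $x,T$; summing over $j>i(f)$ yields
$$\Phi_f^+(x,\tau e^s)=\Phi_{i(f),f}^+(x,\tau e^s)+O_\varepsilon\bigl(e^{(\theta_{i(f)+1}+\varepsilon)s}\bigr),$$
so the leading Oseledets component dominates by a factor $e^{(\theta_{i(f)}-\theta_{i(f)+1})s}$.

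Now comes the renormalization step. Writing $\Phi_{i(f),f}^+=|\Phi_{f,i(f)}^+|\cdot\Phi_{v_f}^+$ and using the equivariance of $\mathcal{I}_{\bf X}^+$ with respect to $({\bf g}_s,{\bf A}_{KZ})$, for $\tau\in[0,1]$,
$$\Phi_{v_f}^+(x,\tau e^s)=H_{i(f)}(s,({\bf X},v_f))\cdot\Phi_{v(s)}^+(x,\tau),\qquad v(s)=\frac{{\bf A}(s,{\bf X})v_f}{|{\bf A}(s,{\bf X})v_f|},$$
where on the right $\Phi_{v(s)}^+$ denotes the cocycle on $({\bf X}(s),v(s))={\bf g}_s^{{\bf S}^{(i(f))}}({\bf X},v_f)$ and $x$ is identified through the natural time rescaling. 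Combining this with the variance estimate of the preceding proposition, the normalizing denominator $\sqrt{Var_\nu\mathfrak S[f,s;1]}$ is, up to a $(1+O(e^{-\alpha s}))$ factor, equal to the constant that turns $\Phi_{v(s)}^+(\cdot,\tau)$ into its $\mathcal{D}_{i(f)}^+$-normalization. Since the cumulative error from Steps 1 and 2 is $O(e^{(\theta_{i(f)+1}+\varepsilon)s})$ while the main factor is of size $H_{i(f)}(s)\sim e^{\theta_{i(f)} s}$, division by $\sqrt{Var_\nu\mathfrak S[f,s;1]}$ produces
$$\frac{\mathfrak S[f,s;\tau,x]}{\sqrt{Var_\nu\mathfrak S[f,s;1]}}=\frac{\Phi_{v(s)}^+(x,\tau)}{\sqrt{Var_\nu\Phi_{v(s)}^+(x,1)}}+R(x,s,\tau),\qquad \sup_{x,\tau}|R(x,s,\tau)|\le C({\bf X},{\bf g}_s{\bf X})\,e^{-\alpha' s},$$
after shrinking $\alpha$ if necessary and absorbing $\varepsilon$.

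Finally, the right-hand side is a $C[0,1]$-valued random variable with distribution $\mathcal{D}_{i(f)}^+({\bf g}_s^{{\bf S}^{(i(f))}}({\bf X},v_f))$ by definition (this is exactly the content of the commutative diagram \ref{Dtwo-diagr}). Since a uniform bound $\|X-Y\|_{C[0,1]}\le\eta$ between two random variables implies both $d_{LP}(\mathrm{Law}(X),\mathrm{Law}(Y))\le\eta$ and $d_{KR}(\mathrm{Law}(X),\mathrm{Law}(Y))\le\eta$, the required exponential bounds follow. The principal technical point — and the main obstacle — is Step 3: one must verify carefully that the two scaling factors in the numerator, namely $|\Phi_{f,i(f)}^+|$ and $H_{i(f)}(s,({\bf X},v_f))$, combine with the denominator $\sqrt{Var_\nu\mathfrak S[f,s;1]}$ supplied by the variance proposition to produce precisely the intrinsic normalization $1/\sqrt{Var_\nu\Phi_{v(s)}^+(x,1)}$ of the limiting distribution, with all residual discrepancies absorbed into the announced $O(e^{-\alpha s})$ error. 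A secondary point is that the lower Oseledets components of $\Phi_f^+$ must be controlled \emph{uniformly} in $x$ and in $\tau\in[0,1]$, which is where the H\"older regularity from Proposition \ref{hoeldergrowth} is essential.
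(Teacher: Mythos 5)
Your proposal follows essentially the same route as the paper: uniform approximation of the ergodic integral by the leading Oseledets component of $\Phi_f^+$ (Theorem \ref{multiplicmoduli} plus control of the sub-leading cocycle terms), renormalization of that component along the Teichm{\"u}ller flow via the commutative diagram, matching of the two normalizations through the variance asymptotics (which the paper packages in the elementary inequality (\ref{ineqlimthm})), and the conclusion via Lemma \ref{dist-images}. The only cosmetic difference is that you argue directly on $\HH$ with the Kontsevich--Zorich cocycle, whereas the paper first proves the statement on the space of zippered rectangles (Theorem \ref{limthmmarkcomp}), where the renormalization action on ${\mathfrak B}^+$ is constructed, and then descends to the moduli space via $\pi_{\mathcal R}$.
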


\subsubsection{ Atoms of limit distributions.}

For $\Phi^+\in {\mathfrak B}^+({\bf X})$, let
$\mathfrak{m}[\Phi^+,\tau]$ be the distribution of the
$\mathbb{R}$-valued random variable
$$\frac{\Phi^{+}(x,\tau)}{\sqrt{Var_{\nu}\Phi^{+}(x,\tau)}}\ .$$

\begin{proposition} For $\mathbb{P}-almost$ every
${\bf X}\in\mathcal{H},$ there exists a dense set
$T_{atom}\subset\mathbb{R}$ such that if $\tau\in T_{atom},$
 then for any $\Phi^+\in {\mathfrak B}^+({\bf X})$, $\Phi^+\neq 0$, the measure
$\mathfrak{m}(\Phi^+,\tau)$ admits atoms.
\end{proposition}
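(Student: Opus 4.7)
The plan is to exploit the horizontal holonomy invariance of every $\Phi^+\in\mathfrak{B}^+(\mathbf{X})$ together with the Rauzy-Veech tower decompositions of $M$ to produce, for a dense set of $\tau$'s, a positive-$\nu$-measure subset of $M$ on which $\Phi^+(\cdot,\tau)$ is forced to be constant. Crucially, since the construction will rely only on holonomy and the tower combinatorics, the atom will occur uniformly for all nonzero $\Phi^+$ at once.

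\textbf{Construction of atoms.} Fix a $\mathbb{P}$-generic $\mathbf{X}=(M,\omega)$ and pick a horizontal transversal $I\subset M$ whose vertical first-return map $T_I$ is an interval exchange transformation. At each Rauzy-Veech stage $n$, the surface $M$ decomposes, modulo a $\nu$-null set, into admissible rectangles $R_1^{(n)},\ldots,R_m^{(n)}$ of heights $h_i^{(n)}$ and base widths $\lambda_i^{(n)}>0$. Set $\tau=h_i^{(n)}$ and take any $\Phi^+\in\mathfrak{B}^+$. By Assumption~\ref{bplusx}(3), $x\mapsto\Phi^+(x,\tau)$ is constant along each horizontal slice of every $R_j^{(n)}$; writing $y=h_t^+(x_0)\in R_i^{(n)}$ with $x_0\in I_i^{(n)}$ and $t\in[0,\tau]$, the cocycle identity yields
$$\Phi^+(y,\tau)=c_i^{(n)}(\Phi^+)+\Phi^+(T_Ix_0,t)-\Phi^+(x_0,t),$$
so $\Phi^+(\cdot,\tau)\big|_{R_i^{(n)}}$ depends only on the vertical coordinate $t$. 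The next step is to refine Rauzy-Veech to a deeper stage $n'>n$ and isolate a sub-tower whose combinatorial itinerary matches a shifted copy of itself, so that the difference $\Phi^+(T_Ix_0,t)-\Phi^+(x_0,t)$ is forced to be constant on a positive-Lebesgue-measure set of $t$'s. Multiplying by the base width of that sub-tower gives a positive-$\nu$-measure subset of $R_i^{(n)}$ on which $\Phi^+(\cdot,\tau)$ takes a single value, producing the desired atom in $\mathfrak{m}(\Phi^+,\tau)$. Because the entire construction is combinatorial (it does not distinguish which nonzero $\Phi^+$ one inserts into the cocycle bookkeeping), the same $\tau$ produces an atom for every nonzero $\Phi^+\in\mathfrak{B}^+(\mathbf{X})$.

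\textbf{Density.} Varying $i$ and $n$ and invoking the ergodicity and density of orbits of the Rauzy-Veech-Zorich renormalization for $\mathbb{P}$-generic $\mathbf{X}$, the set of heights $\{h_i^{(n)}\}$ is dense in $\mathbb{R}_+$; mirroring by time-reversal and adjoining the trivial value $\tau=0$ yields density in $\mathbb{R}$. The atom survives the normalization by $\sqrt{\mathrm{Var}_\nu\Phi^+(x,\tau)}$ because this variance is positive for nonzero $\Phi^+$, by the variance estimates of Proposition~\ref{varestfmoduli} and its general-$i$ counterpart.

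\textbf{Main obstacle.} The hardest part is the refinement step in the construction above: promoting the $1$-dimensional constancy on horizontal slices (which holonomy supplies for free) into a $2$-dimensional positive-$\nu$-measure level set. Doing so requires a self-similarity argument that matches the itinerary at a finer Rauzy-Veech stage $n'$ with that of the original tower of height $\tau=h_i^{(n)}$ so that the $t$-dependent correction $\Phi^+(T_Ix_0,t)-\Phi^+(x_0,t)$ stabilizes on a fat subset of $[0,\tau]$. This is where the bulk of the technical work lies, and the rest of the argument (holonomy-invariance bookkeeping, density via Rauzy-Veech recurrence, variance normalization) is essentially routine once the refinement lemma is in place.
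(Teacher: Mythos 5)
Your reduction inside a Rauzy--Veech tower is fine up to the displayed identity, but the two conclusions you draw from it are where the argument breaks. First, the claim that $\Phi^+(\cdot,\tau)\big|_{R_i^{(n)}}$ ``depends only on the vertical coordinate $t$'' is not correct in general: the term $\Phi^+(T_Ix_0,t)$ is constant in $x_0$ only as long as the rectangle of height $t$ erected over the image interval of the base is admissible, and for $t$ ranging up to $\tau=h_i^{(n)}$ the vertical orbit of the return point re-crosses the transversal and meets singular leaves differently for different $x_0$; so this term is only piecewise constant in $x_0$. Second, and decisively, the step you yourself flag as the main obstacle --- the ``refinement''/self-similarity lemma that would force $\Phi^+(T_Ix_0,t)-\Phi^+(x_0,t)$ to be constant on a set of $t$ of positive measure --- is exactly the heart of the proposition and is not proved; nothing in the proposal shows that a tower height $h_i^{(n)}$ is in fact an atom time. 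The density claim is also unsubstantiated: the set of heights $\{h_i^{(n)}\}$ over all stages is a countable set accumulating at $0$ and $\infty$, and its density in $\mathbb{R}_+$ does not follow merely from ergodicity of the Rauzy--Veech--Zorich renormalization. (The normalization issue at the end is handled correctly, but the relevant reference is Proposition \ref{phivarnotzero}, not the variance asymptotics.)

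For comparison, the paper's proof (Proposition \ref{atom-eq-pr}) uses a simpler mechanism that bypasses the tower combinatorics entirely: take a ``homoclinic'' time $t_0$, i.e.\ a time for which some point $\tilde x$ off the singular leaves satisfies $h^+_{t_0}\tilde x\in\gamma^-_{\infty}(\tilde x)$, say $h^+_{t_0}\tilde x=h^-_{t_1}\tilde x$. For small $t_2,t_3$ the two admissible rectangles $\Pi(\tilde x,t_2,t_1+t_3)$ (short and wide, along the horizontal segment joining $\tilde x$ to its image) and $\Pi(\tilde x,t_0+t_2,t_3)$ (tall and thin, along the vertical segment) together with the cocycle identity and condition 3 of Assumption \ref{bplusx} force $\Phi^+(x,t_0)=\Phi^+(\tilde x,t_0)$ for every $x$ in the positive-area rectangle $\Pi(\tilde x,t_2,t_3)$, simultaneously for all $\Phi^+$; such times $t_0$ are dense by minimality of the foliations, and the atom survives normalization because $Var_{\nu}\Phi^+(x,\tau)\neq0$ for $\tau\neq 0$. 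Note that in your language the homoclinic condition is precisely what kills the correction term: if $x_0$ and its return point lie on a common horizontal arc over which the rectangle of height $t$ is admissible, then $\Phi^+(T_Ix_0,t)=\Phi^+(x_0,t)$ by holonomy. So the missing refinement lemma, once made precise, essentially \emph{is} the homoclinic construction; as written, the proposal leaves the decisive step unestablished.
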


\subsubsection{Nonconvergence in distribution of ergodic integrals.}

Our next aim is to show that along certain subsequences of times the ergodic integrals of translation flows
converge in distribution  to the measure $\delta_0$, the delta-mass at zero. Weak convergence
of probability measures will be denoted by the symbol $\Rightarrow$.

We need the following additional assumption on the measure $\Prob$.
\begin{assumption}
\label{aczero}
For any $\varepsilon>0$ the set of
abelian differentials ${\bf X}=(M, \omega)$ such that  there exists an
admissible rectangle $\Pi(x, t_1, t_2)\subset M$ with
$t_1>1-\varepsilon$, $t_2>1-\varepsilon$
has positive measure with respect to $\Prob$.
\end{assumption}
Of course, this assumption holds for the Masur-Veech smooth measure.

\begin{proposition} Let $\mathbb{P}$ be an ergodic
${\bf g}_s$-invariant measure on $\mathcal{H}$ satisfying
Assumption \ref{aczero}. Then for $\mathbb{P}$-almost
every ${\bf{X}}\in\mathcal{H}$ there exists a sequence
$\tau_n\in\mathbb{R}_+$ such that for any
$\Phi^+\in{\mathfrak{B}}^+({\bf{X}})$, $\Phi^+\neq 0$,  we have
$$\mathfrak{m}[\Phi^+, \tau_n]\Rightarrow\delta_0\ \mathrm{in} \ {\mathfrak M}({\mathbb R}) \  \mathrm{as}\
n\to\infty.$$
\end{proposition}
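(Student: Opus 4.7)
My plan is to combine Assumption \ref{aczero}, ergodicity of the Teichm\"uller flow, and the renormalization structure of $\BB^+$. The picture is that, at times when the orbit of ${\bf X}$ under ${\bf g}_s$ passes near differentials whose surface is covered, up to a set of small $\nu$-measure, by a single admissible rectangle, the cocycle $\Phi^+$ at the corresponding time scale is forced to be an exact coboundary on the bulk (by horizontal holonomy) while its full nontrivial $H^1$-content is pushed into the small complement.

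First, Assumption \ref{aczero} and the Birkhoff ergodic theorem applied to ${\bf g}_s$ yield, for $\Prob$-a.e.\ ${\bf X}$ and after a diagonal extraction over $\varepsilon\downarrow 0$, a sequence $s_n\to\infty$ such that ${\bf Y}_n:={\bf g}_{-s_n}{\bf X}$ admits an admissible rectangle $\Pi_n=\Pi(x_n,t_1^{(n)},t_2^{(n)})$ with $t_1^{(n)},t_2^{(n)}\to 1$. I set $\tau_n=t_1^{(n)}e^{s_n}$, chosen so that a vertical ${\bf X}$-arc of length $\tau_n$ is the same underlying curve as a vertical ${\bf Y}_n$-arc of length $t_1^{(n)}$. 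Since the normalization in $\mm[\Phi^+,\tau_n]$ is invariant under this reparameterization of the vertical flow, it suffices to show that the normalized distribution of the same cocycle on $({\bf Y}_n,\nu)$ at time $t_1^{(n)}$ converges to $\delta_0$.

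The heart of the argument uses horizontal holonomy (Assumption \ref{bplusx}(3)): on any sub-rectangle $\Pi(x_n,t_1^{(n)}-\delta_n,t_2^{(n)})$ with $\delta_n\downarrow 0$, $\Phi^+(\cdot,t_1^{(n)}-\delta_n)$ is independent of the horizontal coordinate, since $\Phi^+(x,t_1^{(n)}-\delta_n)=g_n(\tau_1+t_1^{(n)}-\delta_n)-g_n(\tau_1)$ for the potential $g_n(t):=\Phi^+(x_n,t)$ and $x$ at vertical coordinate $\tau_1$. By Proposition \ref{hoeldergrowth} the oscillation of $g_n$ on $[0,t_1^{(n)}]$ is of order $(t_1^{(n)})^\theta\sim 1$ in the natural normalization of $\Phi^+$ on ${\bf Y}_n$, so the bulk values of $\Phi^+(\cdot,t_1^{(n)})$, after centering by the $\nu$-mean, are $O(1)$. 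Meanwhile on ${\bf Y}_n\setminus\Pi_n$, of $\nu$-measure $\varepsilon_n:=1-t_1^{(n)}t_2^{(n)}\to 0$, the cocycle attains the full renormalized scale $\sim e^{\theta s_n}$ induced by the Kontsevich--Zorich expansion of $\check{\mathcal I}^+_{\bf X}(\Phi^+)$ along the leading Oseledets subspace. Passing to a further subsequence along which $\varepsilon_n\,e^{2\theta s_n}\to\infty$ (compatible with the diagonal extraction by taking $s_n$ to grow fast enough), the variance $\sigma_n^2$ is dominated by the exterior contribution; the bulk values then become $o(\sigma_n)$ on a set of $\nu$-measure $\to 1$, forcing $\mm[\Phi^+,\tau_n]\Rightarrow\delta_0$.

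The main obstacle is the lower bound $\sigma_n^2\gtrsim\varepsilon_n e^{2\theta s_n}$, needed to exclude the possibility that $\Phi^+$ stays small on both the bulk and the complement; here the hypothesis $\Phi^+\ne 0$ and the nontriviality of the Kontsevich--Zorich expansion on the Oseledets subspace of $\check{\mathcal I}^+_{\bf X}(\Phi^+)$ are essential. Uniformity over $\Phi^+\in\BB^+({\bf X})\setminus\{0\}$ is automatic because the geometric data $(s_n,\Pi_n,\tau_n)$ are chosen independently of $\Phi^+$ and $\BB^+({\bf X})$ is finite-dimensional.
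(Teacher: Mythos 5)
Your geometric picture --- collapse of $\Phi^+$ to a single value on most of the surface at ``fat-rectangle'' times --- is the right starting point, but there are two genuine gaps. First, the horizontal-coordinate independence of $\Phi^+(\cdot,t_1^{(n)}-\delta_n)$ via the formula $\Phi^+(x,t)=g_n(\tau_1+t)-g_n(\tau_1)$ is valid only when the vertical arc of length $t$ from $x$ stays inside the admissible rectangle, i.e.\ when $\tau_1+t<t_1^{(n)}$. With $t\approx t_1^{(n)}$ this forces $\tau_1<\delta_n$, so the formula holds only on a slab of $\nu$-measure $O(\delta_n)\to 0$ and gives no control on the bulk. The paper's Proposition \ref{bigatom} obtains \emph{exact} constancy of $\Phi^+(\cdot,h_1^{(0)})$ on a set of $\nu$-measure $\geq(2\lambda_1-1)h_1\to 1$ by a mechanism you are missing: split the integral at the time the vertical arc crosses the base transversal $I$. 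For $\tilde x=h^+_{\tau_1}x_1$, $x_1\in I_1$, one has $\Phi^+(\tilde x,t_1)=\Phi^+(\tilde x,t_1-\tau_1)+\Phi^+(h^+_{t_1-\tau_1}\tilde x,\tau_1)$; when the first return $h^+_{t_1}x_1$ lands back in $I_1$ (horizontal measure $\geq 2\lambda_1-1$, since $\lambda_1>1/2$) the second term equals $\Phi^+(x_1,\tau_1)$ by holonomy, and the cocycle identity collapses the whole expression to $\Phi^+(x_1,t_1)$, constant in $x_1\in I_1$. This is exact, not approximate.

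Second, you identify the lower bound $\sigma_n^2\gtrsim\varepsilon_n e^{2\theta s_n}$ as the ``main obstacle'' and assert it is essential, but give no argument, and I do not see how to prove it: nothing prevents $\Phi^+$ from also being small off the big rectangle, in which case $\sigma_n$ is small and, after normalizing to unit variance, the atom could sit at a bounded nonzero location. The paper does not need this bound; it uses instead the elementary Proposition \ref{atomatzero}: any probability measure on $\mathbb{R}$ with mean $0$ and variance $1$ carrying an atom of weight $\beta$ must place that atom within distance $\sqrt{(1-\beta)/\beta^2}$ of the origin. Combined with an atom of weight $\beta_n\to 1$ coming from Proposition \ref{bigatom}, this forces $\mm[\Phi^+,\tau_n]\Rightarrow\delta_0$ unconditionally, without any lower estimate on the variance of $\Phi^+(\cdot,\tau_n)$. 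That moment inequality, not a variance lower bound, is the missing ingredient.
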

Theorem \ref{limthmmoduli} now implies the following
\begin{corollary} Let $\mathbb{P}$ be an ergodic
${\bf g}_s$-invariant measure on $\mathcal{H}$ satisfying
Assumption \ref{aczero}. Then for $\mathbb{P}$-almost
every ${\bf{X}}\in\mathcal{H}$ there exists a sequence
$s_n\in\mathbb{R}_+$ such that for any $f\in
Lip_{w,0}^+({\bf{X}})$ satisfying $\Phi_f^+\neq 0$ we have
$$\mathfrak{m}[f,s_n; 1]\Rightarrow\delta_0\  \mathrm{in} \   {\mathfrak M}({\mathbb R}) \ \mathrm{as}\
n\to\infty.$$

Consequently, if $f\in Lip_{w,0}^+({\bf{X}})$ satisfies
$\Phi_f^+\neq0,$ then the family of measures
$\mathfrak{m}[f,\tau;1] $ does not converge in ${\mathfrak M}({\mathbb R})$
and the family of measures $\mathfrak{m}[f,\tau] $ does not converge in ${\mathfrak M}(C[0,1])$
as $\tau\to\infty.$
\end{corollary}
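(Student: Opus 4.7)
The strategy is to combine Theorem \ref{limthmmoduli} with the preceding proposition on cocycle concentration to transfer the $\delta_0$-limit from cocycles to ergodic integrals, and then to invoke Poincar\'e recurrence of the skew-product flow ${\bf g}_s^{{\bf S}^{(i)}}$ in order to produce a second subsequence whose weak limit is distinct from $\delta_0$.

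First I would fix a typical ${\bf X}\in\mathcal{H}$ on which both the preceding proposition and Theorem \ref{limthmmoduli} hold, and let $\tau_n\to\infty$ be the sequence supplied by the preceding proposition; set $s_n=\log\tau_n$. Given any $f\in Lip_{w,0}^+({\bf X})$ with $\Phi_f^+\neq 0$, let $i=i(f)$ and $v_f\in E^u_i$ be the invariants of Theorem \ref{limthmmoduli}; the unit cocycle $\Phi^+_{v_f}=\mathcal{I}_{\bf X}^+(v_f)\in\mathfrak{B}^+({\bf X})$ is nonzero. Theorem \ref{limthmmoduli} yields
$$d_{LP}\bigl(\mathfrak{m}[f,s_n],\, \mathcal{D}^+_i({\bf g}_{s_n}^{{\bf S}^{(i)}}({\bf X},v_f))\bigr)\leq C({\bf X},{\bf g}_{s_n}{\bf X})\,e^{-\alpha s_n}.$$
Pushing forward by the $1$-Lipschitz evaluation $\xi\mapsto\xi(1)$ reduces this to the corresponding estimate for the one-dimensional marginals. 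Unpacking the definition of $\mathcal{D}^+_i$ and using the Teichm\"uller scaling $\Phi^+_{{\bf A}(s,{\bf X})v_f,\,{\bf g}_s{\bf X}}(y,\tau)=\Phi^+_{v_f,{\bf X}}(y,e^s\tau)$ identifies the $\tau=1$ marginal of $\mathcal{D}^+_i({\bf g}_{s_n}^{{\bf S}^{(i)}}({\bf X},v_f))$ with $\mathfrak{m}[\Phi^+_{v_f},e^{s_n}]=\mathfrak{m}[\Phi^+_{v_f},\tau_n]$. By the preceding proposition this converges weakly to $\delta_0$, and so therefore does $\mathfrak{m}[f,s_n;1]$. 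Crucially, the sequence $s_n$ depends only on ${\bf X}$, not on $f$.

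For the non-convergence I would exhibit a second subsequence with a cluster point different from $\delta_0$. By Lusin's theorem the measurable map $\mathcal{D}^+_i:{\bf S}^{(i)}\mathcal{H}\to\mathfrak{M}_1$ is continuous on a compact set $K$ of positive measure with respect to the natural lift of $\mathbb{P}$ to ${\bf S}^{(i)}\mathcal{H}$. Poincar\'e recurrence for the measure-preserving flow ${\bf g}_s^{{\bf S}^{(i)}}$ then provides, for almost every starting point, a sequence $s_n'\to\infty$ along which ${\bf g}_{s_n'}^{{\bf S}^{(i)}}({\bf X},v_f)\to ({\bf X}_\infty,v_\infty)\in K$; continuity on $K$ gives $\mathcal{D}^+_i({\bf g}_{s_n'}^{{\bf S}^{(i)}}({\bf X},v_f))\Rightarrow\mathcal{D}^+_i({\bf X}_\infty,v_\infty)$, whose $\tau=1$ marginal has variance $1$ by construction and is therefore not $\delta_0$. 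Applying Theorem \ref{limthmmoduli} again transfers this convergence to $\mathfrak{m}[f,s_n';1]$. Thus $\mathfrak{m}[f,\tau;1]$ admits two distinct weak cluster points and cannot converge in $\mathfrak{M}(\mathbb{R})$; since evaluation at $\tau=1$ is continuous from $\mathfrak{M}(C[0,1])$ to $\mathfrak{M}(\mathbb{R})$, the family $\mathfrak{m}[f,\tau]$ also fails to converge.

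The principal technical step is the identification of the $\tau=1$ marginal of $\mathcal{D}^+_i({\bf g}_s^{{\bf S}^{(i)}}({\bf X},v_f))$ with $\mathfrak{m}[\Phi^+_{v_f},e^s]$ on the original ${\bf X}$; once the Teichm\"uller time rescaling and the $|{\bf A}(s,{\bf X})v_f|$ normalization in $\mathcal{D}^+_i$ are matched up, the remainder is a mechanical combination of Theorem \ref{limthmmoduli}, the preceding proposition, and a standard recurrence-plus-Lusin argument.
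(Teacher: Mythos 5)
The first part of your proof — passing to the $\tau=1$ marginal, identifying the marginal of $\mathcal{D}_i^+({\bf g}_{s}^{{\bf S}^{(i)}}({\bf X},v_f))$ with $\mathfrak{m}[\Phi^+_{v_f},e^{s}]$ via the Teichm\"uller scaling, and invoking the preceding proposition — is correct and is essentially the paper's argument; the paper simply cites the preceding proposition and the Limit Theorem without spelling out the scaling, but the content is the same.

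The second part has a gap. You invoke Poincar\'e recurrence for the lifted flow ${\bf g}_s^{{\bf S}^{(i)}}$ on the sphere bundle, which (for some unspecified invariant measure on the bundle) yields a full-measure set of starting points whose orbits return to the Lusin compact $K$. But the corollary must hold for \emph{every} $f\in Lip_{w,0}^+({\bf X})$ with $\Phi_f^+\neq0$, and hence for \emph{every} unit vector $v_f\in E^u_{i(f)}$. When $\dim E^u_i>1$ the sphere fibre is positive-dimensional and the null set excluded by Poincar\'e recurrence over the fibre is not controlled: the specific direction $v_f$ produced by your function $f$ may well lie in it. (You also do not say which invariant measure on ${\bf S}^{(i)}\HH$ you are using; Lusin's theorem and Poincar\'e recurrence both require one.) The paper circumvents this entirely by working at the level of the base flow: it shows that for almost every $\oomega$ the map $\Phi^+\mapsto\mathfrak{m}[\Phi^+,1]$ is continuous on the unit sphere $\{|\Phi^+|=1\}$ (uniform convergence on spheres in the Oseledets theorem), so the image is compact and the function
$$\kappa(\oomega)=\inf_{|\Phi^+|=1}d_{LP}\bigl(\mathfrak{m}[\Phi^+,1],\delta_0\bigr)$$
is positive and measurable. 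One then picks $\kappa_0>0$ with $\Prob(\kappa>\kappa_0)>0$, applies ergodicity of the flow \emph{on $\HH$} to get return times, and at those times the distance to $\delta_0$ is $\geq\kappa_0$ \emph{uniformly over all unit cocycles} — in particular for the one coming from $v_f$. This uniformity in the sphere direction is exactly what your Lusin/recurrence argument fails to deliver. To repair your proof you should replace the sphere-bundle recurrence by this compactness-of-the-fibre-image plus base-space ergodicity argument.
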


\subsection{The mapping into cohomology.}
In this subsection we show that for an arbitrary  abelian differential ${\bf X}=(M, \omega)$ the  map
$$
{\check {\mathcal I}}_{\bf X}:\BB_c^+(M, \omega)\to H^1(M, {\mathbb R})
$$
given by (\ref{maptocohomology}) is indeed  well-defined.

\begin{proposition}
\label{acthomology}
Let $\gamma_i$, $i=1, \dots, k$,  be rectangular closed curves such that the cycle $\sum_{i=1}^k \gamma_i$
is homologous to $0$. Then for any $\Phi^+\in\BB_c^+$ we have
$$
\sum_{i=1}^{k} \Phi^+(\gamma_i)=0.
$$
\end{proposition}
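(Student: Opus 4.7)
The plan is to reduce the general statement to the case of a single admissible rectangle, where it follows at once from holonomy invariance, and then to splice via a cellular decomposition of $M$ into admissible rectangles.

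First, I would verify the base case. For any admissible rectangle $\Pi=\Pi(x,t_1,t_2)$, its boundary $\partial\Pi$ is a rectangular curve consisting of two vertical and two horizontal arcs. Only the vertical arcs contribute to $\Phi^+(\partial\Pi)$, and with the induced boundary orientation they give
\[
\Phi^+(\partial\Pi) \;=\; \Phi^+(x,t_1) - \Phi^+(h_{t_2}^-x,t_1),
\]
which vanishes by condition 3 of Assumption \ref{bplusx}. Thus $\Phi^+$ kills the boundary of every admissible rectangle.

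Next, I would construct a finite cellular decomposition of $M$ whose 2-cells are admissible rectangles $R_1,\dots,R_n$, whose 0-skeleton contains the zeros of $\omega$, and whose 1-skeleton contains every vertical and horizontal arc $\gamma_i^\pm$ making up the curves $\gamma_1,\dots,\gamma_k$. Such a decomposition is obtained from a Veech zippered-rectangle (Markov) partition for the vertical flow over a horizontal transversal, refined by further cutting so that every $\gamma_i^-$ lies inside the transversal and every $\gamma_i^+$ is a concatenation of vertical edges of towers.

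Finally, in this cellular complex the hypothesis that $\sum_i\gamma_i$ is null-homologous forces
\[
\sum_{i=1}^{k}\gamma_i \;=\; \partial\Bigl(\sum_k n_k R_k\Bigr)
\]
as cellular 1-chains, for some integers $n_k$. Extending $\Phi^+$ additively to rectangular 1-chains (consistent with condition 1 of Assumption \ref{bplusx}) and applying it to both sides, the right-hand side equals $\sum_k n_k\,\Phi^+(\partial R_k)=0$ by the base case, while the left-hand side equals $\sum_i\Phi^+(\gamma_i)$, because horizontal edges never contribute to $\Phi^+$ and each internal vertical edge shared by two adjacent rectangles is traversed twice with opposite orientations and cancels.

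The main obstacle is the construction in the second step: one must arrange a cellular decomposition into admissible rectangles that embeds the given rectangular curves in its 1-skeleton without destroying admissibility of any 2-cell, with the zeros of $\omega$ sitting at vertices, and with orientations matched so that the additivity of $\Phi^+$ faithfully tracks the cellular boundary. The flexibility of the Rauzy--Veech construction (choice of the horizontal transversal and freedom to subdivide columns) provides the room needed to carry this out.
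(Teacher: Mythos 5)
Your overall strategy — kill boundaries of rectangles, then express a null-homologous rectangular cycle as a boundary of a $2$-chain built from rectangles — is in the same spirit as the paper's proof, but the paper avoids the construction your argument hinges on, and I don't think you can carry out step two in the form you describe.

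First, the clause ``whose 2-cells are admissible rectangles \dots whose 0-skeleton contains the zeros of $\omega$'' is internally contradictory. An admissible rectangle, by definition, has closure disjoint from the zero set, while on a $2$-manifold every point, in particular every zero, lies in the closure of some $2$-cell. So near a zero you are forced to use weakly admissible rectangles. But condition 3 of Assumption \ref{bplusx} is stated only for admissible rectangles, and it is not an immediate consequence that $\Phi^+$ vanishes on the boundary of a weakly admissible cell; one has to push the rectangle slightly inward and invoke the continuity hypothesis (the defining property of $\BB_c^+$) to pass to the limit. This is not a trivial omission: it is exactly the part of the argument that makes the statement specific to $\BB_c^+$, and it is precisely the mechanism the paper uses. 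The paper's proof works in a fundamental polygon, reduces by Jordan's theorem to a simple rectangular curve bounding a disc, decomposes the disc into genuinely admissible rectangles together with a small $4(\kappa_i+1)$-gon around each zero $p_i$ whose sides have length at most $\delta$, and then lets $\delta \to 0$ using continuity to kill the polygon contributions. Nothing needs to be a CW complex and no fixed decomposition is required.

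Second, even granting weak admissibility, the claim that the finitely many horizontal and vertical arcs of the $\gamma_i$ can be placed in the $1$-skeleton of a single finite cellular decomposition by ``flexibility of Rauzy--Veech'' is not justified and is the real crux of your route. The horizontal pieces $\gamma_i^-$ need not lie on the chosen transversal, so ``refine so that every $\gamma_i^-$ lies inside the transversal'' does not parse; and if you instead try to subdivide the zippered rectangles by the leaves through the $\gamma_i^\pm$, the cuts propagate to neighboring rectangles and you must control the resulting T-junctions to get an honest CW structure in which $\sum_i\gamma_i$ is a cellular $1$-cycle. That is a genuine combinatorial construction that needs to be carried out, not waved at; it is considerably more work than the paper's $\varepsilon$-approximation, which never requires the $\gamma_i$ to be compatible with any partition.

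To summarize: the base case and the homological bookkeeping in your step three are fine, but step two as written both contradicts admissibility and omits the continuity argument at the zeros, and the decomposition it posits is not constructed. The paper's route via a fundamental polygon, Jordan's theorem for simple curves, and shrinking polygons around zeros accomplishes the same reduction with much less machinery, and explicitly isolates the one place where continuity of $\Phi^+$ is needed.
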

Informally, Proposition \ref{acthomology} states that the {\it relative} homology of the surface with respect
to zeros of the form is not needed for the description of cocycles.
Arguments of this type for invariant measures
of translation flows go back to Katok's work \cite{katok}.

We proceed to the formal proof.
Take a fundamental polygon $\Pi$ for $M$ such that all its sides are
simple
closed rectangular curves on $M$. Let $\del\Pi$ be
the boundary of $\Pi$, oriented counterclockwise. By definition,
\begin{equation}
\label{deltapizeroo}
\Phi^+(\del\Pi)=0,
\end{equation}
since each curve of the boundary enters $\del\Pi$ twice and with
opposite signs.

We now deform the curves $\gamma_i$ to the boundary $\del\Pi$ of our fundamental polygon.
\begin{proposition}
\label{simplezero}
Let $\gamma\subset \Pi$ be a simple rectangular closed curve.
Then
$$
\Phi^+(\gamma)=0.
$$
\end{proposition}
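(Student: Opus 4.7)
The plan is to cut the disk bounded by $\gamma$ into finitely many admissible rectangles, observe that the horizontal-holonomy axiom forces $\Phi^+$ to vanish on the boundary of each rectangle (as a sum over its vertical sides), and then cancel all interior contributions when summing over the pieces. This follows Katok's strategy already flagged in the excerpt.

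Since $\gamma$ is a simple closed rectangular curve inside the fundamental polygon $\Pi$, the Jordan curve theorem provides a topological disk $R$ with $\partial R = \gamma$. The first step is to produce a finite decomposition $R = \bigcup_j R_j$ into admissible rectangles $R_j = \Pi(x_j, t_1^{(j)}, t_2^{(j)})$ with pairwise disjoint interiors. I would obtain this by extending vertical and horizontal line segments from each corner of $\gamma$ and from each zero of $\omega$ lying in $\overline{R}$, continuing each segment until it meets another such segment, another zero, or the curve $\gamma$. Since $\gamma$ has finitely many corners and $\omega$ has finitely many zeros, the procedure terminates after finitely many steps and yields the required partition.

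For each $R_j$, condition 3 of Assumption~\ref{bplusx} gives
$$
\Phi^+(x_j, t_1^{(j)}) = \Phi^+(h_{t_2^{(j)}}^- x_j, t_1^{(j)}),
$$
which means the two vertical sides of $R_j$ contribute with opposite signs to $\Phi^+(\partial R_j)$ when $\partial R_j$ is oriented counterclockwise; thus $\Phi^+(\partial R_j) = 0$ for every $j$. Summing over $j$, every vertical edge shared by two adjacent rectangles appears twice with opposite orientations and cancels, so only the vertical arcs of $\gamma$ survive on the right-hand side. Therefore
$$
\Phi^+(\gamma) = \sum_j \Phi^+(\partial R_j) = 0,
$$
which is the desired conclusion.

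The main obstacle is the decomposition step: one needs to check that the straight-line cuts from corners of $\gamma$ and from zeros of $\omega$ really produce rectangles whose closures contain no zero of $\omega$, so that each $R_j$ is genuinely admissible and condition 3 applies. Because the zero set is finite and the singularities are isolated, any piece whose closure accidentally touches a zero can be further subdivided by extending an extra vertical or horizontal segment through that zero, and the procedure still terminates in finitely many steps. Once the decomposition is in place, the remainder of the argument is a formal signed-edge cancellation.
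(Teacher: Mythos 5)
Your argument never uses condition 2 (continuity) of the cocycle, and that is the sign of a genuine gap: the finite decomposition you rely on does not exist in general. If the disk $N$ bounded by $\gamma$ contains a zero $p$ of $\omega$ in its interior, then in \emph{any} finite partition of $N$ into rectangles the point $p$ lies in the closure of at least one piece, and a rectangle whose closure contains a zero is by definition not admissible, so condition 3 of Assumption \ref{bplusx} cannot be applied to it. Your suggested repair --- subdividing further by running an extra vertical or horizontal segment through the offending zero --- does not help: cutting along the separatrices of $p$ produces sectors (there are $2(\kappa+1)$ horizontal and $2(\kappa+1)$ vertical separatrices at a zero of order $\kappa$, so the local geometry is a cone of angle $2\pi(\kappa+1)$, not a product neighbourhood) whose closures still contain $p$, and no finite number of further cuts changes this. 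Thus the ``formal signed-edge cancellation'' step is fine where it applies, but the decomposition it is applied to cannot consist solely of admissible rectangles, and on the bad pieces you have no identity forcing the boundary contribution to vanish.

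This is exactly the point the paper's proof is designed to handle: it partitions $N$ into admissible rectangles \emph{together with} one small $4(\kappa_i+1)$-gon ${\tilde \Pi}_i^{(\varepsilon)}$ around each zero $p_i$, all of whose sides have length at most $\delta$. The admissible rectangles contribute $0$ by holonomy invariance, while the boundary of each small polygon contributes at most $C(\kappa_1,\dots,\kappa_r)\varepsilon$ by the continuity assumption (the weak form of condition 2 defining $\BB_c^+$), and letting $\varepsilon\to 0$ gives $\Phi^+(\gamma)=0$. Some appeal to continuity is unavoidable here: with only finite additivity and holonomy invariance one would be working with data naturally attached to the homology relative to the zeros, and the whole content of the proposition is that continuity kills the contribution of the zeros. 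To fix your proof, keep your cancellation scheme for the admissible part of the partition, but isolate each zero inside an arbitrarily small polygon with horizontal and vertical sides and estimate its boundary term via continuity rather than trying to eliminate it by subdivision.
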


Proof of Proposition \ref{simplezero}.

We may assume that $\gamma$ is oriented counterclockwise and does not
contain zeros of the form $\omega$. By Jordan's theorem, $\gamma$ is the boundary of a domain
$N\subset\Pi$. Let $p_1, \dots, p_r$ be zeros of $\omega$ lying inside
$N$; let $\kappa_i$ be the order of $p_i$. Choose an arbitrary
$\varepsilon>0$, take $\delta>0$ such that $|\Phi^+(\gamma)|\leq\varepsilon$ as soon as the length of $\gamma$ does not exceed $\delta$
 and consider a partition of $N$ given by
\begin{equation}
N=\Pi_1^{(\varepsilon)}\bigsqcup\dots\bigsqcup
\Pi_n^{(\varepsilon)}\bigsqcup
{\tilde \Pi}_1^{(\varepsilon)}\bigsqcup\dots\bigsqcup
{\tilde \Pi}_{r}^{(\varepsilon)},
 \end{equation}
where all
$\Pi_i^{(\varepsilon)}$ are admissible rectangles and
${\tilde \Pi}_i^{(\varepsilon)}$ is a $4(\kappa_i+1)$-gon
containing $p_i$ and no other zeros and satisfying the additional
assumption that all its sides are no longer than $\delta$.
Let $\del\Pi_i^{(\varepsilon)}$, $\del{\tilde \Pi}_i^{(\varepsilon)}$
 stand for the boundaries of our polygons oriented counterclockwise.

We have
$$
\Phi^+(\gamma)=\sum \Phi^+(\del\Pi_i^{(\varepsilon)})+
\sum\Phi^+(\del{\tilde \Pi}_i^{(\varepsilon)}).
$$
In the first sum, each term is equal to $0$ by definition of $\Phi^+$,
whereas the second sum does not exceed, in absolute value, the quantity
$$
C(\kappa_1, \dots, \kappa_r)\varepsilon,
$$
where $C(\kappa_1, \dots, \kappa_r)$ is a positive constant depending only
on $\kappa_1, \dots, \kappa_r$.
Since $\varepsilon$ may be chosen arbitrarily small, we have
$$
\Phi^+(\gamma)=0,
$$
which is what we had to prove.

For $A,B\in\del\Pi$, let $\del\Pi_A^B$ be the part of $\del\Pi$ going
counterclockwise from $A$ to $B$.
\begin{proposition}
Let $A,B\in\del\Pi$ and let $\gamma\subset\Pi$ be an
arbitrary rectangular curve going from $A$ to $B$.
Then
$$
\Phi^+(\del\Pi_A^B)=\Phi^+(\gamma).
$$
\end{proposition}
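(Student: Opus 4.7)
My plan is to form the closed rectangular curve $\sigma$ by concatenating $\gamma$ (from $A$ to $B$) with $\del\Pi_A^B$ traversed in reverse (from $B$ back to $A$). Since $\Phi^+$ is additive on concatenations of horizontal and vertical arcs and changes sign under orientation reversal, this will give
\[
\Phi^+(\sigma) = \Phi^+(\gamma) - \Phi^+(\del\Pi_A^B),
\]
reducing the statement to the assertion $\Phi^+(\sigma) = 0$, and noting that $\sigma \subset \overline{\Pi}$.

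Next, if $\sigma$ happens to be a simple closed curve, I would appeal directly to Proposition \ref{simplezero}: its proof only uses that $\sigma$ is simple and rectangular and bounds a Jordan region which may be partitioned into admissible rectangles and small $4(\kappa_i+1)$-gons around interior zeros of $\omega$, and this argument is unaffected by whether part of $\sigma$ lies on $\del\Pi$. For the general case I would induct on the number of transverse self-intersections of $\sigma$: at any such intersection point $p$, the curve $\sigma$ splits into two closed rectangular sub-curves $\sigma_1, \sigma_2 \subset \overline{\Pi}$ with strictly fewer self-intersections, and the additivity of $\Phi^+$ combined with the inductive hypothesis yields $\Phi^+(\sigma) = \Phi^+(\sigma_1) + \Phi^+(\sigma_2) = 0$.

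The main obstacle is that rectangular arcs of $\gamma$ may a priori overlap along positive-length segments with arcs of $\del\Pi$, or with arcs of $\gamma$ itself, because all arcs involved are horizontal or vertical, and such overlaps make the notion of ``splitting at a self-intersection'' ambiguous. Before running the induction I would therefore use the continuity of $\Phi^+$ from part (2) of Assumption \ref{bplusx} to replace $\gamma$ by a nearby rectangular curve, losing only an arbitrarily small quantity $\varepsilon$ in $\Phi^+(\gamma)$, so that the new $\gamma$ meets $\del\Pi$ only at $A, B$, avoids the zeros of $\omega$, and has only finitely many isolated transverse self-intersections. The induction then applies cleanly to the perturbed curve, and letting $\varepsilon \to 0$ concludes the argument.
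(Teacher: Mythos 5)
Your argument is correct and follows essentially the paper's route: the paper likewise closes $\gamma$ up against the boundary of the fundamental polygon, reduces to the simple-curve case by splitting off loops at self-intersections (each of which has $\Phi^+$ equal to zero by Proposition \ref{simplezero}), and then applies Proposition \ref{simplezero} once more to the resulting simple closed curve. Your extra preliminary perturbation step, which eliminates positive-length overlaps among the horizontal and vertical arcs and keeps the closed curve away from the zeros of $\omega$, is a careful refinement the paper glosses over; it is sound because horizontal holonomy invariance of $\Phi^+$ makes the value on a vertical arc unchanged under small horizontal shifts through admissible rectangles, while continuity (condition 2 of Assumption \ref{bplusx}) controls the contribution of the short pieces near the singularities.
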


We may assume that $\gamma$ is simple in $\Pi$, since,
by Proposition \ref{simplezero}, self-intersections of $\gamma$ (whose
number is finite) do not change the value of $\Phi^+(\gamma)$. If $\gamma$
is simple, then $\gamma$ and $\Phi^+(\del\Pi_B^A)$ together form a simple
closed curve, and the proposition follows from Proposition
\ref{simplezero}.

\begin{corollary}
If $\gamma\subset \Pi$ is a rectangular curve which yields a closed curve
in $M$
homologous to zero in $M$, then
$$
\Phi^+(\gamma)=0.
$$
\end{corollary}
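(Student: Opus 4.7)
The plan is to reduce the computation of $\Phi^+(\gamma)$ to arcs on $\partial \Pi$, for which we already control the answer via $\Phi^+(\partial \Pi)=0$, and then to use the null-homology hypothesis to force the residual sum to vanish.

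First I would decompose $\gamma$ in $\Pi$ into a finite union of arcs $\gamma_1,\dots,\gamma_n$ whose endpoints $A_i, B_i$ lie on $\partial \Pi$, labelled cyclically so that the endpoint $B_i\in\partial\Pi$ is identified in $M$ with $A_{i+1}\in\partial\Pi$ under the side-pairing of the fundamental polygon. Any components of $\gamma$ contained in the open interior of $\Pi$ contribute $0$ by Proposition \ref{simplezero} and may be discarded. By the preceding proposition each remaining arc satisfies $\Phi^+(\gamma_i)=\Phi^+(\partial \Pi_{A_i}^{B_i})$, so that
$$\Phi^+(\gamma)=\sum_{i=1}^n \Phi^+(\partial \Pi_{A_i}^{B_i}).$$

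Next I would fix a basepoint $P_0\in\partial \Pi$ and introduce the potential $F(P):=\Phi^+(\partial \Pi_{P_0}^P)$. Because $\Phi^+(\partial \Pi)=0$, the function $F$ is single-valued on the closed curve $\partial \Pi$. Telescoping and reindexing cyclically then gives
$$\Phi^+(\gamma)=\sum_{i=1}^n\bigl(F(B_i)-F(A_i)\bigr)=\sum_{i=1}^n\bigl(F(B_i)-F(A_{i+1})\bigr),$$
so that $\Phi^+(\gamma)$ equals the total jump of $F$ across the successive side-identifications $B_i\sim A_{i+1}$ traversed by $\gamma$.

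Finally I would identify this total jump with the pairing of (the cohomology class of) $\Phi^+$ against $[\gamma]\in H_1(M,\mathbb{R})$. Each jump $F(B_i)-F(A_{i+1})$ is equal to $\Phi^+$ evaluated on a short rectangular loop in $M$ crossing the identified pair of sides containing $B_i$ and $A_{i+1}$; summing over $i$ yields $\Phi^+$ evaluated on a rectangular $1$-cycle homologous in $M$ to $\gamma$ and supported on the identification pattern of $\partial \Pi$. Under the assumption that $[\gamma]=0$, the single homological relation $[\partial \Pi]=0$ makes the resulting combination of side-classes the zero class, and hence the sum of jumps vanishes, giving $\Phi^+(\gamma)=0$.

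The main obstacle is the last step: the bookkeeping identifying the cumulative jump with a genuine pairing against $H_1(M,\mathbb{R})$ and carrying out the cancellation using the single polygonal relation. A technically cleaner alternative is an inductive reduction: attach arcs along $\partial \Pi$ to $\gamma$ to split it into simple closed curves in $\Pi$ without changing $\Phi^+(\gamma)$ (using path-independence and $\Phi^+(\partial \Pi)=0$), and then conclude by Proposition \ref{simplezero}; the null-homology hypothesis is what guarantees that such a reduction to simple closed curves in $\Pi$ is achievable.
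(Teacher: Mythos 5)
Your proposal is correct and follows essentially the same route as the paper: use the preceding proposition to replace the portions of $\gamma$ interior to $\Pi$ by arcs of $\partial\Pi$ (your potential $F$ and telescoping sum are just explicit bookkeeping for this reduction), so that $\Phi^+(\gamma)$ becomes the value of $\Phi^+$ on a null-homologous cycle supported on $\partial\Pi$. The step you flag as the main obstacle is exactly what the paper asserts to finish: since the sides of the fundamental polygon carry only the single relation coming from the boundary word, a cycle supported on $\partial\Pi$ that is homologous to zero in $M$ must be a multiple of the cycle $\partial\Pi$ itself, and $\Phi^+(\partial\Pi)=0$ because each side occurs twice with opposite signs, so the sum of jumps indeed vanishes.
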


Indeed, by the previous proposition we need only consider the case when $\gamma\subset
\del\Pi$. Since $\gamma$ is homologous to $0$ by assumption, the cycle $\gamma$ is in fact
a multiple of the cycle $\del\Pi$, for which the statement follows from (\ref{deltapizeroo}).

\subsection{Markovian sequences of partitions.}
\subsubsection{The Markov property}
Let $(M, \omega)$ be an abelian differential.
A rectangle  $\Pi(x, t_1, t_2)=\{h^+_{\tau_1} h^{-}_{\tau_2}x, 0\leq \tau_1< t_1, 0\leq \tau_2< t_2\}$
is called {\it weakly admissible} if for all sufficiently small  $\varepsilon>0$ the rectangle
$\Pi(h_{\varepsilon}^+h_{\varepsilon}^-x, t_1-\varepsilon, t_2-\varepsilon)$ is admissible
(in other words, the boundary of $\Pi$ may contain zeros of $\omega$ but the interior does not).

Assume that we are given a natural number $m$ and  a sequence of partitions $\pi_n$
\begin{equation}
\label{recpart}
\pi_n: M=\Pi_1^{(n)}\sqcup \dots \Pi_m^{(n)}, \ n\in {\mathbb Z},
\end{equation}
where $\Pi_i^{(n)}$ are weakly admissible rectangles.

The sequence $\pi_n$ of partitions of $M$ into $m$ weakly admissible rectangles
will be called a {\it Markovian sequence} of partitions if
for any $n_1, n_2\in {\mathbb Z}$, $i_1,i_2\in \{1,\dots, m\}$, the rectangles
$\Pi_{i_1}^{(n_1)}$ and $\Pi_{i_2}^{(n_2)}$ intersect in a Markov way in the following precise sense.

Take a weakly admissible rectangle $\Pi(x, t_1, t_2)$ and decompose its boundary into four parts:
$$
\partial_h^1(\Pi)={\overline{ \{h^+_{t_1} h^{-}_{\tau_2}x,  0\leq \tau_2< t_2\}}};
$$
$$
\partial_h^0(\Pi)={\overline {\{ h^{-}_{\tau_2}x,  0\leq \tau_2< t_2\}}};
$$
$$
\partial_v^1(\Pi)={\overline {\{h^-_{t_2} h^{+}_{\tau_1}x,  0\leq \tau_1< t_1\}}};
$$
$$
\partial_v^0(\Pi)={\overline {\{h^{+}_{\tau_1}x,  0\leq \tau_1< t_1\}}}.
$$
The sequence of partitions $\pi_n$ has the {\it Markov property} if for any $n\in {\mathbb Z}$ and $i\in \{1, \dots, m\}$
there exist $i_1, i_2, i_3, i_4\in \{1, \dots, m\}$
such that
$$
\partial_h^1(\Pi_i^{(n)})\subset \partial_h^{1}\Pi_{i_1}^{(n-1)};
$$
$$
\partial_h^0(\Pi_i^{(n)})\subset \partial_h^{0}\Pi_{i_2}^{(n-1)};
$$
$$
\partial_v^1(\Pi_i^{(n)})\subset \partial_v^{1}\Pi_{i_3}^{(n+1)};
$$
$$
\partial_v^0(\Pi_i^{(n)})\subset \partial_v^{0}\Pi_{i_4}^{(n+1)}.
$$
\subsubsection{Adjacency matrices.}
To a Markovian sequence of partitions we assign the sequence of $m\times m$
{\it adjacency matrices} $A_n=A(\pi_n, \pi_{n+1})$ defined as follows:
$(A_n)_{ij}$ is the number of connected components of the intersection
$$
(\Pi_i^{(n)})\cap\Pi_{j}^{(n+1)}.
$$
 A Markovian sequence of partitions $\pi_n$ will be called an {\it exact} Markovian
sequence of partitions if
\begin{equation}
\label{minimality}
\lim\limits_{n\to\infty} \max\limits_{i=1, \dots,m} \nu^+(\partial_v\Pi_i^{(n)})=0; \
\lim\limits_{n\to\infty} \max\limits_{i=1, \dots,m} \nu^-(\partial_h\Pi_i^{(-n)})=0.
\end{equation}

For an abelian differential both whose vertical and horizontal flows are minimal, there always exists $m\in {\mathbb N}$
and a sequence of partitions (\ref{recpart}) having the Markov property
and satisfying the exactness condition (\ref{minimality}).
A suitably chosen Markovian sequence of partitions will be essential for
the construction of finitely-additive measures in the following Section.

{\bf Remark.}
An exact Markovian sequence of partitions allows one
to identify our surface $M$ with the space of trajectories of
a non-autonomous Markov chain, or, in other words, a Markov compactum.
The horizontal and vertical foliations then become the asymptotic foliations of the
corresponding Markov compactum; the finitely-additive measures become
finitely-additive measures on one of the asymptotic foliations invariant under holonomy with respect to
the complementary foliation;
the vertical and horizontal flow  also admit a purely symbolic description
as flows along the leaves of the asymptotic foliations according to an order
induced by a Vershik's ordering on the edges of the graphs forming the Markov compactum.
The space of abelian differentials, or, more precisely,
the Veech space of zippered rectangles,
is then represented  as a subspace of the space of Markov compacta.
The space of Markov compacta is a space of bi-infinite sequences
of graphs and is therefore endowed with a natural shift
transformation.  Using Rauzy-Veech expansions of zippered rectangles, one
represents the Teichm{\"u}ller flow as a suspension flow over this shift.
The Kontsevich-Zorich cocycle
is then a particular case of the cocycle over the shift given by
consecutive adjacency matrices of the graphs forming
our Markov compactum. To an abelian differential,
random with respect to a probability measure invariant under
the Teichm{\"u}ller flow, one can thus assign a Markov compactum
corresponding to a sequence of graphs generated according to a stationary
process. The relation between Markov
compacta and abelian differentials is summarized in the table below.

The main theorems of this paper, Theorem \ref{multiplicmoduli} and Theorem \ref{limthmmoduli},
are particular cases of general theorems on the asymptotic behaviour of ergodic averages
of symbolic flows along asymptotic foliations of random Markov compacta;
these generalizations, which will be published in the sequel to this paper,
are proved in the preprint \cite{bufetov-lthm}.

\begin{center}
\begin{tabular}{|p{5.2cm}|p{5.2cm}|}
\hline
\begin{center}\textbf{Markov compacta}\end{center}&\begin{center}\textbf{Abelian differentials}\end{center}\\
\hline
Asymptotic foliations of
a Markov compactum
&
Horizontal and vertical foliations of an abelian differential\\

\hline
Finitely-additive measures on asymptotic foliations  of
a Markov compactum&The spaces $\mathfrak{B}^+$ and $\mathfrak{B}^-$ of H\"{o}lder cocycles\\

\hline
Vershik's automorphisms&Interval exchange transformations\\

\hline
Suspension flows over
Vershik's automorphisms&Translation flows on flat surfaces\\

\hline

The space of  Markov compacta&The moduli space of abelian differentials\\

\hline

The shift on the space of  Markov compacta&The Teichm{\"u}ller flow\\

\hline

The cocycle of adjacency matrices&The Kontsevich-Zorich cocycle\\

\hline
\end{tabular}
\end{center}

{\bf Acknowledgements.}
W. A.~Veech made the suggestion that G. Forni's invariant distributions for the vertical flow
should admit a description in terms of cocycles for the horizontal flow, and I am greatly
indebted to him. G.~Forni pointed out that cocycles are dual objects
to invariant distributions and suggested the analogy with F. Bonahon's work \cite{bonahon1};
I am deeply grateful to him.
I am deeply grateful to A.~Zorich for his kind explanations
of Kontsevich-Zorich theory and for discussions of the relation
between this paper and F.~Bonahon's work \cite{bonahon1}.
I am deeply grateful to H.~Nakada who pointed out the reference
to S.~Ito's work \cite{ito} and to B. ~Solomyak who pointed out the reference
to the work \cite{dumontkamae} of P.~Dumont, T.~Kamae and S.~Takahashi and the work \cite{kamae} of T.~Kamae.

I am deeply grateful to J.~Chaika, P.~Hubert, Yu.S.~Ilyashenko, H.~Kr{\"u}ger,
E.~Lanneau, S.~Mkrtchyan and R.~Ryham for many helpful suggestions on improving the presentation.
I am deeply grateful to A.~Avila, X.~Bressaud, B.M.~Gurevich, A.B.~Katok, A.V.~Klimenko, S.B.~Kuksin, C.~McMullen,  V.I.~Oseledets,
Ya.G.~Sinai, I.V.~Vyugin, J.-C.~Yoccoz for useful discussions. I am deeply grateful to
N. Kozin, D.~Ong, S. Sharahov and R. Ulmaskulov for typesetting
parts of the manuscript. Part of this paper was written while I was visiting the
Max Planck Institute of Mathematics in Bonn and the Institut Henri Poincar{\'e} in Paris.
During the work on this paper,
I was supported in part by an Alfred P. Sloan Research Fellowship, by the
Dynasty Foundation Fellowship and the IUM-Simons Fellowship, by Grant MK-6734.2012.1 of
the President of the Russian Federation,  by the Programme on Dynamical Systems and
Mathematical Control Theory of the
Presidium of the Russian Academy of Sciences, by the Edgar Odell Lovett Fund at Rice University,
by the RFBR-CNRS grant 10-01-93115, by the RFBR grant 11-01-00654 and
by the National Science Foundation under grant DMS~0604386.

\section{Construction of finitely-additive measures}

\subsection{Equivariant sequences of vectors}

Let ${\mathfrak{A}}$ be a bi-invariant sequence of invertible $m\times m$-matrices with non-negative
entries
$$\mathfrak{A}=(A_n),\ n\in\mathbb{Z}.$$
To a vector $v\in\mathbb{R}^n$ we assign the corresponding
$\mathfrak{A}$-{\it equivariant}
sequence ${\bf v}=(v^{(n)}),\
n\in \mathbb{Z},$ given by the formula
$$v^{(n)}=\begin{cases}A_{n-1}\dots A_0v,&n>0;\\
v,&n=0;\\ A_{-n}^{-1}\dots A_{-1}^{-1}v,&n<0\end{cases}$$

We now consider subspaces in $\mathbb{R}^{m}$ consisting of
vectors $v$
such that the corresponding equivariant subsequence
${\bf v}=v^{(n)}$ decays as $n$ tends to $-\infty$.

More formally,we write
\begin{gather*}
\mathfrak{B}_c^+(\mathfrak{A})=\{v\in\mathbb{R}^m:|v^{(-n)}|\to0
\ as\ n\to+\infty\},\\
\mathfrak{B}^+(\mathfrak{A})=\{v\in\mathbb{R}^m: \text{ there
exists }\ C>0,\alpha >0\text{ such that } |v^{(-n)}|\leqslant C
e^{-\alpha n}
\text{ for all } n\geqslant 0\}.
\end{gather*}

It will sometimes be convenient to identify a vector with the
corresponding equivariant sequence, and, slightly abusing
notation, we shall sometimes say that a given equivariant sequence
belongs to the space $\mathfrak{B}_c^+(\mathfrak{A})$ or $\mathfrak{B}^+(\mathfrak{A})$.

\subsection{A canonical system of arcs corresponding to a Markovian sequence of partitions}

As before, let $(M, \omega)$  be an abelian differential.
By an arc of the vertical flow we mean a set of the form
$\{h_t^+x,0\leqslant t\leqslant t_0\}.$ Such a set will also
sometimes be denoted $[x,x'],$ where $x'=h_{t_0}^+x.$

Let $\pi_n,\ n\in\mathbb{Z},$ be an exact Markovian sequence of
partitions $$\pi_n:
M=\Pi_1^{(n)}\sqcup\dots\sqcup\Pi_m^{(n)},n\in\mathbb{Z},$$ into
weakly admissible rectangles.
Take a rectangle
$\Pi_i^{(n)},n\in\mathbb{Z},i\in\{1,\dots,m\},$ and choose an
arbitrary arc $\gamma_i^{(n)}$ of the vertical foliation going all
the way from the lower boundary of $\Pi_i^{(n)}$ to the
upper  boundary. More formally, take
$x\in\partial_h^{(0)}\Pi_i^{(n)},$ take $t$ such that
$h_t^+x\in\partial_h^{(1)}\Pi_i^{(n)},$ and let $\gamma_i^{(n)}$
be the vertical arc $[x,h_t^+x]$. An arc $\gamma_i^{(n)}$ of this form will be called a {\it Markovian} arc.

The family of arcs $$\gamma_i^{(n)},  n\in\mathbb{Z},i\in\{1,\dots,m\},$$
will be called {\it a canonical system of arcs} assigned to the Markov sequence of partitions $\pi_n$.

Of course, there is freedom in the choice of specific arcs $\gamma_i^{(n)}$, but our constructions
will not depend on the specific choice of a canonical system.

Given a finitely-additive
measure $\Phi^+\in \mathfrak{B}_c^+(M,\omega),$ introduce a
sequence of vectors $v^{(n)}\in {\mathbb R}^m, n\in\mathbb{Z},$
by setting

\begin{equation}
\label{mestoseq}
v_i^{(n)}=\Phi^+(\gamma_i^{(n)}).
\end{equation}

Now let
$\mathfrak{A}=(A_n),n\in\mathbb{Z},$ $A_n=A(\pi_n,\pi_{n+1})$ be
the sequence of adjacency matrices of the sequence of partitions $\pi_n$, and
assume all $A_n$ to be invertible.

By the horizontal holonomy invariance, the value $v_i^{(n)}$ does
not depend on the
specific choice of the arc $\gamma_i^{(n)}$
inside the rectangle $\Pi_i^{(n)}$. Finite additivity of the
measure $\Phi^+$ implies
that the sequence
$v^{(n)},n\in\mathbb{Z}$, is $\mathfrak{A}$-equivariant.

Exactness of the sequence of partitions
$\pi_n,n\in\mathbb{Z}$,
implies that that the equivariant sequence $v^{(n)}$ corresponding
to a finitely-additive
measure
$\Phi^+\in\mathfrak{B}_c^+(X,\omega)$ satisfies
$v^{(0)}\in\mathfrak{B}_c^+(\mathfrak{A}).$

We have therefore obtained a map
$$\eval_0^+:\mathfrak{B}_c^+(X,\omega)\to
\mathfrak{B}_c^+(\mathfrak{A}).$$

It will develop that under certain natural additional assumptions
this map is indeed an isomorphism.

We now take an abelian differential $(M,\omega)$ whose vertical
flow is uniquely ergodic and show that if the heights of the
rectangles $\Pi_i^{(n)}$ decay exponentially as $n\to-\infty$,
then the map $\eval_0^+$ sends $\mathfrak{B}^+(X,\omega)$ to
$\mathfrak{B}^+(\mathfrak{A})$.
We proceed to precise
formulations.

Introduce a sequence $h^{(n)}=(h^{(n)}_1,\dots,h^{(n)}_m),$ by
setting $h^{(n)}_i$ to be the height
of the rectangle
$\Pi_i^{(n)},i=1,\dots,m.$

By the Markov property, the sequence $h^{(n)}$ is $\mathfrak{A}$-equivariant;
unique ergodicity of the vertical flow
and exactness of the sequence $\pi_n,n\in\mathbb{Z}$, imply
that
a positive $\mathfrak{A}$-equivariant sequence is unique up
to scalar multiplication.
By definition and, again, by exactness,
we have $$h^{(0)}\in\mathfrak{B}_c^+(\mathfrak{A}).$$

\begin{proposition}
If $h^{(0)}\in\mathfrak{B}^+(\mathfrak{A}),$ then $$\eval_0^+(\mathfrak{B}^+(X,\omega))\subset
\mathfrak{B}^+(\mathfrak{A})$$.
\end{proposition}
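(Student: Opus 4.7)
The plan is to translate the Hölder bound on $\Phi^+$ directly into the exponential decay required for membership in $\mathfrak{B}^+(\mathfrak{A})$, using the assumption that the height vector $h^{(0)}$ already has this exponential decay along the backward direction. The key observation is that each Markovian arc $\gamma_i^{(-n)}$ is, by construction, a vertical arc of length exactly $h_i^{(-n)}$, so applying $\Phi^+$ to it is literally evaluating the cocycle on a vertical segment of controlled length.

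First I would unpack Assumption \ref{bplusx}(2): there exist $t_0 > 0$ and $\theta > 0$ such that $|\Phi^+(x,t)| \leq |t|^\theta$ whenever $|t| < t_0$. Next I would invoke the hypothesis $h^{(0)} \in \mathfrak{B}^+(\mathfrak{A})$ to obtain constants $C_0 > 0$, $\alpha_0 > 0$ with $|h^{(-n)}| \leq C_0 e^{-\alpha_0 n}$ for all $n \geq 0$; coordinatewise this yields $h_i^{(-n)} \leq C_0 e^{-\alpha_0 n}$ for each $i = 1, \ldots, m$. For $n$ large enough that $C_0 e^{-\alpha_0 n} < t_0$, the Hölder bound applies to each Markovian arc:
$$
|v_i^{(-n)}| = |\Phi^+(\gamma_i^{(-n)})| \leq (h_i^{(-n)})^\theta \leq C_0^\theta \, e^{-\alpha_0 \theta\, n}.
$$
Taking the maximum over $i$ gives $|v^{(-n)}| \leq C\, e^{-\alpha n}$ with $\alpha = \alpha_0 \theta$ and a suitable constant $C$ absorbing the finitely many initial terms, which is exactly the defining condition of $\mathfrak{B}^+(\mathfrak{A})$.

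The only step that requires care, rather than a routine bound, is verifying that the vector $v^{(0)}$ produced by $\eval_0^+$ is well-defined and that the equivariant sequence it generates under $\mathfrak{A}$ really coincides with the sequence $v_i^{(n)} = \Phi^+(\gamma_i^{(n)})$ for all $n$, including negative $n$. This was already established in the preceding discussion: horizontal holonomy invariance of $\Phi^+$ makes $v_i^{(n)}$ independent of the choice of representative arc inside $\Pi_i^{(n)}$, and finite additivity together with the Markov property of the partitions gives the equivariance relation $v^{(n+1)} = A_n v^{(n)}$. Thus the bound above on $|v^{(-n)}|$ is the same quantity that appears in the definition of $\mathfrak{B}^+(\mathfrak{A})$, and the inclusion $\eval_0^+(\mathfrak{B}^+(X,\omega)) \subset \mathfrak{B}^+(\mathfrak{A})$ follows. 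No further obstacle should arise; the proposition is essentially a direct transfer of the Hölder exponent $\theta$ of the cocycle, amplified by the exponential rate $\alpha_0$ of the backward height decay, into the exponential decay rate of the equivariant sequence.
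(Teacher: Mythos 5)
Your proof is correct and follows essentially the same route as the paper's: both translate the H\"older bound from Assumption~\ref{bplusx}(2) applied to the Markovian arcs into exponential decay of $v^{(-n)}$, using the exponential decay of $h^{(-n)}$ provided by the hypothesis $h^{(0)}\in\mathfrak{B}^+(\mathfrak{A})$, with the resulting rate $\alpha_0\theta$ and a constant absorbing the finitely many small $n$. Your write-up is slightly more explicit than the paper's about the coordinate-wise passage and about the threshold $n$ making $h_i^{(-n)}<t_0$, but the argument is the same.
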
Proof.
Let a canonical family of vertical arcs $\gamma_i^{(n)}$ corresponding to the Markovian sequence of partitions $\pi_n$ be chosen
as above. The condition $h^{(0)}\in\mathfrak{B}^+(\mathfrak{A})$
precisely means the existence of constants $C>0,\alpha>0$ such
that for all $n\in\mathbb{N},i=1,\dots,m,$ we have
$\nu^+(\gamma_i^{(n)})\leqslant C e^{-\alpha n}.$

Now if $\Phi^+\in \mathfrak{B}^+(M,\omega),$ then there exists $\theta>0$
such that for all sufficiently large n and all $i=1,\dots,m$ we
have
$$|\Phi^+(\gamma_i^{(n)})|
\leqslant(\nu^+(\gamma_i^{(n)}))^{\theta}.$$

Consequently, $|\Phi^+(\gamma_i^{(n)})|\leqslant
\tilde{C}e^{-\tilde{\alpha}n}$ for some
$\tilde{C}>0,\tilde{\alpha}>0$ and all
$n\in\mathbb{N},i=1,\dots,m,$ which is what we had to prove.

The scheme of the proof of the reverse inclusion can informally be
summarized as follows. We start with an equivariant sequence
$v^{(n)}\in\mathfrak{B}^+(\mathfrak{A}),$ and we wish to recover a
measure $\Phi^+\in\mathfrak{B}^+(X,\omega).$ The equivariant
sequence itself determines the values of the finitely-additive
measure $\Phi^+$ on all {\it Markovian} arcs, that is, arcs going
from the lower horizontal to the upper horizontal boundary of a
rectangle of one of the proposition $\pi_n$.
To extend the measure $\Phi^+$ from Markovian arcs to all vertical
arcs, we approximate an arbitrary arc by Markovian ones (similar approximation lemmas were used by
Forni \cite{F2} and Zorich \cite{Z}).
The
approximating series will be seen to converge because the number
of terms at each stage of the approximation grows at most
sub-exponentially, while the contribution of each term decays
exponentially. For this argument to work, we assume that the norms
of the matrices $A_n$ grow sub-exponentially.

The measure $\Phi^+$ is thus extended to all vertical arcs.
To check the H{\"o}lder property for $\Phi^+$, one needs additionally to
assume that the heights of the Markovian rectangles
$\Pi_i^{(n)}$ decay not faster than exponentially.
More precise Oseledets-type assumptions on the sequence ${\mathfrak A}$
of adjacency matrices are used in order to obtain lower bounds on the
H{\"o}lder exponent for $\Phi^+$ and to derive the logarithmic asymptotics
of the growth of $\Phi^+$ at infinity.
All our assumptions
are verified for Markov sequences of partitions induced by
Rauzy-Veech expansions  of zippered rectangles as soon as one uses
the Veech method of considering expansions corresponding to
occurrences of a fixed renormalization matrix with positive
entries.

\subsection{Strongly biregular sequences of matrices}

A sequence $\mathfrak{A}=(A_n),n\in\mathbb{Z},$ of $m\times
m$-matrices will be called {\it balanced} if all entries of all
matrices $A_n$ are positive, and, furthermore, there exists a
positive constant C such that for any $n\in\mathbb{Z}$ and any
$i_1,j_1,i_2,j_2\in\{1,\dots,m\},$ we have
$$\frac{(A_n)_{i_1j_1}}{(A_n)_{i_2j_2}}<C.$$

A sequence $\mathfrak{A}=(A_n),n\in\mathbb{Z},$ of $m\times
m$-matrices with non-negative entries will be said to have {\it
sub-exponential growth} if for any $\varepsilon>0$ there exists
$C_{\varepsilon}$ such that for all $n\in\mathbb{Z}$ we have
$$\sum_{i,j=1}^{m}(A_n)_{ij}\leqslant C_{\varepsilon}
e^{\varepsilon|n|}.$$

In order to formulate our next group of assumptions, we need to
consider $\mathfrak{A}$-{\it reverse equivariant} sequences of
vectors. To a vector {\~{v}}$\in\mathbb{R}^m$ we assign the
sequence ${\bf{\tilde{v}}}=(\tilde{v}^{(n)}),n\in\mathbb{Z},$
given by the formula
$$\tilde{v}^{(n)}=\begin{cases}(A_n^t)^{-1}\dots(A_1^t)^{-1}\tilde{v}^{(n)},&n>0;\\
       \tilde{v}^{(n)},&n=0;\\
       A_{n+1}^t\dots A_{-1}^tA_0^t\tilde{v}^{(n)},&n<0.\end{cases}$$

By definition, if $\tilde{v}^{(n)}$ is an
$\mathfrak{A}$-equivariant sequence, while $\tilde{v}^{(n)}$ is an
$\mathfrak{A}$-reverse equivariant sequence, then the inner
product $$\left< v^{(n)},\tilde{v}^{(n)}\right>=\sum_{i=1}^{m}
v_i^{(n)}\tilde{v}_i^{(n)}$$ does not depend on $n\in\mathbb{Z}$.
In analogy to the spaces ${\mathfrak B}^+(\mathfrak{A})$ and
${\mathfrak B}_c^+(\mathfrak{A})$, we introduce the spaces
\begin{gather*}
{\mathfrak B}_c^-(\mathfrak{A})=\{\tilde{v}:|\tilde{v}^{(n)}|\to0\ as\ n\to\infty\},\\
{\mathfrak B}^-(\mathfrak{A})=\{\tilde{v}:\ {\mathrm{there}}\ {\mathrm{exists}}\ C>0,\
\alpha>0,\ {\mathrm{such}}\ {\mathrm{that}}\ |\tilde{v}^{(n)}|\leqslant Ce^{-\alpha n}\
{\mathrm{as}}\ n\to\infty\}.
\end{gather*}

The unique ergodicity of the vertical and the horizontal flow
admits the following reformulation in terms of the spaces
${\mathfrak B}_c^+(\mathfrak{A})$, ${\mathfrak B}_c^-(\mathfrak{A})$.

\begin{assumption}\label{a-u-e}. The space
${\mathfrak B}_c^+(\mathfrak{A})$ contains an equivariant sequence
$(h^{(n)}),n\in\mathbb{Z}$ such that $h^{(n)}_i>0$ for all
$n\in\mathbb{Z}$ and all $i\in\{1,\dots,m\}.$ The space
${\mathfrak B}_c^-(\mathfrak{A})$ contains a reverse equivariant
sequence $(\lambda^{(n)}),n\in\mathbb{Z},$ such that
$\lambda^{(n)}_i>0$ for all $n\in\mathbb{Z}$ and all
$i\in\{1,\dots,m\}.$

The sequences $(h^{(n)})$ and $(\lambda^{(n)})$ are unique up to
scalar multiplication.

\end{assumption}

A convenient normalization for us will be: $$|\lambda^{(0)}|=1,\
<\lambda^{(0)},h^{(0)}>=1.$$

Our next assumption is the requirement of Lyapunov regularity for
the sequence of matrices $\mathfrak{A}=(A_n),n\in\mathbb{Z}.$ For
renormalization matrices of Rauzy-Veech expansions this assumption
will be seen to hold by the Oseledets Theorem applied to the
Kontsevich-Zorich cocycle.

In fact, we will assume the validity of all the statements of the
Oseledets-Pesin Reduction Theorem (Theorem 3.5.5 on p.77 in
\cite{pesinbarreira}). We proceed to the precise formulation.
\begin{assumption}\label{a-lyap-reg} There exists
$l_0\in\mathbb{N},$ positive numbers
$$\theta_1>\theta_2>\dots>\theta_{l_0}>0,$$ and, for any
$n\in\mathbb{Z},$ direct-sum decompositions
$$\mathbb{R}^m=E_n^1\oplus\dots\oplus E_n^{l_0}\oplus E_n^{cs}$$
$$\mathbb{R}^m=\tilde{E}_n^1\oplus\dots\oplus \tilde{E}_n^{l_0}\oplus
\tilde{E}_n^{cs}$$
such that the following holds.

\begin{enumerate}
\item for all $n\in\mathbb{Z}$ we have $$E_n^1=\mathbb{R}h^{(n)},
\tilde{E}_n^1=\mathbb{R}{\lambda}^{(n)},$$

\item for all $n\in\mathbb{Z}$ and all $i=1,\dots,l_0$ we have

$$A_nE_n^i=E_{n+1}^i,\ A_n^t\tilde{E}^i_{n+1}=\tilde{E}^i_n$$

\item for all $n\in\mathbb{Z}$, every $i=1,\dots,l_0$, and any
$v\in E_n^i\setminus\{0\}$, we have
$$\lim_{k\to\infty}\frac{\log|A_{n+k-1}\dots A_n
v|}{k}=\lim_{k\to\infty}\frac{-\log|A_{n-k}^{-1}\dots
A_{n-1}^{-1}v|}{k}=\theta_i,$$ and the convergence is uniform on
the sphere $\{v\in E_n^i:|v|=1\};$

\item for all $n\in\mathbb{Z}$, every $i=1,\dots,l_0$, and any
$v\in E_n^i\setminus \{0\}$, we have
$$\lim_{k\to\infty}\frac{\log|A^t_{n-k}\dots A_{n-1}^tv|}{k}=
\lim_{k\to\infty}\frac{-\log|{(A_{n+k-1}^t)}^{-1}\dots{A_{n}^t}^{-1}v|}{k}=\theta_i,$$
and the convergence is uniform on the sphere
$\{v\in\tilde{E}_n^i:|v|=1\};$

\item for all $n\in\mathbb{Z}$ we have
$$A_nE_n^{cs}=E_{n+1}^{cs},A_n^tE_{n+1}^{cs}=E_n^{cs}$$

\item for any $\varepsilon>0$ there exists $C_{\varepsilon}$ such
that for any $n\in\mathbb{Z}$ and $k\in\mathbb{N}$ we have

$$||A_{n+k-1}\dots A_n|_{E_n^{cs}}||\leqslant C_{\varepsilon}
e^{\varepsilon(k+|n|)};$$

$$||A_{n-k}^{-1}\dots
A_{n-1}^{-1}|_{E_n^{cs}}||^{-1}\leqslant
C_{\varepsilon}e^{\varepsilon(k+|n|)};$$

$$||A_{n-k}^{t}\dots
A_{n-1}^{t}|_{\tilde{E}_n^{cs}}||\leqslant
C_{\varepsilon}e^{\varepsilon(k+|n|)};$$

$$||(A_{n+k-1}^{t})^{-1}\dots
(A_{n}^{t})^{-1}|_{\tilde{E}_n^{cs}}||^{-1}\leqslant
C_{\varepsilon}e^{\varepsilon(k+|n|)}.$$

\item for all $n\in\mathbb{Z}$ we have $\dim E_n^{cs}=
\dim \tilde{E}_n^{cs}$ and, for any $i=1,\dots,l_0$ we also have
$\dim E_n^i=\dim \tilde{E}_n^{i}$. If $v,\tilde{v}\in\mathbb{R}^m$
satisfy $<v,\tilde{v}>\neq0,$ then $v\in E_n^i$ implies
$\tilde{v}\in\tilde{E}_n^i,$ while $v\in E_n^{cs}$ implies
$\tilde{v}\in\tilde{E}_n^{cs},$ and vice versa.

\end{enumerate}
\end{assumption}

A balanced sequence $\mathfrak{A}$ of $m\times m$-matrices with positive
entries, having sub-exponential growth and satisfying the unique ergodicity assumption as well as
the Lyapunov regularity assumption, will
be called a {\it strongly biregular} sequence, or, for brevity, an
SB-sequence. Using Markovian sequences of partitions induced by
Rauzy-Veech expansions of zippered rectangles corresponding to
consecutive occurrences of a fixed renormalization matrix with
positive entries and applying the Oseledets Multiplicative Ergodic
Theorem and the Oseledets-Pesin Reduction Theorem (see Theorem 3.5.5 on p.77 in
\cite{pesinbarreira}) to the Kontsevich-Zorich cocycle, we will
establish in the next section the following simple
\begin{proposition} Let $\mathbb{P}$ be an ergodic probability
measure on a connected component $\mathcal{H}$ of the moduli space
${\mathcal{ M}}_{\kappa}$ of abelian differentials. Then
$\mathbb{P}$-almost every abelian differential
$(M,\omega)\in\mathcal{H}$ admits an exact Markov sequence of
partitions whose sequence of adjacency matrices belongs to the class
SB.
\end{proposition}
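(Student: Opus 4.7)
The plan is to produce the Markov sequence of partitions as an acceleration of the Rauzy-Veech induction at a fixed positive renormalization word, and then verify the defining conditions of class SB one by one. By the classical work of Rauzy, Veech, and Masur, for each connected component $\HH$ of a stratum one may choose a finite word $w$ in the Rauzy-Veech alphabet whose associated product of elementary matrices $B_w$ has strictly positive integer entries. Let $\Omega_w\subset\HH$ denote the cylinder set of zippered rectangles whose future Rauzy-Veech expansion begins with $w$; by the Veech construction of zippered rectangles as a cross-section of ${\bf g}_s$, $\Omega_w$ has positive $\Prob$-measure for every ergodic ${\bf g}_s$-invariant Borel probability measure $\Prob$ on $\HH$. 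By Poincar\'e recurrence applied to the first-return map of ${\bf g}_s$ to $\Omega_w$, $\Prob$-a.e.\ ${\bf X}\in\HH$ has orbit meeting $\Omega_w$ along a bi-infinite sequence of times $(t_n({\bf X}))_{n\in\mathbb{Z}}$; I would let $\pi_n$ be the partition of $M$ into the Rauzy-Veech rectangles determined by the cross-section at ${\bf g}_{t_n}{\bf X}$.

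The sequence $(\pi_n)$ is Markov by the nesting of Rauzy-Veech rectangles under consecutive induction steps, which translates verbatim into the combinatorial boundary conditions on $\partial_h^{0,1}$ and $\partial_v^{0,1}$ in the paper. Exactness, i.e.\ $\max_i\nu^+(\partial_v\Pi_i^{(n)})\to 0$ and $\max_i\nu^-(\partial_h\Pi_i^{(-n)})\to 0$, follows from Masur's criterion together with Poincar\'e recurrence of $\Prob$-a.e.\ orbit to compact subsets of $\HH$, which forces unique ergodicity of both the vertical and horizontal flows and hence the decay to zero of the widths and heights of Rauzy-Veech rectangles along the induction. Assumption \ref{a-u-e} is then just a reformulation of unique ergodicity: the unique positive equivariant sequence is the vector of heights of the rectangles $\Pi_i^{(n)}$, and the unique positive reverse-equivariant sequence is the vector of widths.

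Next I would verify the three analytic conditions defining class SB. For balance, the adjacency matrix $A_n$ over the block $[t_n,t_{n+1}]$ is the product of elementary Rauzy-Veech matrices along this block, and by construction it begins \emph{and} ends with a factor of $B_w$; writing $A_n=B_w\cdot C_n\cdot B_w$ with $C_n$ a non-negative matrix, one has
$$
(A_n)_{ij}=\sum_{k,l}(B_w)_{ik}(C_n)_{kl}(B_w)_{lj},
$$
so if $b_{\min}$ and $b_{\max}$ are the minimal and maximal entries of $B_w$, the ratio of any two entries of $A_n$ is uniformly bounded by $(b_{\max}/b_{\min})^2$. Sub-exponential growth of $\|A_n\|$ in $|n|$ follows from integrability of the first return time $\tau$ to $\Omega_w$ (Kac's lemma): setting $\tau_n=t_{n+1}-t_n$, Birkhoff's theorem yields $\tau_n=o(n)$ almost surely, and $\log\|A_n\|$ is controlled by $\tau_n$ times the essential supremum of the log-norm of the one-step Kontsevich-Zorich cocycle on $\Omega_w$. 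Finally, Assumption \ref{a-lyap-reg} is the conclusion of the Oseledets Multiplicative Ergodic Theorem together with the Oseledets-Pesin Reduction Theorem applied to the Kontsevich-Zorich cocycle over the discrete dynamics of return to $\Omega_w$: the positive Lyapunov exponents $\theta_1>\dots>\theta_{l_0}>0$ and the expanding Oseledets subspaces $E_n^i$ are produced by Oseledets, the dual subspaces $\tilde E_n^i$ by Oseledets applied to the transpose cocycle, the central-stable subspace $E_n^{cs}$ is the direct sum of the neutral and negative Oseledets subspaces of KZ, and the sub-exponential bounds on the restricted cocycle in both time directions are precisely the content of Pesin's reduction.

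The main obstacle is arranging that \emph{all} SB conditions are simultaneously satisfied by a single Markov sequence of partitions. Balance forces acceleration at a positive word (Veech's idea), but this acceleration must remain compatible with the full Oseledets-Pesin reduction; this is clean because the discretized cocycle over the return map to $\Omega_w$ has the same Oseledets decomposition as the KZ cocycle, up to the natural time change by the mean return time $\mathbb{E}[\tau]$, so Oseledets-Pesin applies without further work. The identification of the $\mathbb{R}^m$-valued cocycle of adjacency matrices with the KZ cocycle on its natural sub-bundle of cohomology is classical and goes back to Veech, completing the verification.
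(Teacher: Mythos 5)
Your overall strategy---accelerate the Rauzy--Veech induction at occurrences of a fixed renormalization word with positive matrix and then invoke the Oseledets and Oseledets--Pesin theorems---is the same as the paper's (Lemma \ref{sb-zip}). However, your verification of the balance condition, which is the one place where the precise choice of acceleration matters, contains a genuine error. If $n_k$ denotes the starting positions of the occurrences of $w$, the adjacency matrix of the block between consecutive returns is the product of the elementary Rauzy--Veech matrices at times $n_k,\dots,n_{k+1}-1$; this word begins with $w$ but does \emph{not} end with $w$, since the occurrence at $n_{k+1}$ belongs to the next block. Hence $A_n=B_wC_n$ with a positive factor on one side only, not $B_wC_nB_w$. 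One-sided positivity gives $b_{\min}s_j\leq (A_n)_{ij}\leq b_{\max}s_j$, where $s_j$ is the $j$-th column sum of $C_n$, so it bounds ratios of entries in a common column, but the ratio of entries in different columns is governed by $s_{j_1}/s_{j_2}$, which is unbounded (a long run of a single Rauzy operation inside the block makes the column sums of $C_n$ arbitrarily lopsided). So balance does not follow from your argument and can in fact fail for the sequence you construct. The paper's remedy is exactly the missing ingredient: choose the renormalization matrix of the form $Q=Q_1Q_2$ with \emph{both} $Q_1$ and $Q_2$ positive and place the cut points in the middle of each occurrence, so that every block matrix is sandwiched, $A_n=Q_2\tilde{A}_nQ_1$, and the ratio of any two entries is bounded by $(q_{\max}/q_{\min})^2$.

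Two secondary inaccuracies. First, your claim that the cylinder $\Omega_w$ has positive measure for \emph{every} ergodic ${\bf g}_s$-invariant probability measure is false in general: for a measure carried by a periodic Teichm{\"u}ller geodesic the expansion is periodic and only cylinders of subwords of the period are charged. The word must be chosen depending on $\Prob$, using that almost every bi-infinite expansion contains some block with positive matrix occurring with positive frequency (each symbol wins infinitely often along a Keane expansion), and then ergodicity gives a cylinder of positive measure. Second, the Rauzy--Veech coding lives on the space of zippered rectangles, a finite-to-one cover of $\HH$; to apply recurrence, Birkhoff and Oseledets to the coding of $\Prob$-almost every ${\bf X}$ one must first lift $\Prob$ to a $P^s$-invariant ergodic measure $\Prob_{\mathcal V}$ with $(\pi_{\R})_*\Prob_{\mathcal V}=\Prob$; this is the content of Propositions \ref{cor-mes} and \ref{finfibre} in the paper and is passed over silently in your argument.
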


Let $(M,\omega)$ be an abelian differential whose horizontal
and vertical foliations are both uniquely ergodic. Assume that
$(M,\omega)$ is endowed with an exact Markov sequence
$\pi_n,n\in\mathbb{Z}$, of partitions into weakly admissible
rectangles such that the corresponding sequence $\mathfrak{A}$ of
adjacency matrices belongs to the class SB.

Note that if $\mathfrak{A}$ is an SB-sequence, then
$${\mathfrak B}^+(\mathfrak{A})=E_0^1\oplus\dots\oplus E_0^{l_0}$$
$${\mathfrak B}^-(\mathfrak{A})=\tilde{E}_0^1\oplus\dots\oplus\tilde{E}_0^{l_0}$$

Note also that there exists a constant $C>0$ such that the
positive equivariant sequence $h^{(n)}$ satisfies
$$\frac{h_i^{(n)}}{h_j^{(n)}}\leqslant C$$ for all
$n\in\mathbb{Z},i,j\in\{1,\dots,n\}$. It follows that for any
$\varepsilon>0$ there exists a constant $C_{\varepsilon}$ such
that for all $n>0$ we have $$\min_i h_i^{(-n)}\geqslant
C_{\varepsilon} e^{-(\theta_1+\varepsilon)n}.$$

\subsection{Characterization of finitely-additive measures.}
\subsubsection{The semi-rings of Markovian arcs.}

Given a partition $\pi$ of our surface $M$ into weakly admissible
rectangles $\Pi_1,\dots,\Pi_m,$ we consider the semi-ring
$\mathfrak{C}^+(\pi)$ of arcs of the form $[x,x']$, where
$x\in\partial_h^0\Pi_i,\ x'\in\partial_h^1\Pi_i$ for some $i$
(recall here that $x\in\partial_h^0\Pi_i$ stands for the lower
horizontal boundary of $\Pi_i,\partial_h^1\Pi_i$ for the upper
horizontal boundary).

Our Markov sequence $\pi_n$ thus induces a sequence of semi-rings
$\mathfrak{C}_n^+=\mathfrak{C}^+(\pi_n);$ we write
$\mathfrak{R}_n^+$ for the ring generated by the semi-ring
$\mathfrak{C}_n^+$. Elements of $\mathfrak{R}_n^+$ are finite
unions of arcs from $\mathfrak{C}_n^+$. For an arc $\gamma$ of the
vertical flow, let $\breve{\gamma}_n$ be the largest by inclusion
arc from the ring $\mathfrak{R}_{-n}^+$ contained in $\gamma$, and
let $\hat{\gamma}_n$ be the smallest by inclusion arc from the
ring $\mathfrak{R}_{-n}^+$ containing $\gamma$.

\subsubsection{Extension of finitely-additive measures.}
The following Lemma is immediate from the definitions (note that
similar arc approximation lemmas were used by Forni in \cite{F2} and Zorich in \cite{Z}).
\begin{lemma} \label{arc-dec-lem}
Let $\pi_n$ be an exact  Markovian sequence of partitions such
that the corresponding sequence $\mathfrak{A}$ of adjacency
matrices has sub-exponential growth. Then for any $\varepsilon>0$
there exists $C_{\varepsilon}>0$ such that for any arc $\gamma$ of
the vertical flow and any $n\in\mathbb{N}$ the set $\hat{\gamma}_n
\backslash \breve{\gamma}_n$ consists of at most
$C_{\varepsilon}e^{\varepsilon n}$ arcs from the semi-ring
$\mathfrak{C}_{-n}^+$.
\end{lemma}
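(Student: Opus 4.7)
The plan is to argue combinatorially, directly from the geometry of the Markov partitions, without invoking any quantitative input from $\mathfrak{A}$ beyond what is already built into the setup. First, since $\gamma$ is a connected segment of a vertical leaf and the rectangles of $\pi_{-n}$ are weakly admissible (their interiors contain no zeros of $\omega$), the arc $\gamma$ meets some finite sequence of rectangles $\Pi_{i_0}^{(-n)}, \Pi_{i_1}^{(-n)}, \ldots, \Pi_{i_k}^{(-n)}$ in the natural upward order. For every intermediate index $1 \le j \le k-1$ the arc $\gamma$ enters $\Pi_{i_j}^{(-n)}$ through $\partial_h^{0}\Pi_{i_j}^{(-n)}$ and exits through $\partial_h^{1}\Pi_{i_j}^{(-n)}$, so this portion of $\gamma$ is a complete vertical crossing, i.e.\ an element of $\mathfrak{C}_{-n}^+$ contained in $\gamma$. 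Only at the extreme indices $j=0$ and $j=k$ can the endpoint of $\gamma$ fall strictly inside a rectangle, in which case the would-be Markovian arc extends past $\gamma$.

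Next I would identify $\breve{\gamma}_n$ and $\hat{\gamma}_n$ explicitly. Consecutive Markovian arcs along the vertical leaf fit together head-to-tail, so the union of the middle complete crossings is already a connected sub-arc of $\gamma$ and is manifestly the largest element of $\mathfrak{R}_{-n}^+$ contained in $\gamma$; this is $\breve{\gamma}_n$. Symmetrically, the smallest element of $\mathfrak{R}_{-n}^+$ containing $\gamma$ is obtained by adjoining the two terminal complete crossings, and this is $\hat{\gamma}_n$. Subtracting,
\[
\hat{\gamma}_n \setminus \breve{\gamma}_n \;=\; \gamma_{\mathrm{first}} \sqcup \gamma_{\mathrm{last}},
\]
a disjoint union of at most two arcs from $\mathfrak{C}_{-n}^+$. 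The degenerate configurations --- $\gamma$ contained in a single rectangle, an endpoint of $\gamma$ landing on a horizontal boundary, or $\gamma$ shorter than every Markovian arc at level $-n$ --- only lower the count.

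Taking $C_\varepsilon = 2$ then gives $2 \le C_\varepsilon e^{\varepsilon n}$ for every $\varepsilon>0$ and every $n\in\mathbb{N}$, completing the argument. The main ``obstacle'' is not analytic but notational bookkeeping: enumerating the degenerate configurations and verifying that the extremal characterizations ``largest/smallest arc in the ring'' pick out exactly the unions described above. The sub-exponential growth hypothesis on $\mathfrak{A}$ is not used in this combinatorial step; it is packaged into the statement so that the lemma slots cleanly into the series estimates of the next subsection, where the arcs in $\mathfrak{C}_{-n}^+$ will carry weights from the equivariant sequence $v^{(-n)}$ and the $e^{\varepsilon n}$ slack is the appropriate form in which to carry the count forward.
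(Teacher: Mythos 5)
Your combinatorial analysis is correct: because $\gamma$ is a connected vertical segment, it enters and exits the level-$(-n)$ rectangles in a single upward chain, the middle complete crossings assemble into a connected arc which is precisely $\breve{\gamma}_n$, and $\hat{\gamma}_n$ adds at most the two terminal crossings; hence $\hat{\gamma}_n\setminus\breve{\gamma}_n$ is a disjoint union of at most two elements of $\mathfrak{C}_{-n}^+$, and the bound $C_{\varepsilon}e^{\varepsilon n}$ holds trivially with $C_{\varepsilon}=2$. This is consistent with the paper's declaration that the lemma is ``immediate from the definitions,'' and your treatment of the degenerate configurations is adequate.

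You are also right to flag that the sub-exponential growth hypothesis is not used, but your explanation of why it appears understates the issue. Where Lemma \ref{arc-dec-lem} is invoked in the proof of Lemma \ref{sb-bplus}, the \emph{equality} of the two limits does follow from your bound of two (each of the two end arcs has $|\Phi^+|\leq Ce^{-\alpha n}$), but the \emph{existence} of the limit---that $\Phi^+(\breve{\gamma}_n)$ is Cauchy---does not. One needs to bound $|\Phi^+(\breve{\gamma}_{n+1}\setminus\breve{\gamma}_n)|$, and $\breve{\gamma}_{n+1}\setminus\breve{\gamma}_n$ is a union of level-$(-(n+1))$ Markovian arcs sitting inside the two end arcs of $\hat{\gamma}_n\setminus\breve{\gamma}_n$; the number of such arcs is controlled by the row sums of $A_{-(n+1)}$, and it is precisely here that sub-exponential growth of $\mathfrak{A}$ gives the count $\leq C_{\varepsilon}e^{\varepsilon n}$ that makes the telescoping series $\sum_n|\Phi^+(\breve{\gamma}_{n+1})-\Phi^+(\breve{\gamma}_n)|$ converge against the exponential decay $|v^{(-n)}|\leq Ce^{-\alpha n}$. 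This finer count---how many arcs at the \emph{next} level are needed to resolve the two end pieces---is what the figure caption (``the number of small arcs grows at most subexponentially'') actually describes, and is the substantive content the hypothesis buys. So your proof establishes the literal statement, and correctly identifies the unused hypothesis, but be aware that the downstream Cauchy estimate relies on the stronger, level-shifted version of the count, not on the bound of two alone.
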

\begin{figure}
\begin{center}
\includegraphics{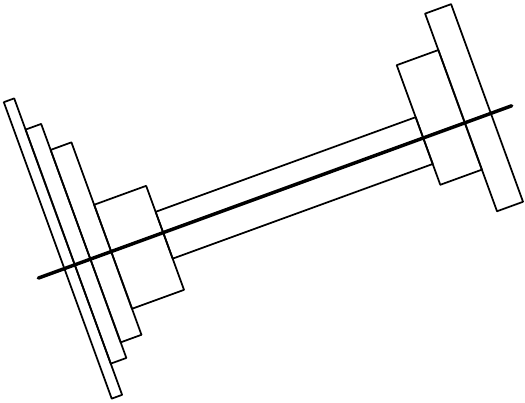}\\
\caption{The number of small arcs grows at most subexponentially}\label{fig:five}
\end{center}
\end{figure}
Informally, Lemma \ref{arc-dec-lem} says that any arc of our
symbolic flow is approximable by Markovian arcs with sub-exponential error;
we illustrate this by Figure \ref{fig:five}.

We are now ready to identify $\mathfrak{B}^+(\mathfrak{A})$ and
$\mathfrak{B}^+(X,\omega)$.

\begin{lemma}
\label{sb-bplus}
Let $\pi_n,n\in\mathbb{Z}$, be a Markov sequence of
partitions such that the corresponding sequence $\mathfrak{A}$ of
adjacency matrices belongs to the class SB. Then for every
equivariant sequence $v^{(n)}\in\mathfrak{B}^+(\mathfrak{A})$
there exists a unique finitely-additive measure
$\Phi^+\in\mathfrak{B}^+(X,\omega)$ such that
$$\eval_0^+(\Phi^+)=v^{(0)}.$$
\end{lemma}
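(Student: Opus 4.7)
The plan is to reverse the map $\eval_0^+$ by first declaring the values of $\Phi^+$ on Markovian arcs, then extending by subexponential-error approximation to all vertical arcs, and finally verifying the three conditions of Assumption \ref{bplusx}. Uniqueness will be immediate from exactness (the Markovian arcs generate the vertical arcs in the sense of the approximation lemma), so the content lies entirely in existence.

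First I would define, for each $n\in\mathbb Z$ and $i\in\{1,\dots,m\}$, the value $\Phi^+(\gamma_i^{(n)})=v_i^{(n)}$ on the canonical Markovian arc $\gamma_i^{(n)}$. Horizontal holonomy-invariance inside $\Pi_i^{(n)}$ is built into this definition, because $v_i^{(n)}$ depends only on the rectangle, not on the choice of fibre; $\mathfrak A$-equivariance of $v^{(n)}$, together with the Markov property of the partitions $\pi_n$, guarantees that the values extend unambiguously and finitely-additively to the ring $\mathfrak R_{-n}^+$ for every $n$, and that the values on different levels are compatible. This gives a well-defined, finitely-additive, horizontal-holonomy-invariant $\Phi^+$ on $\bigcup_n\mathfrak R_{-n}^+$.

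Next I would extend to an arbitrary vertical arc $\gamma$ by setting
\[
\Phi^+(\gamma)=\lim_{n\to\infty}\Phi^+(\breve\gamma_n)=\lim_{n\to\infty}\Phi^+(\hat\gamma_n).
\]
The key estimate is that by Lemma \ref{arc-dec-lem} the set $\hat\gamma_n\setminus\breve\gamma_n$ is a union of at most $C_\varepsilon e^{\varepsilon n}$ arcs from $\mathfrak C_{-n}^+$, while the hypothesis $v^{(n)}\in\mathfrak B^+(\mathfrak A)$ says $|v^{(-n)}|\le Ce^{-\alpha n}$ for some $\alpha>0$. Choosing $\varepsilon<\alpha$ gives a geometric bound $\Phi^+(\hat\gamma_n)-\Phi^+(\breve\gamma_n)=O(e^{-(\alpha-\varepsilon)n})$, so the sequence is Cauchy and the limit defines $\Phi^+(\gamma)$ independently of the choice of canonical system. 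Finite additivity and horizontal-holonomy-invariance pass to the limit because horizontal holonomy carries $\breve\gamma_n,\hat\gamma_n$ to corresponding Markovian approximations of the holonomy image.

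The main obstacle is the Hölder estimate (condition 2 of Assumption \ref{bplusx}). Given a short vertical arc $\gamma$ of length $t=\nu^+(\gamma)$, I would pick $n=n(t)$ so that the heights $h_i^{(-n)}$ of the level-$n$ rectangles are comparable to $t$; the balanced hypothesis makes all $h_i^{(-n)}$ comparable to $\max_i h_i^{(-n)}$, and Lyapunov regularity gives $h_i^{(-n)}\asymp e^{-\theta_1 n}$ up to sub-exponential factors. Then $\hat\gamma_{n}\setminus\breve\gamma_{n}$ again contains at most $C_\varepsilon e^{\varepsilon n}$ Markovian arcs from $\mathfrak C_{-n}^+$, summed with the telescoping series $\sum_{k\ge n}|v^{(-k)}|\cdot C_\varepsilon e^{\varepsilon k}=O(e^{-(\alpha-\varepsilon)n})$, so
\[
|\Phi^+(\gamma)|\le |\Phi^+(\breve\gamma_n)|+O(e^{-(\alpha-\varepsilon)n})\le C'\,t^{\,\alpha/\theta_1-\varepsilon'},
\]
where the first term is bounded using equivariance plus the trivial bound $|\breve\gamma_n|$ is a disjoint union of level-$n$ Markovian arcs whose number is controlled by the sub-exponential growth of $\mathfrak A$. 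This gives the Hölder condition with a positive exponent $\theta=\alpha/\theta_1-\varepsilon'$, hence $\Phi^+\in\mathfrak B^+(X,\omega)$.

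Finally, uniqueness: if two such $\Phi^+$ agree on all Markovian arcs, they agree on every ring $\mathfrak R_{-n}^+$; for any vertical arc, the common approximation $\breve\gamma_n\subset\gamma\subset\hat\gamma_n$ together with the H\"older estimate (which forces the difference $\Phi^+(\hat\gamma_n)-\Phi^+(\breve\gamma_n)\to 0$) forces the two extensions to coincide, establishing $\eval_0^+$ as a bijection between $\mathfrak B^+(\mathfrak A)$ and $\mathfrak B^+(X,\omega)$.
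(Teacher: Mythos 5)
Your proposal is correct and follows essentially the same route as the paper: prescribe $\Phi^+$ on Markovian arcs via the equivariant sequence, extend to arbitrary vertical arcs by the approximation $\breve\gamma_n\subset\gamma\subset\hat\gamma_n$ of Lemma \ref{arc-dec-lem} (subexponentially many correction arcs against exponentially decaying values $|v^{(-n)}|$), and obtain the H\"older bound from the upper exponential decay of $|v^{(-n)}|$ together with the lower exponential bound on the heights $h_i^{(-n)}$, with uniqueness forced by the same approximation. Your write-up merely makes the geometric-series and exponent bookkeeping (choice of $n(t)$, exponent $\alpha/\theta_1-\varepsilon'$) more explicit than the paper's terse argument.
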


Proof. Indeed, the sequence $v^{(n)}$ itself
prescribes the values of the $\Phi^+$ on all Markovian arcs
$\gamma\in\mathfrak{C}_n^+,n\in\mathbb{Z}.$

For a general arc $\gamma$ of the vertical flow, set
\begin{equation}
\label{approx-arcs}
\Phi^+(\gamma)=\lim_{n\to\infty}\Phi^+(\hat{\gamma}_n)=
\lim_{n\to\infty}\Phi^+(\breve{\gamma}_n),
\end{equation} where the existence
of both limits and the equality of their values immediately follow
from Lemma \ref{arc-dec-lem}.
Finite-additivity of $\Phi^+$ is again a corollary of Lemma  \ref{arc-dec-lem}.
We have thus obtained a finitely-additive measure $\Phi^+$
defined on all vertical arcs. The uniqueness of such a  measure is clear by (\ref{approx-arcs}).
The invariance of the resulting measure under
horizontal holonomy is also clear by definition.
To conclude the proof of Lemma \ref{sb-bplus}, it remains to
check that the obtained finitely-additive measure $\Phi^+$ satisfies
the H{\"o}lder property.

For Markovian arcs the H{\"o}lder upper bound is clear from the {\it
upper} exponential bound $$|v^{(-n)}|\leqslant Ce^{-\alpha n},$$
and the {\it lower} exponential bound $$\min_i h_i^{(-n)}\geqslant
C_1 e^{(-\alpha_1n)}.$$

For general arcs, the H{\"o}lder property follows by
Lemma  \ref{arc-dec-lem}. Lemma \ref{sb-bplus} is proved completely, and we have
thus shown that under its assumptions the map $$\eval_0^+:
\mathfrak{B}^+(X,\omega)\to\mathfrak{B}^+(\mathfrak{A})$$ is
indeed an isomorphism.

{\bf Remark.} Under stronger
assumptions of Lyapunov regularity we will also give a H{\"o}lder {\it lower} bound for the cocycles $\Phi^+\in\BB^+$,
see Proposition \ref{convhoeldprop}.

\subsection {Duality.}
Let $v^{(n)}\in\mathfrak{B}^+(\mathfrak{A}),
\tilde{v}^{(n)}\in\mathfrak{B}^-(\mathfrak{A}),$ and let
$\Phi^+\in\mathfrak{B}^+(X,\omega),\Phi^-\in\mathfrak{B}^+(X,\omega)$
be the corresponding finitely-additive measures. The definitions
directly imply $$\int_M\Phi^+\times\Phi^-=\sum_{i=1}^m
v_i^{(0)}\tilde{v}_i^{(0)}=<v,\tilde{v}>.$$

Duality between the spaces $\mathfrak{B}^+(X,\omega)$ and
$\mathfrak{B}^-(X,\omega)$ follows now from the duality between
 the spaces $\mathfrak{B}^+(\mathfrak{A})$ and
$\mathfrak{B}^-(\mathfrak{A})$, which holds by the Lyapunov
regularity assumption for the sequence $\mathfrak{A}$.

\subsection{Proof of Theorem \ref{multiplicmoduli}.}
\subsubsection{Approximation of almost equivariant sequences.}
 We start with  a sequence of matrices
$A_n,n\geqslant0$ satisfying the following

\begin{assumption}\label{asexpdec} There exists $\alpha>0$ and, for
every $n\geqslant1$, a direct-sum decomposition
$$\mathbb{R}^m=E^u_n\oplus E^n_{cs}$$ satisfying the following.
\begin{enumerate}
\item $A_nE_n^u=E^u_{n+1}$ and $A_n|_{E^u_n}$ is injective.

\item $A_nE_n^{cs}\subset E_{n+1}^{cs}.$

\item  For any $\varepsilon>0$ there exists $C_{\varepsilon}>0$ such
that for any $n\geqslant 1,\ k\geqslant0$ we have
$$\left|\left|(A_{n+k}\cdot\ldots\cdot A_n)^{-1}|_{E^u_{n+k+1}}\right|\right|\leqslant
C_{\varepsilon}e^{\varepsilon n-\alpha k}$$

$$\left|\left|A_{n+k}\cdot\ldots\cdot A_n|_{E^{cs}_{n}}\right|\right|\leqslant
C_{\varepsilon}e^{\varepsilon(n+k)}.$$

\end{enumerate}
\end{assumption}

\begin{lemma}
\label{vngeneral}
Let $A_n$ be a sequence of matrices satisfying Assumption \ref{asexpdec}.
Let $v_1, \dots, v_n,\dots$ be a sequence of vectors such that for any $\varepsilon>0$ a constant
$C_{\varepsilon}$   can be chosen in such a way that for all $n$ we have
$$
|A_nv_n-v_{n+1}|\leq C_{\varepsilon}\exp(\varepsilon n).
$$
Then there exists a unique vector $v\in E_1^u$ such that
$$
|A_n\dots A_1v-v_{n+1}|\leq C^{\prime}_{\varepsilon}\exp(\varepsilon n).
$$
\end{lemma}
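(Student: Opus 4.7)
Set $\delta_n := A_n v_n - v_{n+1}$, which by hypothesis satisfies $|\delta_n| \le C_\varepsilon e^{\varepsilon n}$ for every $\varepsilon > 0$. Let $\pi^u_n$ and $\pi^{cs}_n$ be the projections onto $E^u_n$ and $E^{cs}_n$ associated with the splitting $\mathbb{R}^m = E^u_n \oplus E^{cs}_n$ from Assumption \ref{asexpdec}; under the Oseledets-type regularity in force for the applications, their norms grow at most subexponentially in $n$. For any candidate $v\in E^u_1$, the error $u_n := A_{n-1}\cdots A_1 v - v_n$ satisfies $u_1 = v - v_1$ and $u_{n+1} = A_n u_n + \delta_n$, hence
\begin{equation*}
u_{n+1} = A_n \cdots A_1 (v - v_1) + \sum_{k=1}^{n} A_n \cdots A_{k+1}\, \delta_k.
\end{equation*}

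The idea is to choose $v$ so as to cancel the unstable part of this accumulated error. Define
\begin{equation*}
v := \pi^u_1 v_1 \;-\; \sum_{k=1}^{\infty} \bigl(A_k \cdots A_1\bigr)^{-1}\!\big|_{E^u_{k+1}}\, \pi^u_{k+1}\delta_k \;\in\; E^u_1.
\end{equation*}
Absolute convergence follows from Assumption \ref{asexpdec}(3): the $k$-th summand is bounded by $C_\varepsilon e^{(\varepsilon-\alpha)k}$ times a subexponential factor, so fixing any $\varepsilon < \alpha$ suffices. To establish the required estimate I would control $\pi^u_{n+1} u_{n+1}$ and $\pi^{cs}_{n+1} u_{n+1}$ separately. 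Projecting the equality $\delta_n = A_n v_n - v_{n+1}$ onto $E^u_{n+1}$, and using that $A_n$ preserves the splitting, yields the recursion $\pi^u_{n+1} v_{n+1} = A_n \pi^u_n v_n - \pi^u_{n+1}\delta_n$; iterating from $n=1$ and plugging in the definition of $v$ gives, after telescoping cancellation,
\begin{equation*}
\pi^u_{n+1} u_{n+1} = -\sum_{k > n} \bigl(A_k \cdots A_{n+1}\bigr)^{-1}\!\big|_{E^u_{n+1}}\, \pi^u_{k+1}\delta_k,
\end{equation*}
whose norm is bounded by $\sum_{k>n} C_\varepsilon e^{\varepsilon(n+1) - \alpha(k-n) + \varepsilon k} = O(e^{2\varepsilon n})$, subexponential in $n$.

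For the centre-stable component, $A_n\cdots A_1 v \in E^u_{n+1}$, so $\pi^{cs}_{n+1} u_{n+1} = -\pi^{cs}_{n+1} v_{n+1}$; applying the analogous recursion $\pi^{cs}_{n+1} v_{n+1} = A_n \pi^{cs}_n v_n - \pi^{cs}_{n+1}\delta_n$ gives an expression whose summands are each bounded by Assumption \ref{asexpdec}(3)'s centre-stable estimate, again yielding subexponential growth. Combining the two contributions produces $|A_n\cdots A_1 v - v_{n+1}| \le C'_\varepsilon e^{\varepsilon n}$. Uniqueness is immediate: if $v, v' \in E^u_1$ both satisfied the conclusion, then $A_n\cdots A_1(v-v')$ would be subexponentially small, whereas Assumption \ref{asexpdec}(3) inverted forces $|A_n\cdots A_1(v-v')| \ge C_\varepsilon^{-1} e^{(\alpha - \varepsilon)n}|v-v'|$, so $v = v'$.

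The only delicate point, rather than a genuine obstacle, is bookkeeping the subexponential factors coming from $\|\pi^u_n\|$ and $\|\pi^{cs}_n\|$: these are not quantified explicitly in Assumption \ref{asexpdec}, but they are controlled by the Oseledets-Pesin reduction already invoked in the paper for the Kontsevich-Zorich cocycle, and all such losses can be absorbed into a single arbitrary $\varepsilon > 0$ at the end of the argument.
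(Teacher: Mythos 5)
Your proposal is correct and follows essentially the same route as the paper: the same series $v=\pi^u_1v_1-\sum_{k\ge1}(A_k\cdots A_1)^{-1}\pi^u_{k+1}\delta_k$ (the paper's $v=u_1^++A_1^{-1}u_2^++\dots$), the same splitting of the accumulated error into unstable and centre-stable parts controlled by Assumption \ref{asexpdec}(3), and the same uniqueness argument from expansion on $E^u_1$. The subexponential control of the projections that you flag is likewise implicit in the paper's decomposition $u_{n+1}=u_{n+1}^++u_{n+1}^-$, so there is no real divergence between the two arguments.
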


Proof: Denote $u_{n+1}=v_{n+1}-A_nv_n$ and
decompose $u_{n+1}=u_{n+1}^+ + u_{n+1}^-$, where
$u_{n+1}^+\in E_{n+1}^u$, $u_{n+1}^-\in E_{n+1}^{cs}$.
Let
$$
v_{n+1}^+=u_{n+1}^+ + A_{n} u_n^+ +
A_{n}A_{n-1}u_{n-1}^+ + \dots + A_{n}\dots A_1u_1^+;
$$
$$
v_{n+1}^-=u_{n+1}^- + A_{n} u_n^- +
A_{n}A_{n-1}u_{n-1}^- + \dots + A_{n}\dots A_1u_1^-.
$$
We have $v_{n+1}\in E_{n+1}^u$, $v_{n+1}^-\in E_{n+1}^{cs}$,
$v_{n+1}=v_{n+1}^+ + v_{n+1}^-$.
Now introduce a vector
$$
v=u_1^+ + A_1^{-1}u_1^+ + \dots + (A_n\dots A_1)^{-1}u_{n+1}^+ +\dots
$$

By our assumptions, the series defining $v$ converges exponentially fast
and, moreover, we have
$$
|A_n\dots A_1v-v_{n+1}^+|\leq C^{\prime}_{\varepsilon}\exp(\varepsilon n)
$$
for some constant $C_{\varepsilon}^{\prime}$.

Since, by our assumptions we also have $|v_{n+1}^-|\leq
C_{\varepsilon}\exp(\varepsilon n)$, the Lemma is proved completely.

Uniqueness of the vector $v$ follows from the fact that, by our assumptions,
for any ${\tilde v}\neq 0$, ${\tilde v}\in E^{u}_0$ we have
$$
|A_n\dots A_1{\tilde v}|\geq C^{\prime\prime} \exp(\alpha n).
$$

\subsubsection{Approximation of weakly Lipschitz functions.}

Let $f:M\to\mathbb{R}$ be a weakly Lipschitz function, and,
as before, introduce a canonical family of Markovian curves
$\gamma_i^n,n\in\mathbb{Z},i=1,\dots,m,$ corresponding to the
Markovian sequence of partitions $\pi_n,n\in\mathbb{Z}$.

Introduce a family of vectors
$v(n)\in\mathbb{R}^m,n\in\mathbb{Z},$ by setting
$$(v(n))_i=\int\limits_{\gamma_i^n}fd\nu^+,\ i=1,\dots,m.$$

By definition of the adjacency matrices $A_n=A(\pi_n,\pi_{n+1}),$
using the weak Lipschitz property of the function $f$ and
sub-exponential growth of the sequence $\mathfrak{A}=(A_n)$, we
arrive for all $n\in\mathbb{N}$ at the estimate
$$\left|A_nv(n)-v(n+1)\right|\leqslant
C_{\varepsilon}||f||_{Lip}\cdot e^{\varepsilon n}.$$

By Lemma \ref{vngeneral}, there exists a unique vector
$v_f^+\in\mathfrak{B}^+(\mathfrak{A})$ such that for all
$n\in\mathbb{N}$ we have $$\left|v(n)-A_{n-1}\cdot\dots\cdot
A_0v_f^+\right|\leqslant C_{\varepsilon}'||f||_{Lip}\cdot
e^{\varepsilon n}.$$

We let $\Phi_f^+\in\mathfrak{B}^+(X,\omega)$ be the
finitely-additive measure corresponding to the vector $v_f^+$, or,
in other words, the unique finitely additive measure in
$\mathfrak{B}^+(X,\omega)$ satisfying
$$\eval_0^+(\Phi_f^+)=v_f^+.$$ The inequality

\begin{equation}\label{a-apr}
\left|\int\limits_0^Tf\circ
h_t^+(x)dt-\Phi_f^+([x,h_T^+x])\right|\leqslant
C_{\varepsilon}''||f||_{Lip}(1+T^{\varepsilon})
\end{equation}

now holds for all $x\in M,T\in\mathbb{R}_+.$

Indeed, if $[x,h_t^+x]$ is a Markovian arc, then (\ref{a-apr}) is
clear by definition of the vector $v_f^+$ and the weak Lipschitz
property of the function $f$, while for general arcs of the vertical flow the
inequality (\ref{a-apr}) follows by Lemma \ref{arc-dec-lem}.

\subsubsection{Characterization of the cocycle $\Phi_f^+$}
Our next step is to check that for every
$\Phi^-\in\mathfrak{B}^-(X,\omega)$ we have
$$<\Phi_f^+\times\Phi^->=\int\limits_Mfdm_{\phi^-},$$ where we
recall that $m_\Phi^-=\nu^+\times\Phi^-.$

As before, let $\gamma_i^{n}$ be a canonical system of Markovian
arcs of the vertical flow, corresponding to the Markov sequence of
partitions $\pi_n, n\in \mathbb{Z},$ and let
$\tilde{\gamma}_i^{n}$ be a canonical system of Markovian arcs of
the horizontal flow corresponding to the Markov sequence of
partitions $\pi_n, n\in \mathbb{Z}.$ By definition, for any
$n\in\mathbb{Z}$ we have

$$\int \limits _M {\Phi_f^{+} \times \Phi^{-}}=
\sum_{i=1}^{m}{\Phi_f^{+}(\gamma_i^{n}) \cdot \Phi^{-}(\tilde{\gamma}_i^{n})}.$$
We now  write the Riemann sum
$$
S(n, f, \Phi^{-})=\sum_{i=1}^{m}{\int
\limits_{\gamma_i^{n}}{fd\nu^{+}}\cdot\Phi^{-}(\tilde{\gamma}_i^{n})}
$$
for the measure $m_{\Phi^-}$ and let $n$ tend to $+\infty$.
By definition, we have

$$\lim_{n\rightarrow+\infty}{S(n, f, \Phi^{-})}=\int\limits_M{fdm_{\Phi^{-}}}.$$
Now for all $n\in\mathbb{N}, i=1,\ldots, m,$ we have

$$\left| \int
\limits_{\gamma_i^{n}}{fd\nu^{+}}-\Phi_f^{+}(\gamma_i^{n})\right|\leqslant
C_\varepsilon e^{\varepsilon n},$$
while, by Lyapunov regularity, the quantity $\max \limits_{i=1, \dots, m}
{|\Phi^{-}(\tilde{\gamma}_i^{n})|}$ decays exponentially as $n
\rightarrow\infty.$ It follows that

$$\int \limits_M {\Phi_f^{+}\times \Phi^{-}}= \int \limits_M {fdm_{\Phi^{-}}},$$
which is what we had to prove.

\subsection{The asymptotics at infinity for H{\"o}lder cocycles}

As was noted above, we identify a finitely-additive measure
$\Phi^{+}\in \mathfrak{B}_c^{+}(X, \omega)$ with a continuous
cocycle over the vertical flow for which, slightly abusing
notation, we keep the same symbol $\Phi ^{+};$ the identification
is given by the formula
$$\Phi^{+}(x, t)=\Phi^{+}([x, h_t^{+}x]).$$
The H{\"o}lder property of a finitely-additive measure is
equivalent to the H{\"o}lder property of the cocycle, that is, to
the requirement that the function $\Phi^{+}(x, t)$ be H{\"o}lder
in $t$ uniformly in $x$. Our next aim is to give H{\"o}lder lower
bounds for the cocycles $\Phi^{+}$ and to investigate the growth
of $\Phi^{+}(x, T)$ as $T\rightarrow +\infty$.

Consider the direct-sum decomposition
$$\mathfrak{B}^{+}(\mathfrak{A})=E_1^{(0)}\oplus\ldots\oplus
E_{l_0}^{(0)}$$
and the corresponding direct-sum decomposition

$$\mathfrak{B}^{+}(X, \omega)=\mathfrak{B}_1^{+}(X,
\omega)\oplus\mathfrak{B}_2^{+}(X,
\omega)\oplus\ldots\oplus\mathfrak{B}_{l_0}^{+}(X, \omega)$$
with
$$\mathfrak{B}_i^{+}(X, \omega)=(\eval_0^{+})^{-1}(E_i^{(0)}),
i=1,\ldots,l_0;$$ of course, we have $$\mathfrak{B}_1^{+}(X,
\omega)=\mathbb{R}\nu^{+}.$$

Take $\Phi^{+}\in\mathfrak{B}^{+}(X, \omega)$ and write
$$\Phi^{+}=\Phi_1^{+}+\ldots+\Phi^{+}_{l_0}$$ with
$\Phi_i^{+}\in\mathfrak{B}^{+}(X, \omega)$. Take the smallest $i$
such that $\Phi_i^{+}\neq0;$ the exponent $\theta_i$ will then be
called {\it the top Lyapunov exponent} of $\Phi^{+};$ similarly,
if $j$ is the largest number such that $\Phi_j^{+}\neq0,$ then
$\theta_j$ will be called {\it the lower Lyapunov exponent} of
$\Phi^{+}.$ We shall now see that the top Lyapunov exponent
controls the growth of $\Phi^{+}(x, t)$ as $t\rightarrow\infty,$
while the lower Lyapunov exponent describes the local H{\"o}lder
behaviour of $\Phi^{+}(x, t).$

\begin{proposition}
\label{convhoeldprop}
Let $r\in\{1, \ldots, l_0\},$ let $\Phi^{+}\in\mathfrak{B}_r^{+},
\Phi^{+}\neq0$, and let $x\in M$ be such that $h_t^{+}x$ is
defined for all $t\in\mathbb{R}.$ Then
$$ \limsup_{|t|\rightarrow\infty} \frac{\log{|\Phi^{+}(x, t)|}}{\log{|t|}} =
\limsup_{|t|\rightarrow 0} \frac{\log{|\Phi^{+}(x, t)|}}{\log{|t|}}=
\theta_r.$$
\end{proposition}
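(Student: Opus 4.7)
The plan is to transport everything to the symbolic side via the isomorphism $\eval_0^+\colon \mathfrak B^+(X,\omega)\to\mathfrak B^+(\mathfrak A)$ of Lemma~\ref{sb-bplus}: a nonzero $\Phi^+\in\mathfrak B_r^+$ corresponds to a nonzero $v\in E_0^r$, with equivariant sequence $v^{(n)}=A_{n-1}\cdots A_0 v$ giving the values $\Phi^+(\gamma_i^{(n)})=v_i^{(n)}$ on canonical Markovian arcs. Items 3 and 4 of Assumption~\ref{a-lyap-reg}, together with uniformity on the unit sphere of $E_0^r$, yield for every $\varepsilon>0$ two-sided Lyapunov bounds
$$
C_\varepsilon^{-1}e^{(\theta_r-\varepsilon)|n|}\le |v^{(n)}|\le C_\varepsilon e^{(\theta_r+\varepsilon)|n|}\quad(n\ge 0),
$$
with the exponent replaced by $-\theta_r$ for $n\le 0$; the balanced hypothesis on $\mathfrak A$ further ensures that a definite fraction of the coordinates $v_i^{(n)}$ actually achieves this order. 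The positive Perron sequence of heights satisfies $h_i^{(n)}\asymp e^{n}$ up to $\varepsilon$-corrections.

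The upper estimates of both limsups come from an Ostrowski/Zorich-style nested decomposition of the vertical arc $[x,h_t^+x]$. Setting $n^{*}=\lfloor\log|t|\rfloor$, one carves off at level $n^{*}$ the maximal union of whole Markovian arcs contained in $[x,h_t^+x]$ (a bounded number of arcs, each of height comparable to $|t|$) and recursively decomposes the two remaining end-pieces at finer levels. Lemma~\ref{arc-dec-lem}, combined with sub-exponential growth of $\mathfrak A$, ensures the number of arcs at level $k$ is at most $C_\varepsilon e^{\varepsilon|k-n^{*}|}$, each contributing at most the bound on $|v^{(k)}|$ above. Summing the resulting geometric series yields $|\Phi^+(x,t)|\le C'_\varepsilon|t|^{\theta_r+\varepsilon}$ for large $|t|$ and $|\Phi^+(x,t)|\le C'_\varepsilon|t|^{\theta_r-\varepsilon}$ for small $|t|$; the first gives directly $\limsup_{|t|\to\infty}\log|\Phi^+(x,t)|/\log|t|\le\theta_r$, while the second, after accounting for $\log|t|<0$, yields $\liminf_{|t|\to 0}\log|\Phi^+(x,t)|/\log|t|\ge\theta_r$ and in particular $\limsup_{|t|\to 0}\ge\theta_r$.

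The matching inequalities exploit unique ergodicity and minimality of the vertical flow together with the two-sided Lyapunov bounds. For each $n$ fix an index $i(n)$ with $|v_{i(n)}^{(n)}|\asymp|v^{(n)}|$; minimality makes the orbit of $x$ cross $\partial_h^0\Pi_{i(n)}^{(n)}$ infinitely often, and horizontal holonomy invariance then gives $\Phi^+(h_s^+x,h_{i(n)}^{(n)})=v_{i(n)}^{(n)}$ at each crossing time $s$. The cocycle identity $\Phi^+(x,s+h_{i(n)}^{(n)})-\Phi^+(x,s)=v_{i(n)}^{(n)}$ forces at least one of $t\in\{s,s+h_{i(n)}^{(n)}\}$ to satisfy $|\Phi^+(x,t)|\ge\tfrac12|v_{i(n)}^{(n)}|$. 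Letting $n\to+\infty$ with $t\asymp e^n$ produces a sequence $t_k\to\infty$ along which $|\Phi^+(x,t_k)|\gtrsim t_k^{\theta_r-\varepsilon}$, giving $\limsup_{|t|\to\infty}\ge\theta_r$; the analogous construction at negative levels produces $t_k\to 0$ with $|\Phi^+(x,t_k)|\gtrsim t_k^{\theta_r+\varepsilon}$.

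The principal obstacle is that this last inequality along a sequence is not enough to conclude $\limsup_{|t|\to 0}\le\theta_r$: one must upgrade it to the \emph{uniform} lower bound $|\Phi^+(x,t)|\ge c_\varepsilon|t|^{\theta_r+\varepsilon}$ valid for every sufficiently small $|t|$. This is a quantitative non-degeneracy statement — the dominant Markovian contribution at the matching level $n^{*}\approx -\log|t|$ must not be wholly cancelled by the sub-exponentially many corrections from finer levels — whose proof requires combining the upper-bound sums above with the balanced and Lyapunov-regular structure of $\mathfrak A$ through a careful pigeonhole argument.
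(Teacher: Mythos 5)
Your proposal is essentially the paper's own argument: the same transport to the equivariant sequence $v^{(n)}$ via $\eval_0^+$, the same Oseledets bounds on $|v^{(n)}|$, the same upper estimates by Markovian approximation of arcs (Lemma \ref{arc-dec-lem}, i.e.\ the machinery behind Lemma \ref{sb-bplus}), and the same splitting trick for the lower bound: the paper takes the first time $t_n$ at which the orbit of $x$ completes a full passage through $\Pi^{(n)}_{i(n)}$, writes $\Phi^{+}([x,x''(n)])=\Phi^{+}([x,x'(n)])+v^{(n)}_{i(n)}$, and concludes that one of the two outer terms has modulus at least $\tfrac12|v^{(n)}_{i(n)}|$, with $t_n\asymp e^{(\theta_1\pm\varepsilon)n}$. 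Two remarks. First, for the crossing time of size $\asymp e^{(1\pm\varepsilon)n}$ you should not appeal to minimality, which gives no quantitative bound; the paper's estimate (\ref{est-tn}) rests on the SB structure: all entries of the adjacency matrices are positive, so every rectangle of level $n+1$ traverses every rectangle of level $n$, and a full passage through $\Pi^{(n)}_{i(n)}$ therefore occurs within time of order $\max_i h^{(n+1)}_i\leq C_{\varepsilon}e^{(1+\varepsilon)(n+1)}$. Second, the ``principal obstacle'' you flag at $|t|\to 0$ --- upgrading the lower bound along a sequence to the uniform bound $|\Phi^{+}(x,t)|\geq c_{\varepsilon}|t|^{\theta_r+\varepsilon}$ for all small $t$ --- is not resolved in the paper either: its entire treatment of this case is the remark that it is ``obtained by taking $n\to-\infty$ and repeating the same argument'', which produces exactly the sequence of good small times you already constructed, together with the H{\"o}lder upper bound; that controls $\limsup_{|t|\to 0}\geq\theta_r$ and pins down the liminf, but supplies no uniform lower bound. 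So there is no hidden pigeonhole argument for you to import; your write-up establishes exactly as much as the published proof, and is in fact more explicit about where the literal inequality $\limsup_{|t|\to 0}\leq\theta_r$ would require an additional non-cancellation input.
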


Proof. We first let $t$ tend to $+\infty.$ Let
$v^{(n)}\in\mathfrak{B}^{+}(\mathfrak{A})$ be the equivariant
sequence corresponding to $\Phi^{+};$ we have $v^{(n)}\in
E_r^{(n)}$ and, consequently, for every $\varepsilon>0$ there
exists a constant $C_\varepsilon>0$ and, for every $n\in
\mathbb{N},$ there exists $i(n)\in \{1, \ldots, m\}$ such that

\begin{equation}\label{est-vn}
\left| v_{i(n)}^{(n)} \right| \geq C_\varepsilon
e^{(\theta_r-\varepsilon)n}, n \in \mathbb{N}.
\end{equation}

Now let
$$t_n= \min\{t: t\geq h_{i(n)}^{(n)}, h_{t_n}^{+}x\in
\partial_v^{1}\Pi_{i(n)}^{(n)}\}$$
Informally, $t_n$ is the first such moment that the are $[x,
h_{t_n}^{+}x]$ contains a Markovian arc going all the way through
the rectangle $\Pi_{i(n)}^{(n)}$. It is clear from the
$SB$-property of the sequence $\mathfrak{A}$ that for any
$\varepsilon>0$ there exist constants $C_\varepsilon^\prime,
C_\varepsilon^{\prime\prime}>0$ such that

\begin{equation}\label{est-tn}
C_\varepsilon^\prime e^{(\theta_1 -\varepsilon)n}\leq t_n\leq
C_\varepsilon^{\prime \prime} e^{(\theta_1 +\varepsilon)n}, n\in
\mathbb{N}.
\end{equation}

Now denote $$x^\prime (n)= h_{t_n-h_{i(n)}^{(n)}}^{+} x, x^{\prime
\prime}(n)= h_{t_n}^{+}x.$$

Since $$\Phi^{+}([x, x^{\prime \prime}(n)])=\Phi^{+}([x,
x^{\prime}(n)])+\Phi^{+}([x^{\prime}(n), x^{\prime \prime}(n)]),$$
and $$\Phi^{+}([x^{\prime}(n), x^{\prime
\prime}(n)])=v^{(n)}_{i(n)},$$
it follows from (\ref{est-tn}),
(\ref{est-vn}) that we have
$$\limsup_{n \rightarrow +\infty}
\frac{\max \{\log {|\Phi^{+}([x, x^{\prime}(n)])|},
\log{|\Phi^{+}([x, x^{\prime \prime}(n)])|}\}}{n}=
\frac{\theta_r}{\theta_1},$$

whence also
$$\limsup_{n\rightarrow +\infty} \max \left( \frac{\log {|\Phi^{+}([x,
x^{\prime}(n)])|}}{\log{\nu^{+}([x, x^{\prime}(n)])}}, \frac{\log
{|\Phi^{+}([x^{\prime}(n), x^{\prime \prime}(n)])|}}{\log{\nu^{+}([x, x^{\prime
\prime}(n)])}} \right) = \theta_r,$$
and, finally
$$\limsup_{t\rightarrow +\infty} \frac{\log{|\Phi^{+}(x, t)|}}{\log{t}}=\theta_r.$$

\begin{figure}
\begin{center}
\includegraphics{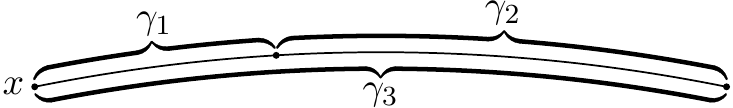}\\
\caption{Proof of the lower bound in Proposition \ref{convhoeldprop}:
$\gamma_1=[x(n),x^{\prime}(n)]$, $\gamma_2=[x^{\prime}(n), x^{\prime\prime}(n)]$.
Either $\gamma_1$ or $\gamma_3$ satisfies the lower bound. }\label{fig:one}
\end{center}
\end{figure}
The desired lower bound is established.
 We illustrate the argument in Figure \ref{fig:one}.

The proof for $t\rightarrow -\infty$ is completely similar, while
the case $t \rightarrow 0$ is obtained by taking $n\rightarrow
-\infty$ and repeating the same argument.

\subsection{Hyberbolic  SB - sequences}
An $SB$ sequence $\mathfrak{A}$ will be called {\it hyperbolic} if
$\mathfrak{B}_c^{+}(\mathfrak{A})=\mathfrak{B}^{+}(\mathfrak{A}).$
It is clear from the definitions that if an abelian differential
$(X, \omega)$ admits an exact Markovian sequence of partitions
such that the corresponding sequence $\mathfrak{A}$ is hyperbolic,
then $\mathfrak{B}_c^{+}(X, \omega)=\mathfrak{B}^{+}(X, \omega).$

In what follows, we shall check that if $\mathbb{P}$ is
probability measure on $\mathcal{H}$, invariant under the
Teichm\"{u}ller flow and ergodic, and such that the
Kontsevich-Zorich cocycle acts isometrically on its neutral
Oseledets subspace, then $\mathbb{P}$-almost every abelian
differential $(X, \omega)$ admits a Markovian sequence of
partitions $\pi_n$ such that the corresponding sequence
$\mathfrak{A}$ of adjacency matrices is a hyperbolic $SB$ -
sequence. It will follow that for $\mathbb{P}$ - almost all
$\omega$ we have $\mathfrak{B}^{+}(X,
\omega)=\mathfrak{B}_c^{+}(X, \omega).$

{\bf Remark.} If $\mathfrak{B}_c^{+}(\mathfrak{A})$ is
strictly larger than $\mathfrak{B}^{+}(\mathfrak{A})$, it does not
follow that $\mathfrak{B}_c^{+}(X, \omega)$ is strictly larger
than $\mathfrak{B}^{+}(X, \omega):$ our constructions do not allow
us to assign a finitely additive measure defined on all arcs of
the vertical flow to a general equivariant sequence
$v^{(n)}\in\mathfrak{B}^{+}(\mathfrak{A})$.

\subsection{Expectation and variance of H{\"o}lder cocycles}
\begin{proposition}
\label{phiexpzero}
For any $\Phi^+\in \BB^+$ and any $t_0\in {\mathbb R}$ we have
$$
\ee_{\nu}(\Phi^+(x,t_0))=\langle \Phi^+, \nu^-\rangle\cdot t_0.
$$
\end{proposition}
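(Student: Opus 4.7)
The plan is to show that $F(t) := \ee_\nu(\Phi^+(x,t)) = \int_M \Phi^+(x,t)\,d\nu(x)$ is linear in $t$ and that the slope equals $\langle \Phi^+, \nu^-\rangle$. Additivity of $F$ is immediate from the cocycle identity in part 1 of Assumption \ref{bplusx}: the relation $\Phi^+(x, t_1+t_2) = \Phi^+(x,t_1) + \Phi^+(h^+_{t_1} x, t_2)$ combined with the $h^+_t$-invariance of $\nu$ gives $F(t_1+t_2) = F(t_1) + F(t_2)$. The H\"older bound (part 2 of that Assumption) yields $|F(t)| \le |t|^{\theta}$ for small $|t|$, so the additive function $F$ is locally bounded, hence linear: $F(t) = c\,t$ for some $c \in \mathbb{R}$.

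To identify $c$ with $\langle \Phi^+, \nu^-\rangle = (\Phi^+\times\nu^-)(M)$, I would fix a finite partition modulo $\nu$ of $M$ into admissible rectangles $\Pi_j = \Pi(x_j, a_j, b_j)$ and compute $F(t_0)/t_0$ as $t_0 \to 0^+$ with $t_0 < \min_j a_j$. On each $\Pi_j$, Fubini (with $d\nu = d\nu^+\otimes d\nu^-$ locally) gives
\[
\int_{\Pi_j} \Phi^+(y,t_0)\, d\nu(y) = \int_0^{a_j} \int_0^{b_j} \Phi^+(h^+_s h^-_\tau x_j, t_0)\, d\tau\, ds.
\]
For $s \in [0, a_j - t_0]$ the vertical arc of length $t_0$ issuing from $h^+_s h^-_\tau x_j$ remains inside $\Pi_j$, so horizontal holonomy invariance (part 3 of Assumption \ref{bplusx}) makes the integrand independent of $\tau$. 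A short telescoping using the cocycle identity, together with the H\"older bound near $u=0$ and near $u=a_j$, converts the inner $s$-integral into $t_0\,\Phi^+(x_j,a_j) + O(t_0^{1+\theta})$. Multiplying by $b_j$ and summing over $j$ yields $F(t_0) = t_0 \sum_j b_j\,\Phi^+(x_j,a_j) + O(t_0^{1+\theta})$. By the definition of $\Phi^+\times\nu^-$, the sum equals $(\Phi^+\times\nu^-)(M) = \langle\Phi^+,\nu^-\rangle$; dividing by $t_0$ and sending $t_0 \to 0^+$ produces $c = \langle\Phi^+,\nu^-\rangle$.

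The one delicate point is the boundary strip $s \in [a_j - t_0, a_j]$, on which holonomy invariance is unavailable because the vertical arc escapes $\Pi_j$; however, on this strip the contribution is bounded by $t_0\,b_j \cdot \sup_y |\Phi^+(y,t_0)| \le C\,t_0^{1+\theta}\,b_j$ via the H\"older bound, and is therefore negligible against the $O(t_0)$ leading term after summation over the fixed partition. Otherwise the argument is essentially a Fubini computation on flow boxes, with admissibility ensuring that horizontal holonomy invariance of $\Phi^+$ can be applied on the interior.
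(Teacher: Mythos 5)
Your proof is correct, but it follows a genuinely different route from the paper's. The paper argues indirectly: since the identity is obvious for $\Phi^+=\nu^+$, it reduces by linearity to the case $\langle\Phi^+,\nu^-\rangle=0$, and then observes that if $\ee_\nu(\Phi^+(x,t_0))$ were nonzero, the Birkhoff ergodic theorem applied to the cocycle over the (ergodic) vertical flow would force $\Phi^+(x,T)$ to grow linearly, i.e. $\limsup_T \log|\Phi^+(x,T)|/\log T=1$; by the growth asymptotics (Proposition \ref{convhoeldprop}) this exponent occurs only for cocycles with a nonzero component along $\nu^+$, i.e. with $\langle\Phi^+,\nu^-\rangle\neq 0$, a contradiction. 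You instead prove the statement by a direct flow-box computation: additivity of $F(t)=\ee_\nu\Phi^+(x,t)$ from the cocycle identity and $h^+_t$-invariance of $\nu$, linearity from the local H\"older bound, and identification of the slope by Fubini over a partition of $M$ (mod $\nu$) into admissible rectangles, using holonomy invariance on interior sub-rectangles and the H\"older bound on the boundary strips to get $F(t_0)=\langle\Phi^+,\nu^-\rangle\,t_0+O(t_0^{1+\theta})$. The implicit points in your argument are sound: the sub-rectangles on which you invoke condition 3 of Assumption \ref{bplusx} are admissible because their closures lie in the closure of an admissible $\Pi_j$, and $\sum_j b_j\Phi^+(x_j,a_j)=\langle\Phi^+,\nu^-\rangle$ is exactly how the paper evaluates $\Phi^+\times\nu^-(M)$ by finite additivity over such a partition. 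The trade-off: the paper's proof is two lines but leans on ergodicity of the vertical flow and the Lyapunov/growth machinery developed earlier (so it is an almost-sure statement tied to the measure $\Prob$), whereas yours uses only the defining properties of cocycles in $\BB^+$ plus the existence of a partition into admissible rectangles, needs no ergodic theorem or Oseledets theory, yields a quantitative error term, and would extend verbatim (with $o(t_0)$ errors) to merely continuous cocycles in $\BB^+_c$.
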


Proof: Since the Proposition is clearly valid for $\Phi^+=\nu^+$, it suffices to prove it
in the case $\langle \Phi^+, \nu^-\rangle=0$.
But indeed, if $\ee_{\nu}(\Phi^+(x,t))\neq 0$, then the Ergodic  Theorem implies
$$
\limsup_{T\to\infty} \frac{\log |\Phi^+(x,T)|}{\log T}=1,
$$
and then $\langle \Phi^+, \nu^-\rangle\neq 0$.

\begin{proposition}
\label{phivarnotzero}
For any $\Phi^+\in \BB^+$ not proportional to $\nu^+$  and any $t_0\neq 0$ we have
$$
Var_{\nu} \Phi^+(x,t_0)\neq 0.
$$
\end{proposition}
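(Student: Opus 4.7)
The plan is to argue by contradiction. Suppose that $Var_\nu \Phi^+(x,t_0) = 0$ for some $t_0 \neq 0$; by the cocycle identity $\Phi^+(x,-t_0) = -\Phi^+(h_{-t_0}^+ x, t_0)$ together with $h_t^+$-invariance of $\nu$, I may assume $t_0 > 0$. My first step will be to reduce to the case $\langle \Phi^+, \nu^-\rangle = 0$: setting $c = \langle \Phi^+, \nu^-\rangle$ and $\Psi^+ = \Phi^+ - c\nu^+$, the variance is unchanged because $c\nu^+(x,t_0) = ct_0$ is deterministic in $x$, and the Oseledets duality of $\BB^+$ with $\BB^-$ under $\langle,\rangle$ places $\Psi^+$ in the subdominant subspace $\bigoplus_{i \geq 2} \BB_i^+$. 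Since $\Phi^+$ is not proportional to $\nu^+$, we must have $\Psi^+ \neq 0$.

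Next, Proposition \ref{phiexpzero} gives $\ee_\nu \Psi^+(x,t_0) = \langle \Psi^+, \nu^-\rangle \cdot t_0 = 0$, which combined with vanishing variance forces $\Psi^+(x,t_0) = 0$ for $\nu$-a.e.\ $x$. Using the cocycle identity
$$\Psi^+(x,nt_0) = \sum_{k=0}^{n-1} \Psi^+(h_{kt_0}^+ x, t_0)$$
and $h_t^+$-invariance of $\nu$, I conclude that each set $\{x : \Psi^+(x,nt_0) = 0\}$ has full $\nu$-measure; their countable intersection over $n \in \mathbb{Z}$ produces a full-measure set $E \subset M$ on which $\Psi^+(x,nt_0) = 0$ for every $n \in \mathbb{Z}$ simultaneously.

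The contradiction will come from comparing uniform boundedness to Lyapunov growth. For $x \in E$ and arbitrary $T > 0$, writing $T = nt_0 + \tau$ with $\tau \in [0,t_0)$, the cocycle property gives $\Psi^+(x,T) = \Psi^+(h_{nt_0}^+ x, \tau)$, and Assumption \ref{bplusx}(2) yields a uniform bound $|\Psi^+(x,T)| \leq C t_0^\theta$ for all $T$. On the other hand, let $\Psi_r^+$ denote the leading Oseledets component of $\Psi^+$, so that $r \geq 2$ and $\theta_r > 0$; Proposition \ref{convhoeldprop} applied to $\Psi_r^+$ gives $\limsup_{T \to \infty} \log|\Psi_r^+(x,T)|/\log T = \theta_r$ for a.e.\ $x$, and since the subdominant components $\Psi_j^+$ with $j > r$ grow at strictly slower rates $\theta_j < \theta_r$, the same asymptotic holds for the full sum $\Psi^+$. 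In particular $|\Psi^+(x,T)|$ is unbounded, contradicting the uniform bound whenever $x$ lies in the intersection of $E$ with the Proposition \ref{convhoeldprop} full-measure set. The main obstacle is the bookkeeping that promotes the single-component growth to an estimate on the full sum, but this is handled by the triangle inequality together with the strict separation of Lyapunov exponents.
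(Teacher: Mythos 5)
Your argument is correct and follows the same route as the paper's own proof: subtract off $\langle\Phi^+,\nu^-\rangle\nu^+$ to center the cocycle, deduce from vanishing variance (and the cocycle identity) that $\Psi^+(x,nt_0)=0$ a.e.\ and hence that $\Psi^+(x,T)$ is uniformly bounded in $T$, and contradict the polynomial growth rate $\theta_r>0$ guaranteed by Proposition \ref{convhoeldprop}. You simply spell out the details the paper leaves implicit (the $t_0>0$ reduction, the countable intersection over $n\in\mathbb{Z}$, and the promotion of the single-component growth estimate to the full cocycle), but the underlying idea is identical.
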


Taking $\Phi^+-\langle \Phi^+, \nu^-\rangle\cdot\nu^+$ instead of $\Phi^+$, we may
assume $\ee_{\nu}(\Phi^+(x,t_0))=0$. If $Var_{\nu} \Phi^+(x,t_0)=0$, then $\Phi^+(x,t_0)=0$
identically, but then
 $$
\limsup_{T\to\infty} \frac{\log |\Phi^+(x,T)|}{\log T}=0,
$$
whence $\Phi^+=0$, and the Proposition is proved.

{\bf Remark.} In the context of substitutions,
cocycles related to the H{\"o}lder cocycles from ${\mathfrak B}^+$ have been studied by P.~Dumont, T.~Kamae and
S.~Takahashi in \cite{dumontkamae}
as well as by T.~Kamae in \cite{kamae}.

\section{The Teichm{\"u}ller Flow on the Veech Space of Zippered Rectangles.}

\subsection{Veech's space of zippered rectangles}

\subsubsection{Rauzy-Veech induction}

The renormalization action of the Teichm{\"u}ller flow on the spaces $\BB^+$ and $\BB^-$ of H{\"o}lder cocycles will play a main r{\^o}le in the
proof of limit theorems for translation flows. We will use Veech's
representation of abelian differentials by zippered rectangles,
and in this section we recall Veech's construction using the
notation of \cite{bufetov}, \cite{bufgur}.
For a different presentation of the Rauzy-Veech formalism,
see Marmi-Moussa-Yoccoz \cite{MMY}.

We start by recalling the definition of the Rauzy-Veech induction.
Let $\pi$ be a permutation of $m$ symbols, which will always be
assumed irreducible in the sense that $\pi\{1,\dots,k\}=\{1,\dots,k\}$ implies $k=m$.
The Rauzy operations $a$ and $b$ are defined by the formulas
$$
a\pi(j)=\begin{cases}
\pi j,&\text{if $j\leq\pi^{-1}m$,}\\
\pi m,&\text{if $j=\pi^{-1}m+1$,}\\
\pi(j-1),&\text{if $\pi^{-1}m+1<j\le m$;}
\end{cases}
$$
$$
b\pi(j)=\begin{cases}
\pi j,&\text{if $\pi j\leq \pi m$,}\\
\pi j+1,&\text{if $\pi m<\pi j<m$,}\\
\pi m+1,&\text{ if $\pi j=m$.}
\end{cases}
$$

These operations preserve irreducibility. The {\it Rauzy class}
$\mathcal R(\pi)$ is defined as the set of all permutations that can
be obtained from $\pi$ by application of the transformation group
generated by $a$ and $b$. From now on we fix a Rauzy class $\R$ and
assume that it consists of irreducible permutations.

For $i,j=1,\dots,m$, denote by $E^{ij}$ the $m\times m$ matrix whose
$(i,j)th$ entry is $1$, while all others are zeros. Let $E$ be the
identity $m\times m$-matrix. Following Veech \cite{veech}, introduce
the unimodular matrices
\begin{equation} \label{mat_a}
{\mathcal A}(a,\pi)=\sum_{i=1}^{\pi^{-1}m}E^{ii}+E^{m,\pi^{-1}m+1}+
\sum_{i=\pi^{-1}m}^{m-1}E^{i,i+1},
\end{equation}
\begin{equation} \label{mat_b}
{\mathcal A}(b,\pi)=E+E^{m,\pi^{-1}m}.
\end{equation}
For a vector $\la=(\la_1,\dots,\la_m)\in{\mathbb R}^m$, we write
$$
|\la|=\sum_{i=1}^m\la_i.
$$
Let
$$
\Delta_{m-1}=\{\la\in {\mathbb R}^m:|\la|=1,\ \la_i>0 \text{ for }
i=1,\dots,m\}.
$$

One can identify each pair $(\lambda,\pi)$,
$\lambda\in\Delta_{m-1}$, with the {\it interval exchange map} of
the interval $I:=[0,1)$ as follows. Divide $I$ into the
sub-intervals $I_k:=[\beta_{k-1},\beta_k)$, where $\beta_0=0$,
$\beta_k=\sum_{i=1}^k\lambda_i$, $1\le k\le m$, and then place the
intervals $I_k$ in $I$ in the following order (from left to write):
$I_{\pi^{-1}1},\dots, I_{\pi^{-1}m}$. We obtain a piecewise linear
transformation of $I$ that preserves the Lebesgue measure.

The space $\Delta(\R)$ of interval exchange maps corresponding to
$\R$ is defined by
$$
\Delta(\R)=\Delta_{m-1}\times\R.
$$
Denote
$$
\Delta_{\pi}^+=\{\la\in\Delta_{m-1}| \ \la_{\pi^{-1}m}>\la_m\},\ \
\Delta_{\pi}^-=\{\la\in\Delta_{m-1}| \ \la_m>\la_{\pi^{-1}m}\},
$$
$$
\Delta^+(\R)=\bigcup\limits_{\pi\in{\mathcal R}}\{(\pi,\la)|\
\la\in\Delta_{\pi}^+\},$$
$$
\Delta^-(\R)=\bigcup\limits_{\pi\in{\mathcal R}}\{(\pi,\la)|\
\la\in\Delta_{\pi}^-\},$$
$$
\Delta^\pm(\R)=\Delta^+(\R)\cup\Delta^-(\R).
$$
The {\it Rauzy-Veech induction map} ${\mathscr
T}:\Delta^\pm(\R)\to\Delta(\R)$ is defined as follows:
\begin{equation}
\label{te} {\mathscr T}(\la,\pi)=\begin{cases}
(\frac{{\mathcal A}(a,\,\pi)^{-1}\la}{|{\mathcal A}(a,\,\pi)^{-1}\la|},a\pi), &\text{if
$\la\in\Delta_\pi^+$,}\\
(\frac{{\mathcal A}(b,\,\pi)^{-1}\la}{|{\mathcal A}(b,\,\pi)^{-1}\la|},\,b\pi), &\text{if
$\la\in\Delta_\pi^-$} .
\end{cases}
\end{equation}

One can check that ${\mathscr T}(\la,\pi)$ is the interval exchange map
induced by $(\la,\pi)$ on the interval $J=[0,1-\gamma]$, where
$\gamma=\min(\la_m,\la_{\pi^{-1}m})$; the interval $J$ is then stretched to
unit length.

Denote
\begin{equation}
\label{delta+-} \Delta^\infty(\mathcal R)=\bigcap_{n\ge0}\mathscr
T^{-n} \Delta^\pm(\mathcal R).
\end{equation}
Every ${\mathscr T}$-invariant probability measure is concentrated on
$\Delta^\infty(\mathcal R)$. On the other hand, a natural Lebesgue
measure defined on $\Delta(\mathcal R)$, which is finite, but
non-invariant, is also concentrated on $\Delta^\infty(\mathcal R)$.
Veech \cite{veech} showed that $\mathscr T$ has an absolutely
continuous ergodic invariant measure on $\Delta(\R)$, which is,
however, infinite.

We have two matrix cocycles ${\mathcal A}^t$, ${\mathcal A}^{-1}$over $\mathscr T$ defined by
$$
{\mathcal A}^t(n,(\lambda, \pi))={\mathcal A}^t({\mathscr T}^n(\lambda, \pi))\cdot \ldots \cdot {\mathcal A}^t(\lambda,\pi),
$$
$$
{\mathcal A}^{-1}(n,(\lambda, \pi))={\mathcal A}^{-1}({\mathscr T}^n(\lambda, \pi))\cdot \ldots \cdot {\mathcal A}^{-1}(\lambda,\pi).
$$

We introduce the corresponding skew-product transformations
${\mathscr T}^{{\mathcal A}^t}: \Delta(\R)\times {\mathbb R}^m\to\Delta(\R)\times {\mathbb R}^m$,
${\mathscr T}^{{\mathcal A}^{-1}}: \Delta(\R)\times {\mathbb R}^m\to\Delta(\R)\times {\mathbb R}^m$,
$$
{\mathscr T}^{{\mathcal A}^t}((\la,\pi), v)=({\mathscr T}(\la,\pi), {\mathcal A}^t(\la,\pi)v);
$$
$$
{\mathscr T}^{{\mathcal A}^{-1}}((\la,\pi), v)=({\mathscr T}(\la,\pi), {\mathcal A}^{-1}(\la,\pi)v).
$$

\subsubsection{The construction of zippered rectangles}
\label{zipper}

Here we briefly recall the construction of the Veech space of
zippered rectangles. We use the notation of \cite{bufetov}.

{\it Zippered rectangles} associated to the Rauzy class $\R$ are
triples $(\la,\pi,\delta)$, where
$\la=(\la_1,\dots,\la_m)\in{\mathbb R}^m$, $\la_i>0$, $\pi\in{\mathcal
R}$, $\delta=(\delta_1,\dots,\delta_m)\in{\mathbb R}^m$, and the
vector $\delta$ satisfies the following inequalities:
\begin{equation}
\label{deltaone} \delta_1+\dots+\delta_i\leq 0,\ \ i=1,\dots,m-1,
\end{equation}
\begin{equation}
\label{deltatwo} \delta_{\pi^{-1}\,1}+\dots+\delta_{\pi^{-1}\,i}\geq
0, \ \ i=1, \dots, m-1.
\end{equation}
The set of all vectors $\delta$ satisfying (\ref{deltaone}),
(\ref{deltatwo}) is a cone in ${\mathbb R}^m$; we denote it by
$K(\pi)$.

For any $i=1, \dots, m$, set
\begin{equation}
\label{adelta}
a_j=a_j(\delta)=-\delta_1-\dots-\delta_j, \
h_{j}=h_j(\pi,\delta)=-\sum_{i=1}^{j-1} \delta_i+\sum_{l=1}^{\pi(j)-1}\delta_{\pi^{-1}l}.
\end{equation}

\subsubsection{Zippered rectangles and abelian differentials.}

Given a zippered rectangle ${\mathscr X}=(\la,\pi,\delta)$, Veech \cite{veech} takes $m$ rectangles
$\Pi_i=\Pi_i(\la,\pi,\delta)$ of girth $\la_i$ and height $h_i$, $i=1, \dots, m$, and glues them together according to
a rule determined by the permutation $\pi$. This procedure yields a Riemann surface $M$ endowed with a holomorphic
$1$-form $\omega$ which, in restriction to each $\Pi_i$, is simply the form $dz=dx+idy$.
The union of the bases of the rectangles is an interval $I^{(0)}(\la,\pi, \delta)$ of length $|\la|$ on $M$; the
first return map of the vertical flow of the form $\omega$ is precisely the interval exchange ${\bf T}_{(\la,\pi)}$.

A zippered rectangle ${\mathscr X}$ by definition carries a partition $\pi_0=\pi_0({\mathscr X})$ of
the underlying surface $M=M({\mathscr X})$
into $m$ weakly admissible rectangles$\Pi_i$:
$$
\pi_0: M=\Pi_1\sqcup \dots\sqcup \Pi_m.
$$

The {\it area} of a zippered rectangle $(\la,\pi,\delta)$ is given
by the expression
\begin{equation}
\label{area} Area\,(\la,\pi,\delta):=\sum_{r=1}^m\la_rh_r=
\sum_{r=1}^m\la_r(-\sum_{i=1}^{r-1}\delta_i+\sum_{i=1}^{\pi
r-1}\delta_{\pi^{-1}\,i}).
\end{equation}
(Our convention is $\sum_{i=u}^v...=0$ when $u>v$.)

Furthermore, to each rectangle $\Pi_i$ Veech \cite{veechamj} assigns a cycle $\gamma_i(\la,\pi, \delta)$ in the homology
group $H_1(M, {\mathbb Z})$: namely, if $P_i$ is the left bottom corner of $\Pi_i$ and $Q_i$ the left top corner, then the cycle is
the union of the vertical interval $P_iQ_i$ and the horizontal subinterval of $I^{(0)}(\la,\pi, \delta)$ joining $Q_i$ to $P_i$. It is
clear that the cycles $\gamma_i(\la,\pi, \delta)$ span $H_1(M, {\mathbb Z})$.

\subsubsection{The space of zippered rectangles.}
Denote by ${\mathcal V}(\R)$ the space of all zippered rectangles
corresponding to the Rauzy class $\R$, i.e.,
$$
{\mathcal V}(\R)=\{(\la,\pi,\delta):\la\in{\mathbb
R}^m_+,\,\pi\in\R,\,\delta\in K(\pi)\}.
$$
Let also
$$
{\mathcal V}^+(\R)=\{(\la,\pi,\delta)\in{\mathcal
V}(\R):\la_{\pi^{-1}m}>\la_m\},
$$
$$
{\mathcal V}^-(\R)=\{(\la,\pi,\delta)\in{\mathcal
V}(\R):\la_{\pi^{-1}m}<\la_m\},
$$
$$
{\mathcal V}^\pm(\R)={\mathcal V}^+(\R)\cup{\mathcal V}^-(\R).
$$
Veech \cite{veech} introduced the flow $\{P^s\}$ acting on ${\mathcal
V}(\R)$ by the formula
$$
P^s(\la,\pi,\delta)=(e^{s}\la,\pi,e^{-s}\delta),
$$
and the map $\U:{\mathcal V}^\pm(\R)\to{\mathcal V}(\R)$, where
$$
{\U}(\la,\pi,\delta)=\begin{cases}({{\mathcal A}}(\pi,a)^{-1}\la,a\pi,{{\mathcal A}}(\pi,
a)^{-1}\delta),
&\text{if $\la_{\pi^{-1}m}>\la_m$,}\\
({{\mathcal A}}(\pi,b)^{-1}\la,b\pi,{{\mathcal A}}(\pi,b)^{-1}\delta), &\text{if
$\la_{\pi^{-1}m}<\la_m$.}
\end{cases}
$$
(The inclusion $\U{\mathcal V}^\pm(\R)\subset{\mathcal V}(\R)$ is proved in \cite{veech}.) The
map $\U$ and the flow $\{P^s\}$ commute on ${\mathcal V}^\pm(\R)$. They also
preserve the area of a zippered rectangle (see (\ref{area})) and
hence can be restricted to the set
$$
{\mathcal V}^{1,\pm}(\R):=\{(\la,\pi,\delta)\in{\mathcal V}^\pm(\R):
Area(\la,\pi,\delta)=1\}.
$$

For $(\la,\pi)\in\Delta(\R)$, denote
\begin{equation}
\label{taulambda}
\tau^0(\la,\pi)=:-\log(|\la|-\min(\la_m,\la_{\pi^{-1}m})).
\end{equation}
From (\ref{mat_a}), (\ref{mat_b}) it follows that if
$\la\in\Delta_\pi^+\cup\Delta_\pi^-$, then
\begin{equation}
\label{tau1} \tau^0(\la,\pi)=-\log|{{\mathcal A}}^{-1}(c,\pi)\la|,
\end{equation}
where $c=a$ when $\la\in\Delta_\pi^+$, and $c=b$ when
$\la\in\Delta_\pi^-$.

Next denote
$$\Y_1(\R):=\{x=(\la,\pi,\delta)\in{\mathcal
V}(\R):|\la|=1,\ Area(\la,\pi,\delta)=1\},
$$
\begin{equation*}
\label{tau3}\tau(x):=\tau^0(\la,\pi)\text{ for
}x=(\la,\pi,\delta)\in\Y_1(\R),
\end{equation*}
\begin{equation}
\label{phase} {\mathcal V}_{1,\tau}(\R):=\bigcup_{x\in\Y_1(\R),\ 0\leq
s\leq \tau(x)}P^sx.
\end{equation}
 Let
$$
{\mathcal V}^{1,\pm}_{\ne}(\R):=\{(\la,\pi,\delta)\in {\mathcal
V}^{1,\pm}(\R):a_m(\delta)\ne 0\},
$$
$$
{\mathcal V}_\infty(\R):=\bigcap_{n\in\mathbb Z}\U^n{\mathcal
V}^{1,\pm}_{\ne}(\R).
$$
Clearly
$\U^n$ is well-defined on ${\mathcal
V}_\infty(\R)$ for all $n\in\mathbb Z$.

We now set
$$
{\mathcal Y}^{\prime}(\R):={\mathcal Y}_1(\R)\cap\mathcal V_\infty(\R),\ \ {\tilde{\mathcal V}}(\R):=
{\mathcal V}_{1,\tau}(\R)\cap{\mathcal V}_\infty(\R).
$$
The above identification enables us to define on $\tilde{\mathcal
V}(\R)$ a natural flow, for which we retain the notation $P^s$
(although the bounded positive function $\tau$ is not separated from
zero, the flow $P^s$ is well-defined).

Note that for any $s\in {\mathbb R}$ we have a natural ``tautological" map
\begin{equation}
\label{def-t-s}
{\mathfrak t}_s: M({\mathscr X})\to M(P^s{\mathscr X})
\end{equation}
which on each rectangle $\Pi_i$ is simply expansion by $e^s$ in the horizontal direction and contraction
by $e^s$ in the vertical direction. By definition, the map ${\mathfrak t}_s$ sends the vertical and the
horizontal foliations of ${\mathscr X}$ to those of $P^s{\mathscr X}$.

Introduce the space
$$
{\mathfrak X}{\tilde{\mathcal V}}(\R)=\{({\mathscr X}, x): {\mathscr X}\in {\tilde{\mathcal V}}(\R),  x\in M({\mathscr X}) \}
$$
and endow the space ${\mathfrak X}{\tilde{\mathcal V}}(\R)$ with the flow $P^{s, {\mathfrak X}}$ given by the formula
$$
P^{s, {\mathfrak X}}({\mathscr X}, x)=(P^s{\mathscr X}, {\mathfrak t}_sx).
$$

The flow $P^s$ induces on the transversal ${\mathcal Y}_1(\R)$ the first-return map ${\overline {\mathscr T}}$ given by the
formula
\begin{equation}
{\overline {\mathscr T}}(\la,\pi, \delta)={\mathcal U}P^{\tau^0(\la,\pi)}(\la,\pi, \delta).
\end{equation}

Observe that, by definition, if ${\overline {\mathscr T}}(\la,\pi,\delta)=(\la^{\prime}, \pi^{\prime}, \delta^{\prime})$,
then $(\la^{\prime}, \pi^{\prime})={\mathscr T}(\la,\pi)$.

For $(\la,\pi, \delta)\in {\tilde{\mathcal V}}(\R)$, $s\in {\mathbb R}$, let ${\tilde n}(\la,\pi,\delta, s)$ be defined by the formula
$$
{\mathcal U}^{{\tilde n}(\la,\pi,\delta, s)}(e^s\la,\pi,e^{-s}\delta)\in {\mathcal V}_{1,\tau}(\R).
$$

Endow the space ${\tilde{\mathcal V}}(\R)$  with a matrix cocycle ${\overline {\mathscr {{\mathcal A}}}}^t$
over the flow $P^s$ given by the formula
$$
{\overline {\mathcal {A}}}^t(s, (\la,\pi,\delta))={{\mathcal A}}^t({\tilde n}(\la,\pi,\delta, s), (\la,\pi))
$$

and introduce the corresponding skew-product flow
$$
P^{s,\overline{\mathcal{A}}^t}:\widetilde{\mathcal{V}}(\mathcal{R})\times\mathbb{R}^m\to
\widetilde{\mathcal{V}}(\mathcal{R})\times\mathbb{R}^m
$$
by the formula
$$
P^{s,\overline{\mathcal{A}}^t}({\mathscr X}, v)=
(P^s{\mathscr X},{\overline{\mathcal{A}}^t}({\mathscr X}, s)v).
$$
We also have a natural cocycle ${\overline {\mathcal A}}$
over the inverse flow $P^{-s}$ given by the formula
$$
{\overline {\mathcal A}}({\mathscr X}, s)=({\overline {\mathcal A}}^t(P^{-s}{\mathscr X}, s))^t
$$
and the natural skew-product flow
$$
P^{-s,\overline{\mathcal{A}}}:\widetilde{\mathcal{V}}(\mathcal{R})\times\mathbb{R}^m\to \widetilde{\mathcal{V}}(\mathcal{R})\times\mathbb{R}^m
$$
defined by the formula
$$
P^{-s,\overline{\mathcal{A}}}({\mathscr X}, v)=(P^{-s}{\mathscr X},{\overline{\mathcal{A}}}({\mathscr X}, s)v).
$$
The strongly unstable Oseledets bundle of the cocycle ${\overline {\mathcal A}}$ will
be seen to describe all the measures
$\Phi^-\in \BB^-$ in the same way in which
 the strongly unstable Oseledets bundle of the
cocycle $\overline{\mathcal{A}}^t$ describes all the measures
$\Phi^+\in \BB^+$.

{\bf Remark.} The Kontsevich-Zorich cocycle is isomorphic to the inverse of its dual (see, e.g.,
Statement 2 in
Proposition \ref{veechtokz} below).
This ``self-duality'' is, however, not used in
the construction and characterization of finitely-additive invariant measures.
The duality between the spaces $\BB^+$ and $\BB^-$ corresponds to
the duality between the cocycle and its transpose, that is,
in our notation, between $\overline{\mathcal{A}}^t$ and $\overline{\mathcal{A}}$ :
such duality takes place for any invertible matrix-valued cocycle over any measure-preserving flow.

\subsubsection{The correspondence between cocycles.}

To a connected component $\HH$ of the space $\modk$ one can assign
a Rauzy
class $\R$ in such a way that the following is true \cite{veech, kz}.

\begin{theorem}[\rm Veech]
\label{zipmodule} There exists a finite-to-one measurable map
$\pi_{\R}:\tilde{\mathcal V}(\R)\to\HH$ such that $\pi_{\R}\circ P^t=g_t\circ \pi_{\R}$.
The image of $\pi_{\R}$ contains all abelian differentials whose vertical and
horizontal foliations are both minimal.
\end{theorem}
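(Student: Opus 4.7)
The plan is to construct the map $\pi_{\mathcal R}$ directly from the gluing construction already described in Subsection \ref{zipper} and then verify the three required properties (equivariance, finite-to-one, and surjectivity onto minimal differentials) in sequence.

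First I would make the map explicit. Given $(\lambda,\pi,\delta)\in\tilde{\mathcal V}(\mathcal R)$, form the $m$ Euclidean rectangles $\Pi_i$ of width $\lambda_i$ and height $h_i(\pi,\delta)$, stack them in the order $\Pi_1,\ldots,\Pi_m$ along a common baseline so that their bases cover $I^{(0)}=[0,|\lambda|)$, and glue the top of $\Pi_i$ to the base of $\Pi_{\pi i}$ by translation; the inequalities (\ref{deltaone}) and (\ref{deltatwo}) together with the area normalisation ensure that the left and right vertical sides of the resulting figure can be zippered by translation as well, yielding a closed surface $M$. The one-form $dz$ on each $\Pi_i$ descends to a holomorphic abelian differential $\omega$ on $M$ whose zeros lie only at the images of the corners, with orders determined combinatorially by the permutation $\pi$. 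Thus $(M,\omega)$ belongs to a stratum $\modk$, and the fact that the combinatorial type $\kappa$ is constant on the Rauzy class $\mathcal R$ (a result of Veech, refined by Kontsevich--Zorich) shows that the image lies in a fixed connected component $\mathcal H$. Setting $\pi_{\mathcal R}(\lambda,\pi,\delta)=(M,\omega)$ (modulo the action of the mapping class group) defines the map.

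Next, I would check the equivariance $\pi_{\mathcal R}\circ P^s=g_s\circ\pi_{\mathcal R}$. Under $P^s$, widths are multiplied by $e^s$ and heights by $e^{-s}$; since in local coordinates on each $\Pi_i$ the real part $\Re(\omega)=dx$ corresponds to widths and $\Im(\omega)=dy$ to heights, the new differential is precisely $e^s\Re(\omega)+ie^{-s}\Im(\omega)$, which is the definition of $g_s(M,\omega)$. This is immediate once the map is written down.

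The main obstacle is the finite-to-one property, and I would argue it as follows. A point in the fibre over $(M,\omega)$ is determined by a choice of an admissible horizontal interval $I^{(0)}\subset M$ whose left endpoint is a zero of $\omega$ (or the origin of the zippered construction) and whose Poincaré first-return map under $h^+_t$ is an interval exchange on $m$ subintervals with permutation in $\mathcal R$. The length $|\lambda|$ is constrained by the area-one condition plus the requirement $\lambda\in\Delta_{m-1}$, and once the left endpoint (which must be one of finitely many singular points) and a direction are fixed, the interval is determined by its length; admissibility together with the requirement that the induced permutation lie in $\mathcal R$ restricts the length to a discrete set, hence the fibre is finite. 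The vector $\delta$ is then determined by the shape of the zippered rectangle, which is read off from the positions of the zeros relative to $I^{(0)}$.

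Finally, for surjectivity onto minimal differentials, I would invoke Veech's lemma that if both foliations of $(M,\omega)$ are minimal then there exists a horizontal segment whose vertical return map is an interval exchange transformation with irreducible permutation; running the zippered-rectangle construction on this segment produces a preimage in $\tilde{\mathcal V}(\mathcal R)$ (after, if necessary, replacing $\mathcal R$ by the Rauzy class of the resulting permutation, which is fixed once $\mathcal H$ is). The hard part here is precisely the existence of such a segment with all the desired admissibility properties, which is the content of Veech's original construction and which I would import as a black box from \cite{veech}.
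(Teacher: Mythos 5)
The paper does not actually prove this statement: it is quoted as a theorem of Veech (with the connected-component refinement coming from Kontsevich--Zorich), and the citation to \cite{veech, kz} is the whole of the paper's argument. Your proposal is an outline of exactly that construction of Veech, so in spirit you are following the same route as the source; but since you import the two genuinely hard steps (existence of a suitable horizontal transversal for every differential with both foliations minimal, and the fibre bound) as black boxes from \cite{veech}, what remains is a plausibility sketch rather than an independent proof. The equivariance check is fine and is indeed immediate from the definitions of $P^s$ and ${\bf g}_s$.

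Two points in the sketch are genuinely loose. First, constancy of the singularity data $\kappa$ along the Rauzy class only places the image of $\pi_{\R}$ in a fixed stratum $\modk$, not in a fixed connected component $\HH$; strata can have several components, and the correct argument is that $\tilde{\mathcal V}(\R)$ (the cones $K(\pi)\times{\mathbb R}^m_+$ over the permutations of $\R$, glued along the Rauzy--Veech moves $\U$ and the flow $P^s$) is connected, so its image under the continuous map $\pi_{\R}$ is connected and hence lies in one component -- this is a different statement from the Kontsevich--Zorich classification you invoke. Second, your finite-to-one argument ("admissibility restricts the length to a discrete set, hence the fibre is finite") does not close: discreteness alone gives nothing without a compactness or normalization argument, and the left endpoint of $I^{(0)}$ need not be a zero but lies on a horizontal separatrix emanating from one. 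The actual source of finiteness is that there are finitely many zeros, each with finitely many outgoing horizontal separatrices, and that the normalization built into ${\mathcal V}_{1,\tau}(\R)$ (the fundamental-domain condition between consecutive returns to $|\la|=1$ under $\U$ and $P^s$) pins down the admissible interval up to finitely many choices once the separatrix is fixed. As written, that step is the one place where a reader could not reconstruct the bound on the fibre cardinality from your text alone.
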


As before, let ${\mathbb H}^1(\HH)$ be the fibre bundle over $\HH$ whose fibre at a point $(M, \omega)$ is the cohomology group
$H^1(M, {\mathbb R})$. The Kontsevich-Zorich cocycle ${\bf A}_{KZ}$ induces
a skew-product flow $g_s^{{\bf A}_{KZ}}$ on ${\mathbb H}^1(\HH)$ given by the formula
$$
g_s^{{\bf A}_{KZ}}({\bf X}, v)=({\bf g}_s{\bf X}, {\bf A}_{KZ}v), \ {\bf X}\in\HH, v\in H^1(M, {\mathbb R}).
$$

Following Veech \cite{veech}, we now explain the connection between the Kontsevich-Zorich cocycle
${\bf A}_{KZ}$ and the cocycle  ${\overline {\mathcal A}}^t$.

For any irreducible permutation $\pi$ Veech \cite{veechamj} defines an alternating matrix $L^{\pi}$ by
setting $L_{ij}^{\pi}=0$ if $i=j$ or if $i<j, \pi i<\pi j$, $L_{ij}^{\pi}=1$ if $i<j, \pi i>\pi j$,
$L^{\pi}_{ij}=-1$ if $i>j, \pi i<\pi j$ and denotes by $N(\pi)$ the kernel of $L^{\pi}$ and by $H(\pi)=L^{\pi}({\mathbb R}^m)$ the image of $L^{\pi}$. The dimensions of $N(\pi)$ and $H(\pi)$ do not change as $\pi$ varies in $\R$, and, furthermore,
Veech \cite{veechamj} establishes the following properties of the spaces $N(\pi)$, $H(\pi)$.
\begin{proposition}
\label{veechtokz}
Let $c=a$ or $b$. Then
\begin{enumerate}
\item $H(c\pi)={\mathcal A}^t(c,\pi)H(\pi)$,
$N(c\pi)={\mathcal A}^{-1}(c,\pi)N(\pi)$;
\item the diagram
$$
\begin{CD}
{\mathbb R}^m/N(\pi)@ >L^{\pi}>> H(\pi) \\
@ VV {\mathcal A}^{-1}(\pi, c)V          @VV {\mathcal A}^t(\pi, c) V   \\
{\mathbb R}^m/N(c\pi)@ >L^{c\pi}>> H(c\pi) \\
\end{CD}
$$
is commutative and each arrow is an isomorphism.

\item For each $\pi$ there exists a basis ${\bf v}_{\pi}$ in $N(\pi)$ such that
the map ${\mathcal A}^{-1}(\pi, c)$ sends every element of ${\bf v}_{\pi}$ to an element of ${\bf v}_{c\pi}$.
\end{enumerate}
\end{proposition}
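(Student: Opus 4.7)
The plan is to reduce all three statements to a single algebraic identity. Specifically, for $c \in \{a,b\}$ and any irreducible permutation $\pi \in \R$, I claim
$$
L^{c\pi} = {\mathcal A}^{t}(c,\pi) \cdot L^{\pi} \cdot {\mathcal A}(c,\pi). \qquad (\star)
$$

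First I would establish $(\star)$ directly. Geometrically, $(\star)$ reflects the fact that the Rauzy--Veech move does not alter the underlying translation surface $M(\la,\pi,\delta)$ but merely shortens the chosen transversal $I^{(0)}(\la,\pi,\delta)$; consequently the intersection form on $H_{1}(M,{\mathbb Z})$, whose Gram matrix in Veech's basis of cycles $\gamma_{i}(\la,\pi,\delta)$ is precisely $L^{\pi}$, is preserved, while the basis cycles transform linearly via the renormalization matrix in the direction $\gamma^{c\pi}_{j} = \sum_{i}{\mathcal A}(c,\pi)_{ij}\gamma^{\pi}_{i}$. Algebraically, one verifies $(\star)$ by computing both sides entry by entry: splitting $\{1,\dots,m\}$ according to the position of the distinguished indices $\pi^{-1}m$ and $m$, and using the explicit formulas (\ref{mat_a})--(\ref{mat_b}), the verification reduces to a finite number of sign comparisons between the inversion tables of $\pi$ and of $c\pi$. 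The cases $c=a$ and $c=b$ are handled by analogous computations.

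Granting $(\star)$, Part 1 is immediate. Since ${\mathcal A}(c,\pi)$ is unimodular, taking kernels and images in $(\star)$ yields
$$
N(c\pi) = \ker L^{c\pi} = {\mathcal A}^{-1}(c,\pi)\,N(\pi), \qquad H(c\pi) = \mathrm{Im}\,L^{c\pi} = {\mathcal A}^{t}(c,\pi)\,H(\pi).
$$
In particular $\dim N(\pi)$ and $\dim H(\pi)$ are constant on the Rauzy class $\R$. Part 2 is then essentially a restatement of $(\star)$: rewriting the identity as $L^{c\pi}\cdot{\mathcal A}^{-1}(c,\pi) = {\mathcal A}^{t}(c,\pi) \cdot L^{\pi}$ is exactly the commutativity of the given diagram. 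The horizontal arrows are isomorphisms because $L^{\pi}$ induces the canonical bijection between ${\mathbb R}^{m}/\ker L^{\pi}$ and $\mathrm{Im}\,L^{\pi}$, and analogously for $L^{c\pi}$. The vertical arrows are isomorphisms because the matrices ${\mathcal A}(c,\pi)$ are invertible and, by Part 1, carry $N(\pi)$ (respectively $H(\pi)$) isomorphically onto $N(c\pi)$ (respectively $H(c\pi)$).

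The main obstacle is Part 3. The statement that $N(\pi)$ admits a basis ${\bf v}_{\pi}$ each of whose elements is sent by ${\mathcal A}^{-1}(\pi,c)$ to an element of ${\bf v}_{c\pi}$ is the combinatorial construction from Veech \cite{veechamj}. The natural candidate is the basis indexed by the zeros of the resulting abelian differential, obtained as the equivalence classes of a relation on pairs of adjacent positions in the two rows of $\pi$ that is generated by the gluing pattern among corners of the rectangles $\Pi_{i}(\la,\pi,\delta)$. I would verify the permutation property case by case, according to the value of $c \in \{a,b\}$ and to whether the distinguished index $\pi^{-1}m$ is or is not involved in the corresponding identification, using the explicit form of ${\mathcal A}(c,\pi)$. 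The delicate point, and the main combinatorial obstacle, is checking that the equivalence relation defining the basis is respected by both $a$ and $b$: no two distinct classes are merged and none is split by the Rauzy--Veech move, so that the basis vectors are genuinely permuted rather than mixed. This is in agreement with the fact that the Rauzy--Veech induction preserves the stratum $\modk$ and hence the combinatorial type of the singular set of the abelian differential.
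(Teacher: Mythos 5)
The paper itself offers no proof of this proposition: it is stated as a quotation from Veech \cite{veechamj}, so the only thing to compare your plan with is Veech's original argument, which your plan essentially reconstructs. The reduction of parts 1 and 2 to the single identity $(\star)$, $L^{c\pi}={\mathcal A}^{t}(c,\pi)\,L^{\pi}\,{\mathcal A}(c,\pi)$, is correct and efficient: $(\star)$ does hold (a finite entrywise check from (\ref{mat_a}), (\ref{mat_b}); for instance for $m=3$, $\pi=(3\,2\,1)$ and either $c=a$ or $c=b$ both sides agree), it is literally a rewriting of the commutativity asserted in part 2, and since ${\mathcal A}(c,\pi)$ is unimodular it gives $N(c\pi)=\ker\bigl(L^{\pi}{\mathcal A}(c,\pi)\bigr)={\mathcal A}^{-1}(c,\pi)N(\pi)$ and $H(c\pi)={\mathcal A}^{t}(c,\pi)H(\pi)$; the well-definedness of the quotient map and the bijectivity of all four arrows then follow as you say. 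Your geometric gloss (cycles $\gamma_i$ transforming by ${\mathcal A}$, intersection pairing preserved) is also consistent with the conventions of the diagram.

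Part 3 is where the plan stops short of a proof. You point to the right objects --- the singularity-indexed vectors coming from the corner identifications of the rectangles $\Pi_i(\la,\pi,\delta)$ --- but you do not construct them, and the phrase ``basis indexed by the zeros'' cannot be taken literally: the natural construction produces one vector per singularity, i.e.\ $\sigma$ vectors, whereas $\dim N(\pi)=m-2\rho=\sigma-1$, so this family satisfies one linear relation and a basis is obtained only after discarding one vector, chosen consistently for $\pi$ and $c\pi$ (for example the vector attached to the singularity adjacent to the left endpoint of the interval, which Rauzy--Veech induction preserves). With that adjustment, the ``no class is merged or split'' verification you single out is indeed the crux, and it is exactly what Veech carries out in \cite{veechamj}; as written, however, your part 3 is a pointer to that argument rather than an argument, so the proposal should either include the explicit construction and case check or simply cite Veech for this part, as the paper does.
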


Each space $H^{\pi}$ is thus endowed with a natural anti-symmetric bilinear form ${\mathcal L}_{\pi}$
defined, for $v_1, v_2\in H(\pi)$, by the formula
\begin{equation}
\label{bilpi}
{\mathcal L}_{\pi}(v_1, v_2)=\langle v_1, (L^{\pi})^{-1}v_2\rangle.
\end{equation}

(The vector $(L^{\pi})^{-1}v_2$ lies in ${\mathbb R}^m/N(\pi)$; since for all $v_1\in H(\pi)$, $v_2\in N(\pi)$
by definition we have $\langle v_1, v_2\rangle=0$, the right-hand side is well-defined.)

Consider the ${\mathscr T}^{{\mathcal A}^t}$-invariant subbundle
${\mathscr H}(\Delta(\R))\subset \Delta(\R)\times {\mathbb R}^m$ given by the formula
$$
{\mathscr H}(\Delta(\R))=\{((\la,\pi), v), (\la,\pi)\in\Delta(\R), v\in H(\pi)\}.
$$
as well as a quotient bundle
$$
{\mathscr N}(\Delta(\R))=\{((\la,\pi), v), (\la,\pi)\in\Delta(\R), v\in {\mathbb R}^m/N(\pi)\}.
$$

The bundle map ${\mathscr L}_{\R}: {\mathscr H}(\Delta(\R))\to {\mathscr N}(\Delta(\R))$ given by
${\mathscr L}_{\R}((\la,\pi), v)=((\la,\pi), L^{\pi}v)$ induces a bundle isomorphism between
${\mathscr H}(\Delta(\R))$ and ${\mathscr N}(\Delta(\R))$.

Both bundles can be naturally lifted to bundles ${\mathscr H}({\tilde {\mathcal V}}(\R))$, ${\mathscr N}({\tilde {\mathcal  V}}(\R))$
over the space ${\tilde {\mathcal V}}(\R)$ of zippered rectangles; they are naturally invariant
under the corresponding skew-product flows  $P^{s, \overline {\mathcal A}^t}$, $P^{-s, \overline {\mathcal A}}$,
and the map ${\mathscr L}_{\R}$ lifts to a bundle isomorphism between
${\mathscr H}({\tilde {\mathcal V}}(\R))$ and ${\mathscr N}({\tilde {\mathcal V}}(\R))$.

Take ${\mathscr X}\in {\tilde {\mathcal V}}(\R)$ and write  $\pi_{\R}({\mathscr X})=(M({\mathscr X}), \omega({\mathscr X}))$.
Veech \cite{veechamj2} has shown that the map $\pi_{\R}$ lifts to a bundle epimorphism
 ${\tilde \pi}_{\R}$
from ${\mathscr H}({\tilde {\mathcal V}}(\R))$ onto ${\mathbb H}^1(\HH)$ that intertwines
the cocycle ${\overline {\mathcal A}}^t$ and the Kontsevich-Zorich cocycle ${\bf A}_{KZ}$:

\begin{proposition}[Veech]
\label{identif-coc-veech}

For almost every ${\mathscr X}\in {\tilde {\mathcal V}}(\R)$, ${\mathscr X}=(\la,\pi,\delta)$,
there exists an isomorphism ${\mathcal I}_{\mathscr X}: H(\pi)\to H^1(M({\mathscr X}), {\mathbb R})$
such that
\begin{enumerate}
\item
the map
${\tilde \pi}_{\R}: {\mathscr H}({\tilde {\mathcal V}}(\R)) \to {\mathbb H}^1(\HH)$ given by
$$
{\tilde \pi}_{\R}({\mathscr X}, v)=(\pi_{\R}({\mathscr X}), {\mathcal I}_Xv)
$$
induces a measurable bundle epimorphism from ${\mathscr H}({\tilde {\mathcal V}}(\R))$ onto ${\mathbb H}^1(\HH)$,
which is an isomorphic on each fibre;
\item the diagram
$$
\begin{CD}
{\mathscr H}({\tilde {\mathcal V}}(\R))@ > {\tilde \pi}_{\R}  >> {\mathbb H}^1(\HH) \\
@ VVP^{s, \overline {\mathcal A}^t} V          @VV g_s^{{\bf A}_{KZ}} V   \\
 {\mathscr H}({\tilde {\mathcal V}}(\R))@ > {\tilde \pi}_{\R}  >> {\mathbb H}^1(\HH)\\
\end{CD}
$$
is commutative;
\item for ${\mathscr X}=(\la,\pi,\delta)$, the isomorphism ${\mathcal I}_X$ takes  the bilinear form ${\mathcal L}_{\pi}$ on $H(\pi)$,
defined by (\ref{bilpi}), to the cup-product on  $H^1(M({\mathscr X}), {\mathbb R})$.
\end{enumerate}
\end{proposition}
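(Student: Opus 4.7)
The plan is to construct the fibrewise isomorphism $\mathcal{I}_{\mathscr{X}}$ using the Veech cycles $\gamma_1(\mathscr{X}),\dots,\gamma_m(\mathscr{X})\in H_1(M(\mathscr{X}),\mathbb{Z})$ attached to the rectangles of the zippered-rectangle construction, then track how this identification behaves under Rauzy--Veech induction and under the Teichm\"uller flow. The key observation is that the $m$ cycles $\gamma_i$ generate $H_1(M(\mathscr{X}),\mathbb{R})$, and the linear relations among them are precisely encoded by the kernel $N(\pi)$ of the alternating form $L^\pi$. Thus the assignment $e_i\mapsto \gamma_i(\mathscr{X})$ descends to an isomorphism $\mathbb{R}^m/N(\pi)\xrightarrow{\sim} H_1(M(\mathscr{X}),\mathbb{R})$, and composing with Poincar\'e duality $H_1(M(\mathscr{X}),\mathbb{R})\xrightarrow{\sim}H^1(M(\mathscr{X}),\mathbb{R})$ one gets an isomorphism $\mathbb{R}^m/N(\pi)\xrightarrow{\sim}H^1(M(\mathscr{X}),\mathbb{R})$. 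Pre-composing with the isomorphism $L^\pi\colon H(\pi)\to \mathbb{R}^m/N(\pi)$ of Proposition~\ref{veechtokz} (item~2) yields the desired $\mathcal{I}_{\mathscr{X}}\colon H(\pi)\to H^1(M(\mathscr{X}),\mathbb{R})$.

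To establish item~(1) one checks that $\mathscr{X}\mapsto \mathcal{I}_{\mathscr{X}}$ is measurable (which follows because the cycles $\gamma_i(\mathscr{X})$ vary continuously in $\mathscr{X}$ in local Veech charts), and that it is a fibrewise isomorphism, which is immediate from the two ingredients above. For item~(2), the commutativity of the diagram reduces to two claims. First, Rauzy--Veech induction sends the cycle basis $\{\gamma_i(\mathscr{X})\}$ to the basis $\{\gamma_i(\mathcal{U}\mathscr{X})\}$ via the transpose matrix $\mathcal{A}^t(c,\pi)$: indeed, $\mathcal{U}$ merely cuts and re-glues the top rectangle and reinterprets the same flat surface with a shorter base interval, so each new cycle $\gamma_j(\mathcal{U}\mathscr{X})$ is a specific $\mathbb{Z}$-linear combination of the old $\gamma_i(\mathscr{X})$'s, and a direct reading from \eqref{mat_a}--\eqref{mat_b} of how rectangles recombine identifies these coefficients with the entries of $\mathcal{A}(c,\pi)$. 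Second, the Teichm\"uller flow $g_s$ (equivalently $P^s$ at the zippered-rectangle level) preserves the integer structure of $H^1$, so parallel transport with respect to the Gauss--Manin connection along $P^s$-orbits corresponds to the identity on the cycle basis; the matrix of the Kontsevich--Zorich cocycle in the basis dual to $\{\gamma_i\}$ at the first return of $P^s$ to the transversal $\mathcal{Y}_1(\mathcal{R})$ is therefore the very same $\mathcal{A}^t$. Combining these two claims gives the commutativity of the diagram.

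For item~(3), one has to show $\mathcal{L}_\pi(v_1,v_2)=\mathcal{I}_{\mathscr{X}}(v_1)\smile \mathcal{I}_{\mathscr{X}}(v_2)$. By Poincar\'e duality the cup-product on $H^1$ corresponds to the algebraic intersection pairing on $H_1$; it therefore suffices to verify that $L^\pi_{ij}$ computes the algebraic intersection number $\gamma_i(\mathscr{X})\cdot \gamma_j(\mathscr{X})$. This is a direct, combinatorial check using the description of each $\gamma_i$ as a vertical segment $P_iQ_i$ inside the rectangle $\Pi_i$ closed up by the horizontal return arc on $I^{(0)}$: two such cycles cross exactly when the pair $(i,j)$ is an ``inversion'' for $\pi$, and the sign of the crossing matches the definition of $L^\pi_{ij}$. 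Given this, the pairing $\langle v_1,(L^\pi)^{-1}v_2\rangle$ in \eqref{bilpi} is the standard way of transporting the intersection pairing on $\mathbb{R}^m/N(\pi)$ back to $H(\pi)$ via the isomorphism $L^\pi$, and it matches the cup-product after application of $\mathcal{I}_{\mathscr{X}}$.

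The main obstacle, in my view, is the bookkeeping for item~(2): one must carefully verify that the \emph{transpose} of $\mathcal{A}(c,\pi)$ (and not $\mathcal{A}$ itself) is what appears, because $\mathcal{A}$ acts on length vectors $\lambda$ while the cohomological cocycle acts on the dual of the cycle basis. This amounts to a careful comparison of the two dualities (Poincar\'e duality on $(M,\omega)$ and the linear duality between $\mathbb{R}^m/N(\pi)$ and $H(\pi)$ realised by $L^\pi$), and is exactly the content of the commutative square in item~2 of Proposition~\ref{veechtokz}. Once this identification is in place, items~(1) and~(3) follow routinely, and the proof is concluded by noting that the set of $\mathscr{X}\in\tilde{\mathcal{V}}(\mathcal{R})$ for which the vertical and horizontal foliations are both minimal (and hence for which the construction applies without degeneracies) has full measure for any $P^s$-invariant probability measure.
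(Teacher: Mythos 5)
Your construction is the same one the paper uses: both proofs start from Veech's cycles $\gamma_i(\la,\pi,\delta)$ spanning $H_1(M({\mathscr X}),{\mathbb Z})$, with the relations among them given exactly by $N(\pi)$, so the content is identical and your argument is correct in substance. The packaging differs slightly: the paper defines $\mathcal I_{\mathscr X}$ directly through the period map, observing that the vectors $(f(\gamma_1),\dots,f(\gamma_m))$, $f\in H^1(M,{\mathbb R})$, fill out precisely $H(\pi)$ — this avoids Poincar\'e duality and any use of $L^{\pi}$ in the definition — and then item (3) is settled by citing Proposition 4.19 of \cite{V3}, while item (2) is attributed to Veech \cite{veechamj2}. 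You instead build $\mathcal I_{\mathscr X}$ as (Poincar\'e duality) $\circ$ (cycle map) $\circ$ $(L^{\pi})^{-1}$, which makes item (3) reduce to the combinatorial identity ``$\gamma_i\cdot\gamma_j=L^{\pi}_{ij}$'' that you then verify directly, and you also sketch the equivariance (2) by tracking the cycle basis under Rauzy--Veech induction and using that Gauss--Manin transport is the identity on integer classes; this is exactly the content of the commutative square in Proposition \ref{veechtokz} and of Veech's computation, so nothing is missing — your route just proves what the paper cites. One small correction: $L^{\pi}$ maps ${\mathbb R}^m/N(\pi)$ \emph{onto} $H(\pi)$ (its image), not the other way around, so the map you pre-compose with is $(L^{\pi})^{-1}:H(\pi)\to{\mathbb R}^m/N(\pi)$; with that fixed, your $\mathcal I_{\mathscr X}$ agrees (up to an overall sign fixed by orientation conventions) with the paper's period-map definition, and the $\mathcal A$ versus $\mathcal A^t$ bookkeeping you flag is indeed the only delicate point, resolved exactly as in \eqref{mat_a}--\eqref{mat_b} and Proposition \ref{veechtokz}.
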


Proof: Recall that to each rectangle $\Pi_i$ Veech \cite{veechamj} assigns a cycle $\gamma_i(\la,\pi, \delta)$ in the homology
group $H_1(M, {\mathbb Z})$: if $P_i$ is the left bottom corner of $\Pi_i$ and $Q_i$ the left top corner, then the cycle is
the union of the vertical interval $P_iQ_i$ and the horizontal subinterval of $I^{(0)}(\la,\pi, \delta)$ joining $Q_i$ to $P_i$. It is
clear that the cycles $\gamma_i(\la,\pi, \delta)$ span $H_1(M, {\mathbb Z})$; furthermore, Veech shows that the cycle
$t_1\gamma_1+\dots +t_m\gamma_m$ is homologous to $0$ if and only if $(t_1, \dots, t_m)\in N(\pi)$.
We thus obtain an identification of ${\mathbb R}^m/N(\pi)$ and $H_1(M, {\mathbb R})$.
Similarly, the subspace of ${\mathbb R}^m$ spanned  by the vectors $(f(\gamma_1), \dots, f(\gamma_m))$, $f\in H^1(M, {\mathbb R})$,
is precisely $H(\pi)$.
The  identification of the bilinear form ${\mathcal L}_{\pi}$ with the cup-product is established in Proposition 4.19 in \cite{V3}.

Let ${\Prob}_{\mathcal V}$ be an ergodic $P^s$-invariant probability measure for the flow $P^s$ on ${\mathcal V}(\R)$
and let $\Prob_{\HH}=(\pi_{\R})_*{\Prob}_{\mathcal V}$ be the corresponding ${\bf g}_s$-invariant measure on $\HH$.
Let ${\mathcal E}^u_{{\Prob}_{\mathcal V}}({\tilde {\mathcal V}}(\R))$
be the strongly unstable bundle of the cocycle
${\overline {\mathcal A}}^t$.
 By Proposition \ref{veechtokz}, the bundle
${\mathcal E}^u_{{\Prob}_{\mathcal V}}({\tilde {\mathcal V}}(\R))$ is a
subbundle of ${\mathscr H}({\tilde {\mathcal V}}(\R))$.
It therefore follows from Proposition \ref{identif-coc-veech} that
the map ${\tilde \pi}_{\R}$ isomorphically identifies
the strongly unstable bundles of the cocycles ${\overline {\mathcal A}^t}$ and ${{\bf A}_{KZ}}$; this identification
is equivariant with respect to the natural actions of the skew-product flows
$P^{s, \overline {\mathcal A}^t}$ and $g_s^{{\bf A}_{KZ}}$  on the corresponding bundles.

\subsubsection{The correspondence between measures.}

\begin{proposition}
\label{cor-mes}

Let $\Prob$ be an ergodic ${\bf g}_s$--invariant probability measure on $\HH$. Then there exists
an ergodic $P^s$-invariant probability measure ${\Prob}_{\mathcal V}$
on ${\mathcal V}(\R)$ such that $$
\Prob=(\pi_{\R})_*{\Prob}_{\mathcal V}.
$$
\end{proposition}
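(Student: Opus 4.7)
The plan is standard measure-theoretic lifting through the finite-to-one factor map $\pi_{\R}\colon \tilde{\mathcal V}(\R)\to \HH$: first push the measure up to $\tilde{\mathcal V}(\R)$ by averaging uniformly over the fibres, then extract an ergodic component. Before the lift is well-defined, however, one must check that $\Prob$-almost every abelian differential lies in the image of $\pi_{\R}$. By Theorem \ref{zipmodule} this image contains all $(M,\omega)$ whose vertical and horizontal foliations are both minimal, so it suffices to verify that this is a full $\Prob$-measure condition. For this one invokes Masur's criterion: if the forward ${\bf g}_s$-orbit of ${\bf X}$ is recurrent in $\HH$, then the vertical foliation of ${\bf X}$ is uniquely ergodic (and in particular minimal), and analogously for the horizontal foliation under recurrence of the backward orbit. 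Poincar\'e recurrence applied to the $\Prob$-preserving flow ${\bf g}_s$ (in both time directions) gives both conditions $\Prob$-almost everywhere.

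Next I would construct the lift. Let $n({\bf X})=\#\pi_{\R}^{-1}({\bf X})$; this is a $\Prob$-almost surely finite and measurable function, and since $\pi_{\R}\circ P^t={\bf g}_t\circ \pi_{\R}$, the flow $P^t$ permutes the fibres $\pi_{\R}^{-1}({\bf X})$ bijectively onto $\pi_{\R}^{-1}({\bf g}_t{\bf X})$, so $n$ is ${\bf g}_s$-invariant. Define a Borel measure $\tilde\Prob$ on $\tilde{\mathcal V}(\R)$ by
$$
\tilde\Prob(B)=\int_{\HH}\frac{\#\bigl(B\cap \pi_{\R}^{-1}({\bf X})\bigr)}{n({\bf X})}\,d\Prob({\bf X}).
$$
By construction $\tilde\Prob$ is a probability measure and $(\pi_{\R})_*\tilde\Prob=\Prob$. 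Invariance under $P^t$ is immediate from the fibrewise bijectivity noted above.

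Finally, I would pass to an ergodic component. Since $\tilde\Prob$ is a $P^t$-invariant Borel probability measure on the standard Borel space $\tilde{\mathcal V}(\R)$, it admits an ergodic decomposition $\tilde\Prob=\int \mu\,d\eta(\mu)$. For $\eta$-almost every ergodic component $\mu$, the pushforward $(\pi_{\R})_*\mu$ is a ${\bf g}_s$-invariant Borel probability measure on $\HH$, and it is absolutely continuous with respect to $(\pi_{\R})_*\tilde\Prob=\Prob$. By ergodicity of $\Prob$ under ${\bf g}_s$, this forces $(\pi_{\R})_*\mu=\Prob$. Choosing any such $\mu$ and setting $\Prob_{\mathcal V}=\mu$ gives the desired ergodic $P^s$-invariant probability measure.

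\textbf{Main obstacle.} The only delicate point is the justification that $\Prob$-almost every abelian differential has minimal vertical and horizontal foliations; once this is in place, the averaging-over-fibres construction and the ergodic decomposition argument are routine. The finiteness of the fibres of $\pi_{\R}$ (needed so that uniform averaging defines a genuine probability measure, and so that $n({\bf X})$ is measurable and positive) is part of Theorem \ref{zipmodule}.
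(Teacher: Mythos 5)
Your proposal is correct and follows essentially the same route as the paper, which deduces Proposition \ref{cor-mes} from the general finite-to-one lifting statement (Proposition \ref{finfibre}): one forms the normalized counting measure on the fibres of $\pi_{\R}$ over $\Prob$ and then takes an (almost every) ergodic component, using ergodicity of $\Prob$ and the intertwining $\pi_{\R}\circ P^t={\bf g}_t\circ\pi_{\R}$ to see that the component pushes forward to $\Prob$. Your explicit verification via Masur's criterion and Poincar\'e recurrence that $\Prob$-almost every differential has minimal foliations, hence lies in the image of $\pi_{\R}$ (Theorem \ref{zipmodule}), is a detail the paper leaves implicit, and is a welcome addition rather than a different method.
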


This proposition is a corollary of the following general statement.

\begin{proposition}
\label{finfibre}
Let $Z_1, Z_2$ be standard Borel spaces, let $g^1_s:Z_1\to Z_1$, $g^2_s: Z_2\to Z_2$ be measurable flows,
and let $\pi_{12}:Z_1\to Z_2$ be a Borel measurable   map such that
\begin{enumerate}
\item for any $z_2\in Z_2$ the preimage $\{\pi_{12}^{-1}(z_2)\}$ of $z_2$ is finite;
\item the map $\pi_{12}$ intertwines the flows $g^1_s$, $g^2_s$ in the sense that
the diagram
$$
\begin{CD}
Z_1@ > {\pi}_{12}  >> Z_2 \\
@ VVg^1_s V          @VV g_s^{2} V   \\
Z_1@ > {\pi}_{12}  >> Z_2\\
\end{CD}
$$
is commutative.
\end{enumerate}

Then for any Borel $g_s^2$-invariant ergodic probability measure $\Prob_2$ on $Z_2$,
there exists a Borel $g_s^1$-invariant ergodic probability measure $\Prob_1$  on $Z_1$
such that $$(\pi_{12})_*(\Prob_1)=\Prob_2.$$
\end{proposition}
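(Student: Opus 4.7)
\textbf{Proof plan for Proposition \ref{finfibre}.}

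The plan is to first construct an explicit $g^1_s$-invariant probability measure $\mu$ on $Z_1$ pushing forward to $\Prob_2$, and then extract an ergodic component via the ergodic decomposition. I will exploit the fact that a finite-to-one Borel map intertwining two flows forces the cardinality of fibers to be constant almost surely.

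The cardinality function $n(z_2):=|\pi_{12}^{-1}(z_2)|$ takes values in $\mathbb N$ and is Borel measurable (standard consequence of the Lusin--Novikov theorem, which also guarantees countably many Borel sections exhausting the fibres). Since $g^1_s$ is a bijection of $Z_1$ and $\pi_{12}\circ g^1_s=g_s^2\circ\pi_{12}$, it maps $\pi_{12}^{-1}(z_2)$ bijectively onto $\pi_{12}^{-1}(g^2_sz_2)$, so $n$ is $g^2_s$-invariant. By ergodicity of $\Prob_2$, $n$ equals an integer $N$ outside a null set $Z_2^{0}$; replacing $Z_2$ by the invariant conull set $Z_2\setminus\bigcup_{s\in\mathbb Q}g^2_sZ_2^{0}$ we may assume $n\equiv N$ everywhere, and similarly shrink $Z_1$.

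Next I would define
\[
\mu(A)\;=\;\frac{1}{N}\int_{Z_2}\bigl|\,A\cap\pi_{12}^{-1}(z_2)\,\bigr|\,d\Prob_2(z_2),\qquad A\subset Z_1\ \text{Borel}.
\]
The integrand is Borel measurable because the Borel sections from Lusin--Novikov give $|A\cap\pi_{12}^{-1}(z_2)|=\sum_i\mathbf{1}_A(\sigma_i(z_2))$. The identity
\[
|\,g^1_{-s}A\cap\pi_{12}^{-1}(z_2)\,|\;=\;|\,A\cap g^1_s\pi_{12}^{-1}(z_2)\,|\;=\;|\,A\cap\pi_{12}^{-1}(g^2_sz_2)\,|
\]
combined with $g^2_s$-invariance of $\Prob_2$ shows $\mu$ is $g^1_s$-invariant, and taking $A=\pi_{12}^{-1}(B)$ gives $(\pi_{12})_*\mu=\Prob_2$ since $|\pi_{12}^{-1}(B)\cap\pi_{12}^{-1}(z_2)|=N\mathbf 1_B(z_2)$.

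Finally, write the ergodic decomposition $\mu=\int\mu_\omega\,d\rho(\omega)$ of $\mu$ with respect to $g^1_s$ (available on standard Borel spaces). For each ergodic $\mu_\omega$, the pushforward $(\pi_{12})_*\mu_\omega$ is $g^2_s$-invariant, and it is ergodic because a $g^2_s$-invariant Borel set $B$ pulls back to a $g^1_s$-invariant set $\pi_{12}^{-1}(B)$ of $\mu_\omega$-measure $0$ or $1$. Pushing forward the ergodic decomposition yields $\Prob_2=\int(\pi_{12})_*\mu_\omega\,d\rho(\omega)$, an integral representation of $\Prob_2$ by ergodic $g^2_s$-invariant measures; by uniqueness of the ergodic decomposition and extremality of $\Prob_2$, one has $(\pi_{12})_*\mu_\omega=\Prob_2$ for $\rho$-almost every $\omega$. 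Choosing any such $\omega$ and setting $\Prob_1:=\mu_\omega$ completes the proof. The only real subtlety is the Borel measurability of the counting function $|A\cap\pi_{12}^{-1}(\cdot)|$ and the availability of a Borel ergodic decomposition; both are standard for finite-to-one Borel maps between standard Borel spaces.
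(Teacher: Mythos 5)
Your argument is correct and is essentially the paper's own proof: you construct the normalized product of $\Prob_2$ with the counting measure on the (a.e. constant-cardinality) fibres, check its $g^1_s$-invariance and that it pushes forward to $\Prob_2$, and then take an ergodic component, noting that almost every component still pushes forward to $\Prob_2$ by extremality. The paper states this same argument in sketch form; your write-up simply supplies the measurability and ergodic-decomposition details.
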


The proof of Proposition \ref{finfibre} is routine: first, note that, by ergodicity,
for $\Prob_2$-almost every $z_z\in Z_2$ the cardinality of the
preimage $\{\pi_{12}^{-1}(z_2)\}$ of $z_2$ is constant; now consider the normalized product ${\tilde \Prob}_1$
of $\Prob_2$ and the counting measure in the preimage; the measure ${\tilde \Prob}_1$ is by definition
$g_s^1$-invariant, and for the measure $\Prob_1$ one may take
an ergodic component, in fact, almost every ergodic component, of the measure ${\tilde \Prob}_1$.

\subsection{A strongly biregular sequence of partitions corresponding to a zippered rectangle}

Given a zippered rectangle ${\mathscr X}$, we shall speak of its vertical and horizontal foliations, H\"{o}lder cocycles and so on, meaning the corresponding objects for the underlying abelian differential, and we shall use the notation $\mathfrak{B}^+({\mathscr X})$, $\mathfrak{B}^-({\mathscr X})$, $\mathfrak{B}^+_c({\mathscr X})$, $\mathfrak{B}^-_c({\mathscr X})$ for the corresponding spaces of finitely-additive measures.

Recall that, by construction, a zippered rectangle carries the partition
$$\pi_0({\mathscr X})=\Pi^{(0)}_1\sqcup\ldots\sqcup\Pi^{(0)}_m$$
into weakly admissible rectangles.

The Rauzy--Veech expansion of a zippered rectangle now yields a Markovian sequence of partitions $\pi_n$, $n\in\mathbb{Z}$. To construct
it, first take ${\mathscr X}\in{\mathcal Y}(\mathcal{R})$ and recall that we have a natural ``tautological''
identification map
$$\mathfrak{t}_{\overline{{\mathscr T}}}: M({\mathscr X})\to M(\overline{{\mathscr T}}{\mathscr X}).$$
Now set
\begin{equation}
\pi_n({\mathscr X})=\left(\mathfrak{t}_{\overline{{\mathscr T}}}\right)^{-n}\,\pi_0\, \left({\overline{{\mathscr T}}}^n{\mathscr X}\right)
\end{equation}

By definition, the sequence $\pi_n({\mathscr X})$, $n\in\mathbb{Z}$, is Markovian. Minimality
of the horizontal and vertical flows implies exactness of the sequence $\pi_n({\mathscr X})$.
For a general zippered rectangle ${\mathscr X}'\in\mathcal{V}_{1,\tau}(\mathcal{R})$,
write $$
{\mathscr X}'=P^{s_0}\,{\mathscr X}, {\mathscr X}\in{\mathcal Y}_1(\mathcal{R}), 0\leqslant s_0<\tau({\mathscr X})
$$
and set $$\pi_n({\mathscr X})=\mathfrak{t}_{s_0}\left(\pi_n({\mathscr X})\right).$$
(informally, carry over sequence $\pi_n$ from the ``closest'' zippered rectangle lying on the transversal
${\mathcal Y}_1(\mathcal{R})$.)

\begin{lemma}
\label{sb-zip}
For $\mathbb{P}_{\mathcal{V}}$-almost every zippered rectangle ${\mathscr X}$ there exists a sequence $n_k\in\mathbb{Z}$, $n_0=0$, $n_k<n_{k+1}$, $k\in\mathbb{Z}$, such that the
sequence $\mathfrak{A}=\left(A\left(\pi_{n_k},\pi_{n_{k+1}}\right)\right)$ of adjacency matrices
of the exact Markovian subsequence $\pi_{n_k}({\mathscr X})$, $k\in\mathbb{Z}$
satisfies the following.
\begin{enumerate}
\item $\mathfrak{A}$ is an SB-sequence,
\item the space $\mathfrak{B}^+(\mathfrak{A})$ coincides with the strongly unstable space of
the cocycle $\overline{\mathcal{A}}^t$ at the point ${\mathscr X}$.
\end{enumerate}
\end{lemma}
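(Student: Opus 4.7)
The proof follows the classical Veech positivity strategy, using the ergodic theory of Rauzy--Veech induction. First I would invoke Veech's theorem that there exist an irreducible permutation $\pi_*\in\mathcal{R}$ and a finite word $q=c_1\dots c_r$ in the Rauzy operations with $q\pi_*=\pi_*$ such that the associated matrix product $\mathcal{A}_q$ has strictly positive entries. Let $E_q\subset\mathcal{Y}_1(\mathcal{R})$ be the ``sandwich'' set of zippered rectangles $\mathscr{X}$ with permutation $\pi_*$ whose Rauzy--Veech expansion exhibits the word $q$ both immediately in the past and immediately in the future of time $0$; by ergodicity of $P^s$ under $\mathbb{P}_{\mathcal{V}}$, the set $E_q$ has strictly positive measure with respect to the induced measure on $\mathcal{Y}_1(\mathcal{R})$. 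For $\mathbb{P}_{\mathcal{V}}$-almost every $\mathscr{X}$, Birkhoff's ergodic theorem ensures that the $\overline{\mathscr{T}}$-orbit of $\mathscr{X}$ visits $E_q$ with positive frequency; I let $(n_k)_{k\in\mathbb{Z}}$ be the enumeration of these visit times, with $n_0=0$ by recentering.

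Next I would verify the four ingredients of the SB property for the sequence $\mathfrak{A}=(A_k)_{k\in\mathbb{Z}}$, where $A_k:=A(\pi_{n_k},\pi_{n_{k+1}})$. \emph{Balanced}: by the sandwich construction, each $A_k$ factors as $\mathcal{A}_q\cdot M_k\cdot\mathcal{A}_q$ for some non-negative Rauzy--Veech product $M_k$, and since $\mathcal{A}_q$ is strictly positive with bounded entry ratios $K$, the elementary estimate $P_{il}\,Q_{lm}\,P_{mj}\leq K^{2}\,P_{i'l}\,Q_{lm}\,P_{mj'}$ yields $(A_k)_{ij}/(A_k)_{i'j'}\leq K^2$ uniformly in $k$. \emph{Sub-exponential growth}: the return times $n_{k+1}-n_k$ form a stationary integrable sequence by Kac's lemma, and log-integrability of the Rauzy--Veech cocycle (inherited from the log-integrability of the Kontsevich--Zorich cocycle with respect to any $\mathbb{P}_{\mathcal{V}}$-invariant probability measure, via Proposition \ref{identif-coc-veech}) gives $\mathbb{E}[\log\|A_0\|]<\infty$; a Borel--Cantelli argument applied to the stationary sequence $\log\|A_k\|$ then yields $\log\|A_k\|=o(|k|)$, which is the required bound.

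\emph{Unique ergodicity} (Assumption \ref{a-u-e}): For $\mathbb{P}_{\mathcal{V}}$-typical $\mathscr{X}$, Poincar\'e recurrence of $P^s$ to a compact set combined with Masur's criterion ensures that both the vertical and horizontal foliations of $\mathscr{X}$ are uniquely ergodic; this translates to Assumption \ref{a-u-e} for $\mathfrak{A}$, the positive equivariant sequences being furnished by the rectangle-heights vectors $h^{(n_k)}$ and widths $\lambda^{(n_k)}$, unique up to scalar by unique ergodicity. \emph{Lyapunov regularity} (Assumption \ref{a-lyap-reg}): apply the Oseledets--Pesin Reduction Theorem (Theorem 3.5.5 of \cite{pesinbarreira}) to the invertible cocycle $\overline{\mathcal{A}}^t$ over the ergodic flow $(P^s,\mathbb{P}_{\mathcal{V}})$, then restrict to the discrete-time subsequence indexed by $n_k$; this directly yields the direct-sum decompositions and uniform-on-spheres estimates demanded by the assumption, with the Lyapunov exponents of $\mathfrak{A}$ being those of $\overline{\mathcal{A}}^t$ rescaled by the positive mean of $n_{k+1}-n_k$ (and by the average Teichm\"uller time per Rauzy--Veech step) and with the Oseledets subspaces at $\mathscr{X}$ agreeing pointwise.

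For assertion (2), recall that for any SB-sequence one has $\mathfrak{B}^+(\mathfrak{A})=E_0^1\oplus\dots\oplus E_0^{l_0}$, the direct sum of Oseledets subspaces of $\mathfrak{A}$ with strictly positive Lyapunov exponents. By the pointwise identification of the Oseledets splittings of $\mathfrak{A}$ and $\overline{\mathcal{A}}^t$ at $\mathscr{X}$ noted above, this space coincides precisely with the strongly unstable Oseledets subspace of $\overline{\mathcal{A}}^t$ at $\mathscr{X}$, as required. The principal obstacle, in my view, is arranging the \emph{balanced} property uniformly in $k$: one must work on the natural extension of $\overline{\mathscr{T}}$ (the bi-infinite space encoding both past and future Rauzy--Veech expansions) in order to force every $A_k$ to be genuinely sandwiched by $\mathcal{A}_q$ on both sides; once this is done, the remaining three SB ingredients and the identification in (2) follow from standard ergodic-theoretic inputs applied to the ergodic invariant probability measure $\mathbb{P}_{\mathcal{V}}$ and its log-integrable cocycle.
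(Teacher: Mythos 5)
Your proposal is correct and follows essentially the same strategy as the paper: use Veech's device of inducing on consecutive occurrences of a fixed Rauzy--Veech matrix with strictly positive entries (the paper writes $Q=Q_1Q_2$ so that each $A(\pi_{n_k},\pi_{n_{k+1}})=Q_2\widetilde{A}Q_1$, your sandwich set $E_q$ gives the equivalent form $\mathcal{A}_q M_k\mathcal{A}_q$) to secure the balanced and subexponential-growth conditions, and then invoke the Oseledets Multiplicative Ergodic Theorem together with the Oseledets--Pesin Reduction Theorem for Lyapunov regularity and the identification of $\mathfrak{B}^+(\mathfrak{A})$ with the strongly unstable Oseledets subspace of $\overline{\mathcal{A}}^t$. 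Your write-up merely fills in the bookkeeping the paper compresses into the word ``routine'' (Kac--Birkhoff stationarity for the return times, Masur's criterion for unique ergodicity, the natural-extension caveat).
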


The proof of the Lemma is routine: one chooses a Rauzy-Veech matrix $Q$ of the form
$$
Q=Q_1\,Q_2,
$$
where $Q_1$ and $Q_2$ are Rauzy-Veech matrices all whose entries are positive and such that $\mathbb{P}_{\mathcal{V}}$-almost all zippered rectangles ${\mathscr X}$ contain infinitely many occurrences of the matrix $Q$ both in the past and in the future. The sequence $n_k$ is then the sequence of consecutive occurrences of the matrix $Q$.
Each adjacency matrix $A\left(\pi_{n_k},\pi_{n_{k+1}}\right)$ now has the form $Q_2\widetilde{A}Q_1$, where $\widetilde{A}$ is an integer matrix with non-negative entries. It follows from the Oseledets
Multiplicative Ergodic
Theorem and the Oseledets-Pesin Reduction Theorem (Theorem 3.5.5 on p.77 in
\cite{pesinbarreira})
that $\mathfrak{A}$ is an SB-sequence and that $\mathfrak{B}^+(\mathfrak{A})$ coincides with the strongly
unstable space of the cocycle $\overline{\mathcal{A}}^t$ at the zippered rectangle
${\mathscr X}$. The proof of the Lemma is complete.

\subsection{The renormalization action of the Teichm{\"u}ller flow on the space of finitely-additive measures.}

We have the evaluation map
$$\eval^+_{\mathscr X}\colon\mathfrak{B}^+({\mathscr X})\to\mathbb{R}^m$$

which  to a finitely-additive measure $\Phi^+\in\mathfrak{B}^+$ assigns the vector of its values on vertical arcs
of the rectangles $\Pi_i^{(0)}$, $i=1,\ldots,m$. We must now check that the map $\eval^+_{\mathscr X}$ is
indeed an isomorphism between the space $\mathfrak{B}^+({\mathscr X})$ and the strongly unstable space of
the cocycle ${\overline{\mathcal{A}}}^t$.

Introduce a measurable fibre bundle $\mathfrak{B}^+\widetilde{\mathcal{V}}(\mathcal{R})$ over the Veech space $\mathcal{V}(\mathcal{R})$ by setting
$$
\mathfrak{B}^+\widetilde{\mathcal{V}}(\mathcal{R})=\left\{\bigl({\mathscr X},\Phi^+\bigr)\colon{\mathscr X}\in\widetilde{\mathcal{V}}(\mathcal{R}), \Phi^+\in\mathfrak{B}^+({\mathscr X})\right\}\,.
$$

Extend the map $\eval^+_{\mathscr X}$ to a bundle morphism
$$\eval^+\colon\mathfrak{B}^+\widetilde{\mathcal{V}}(\mathcal{R})\to\widetilde{\mathcal{V}}(\mathcal{R})\times\mathbb{R}^m\,,$$
given by the formula:
$$\eval^+\bigl({\mathscr X},\Phi^+\bigr)=\bigl({\mathscr X},\eval^+_{\mathscr X}(\Phi^+)\bigr)\,.$$

By definition, the map $\eval^+$ intertwines the action of the flow $P^{s,{\mathscr X}}$ on the
bundle $\mathfrak{B}^+\widetilde{\mathcal{V}}(\mathcal{R})$ with that of the flow
$P^{s,\overline{\mathcal{A}}^t}$ on the trivial bundle $\widetilde{\mathcal{V}}(\mathcal{R})\times\mathbb{R}^m$.

Recall that for any $s\in {\mathbb R}$ we have a natural ``tautological" map
$$
{\mathfrak t}_s: M({\mathscr X})\to M(P^s{\mathscr X}).
$$
given by (\ref{def-t-s}). The bundle
$
{\mathfrak B}^+{\tilde{\mathcal V}}(\R)$ is now
endowed with a natural renormalization flow $P^{s, {\mathfrak B}^+}$ given by the formula
$$
P^{s, {\mathfrak B}^+}({\mathscr X}, \Phi^+)=(P^s{\mathscr X}, ({\mathfrak t}_s)_*\Phi^+).
$$

We furthermore have a bundle morphism
$$
\eval^+: {\mathfrak B}^+{\tilde{\mathcal V}}(\R)\to {\tilde{\mathcal V}}(\R)\times {\mathbb R}^m
$$
given by the formula
$$
\eval^+({\mathscr X}, \Phi^+)=({\mathscr X}, \eval_{{\mathscr X}}^+(\Phi^+).
$$
The identification of cocycles now gives us the following
\begin{proposition}
\label{hypzip}
Let ${\Prob}_{\mathcal V}$ be an ergodic $P^s$-invariant
probability measure for the flow $P^s$ on ${\mathcal V}(\R)$.
We have a commutative diagram
$$
\begin{CD}
{\mathfrak B}^+{\tilde {\mathcal V}}(\R)@ > \eval^+  >> {\tilde{\mathcal V}}(\R)\times {\mathbb R}^m \\
@ VVP^{s, {\mathfrak B}^+} V          @VVP^{s, \overline {\mathcal A}^t} V   \\
{\mathfrak B}^+{\tilde {\mathcal V}}(\R)@ > \eval^+  >> {\tilde{\mathcal V}}(\R)\times {\mathbb R}^m \\
\end{CD}
$$
The map $\eval^+$ is injective in restriction each fibre.
For ${\Prob}_{\mathcal V}$-almost every ${\mathscr X}\in  {\mathfrak B}^+{\tilde{\mathcal V}}(\R)$, the map
$\eval^+$ induces an isomorphism between the space ${\mathfrak B}^+({\mathscr X})$ and
the strongly unstable Oseledets subspace of the cocycle $\overline {\mathcal A}^t$ at the point ${\mathscr X}$.
\end{proposition}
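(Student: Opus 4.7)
The plan is to deduce the proposition by combining Lemmas \ref{sb-zip} and \ref{sb-bplus} with a direct verification of the commutativity of the diagram.

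First I would verify the commutative diagram by unpacking definitions. For a zippered rectangle ${\mathscr X}\in{\mathcal Y}_1(\mathcal R)$ and the first-return time $\tau({\mathscr X})$, the tautological map ${\mathfrak t}_{\tau({\mathscr X})}\colon M({\mathscr X})\to M(\overline{{\mathscr T}}{\mathscr X})$ carries each Veech rectangle $\Pi_i^{(0)}({\mathscr X})$ onto a union of sub-rectangles of the new rectangles $\Pi_j^{(0)}(\overline{{\mathscr T}}{\mathscr X})$, the combinatorics being recorded by the Rauzy--Veech matrix ${\mathcal A}({\mathscr X})$. A canonical vertical arc spanning $\Pi_j^{(0)}(\overline{{\mathscr T}}{\mathscr X})$ decomposes by finite additivity into arcs spanning old rectangles, counted with multiplicities given by the entries of ${\mathcal A}({\mathscr X})^t$; horizontal holonomy invariance of $\Phi^+$ ensures that the value of $(\mathfrak t_{\tau({\mathscr X})})_*\Phi^+$ on each such sub-arc equals the corresponding old value of $\Phi^+$ on a vertical arc spanning the old rectangle. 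Summing gives
$$
\eval^+_{\overline{{\mathscr T}}{\mathscr X}}\bigl((\mathfrak t_{\tau({\mathscr X})})_*\Phi^+\bigr)={\mathcal A}({\mathscr X})^t\,\eval^+_{\mathscr X}(\Phi^+).
$$
Iterating this identity and extending to arbitrary $s\in\mathbb R$ and to ${\mathscr X}\in\widetilde{\mathcal V}(\mathcal R)$ via the suspension structure recovers precisely the definition of the cocycle ${\overline{\mathcal A}}^t({\mathscr X},s)$, which yields the commutative diagram.

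Next, for the identification of the image with the strongly unstable Oseledets subspace, I would invoke Lemma \ref{sb-zip}: for $\mathbb{P}_{\mathcal V}$-almost every ${\mathscr X}$ there exists an increasing sequence $n_k$ with $n_0=0$ such that the subsequence of partitions $\pi_{n_k}({\mathscr X})$ is an exact Markovian sequence whose adjacency sequence $\mathfrak A$ is an SB-sequence, and the space $\mathfrak B^+(\mathfrak A)\subset\mathbb R^m$ coincides with the strongly unstable Oseledets subspace of ${\overline{\mathcal A}}^t$ at ${\mathscr X}$. Lemma \ref{sb-bplus} then produces an isomorphism $\mathfrak B^+({\mathscr X})\to\mathfrak B^+(\mathfrak A)$ given by evaluation of a finitely-additive measure on vertical arcs spanning the level-zero rectangles of $\pi_{n_0}=\pi_0({\mathscr X})$. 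This evaluation coincides with $\eval^+_{\mathscr X}$, so the image of $\eval^+_{\mathscr X}$ is exactly the strongly unstable Oseledets subspace of ${\overline{\mathcal A}}^t$ at ${\mathscr X}$. Injectivity of $\eval^+$ on each fibre is a direct consequence.

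The main obstacle I expect is the bookkeeping for the commutativity step: one must track carefully how ${\mathfrak t}_s$ splits and recombines Markovian arcs under a single Rauzy--Veech zipping, and also handle the case of ${\mathscr X}\in\widetilde{\mathcal V}(\mathcal R)$ that does not lie on the transversal ${\mathcal Y}_1(\mathcal R)$, where a consistent convention for the ``closest'' partition along the suspension must be fixed. Once the transversal case is established, the rest of the argument is a direct assembly of the two lemmas already proved.
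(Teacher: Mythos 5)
Your proposal is correct and follows essentially the same route as the paper: the paper treats the commutativity of the diagram as immediate from the definitions (your arc-decomposition bookkeeping just makes that explicit), and then, exactly as you do, it invokes Lemma \ref{sb-zip} to produce an SB-sequence $\mathfrak A$ with $\mathfrak B^+(\mathfrak A)$ equal to the strongly unstable Oseledets space, and the isomorphism $\eval^+_{\mathscr X}\colon\mathfrak B^+({\mathscr X})\to\mathfrak B^+(\mathfrak A)$ from Lemma \ref{sb-bplus}, noting $n_0=0$ so that the level-zero evaluation is precisely $\eval^+_{\mathscr X}$.
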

Proof.
Let $\pi_{n_k}$ be the sequence of partitions given by Lemma \ref{sb-zip}, and let
${\mathfrak A}$ be the corresponding SB-sequence of matrices.
Since $\mathfrak{A}$ is an SB-sequence, the map $\eval^+_{\mathscr X}$ induces an
isomorphism between $\mathfrak{B}^+({\mathscr X})$ and $\mathfrak{B}^+(\mathfrak{A})$ (recall here that $n_0=0$). Since $\mathfrak{B}^+(\mathfrak{A})$ coincides with the unstable space of the
cocycle $\overline{\mathcal{A}}^t$, the Proposition is proved completely.

Using Proposition \ref{hypzip}, we will identify
the action of $P^{s, {\mathfrak B}^+}$ on ${\mathfrak B}^+({\tilde {\mathcal V}}(\R)$
with the action of  $P^{s, \overline {\mathcal A}^t}$ on the strongly unstable Oseledets subbundle of
${\tilde{\mathcal V}}(\R)\times {\mathbb R}^m$, and speak of the action of the cocycle
  ${\overline {\mathcal A}}^t$ on the space of finitely-additive measures in this sense.

This renormalization action of the flow $P^s$ on the space of finitely-additive measures will
play a key r{\^o}le in the proof of the limit theorems in the next Section. We close this section by giving
a sufficient condition for the equality ${\mathfrak B}^+(X, \omega)={\mathfrak B}^+_c(X, \omega)$.

\subsection{A sufficient condition for the equality ${\mathfrak B}^+(X, \omega)={\mathfrak B}^+_c(X, \omega)$}

Let $({\mathcal X}, \mu)$ be a probability space endowed with a $\mu$-preserving transformation
$T$ or flow $g_s$ and an integrable linear cocycle $A$ over $g_s$  with values in ${\mathrm GL}(m, {\mathbb R})$.

For $p\in{\mathcal X}$ let $E_{0, x}$ be the the {\it neutral} subspace of $A$ at $p$, i.e., the
Lyapunov subspace of the cocycle ${A}$ corresponding to the
Lyapunov exponent $0$.
We say  that ${A}$ {\it acts isometrically on its neutral subspaces}
if for almost any $p$ there exists
an inner product $\langle\cdot\rangle_{p}$ on ${\mathbb R}^m$ which depends on
$p$ measurably and satisfies
$$
\langle { A}(1, p)v,{ A}(1, p)v\rangle_{g_sp}=\langle v, v\rangle_{p}, \ v\in E_{0, p}
$$
for all $s\in {\mathbb R}$ (again, in the case of a transformation, $g_s$ should be replaced by $T$ in this formula).

The third statement of Proposition \ref{veechtokz} has the following immediate
\begin{corollary}
\label{hypzip1}
Let ${\Prob}_{\mathcal V}$ be a Borel ergodic $P^s$-invariant probability measure  on ${\mathcal V}(\R)$
and let $\Prob=(\pi_{\R})_*{\Prob}_{\mathcal V}$ be the corresponding ${\bf g}_s$-invariant measure on $\HH$.
If the Kontsevich-Zorich cocycle acts isometrically on its neutral subspace with respect to $\Prob$, then the
cocycle ${\overline {\mathcal A}}^t$  also acts isometrically on its neutral subspace with respect to $\Prob_{\mathcal V}$.
\end{corollary}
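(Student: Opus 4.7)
The plan is to deduce this corollary by combining Proposition~\ref{identif-coc-veech}, which identifies $\overline{\mathcal A}^t|_{H(\pi)}$ with the Kontsevich--Zorich cocycle, with statement~(3) of Proposition~\ref{veechtokz}, which makes the action of $\overline{\mathcal A}^{-1}$ on the complementary subspace $N(\pi)$ an explicit permutation of a natural basis ${\bf v}_\pi$; from these two ingredients I will assemble a measurable $\overline{\mathcal A}^t$-invariant inner product on the neutral Oseledets subspace $E^0({\mathscr X})\subset{\mathbb R}^m$ at $\Prob_{\mathcal V}$-almost every zippered rectangle ${\mathscr X}=(\lambda,\pi,\delta)$.

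For the $H(\pi)$-contribution: the fibrewise isomorphism ${\mathcal I}_{\mathscr X}$ of Proposition~\ref{identif-coc-veech} conjugates $\overline{\mathcal A}^t|_{H(\pi)}$ to the Kontsevich--Zorich cocycle and identifies $E^0({\mathscr X})\cap H(\pi)$ with the neutral KZ subspace, so the hypothesized KZ-invariant inner product pulls back to a measurable $\overline{\mathcal A}^t$-invariant inner product on $E^0({\mathscr X})\cap H(\pi)$. For the transverse contribution, antisymmetry of the form $L^\pi$ gives the orthogonal direct sum ${\mathbb R}^m=H(\pi)\oplus N(\pi)$ in the standard inner product, so ${\mathbb R}^m/H(\pi)\cong N(\pi)$. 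By statement~(3) of Proposition~\ref{veechtokz}, $\overline{\mathcal A}^{-1}$ permutes the basis ${\bf v}_\pi$; transposing in these bases shows that the induced action of $\overline{\mathcal A}^t$ on ${\mathbb R}^m/H(\pi)\cong N(\pi)$ is likewise a permutation cocycle, hence an isometry for the inner product making the ${\bf v}_\pi$ orthonormal, and all its Lyapunov exponents vanish. A Lyapunov-exponent count then places $E^0({\mathscr X})$ in the short exact sequence of equivariant bundles
$$0\to E^0({\mathscr X})\cap H(\pi)\to E^0({\mathscr X})\to {\mathbb R}^m/H(\pi)\to 0,$$
whose sub-cocycle and quotient cocycle both carry canonical $\overline{\mathcal A}^t$-invariant inner products.

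The final step is to splice these two invariant inner products into a single $\overline{\mathcal A}^t$-invariant inner product on $E^0({\mathscr X})$, and this is where I expect the main obstacle to lie: the above short exact sequence is not obviously $\overline{\mathcal A}^t$-equivariantly split, so simply taking an orthogonal direct sum in the standard metric will not produce an invariant form. The resolution I anticipate, and which I believe is the author's ``immediate'' observation, is that since both diagonal blocks of $\overline{\mathcal A}^t|_{E^0({\mathscr X})}$ are measurable isometries and the full cocycle on $E^0({\mathscr X})$ is tempered (all its Lyapunov exponents being zero), a measurable $\overline{\mathcal A}^t$-invariant splitting of the extension exists by a cohomological argument of the type familiar from the theory of Lyapunov-regular cocycles. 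Once such a splitting is in hand, the direct sum of the two invariant inner products yields the desired invariant inner product on $E^0({\mathscr X})$, completing the proof.
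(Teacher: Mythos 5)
You correctly identify the two structural ingredients. The cocycle $\overline{\mathcal A}^t$ preserves the subbundle $H(\pi)$, on which it is conjugate to the Kontsevich--Zorich cocycle via ${\mathcal I}_{\mathscr X}$ (Proposition~\ref{identif-coc-veech}); the quotient cocycle on ${\mathbb R}^m/H(\pi)$ is dual to ${\mathcal A}^{-1}|_{N(\pi)}$, which by statement~(3) of Proposition~\ref{veechtokz} permutes the basis ${\bf v}_\pi$ and is therefore isometric with vanishing Lyapunov exponents. You also correctly locate the obstacle: the short exact sequence $0\to E^0\cap H\to E^0\to {\mathbb R}^m/H\to 0$ of $\overline{\mathcal A}^t$-equivariant bundles has no a priori equivariant splitting.

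The gap is your final paragraph. There is no general ``cohomological argument of the type familiar from the theory of Lyapunov-regular cocycles'' that produces such a splitting. A measurable linear cocycle with an invariant subbundle on which it acts isometrically and an isometric quotient is \emph{not} in general measurably conjugate to the direct sum, and vanishing Lyapunov exponents on the total bundle do not by themselves control the off-diagonal block. The standard counterexample is a two-dimensional upper-triangular cocycle over an irrational rotation, with unit diagonal and off-diagonal entry $\phi$ of zero mean that is not a measurable coboundary: both diagonal blocks are the identity and all exponents vanish, yet the cocycle preserves no measurable invariant inner product. Note that when the Kontsevich--Zorich cocycle has no zero exponent one has $E^0\cap H=0$, so the projection $E^0\to{\mathbb R}^m/H$ is itself an equivariant isomorphism and the conclusion really is immediate; but the substantive case of the corollary is precisely a nontrivial KZ neutral subspace, and there you have neither produced an $\overline{\mathcal A}^t$-invariant complement to $E^0\cap H$ inside $E^0$ nor shown the off-diagonal block to be measurably trivial. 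As written, the argument does not close.
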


Note that the hypothesis of Corollary \ref{hypzip1} is satisfied, in particular, for the Masur-Veech
smooth measure on the moduli space of abelian differentials.

The following proposition is clear from the definitions.
\begin{proposition}
\label{hyperboliccriterion}
Let ${\Prob}_{\mathcal V}$ be a Borel ergodic $P^s$-invariant probability measure on ${\mathcal V}(\R)$
such that the
cocycle ${\overline {\mathcal A}}^t$  acts isometrically on its neutral subspace with respect to $\Prob_{\mathcal V}$.
Let $\Prob=(\pi_{\R})_*{\Prob}_{\mathcal V}$ be the corresponding ${\bf g}_s$-invariant ergodic measure on $\HH$.
Then for $\Prob$-almost every abelian differential $(M, \omega)$, we have the equality
$$
{\mathfrak B}^+(M, \omega)={\mathfrak B}^+_c(M, \omega).
$$
\end{proposition}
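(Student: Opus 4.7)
The plan is to reduce the problem to the symbolic SB-model of Section 4 and show that under the isometric hypothesis the symbolic analogue $\mathfrak{B}_c^+(\mathfrak{A}) = \mathfrak{B}^+(\mathfrak{A})$ holds, from which the surface-level equality follows by transfer through the evaluation map. By Lemma \ref{sb-zip}, for $\Prob_{\mathcal{V}}$-almost every zippered rectangle $\mathscr X$ there is an exact Markovian subsequence of partitions whose sequence of adjacency matrices $\mathfrak{A}$ is an SB-sequence, with $\mathfrak{B}^+(\mathfrak{A})$ equal to the strongly unstable Oseledets subspace $E_0^u$ of the cocycle $\overline{\mathcal{A}}^t$ at $\mathscr X$. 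Lemma \ref{sb-bplus} identifies $\mathfrak{B}^+(M,\omega)$ with $\mathfrak{B}^+(\mathfrak{A})$ via the evaluation map $\eval_0^+$.

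First I verify that the same evaluation map carries $\mathfrak{B}_c^+(M,\omega)$ into $\mathfrak{B}_c^+(\mathfrak{A})$ and is injective on it. The inclusion follows because exactness of the Markov sequence forces the heights $h_i^{(n)}$ of the relevant Markov arcs $\gamma_i^{(n)}$ to tend to zero, so the continuity condition on $\Phi^+$ gives $\Phi^+(\gamma_i^{(n)}) \to 0$, which is exactly the defining condition of $\mathfrak{B}_c^+(\mathfrak{A})$. Injectivity follows because any vertical arc $\gamma$ admits the decomposition $\gamma = \breve\gamma_n \sqcup R_n$, where $\breve\gamma_n$ is the maximal union of Markov arcs at level $n$ contained in $\gamma$ and $R_n$ consists of at most two end-pieces of length bounded by $\max_i h_i^{(n)}$; finite additivity together with the continuity of $\Phi^+$ then give $\Phi^+(\gamma) = \lim_n \Phi^+(\breve\gamma_n)$, with the right-hand side depending only on the equivariant sequence $\eval_0^+(\Phi^+)$.

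The heart of the argument is the symbolic identity $\mathfrak{B}_c^+(\mathfrak{A}) = \mathfrak{B}^+(\mathfrak{A})$ under the isometric hypothesis. Decomposing $\mathbb{R}^m = E_0^u \oplus E_0^0 \oplus E_0^s$ according to the positive, zero, and negative Lyapunov exponents of $\mathfrak{A}$, write $v = v^u + v^0 + v^s$ for $v \in \mathfrak{B}_c^+(\mathfrak{A})$. A non-zero stable component would force exponential growth of $|v^{(-n)}|$, contradicting membership in $\mathfrak{B}_c^+(\mathfrak{A})$; hence $v^s = 0$. For the neutral part, Corollary \ref{hypzip1} supplies a measurable family of inner products $\langle \cdot, \cdot \rangle_{\mathscr X}$ on $E_0^0$ preserved by $\overline{\mathcal{A}}^t$. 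Luzin's theorem gives a compact subset of probability arbitrarily close to one on which these inner products are uniformly equivalent to the Euclidean one, and Poincar\'e recurrence of $P^s$ along the SB-subsequence returns $\mathscr X$ to this compact set infinitely often. Along such a recurrence subsequence $n_k$, isometry gives a uniform lower bound $|v^{(-n_k)}| \geq c\,|v^0|$ in the Euclidean norm, which contradicts $|v^{(-n)}| \to 0$ unless $v^0 = 0$. Hence $v = v^u \in E_0^u = \mathfrak{B}^+(\mathfrak{A})$.

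Combining the previous steps: any $\Phi \in \mathfrak{B}_c^+(M,\omega)$ has $\eval_0^+(\Phi) \in \mathfrak{B}^+(\mathfrak{A})$, so Lemma \ref{sb-bplus} provides a unique $\Psi \in \mathfrak{B}^+(M,\omega)$ with $\eval_0^+(\Psi) = \eval_0^+(\Phi)$; the injectivity established above yields $\Phi = \Psi \in \mathfrak{B}^+(M,\omega)$. The main obstacle is the neutral-subspace argument: one must transfer the abstract isometry of a measurable inner product into a concrete uniform Euclidean lower bound along an effective recurrence sequence. This is where the isometric hypothesis is essential, and its failure is exactly the scenario flagged by the remark following Proposition \ref{cochyperb}.
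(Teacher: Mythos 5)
Your proposal is correct and takes essentially the same route as the paper: reduce to the symbolic SB-model via Lemma \ref{sb-zip}, show that the isometric hypothesis forces the SB-sequence to be hyperbolic, i.e. $\mathfrak{B}_c^+(\mathfrak{A})=\mathfrak{B}^+(\mathfrak{A})$, and transfer back to the surface through $\eval_0^+$ and Lemma \ref{sb-bplus}. The paper's own proof is only a terse paragraph asserting these steps (``clear from the definitions''), so your added detail --- injectivity of the evaluation map on continuous cocycles via arc approximation, and the Luzin--Poincar\'e recurrence argument excluding a neutral component --- fills in the same strategy rather than departing from it.
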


In other words,  if the
cocycle ${\overline {\mathcal A}}^t$  acts isometrically on its neutral subspace with respect
to $\Prob_{\mathcal V}$, then any continuous finitely-additive measure must in fact be H{\"o}lder.
Note that the assumptions of the proposition are verified,
in particular, for the Masur-Veech
smooth measure on the moduli space of abelian differentials.
To prove Proposition \ref{hyperboliccriterion} we use Proposition \ref{veechtokz}, which implies that
if the
cocycle ${\overline {\mathcal A}}^t$  acts isometrically on its neutral subspace with respect to
$\Prob_{\mathcal V}$, then $\Prob$-almost every abelian differential
$(M, \omega)$ admits an exact Markovian sequence of partitions
whose sequence of adjacency matrices is a hyperbolic SB-sequence: which, in turn, is sufficient for the equality
$$
{\mathfrak B}^+(M, \omega)={\mathfrak B}^+_c(M, \omega).
$$

\section{Proof of the Limit Theorems.}
\subsection{Outline of the proof.}

The main element in the proof of the limit theorem is the
renormalization action of the Teichm{\"u}ller flow $P^s$
on the bundle ${\mathfrak B}^+{\tilde {\mathcal V}}({\mathcal R})$.

Start with the case when the second Lyapunov exponent of the cocycle
${\overline {\mathcal A}}^t$ is positive and simple with respect to a $P^s$-invariant
ergodic probability measure $\Prob_{{\mathcal V}}$ on ${\tilde {\mathcal V}}({\mathcal R})$.
Then, by Theorem \ref{multiplicmoduli},  for $\Prob_{{\mathcal V}}$-almost every zippered rectangle ${\mathscr X}$, and
a generic weakly Lipschitz function $f$ of zero average the ergodic integral
$$
\int_0^{\exp(s)}f\circ h_t^+(x)dt
$$
is approximated by an expression of the form
$$
const\cdot \Phi^+_{2, {\mathscr X}}(x, e^s),
$$
where the constant depends on $f$, and
$\Phi^+_{2, {\mathscr X}}\in {\mathfrak B}^+({\mathscr X})$
is a cocycle belonging to the second Lyapunov subspace of the cocycle
${\overline {\mathcal A}}^t$.
Note that the cocycle  $\Phi^+_{2, {\mathscr X}}$ is defined up to multiplication
by a scalar; the double cover $\HH^{\prime}$ over the space $\HH$ in the formulation of
the limit theorem is considered precisely in order to distinguish between positive and negative scalars.

Now,  Proposition \ref{hypzip} implies that the normalized distribution
of the random variable $\Phi^+_{2, {\mathscr X}}(x, e^s)$ (considered as a function of $x$ with fixed $s$)
coincides with the normalized distribution of the random variable  $\Phi^+_{2, P^s{\mathscr X}}(x, 1)$.
Assigning to a zippered rectangle ${\mathscr X}$ the normalized distribution
of the random variable $\Phi^+_{2, {\mathscr X}}(x, e^s)$ (considered as a function of $x$ with fixed $s$)
now  yields the desired map ${\mathcal D}_2^+$ from the space of zippered rectangles
(more precisely, from its double cover) to the space of distributions.
The fact that the normalized distributions of the ergodic integrals are approximated by
the image under the map ${\mathcal D}_2^+$ of the orbit of our zippered rectangle
under the action of the Teichm{\"u}ller flow $P^s$ follows now from the
asymptotic expansion of Theorem \ref{multiplicmoduli}.

\subsection{The case of the simple second Lyapunov exponent.}
\subsubsection{The leading term in the asymptotic for the ergodic integral.}
We fix a $P^s$-invariant
ergodic probability measure $\Prob_{{\mathcal V}}$ on ${\tilde {\mathcal V}}({\mathcal R})$
 and start with the case in which the second Lyapunov exponent of the cocycle ${\overline {\mathcal A}}^t$
is positive and simple with respect to the measure $\Prob_{{\mathcal V}}$.
Consider the Oseledets subspace $E_{1, \oomega}^u={\mathbb R}h_{\oomega}$ corresponding to the top Lyapunov exponent $1$
and the one-dimensional Oseledets subspace $E_{2, \oomega}^u$ corresponding to the second Lyapunov exponent.
Furthermore, let $E_{\geq 3, \oomega}^u$ be the subspace corresponding to the remaining Lyapunov exponents.

We have then the decomposition
$$
E_{\oomega}^u=E_{1, \oomega}^u\oplus E_{2, \oomega}^u\oplus E_{\geq 3, \oomega}^u.
$$
Denote $\BB_{1, \oomega}^+={\mathbb R}\nu^+$,
$\BB_{2, \oomega}^+$, $\BB_{\geq 3, \oomega}^+$ the corresponding spaces of H{\"o}lder cocycles.

A similar decomposition holds for the dual space ${\tilde E}^u$, the strongly unstable space
of the cocycle ${\overline {\mathcal A}}$:
$$
{\tilde E}_{\oomega}^u={\tilde E}_{1, \oomega}^u\oplus {\tilde E}_{2, \oomega}^u\oplus {\tilde E}_{\geq 3, \oomega}^u.
$$
Again, denote $\BB_{1, \oomega}^-={\mathbb R}\nu^-$,
$\BB_{2, \oomega}^-$, $\BB_{\geq 3, \oomega}^-$ the corresponding spaces of H{\"o}lder cocycles.

Choose $\Phi_2^+\in \BB_{2, \oomega}^+$, $\Phi_2^-\in\BB_{2, \oomega}^-$ in such a way that
$$
\langle \Phi_2^+, \Phi_2^-\rangle =1.
$$
Take $f\in Lip_{w}^+({\mathscr X})$, $x\in {\mathscr X}$, $T\in {\mathbb R}$  and observe that the expression
\begin{equation}
\label{mphitwo}
m_{\Phi_2^-}(f)\Phi_2^+(x,T)
\end{equation}
does not depend on the precise choice of $\Phi_2^{\pm}$ (we have the freedom of multiplying $\Phi_2^+$
by an arbitrary scalar, but then $\Phi_2^-$ is divided by the same scalar).

Now for $f\in Lip_{w}^+({\mathscr X})$ write
$$
\Phi_f^+(x,T)=(\int_{\mathscr X} fd\nu)\cdot T + m_{\Phi_2^-}(f)\Phi_2^+(x,T)+\Phi_{3, f}^+(x,T),
$$
where $\Phi_{3, f}^+\in {\mathcal I}_{\oomega}({\tilde E}_{\geq 3, \oomega}^u)$.

In particular, there exist two positive constants  $C$ and $\alpha$ depending only on $\Prob$ such that
for any  function $f$ satisfying
$$
f\in Lip_{w}^+({\mathscr X}), \int_{\mathscr X} fd\nu=0,
$$
 we have the estimate
\begin{equation}
\label{phitwoapprox}
\left|\int_0^T f\circ h_t^+(x) dt - m_{\Phi_2^-}(f)\Phi_2^+(x,T)\right|\leq C||f||_{Lip} T^{{\overline \theta_2}-\alpha}.
\end{equation}

\subsubsection{The growth of the variance.}

In order to estimate the variance of the random
variable $\int_0^T f\circ h_t^+(x) dt$, we start by studying the growth of
the variance of the random variable $\Phi_{2, \oomega}^+(x,T)$ as $T\to\infty$.

Recall that $\ee_{\nu}\Phi_{2, \oomega}^+(x,T)=0$ for all $T$, while
$Var_{\nu}\Phi_{2, \oomega}^+(x,T)\neq 0$ for $T\neq 0$.
Recall that for a cocycle $\Phi^+\in\BB_{\oomega}^+$,
$\Phi^+={\mathcal I}^+_{\oomega}(v)$,
we have defined its norm $|\Phi^+|$ by the formula $|\Phi^+|=|v|$.
Introduce a multiplicative cocycle $H_2(s, \oomega)$ over the flow $P^s$ by the formula
\begin{equation}
H_2(s, \oomega)=\frac{|{\overline {\mathcal A}}^t(s, \oomega)v|}{|v|}, \ v\in E_{2, \oomega}^u,\  v\neq 0.
\end{equation}
 Observe that the right-hand side does not depend on the specific choice of $v\neq 0$.

By definition, we now have
\begin{equation}
\lim_{s\to\infty}\frac{\log H_2(s, \oomega)}{s}={\overline \theta}_2.
\end{equation}

\begin{proposition}
There exists a positive measurable
function $V:\Oomega\to {\mathbb R}_+$ such that
the following equality holds for $\Prob_{{\mathcal V}}$-almost all $\oomega\in\Oomega$:
\begin{equation}
{Var_{\nu} {\Phi_2^+}(x,T)}=V(P^s\oomega)|\Phi_2^+|^2(H_2(s, \oomega))^2.
\end{equation}
\end{proposition}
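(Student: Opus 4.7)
The plan is to exploit the renormalization action of $P^s$ on the bundle $\mathfrak{B}^+\widetilde{\mathcal{V}}(\mathcal{R})$ and the fact that the subspace $E_{2,\oomega}^u$ is one-dimensional. First, I would define the candidate function by
$$
V(\mathscr{X}) = \frac{Var_{\nu(\mathscr{X})} \Phi_2^+(x,1)}{|\Phi_2^+|^2},
$$
for an arbitrary nonzero $\Phi_2^+ \in \mathfrak{B}_{2,\mathscr{X}}^+$. The first step is to check that the right-hand side is independent of the choice of $\Phi_2^+$: since $\dim \mathfrak{B}_{2,\mathscr{X}}^+=1$, any other nonzero element is $c\Phi_2^+$ for some $c\in\mathbb{R}\setminus\{0\}$; both numerator and denominator then scale by $c^2$. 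Measurability of $V$ follows from that of the bundle $\mathfrak{B}_{2}^+\widetilde{\mathcal{V}}(\mathcal{R})$. Positivity of $V$ follows from Proposition \ref{phivarnotzero}, since $\Phi_2^+$ is not proportional to $\nu^+$.

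Next, I would use the tautological map $\mathfrak{t}_s: M(\mathscr{X}) \to M(P^s\mathscr{X})$ defined in (\ref{def-t-s}). Two key properties of $\mathfrak{t}_s$ are needed: it preserves the area $\nu$ (since $P^s$ preserves the total area of a zippered rectangle), and it conjugates the vertical flow by the time change $t \mapsto e^{-s}t$, i.e.\ $\mathfrak{t}_s(h_t^+ x) = h_{e^{-s}t}^+(\mathfrak{t}_s x)$. Applied to the pushforward cocycle $(\mathfrak{t}_s)_*\Phi_2^+$, which by definition lies in $\mathfrak{B}_{2,P^s\mathscr{X}}^+$ (renormalization preserves the Oseledets decomposition), this yields the identity
$$
\Phi_2^+(x, e^s) = \bigl((\mathfrak{t}_s)_*\Phi_2^+\bigr)\bigl(\mathfrak{t}_s x,\, 1\bigr).
$$
Taking variance in $x$ on $(M(\mathscr{X}), \nu)$, and using that $\mathfrak{t}_s$ is measure-preserving, I obtain
$$
Var_{\nu(\mathscr{X})}\Phi_2^+(x,e^s) = Var_{\nu(P^s\mathscr{X})}\bigl((\mathfrak{t}_s)_*\Phi_2^+\bigr)(y,1).
$$

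Finally, I apply the definition of $V$ at the point $P^s\mathscr{X}$ to the cocycle $(\mathfrak{t}_s)_*\Phi_2^+$:
$$
Var_{\nu(P^s\mathscr{X})}\bigl((\mathfrak{t}_s)_*\Phi_2^+\bigr)(y,1) = V(P^s\mathscr{X}) \cdot \bigl|(\mathfrak{t}_s)_*\Phi_2^+\bigr|^2.
$$
The norm $|(\mathfrak{t}_s)_*\Phi_2^+|$ is by definition the Euclidean norm of $\eval_{P^s\mathscr{X}}^+((\mathfrak{t}_s)_*\Phi_2^+)$, and by the commutative diagram in Proposition \ref{hypzip} this equals $|\overline{\mathcal{A}}^t(s,\mathscr{X})v|$ where $v = \eval_{\mathscr{X}}^+(\Phi_2^+)$; by the very definition of $H_2$, this is $H_2(s,\mathscr{X})|v| = H_2(s,\mathscr{X})|\Phi_2^+|$. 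Substituting yields the claimed identity. The only subtle point, which is what I would verify most carefully, is that the norm on $\mathfrak{B}_{2,\mathscr{X}}^+$ used in defining $V$ and the norm used in defining $H_2$ coincide under the evaluation map — but this is precisely the content of the conjugacy in Proposition \ref{hypzip}, so no obstacle remains.
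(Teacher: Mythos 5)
Your proposal is correct and takes essentially the same route as the paper: the paper defines $V$ by the same formula and declares the result an immediate corollary of Proposition \ref{hypzip}, and your argument is precisely the unwinding of that corollary — pushing forward $\Phi_2^+$ along $\mathfrak{t}_s$, using area-preservation and the vertical time change $e^{-s}$ to transfer the variance computation to $P^s\mathscr{X}$, and reading off the norm factor $H_2(s,\mathscr{X})|\Phi_2^+|$ from the commutative diagram intertwining $P^{s,\mathfrak{B}^+}$ and $P^{s,\overline{\mathcal{A}}^t}$.
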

Indeed, the function $V(\oomega)$ is given by
\begin{equation}
V(\oomega)=\frac{{Var_{\nu} {\Phi_2^+}(x,1)}}{|\Phi^+_2|^2},
\end{equation}
and the Proposition is an immediate corollary of Proposition \ref{hypzip}.
Observe that the right-hand side does not depend on  a particular choice of $\Phi_2^+\in {\BB}_{2, \oomega}^+$,
$\Phi_2^+\neq 0$.

Using (\ref{phitwoapprox}), we now proceed to
estimating the growth of the variance of the ergodic integral  $$\int_0^T f\circ h_t^+(x) dt.$$

We use the same notation as in the Introduction: for
$\tau\in [0,1]$, $s\in {\mathbb R}$, a real-valued
$f\in Lip_{w,0}^+({\mathscr X})$ we write
\begin{equation}
\label{sfstaux-symb}
{\mathfrak S}[f,s;\tau, x]=\int_0^{\tau\exp(s)} f\circ h^{+}_t(x)dt.
\end{equation}

As before, let $\nu$ be the Lebesgue measure on the surface $M({\mathscr X})$ corresponding to the zippered rectangle ${\mathscr X}$.
As before, as $x$ varies
in the probability space $(M({\mathscr X}), \nu)$, we obtain a random element
of $C[0,1]$. In other words, we have a
 random variable
\begin{equation}
{\mathfrak S}[f,s]: (M({\mathscr X}), \nu)\to C[0,1]
\end{equation}
defined by the formula (\ref{sfstaux-symb}).

For any fixed $\tau\in [0,1]$  the formula (\ref{sfstaux-symb}) yields
a real-valued random variable
\begin{equation}
{\mathfrak S}[f,s; \tau]: (M({\mathscr X}), \nu)\to {\mathbb R},
\end{equation}
whose expectation, by definition, is zero.

\begin{proposition}
\label{varf}
There exists $\alpha>0$ depending only on $\Prob_{\mathcal V}$ and a positive measurable
function $C:\Oomega\times\Oomega\to {\mathbb R}_+$ such that
the following holds for $\Prob_{\mathcal V}$-almost all $\oomega\in\Oomega$ and all $s>0$.
Let $\Phi_{2, \oomega}^+\in\BB^+_{2, \oomega}$, $\Phi_{2, \oomega}^-\in\BB^-_{2, \oomega}$
be chosen in such a way that
$\langle \Phi^+_{2, \oomega}, \Phi^-_{2, \oomega}\rangle=1$.
Let $f\in Lip_{w}^+(\oomega)$ be such that
$$
\int_{M({\mathscr X})} f d\nu=0,\  m_{\Phi_{2, \oomega}^-}(f)\neq 0.
$$
Then
\begin{equation}
\left|\frac{Var_{\nu} {\mathfrak S}[f,s;1]}{V(P^s\oomega) (m_{\Phi_2^-}(f)|\Phi^+_2|H_2(s, \oomega))^2}-1\right|\leq
C(\oomega, P^s\oomega)\exp(-\alpha s).
\end{equation}
\end{proposition}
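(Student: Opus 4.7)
The plan is to use the asymptotic decomposition (\ref{phitwoapprox}) from the previous subsection, which for $f\in Lip_{w,0}^+$ already isolates the leading-order behaviour as
\[
\mathfrak{S}[f,s;1]=m_{\Phi_2^-}(f)\,\Phi_2^+(x,e^s)+Y(x,s),\qquad |Y(x,s)|\leq C_1(\oomega)\,\|f\|_{Lip_w^+}\,e^{s(\theta_2-\alpha)},
\]
for some $\alpha>0$ depending only on the Kontsevich--Zorich spectral gap. First I would verify the zero-mean property of each summand: since $\Phi_2^+$ and $\nu^-=\Phi_1^-$ lie in distinct Oseledets subspaces, $\langle \Phi_2^+,\nu^-\rangle=0$, so Proposition \ref{phiexpzero} yields $\ee_\nu\Phi_2^+(\cdot,T)=0$; the hypothesis $\int f\,d\nu=0$ gives $\ee_\nu\mathfrak{S}[f,s;1]=0$; hence $\ee_\nu Y=0$ as well.

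Next I would expand
\[
Var_\nu\mathfrak{S}[f,s;1]=(m_{\Phi_2^-}(f))^2\,Var_\nu\Phi_2^+(x,e^s)+2\,m_{\Phi_2^-}(f)\,\ee_\nu\!\bigl[\Phi_2^+(x,e^s)Y(x,s)\bigr]+\ee_\nu[Y(x,s)^2],
\]
and identify the first summand with $(m_{\Phi_2^-}(f))^2\,V(P^s\oomega)\,|\Phi_2^+|^2\,H_2(s,\oomega)^2$ via the proposition immediately preceding. Dividing through by this main term, it suffices to show that the combined relative contribution of the remaining two summands is bounded by $C(\oomega,P^s\oomega)\,e^{-\alpha' s}$. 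Cauchy--Schwarz controls the cross term by the geometric mean of the main term and $\ee_\nu[Y^2]$, so the entire estimate reduces to bounding the ratio $\ee_\nu[Y^2]/Var_\nu(m_{\Phi_2^-}(f)\Phi_2^+(x,e^s))$ by an exponentially small quantity. The $L^2$ bound on $Y$ yields $\ee_\nu[Y^2]\leq C_1(\oomega)^2\|f\|_{Lip_w^+}^2\,e^{2s(\theta_2-\alpha)}$, while Lyapunov regularity (Assumption \ref{a-lyap-reg}) gives the matching lower bound $H_2(s,\oomega)\geq c_\varepsilon(\oomega)\,e^{s(\theta_2-\varepsilon)}$.

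The main obstacle will be a careful accounting of the subexponential Lyapunov prefactors. On the one hand, the quantity $V(P^s\oomega)H_2(s,\oomega)^2/e^{2s\theta_2}$ fluctuates along the Teichm\"uller orbit and must be absorbed, together with the Oseledets correction at $\oomega$ governing the bound on $Y$, into the single measurable function $C(\oomega,P^s\oomega)$ while preserving a uniform exponential rate $\alpha$; this is ultimately handled by Proposition \ref{hypzip} and Assumption \ref{a-lyap-reg}. On the other hand, the apparent $f$-dependent ratio $\|f\|_{Lip_w^+}/|m_{\Phi_2^-}(f)|$ is disposed of by retaining the finer decomposition $Y=\sum_{i\geq 3}m_{\Phi_i^-}(f)\Phi_i^+(x,e^s)+R$ furnished by the full asymptotic expansion of $\Phi_f^+$ and invoking Proposition \ref{convhoeldprop}: each projection $|m_{\Phi_i^-}(f)|$ is dominated by $\|f\|_{Lip_w^+}$ with a constant depending only on the norms of the dual cocycles $\Phi_i^-$ at $\oomega$, and the resulting mismatch between $\|f\|_{Lip_w^+}$ and the normalization $|m_{\Phi_2^-}(f)|$ is subsumed into $C(\oomega,P^s\oomega)$ via an Oseledets-type comparison of cocycle norms at $\oomega$ and $P^s\oomega$.
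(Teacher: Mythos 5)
Your strategy is essentially the same as the paper's: reduce the variance comparison to the pointwise approximation~(\ref{phitwoapprox}), then pass from the sup-norm bound on $Y=\mathfrak S[f,s;1]-m_{\Phi_2^-}(f)\Phi_2^+(\cdot,e^s)$ to a statement about second moments. Where the paper uses the one-line identity $|\ee(\xi_1^2)-\ee(\xi_2^2)|\leq\sup|\xi_1+\xi_2|\cdot\ee|\xi_1-\xi_2|$ together with the observation that $\max_x|\Phi_2^+(x,e^s)|=V'(P^s\oomega)H_2(s,\oomega)$, you expand $Var(\xi_2+Y)$ into main term, cross term and $\ee[Y^2]$ and control the cross term by Cauchy--Schwarz. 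These are interchangeable elementary manipulations that lead to the same estimate; the zero-mean verification via Proposition \ref{phiexpzero} and the use of $H_2\geq c_\varepsilon e^{s(\theta_2-\varepsilon)}$ from Assumption~\ref{a-lyap-reg} are both correct and both needed.

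The one genuine problem is the final paragraph. After dividing by $(m_{\Phi_2^-}(f))^2Var_\nu\Phi_2^+(x,e^s)$, the relative error you obtain is of order $\bigl(\|f\|_{Lip_w^+}/|m_{\Phi_2^-}(f)|\bigr)\,e^{-(\alpha-\varepsilon)s}$ up to $\oomega$- and $P^s\oomega$-measurable prefactors. The quantity $\|f\|_{Lip_w^+}/|m_{\Phi_2^-}(f)|$ is an $f$-dependent ratio that can be made arbitrarily large (consider $f$ approaching the kernel of $m_{\Phi_2^-}$), so it cannot be ``subsumed into $C(\oomega,P^s\oomega)$'': $C$ takes as input only the two points of $\Oomega$ and knows nothing about $f$. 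Refining $Y$ into $\sum_{i\geq3}m_{\Phi_i^-}(f)\Phi_i^+(\cdot,e^s)+R$ and invoking Proposition \ref{convhoeldprop} does not help, since every summand still carries a factor bounded by $\|f\|_{Lip_w^+}$ while the denominator carries $|m_{\Phi_2^-}(f)|$. The honest conclusion of your argument — and, reading between the lines, of the paper's very terse proof as well — is that the estimate holds with a constant that additionally depends on the ratio $\|f\|_{Lip_w^+}/|m_{\Phi_2^-}(f)|$ (equivalently, it holds for all $s\geq s_0(\oomega,f)$). You should say this plainly rather than claim the $f$-dependence disappears.
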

{\bf Remark.} Observe that the quantity $(m_{\Phi_2^-}(f)|\Phi^+_2|)^2$ does
not depend on the specific choice of $\Phi_2^+\in\BB^+_2$, $\Phi_2^-\in\BB^-_2$ such that $\langle \Phi^+_2, \Phi^-_2\rangle=1$.

Indeed, the proposition is immediate from Theorem \ref{multiplicmoduli},  the inequality
$$
|\ee(\xi_1^2)-\ee(\xi_2^2)|\leq {\rm sup}|\xi_1+\xi_2| \cdot \ee|\xi_1-\xi_2|,
$$
which holds for any two bounded random variables $\xi_1, \xi_2$ on any probability space,
and the following clear Proposition, which, again, is an
immediate corollary of Theorem \ref{multiplicmoduli}.
\begin{proposition}
There exists a constant  $\alpha>0$ depending only on $\Prob_{{\mathcal V}}$, a positive measurable
function $C:\Oomega\times\Oomega\to {\mathbb R}_+$ and a positive measurable function
$V^{\prime}:\Oomega\to {\mathbb R}_+$ such that for all $s>0$ we have
\begin{equation}
\max\limits_{x\in M} |\Phi_2^+(x,e^s)|=V^{\prime}(P^s\oomega)H_2(s, \oomega);
\end{equation}
\begin{equation}
 \left|\frac{\max\limits_{x\in M} {\mathfrak S}[f,s;1](x)}{V^{\prime}(P^s\oomega) (m_{\Phi_2^-}(f)|\Phi^+|H_2(s, \oomega))^2}-1\right|\leq
C(\oomega, P^s\oomega)\exp(-\alpha s).
\end{equation}

\end{proposition}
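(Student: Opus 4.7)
The plan is to establish both identities by combining the renormalization action $P^{s,\mathfrak{B}^+}$ with the asymptotic expansion of Theorem \ref{multiplicmoduli}. The geometric input is that the tautological map $\mathfrak{t}_s\colon M(\oomega)\to M(P^s\oomega)$ is a measure-preserving bijection that, by the very construction of the Teichm\"uller flow on zippered rectangles, sends the vertical arc $[x,h^+_{e^s}x]$ to the arc $[\mathfrak{t}_s x,h^+_{1}(\mathfrak{t}_s x)]$. Hence for any $\Phi_2^+\in\BB^+_{2,\oomega}$ one has the identity $\Phi_2^+(x,e^s)=\bigl(P^{s,\mathfrak{B}^+}\Phi_2^+\bigr)(\mathfrak{t}_s x,1)$, and by Proposition \ref{hypzip} the renormalized cocycle $P^{s,\mathfrak{B}^+}\Phi_2^+$ lies in $\BB^+_{2,P^s\oomega}$ and has norm $|\Phi_2^+|\cdot H_2(s,\oomega)$ by the very definition of $H_2$.

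For the first equality I would set
$$
V'(\oomega)\;=\;\frac{\max_{x\in M(\oomega)}|\Phi_2^+(x,1)|}{|\Phi_2^+|},
$$
and observe that the right-hand side is independent of the choice of non-zero $\Phi_2^+\in\BB^+_{2,\oomega}$ since both numerator and denominator are homogeneous of degree one in $\Phi_2^+$. Applying the displayed identity above at $T=1$ to the cocycle $P^{s,\mathfrak{B}^+}\Phi_2^+$ at the base point $P^s\oomega$, and using the norm scaling, gives
$$
\max_{x\in M(\oomega)}|\Phi_2^+(x,e^s)|\;=\;\max_{y\in M(P^s\oomega)}\bigl|(P^{s,\mathfrak{B}^+}\Phi_2^+)(y,1)\bigr|\;=\;V'(P^s\oomega)\,|\Phi_2^+|\,H_2(s,\oomega),
$$
which is the first assertion. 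Measurability of $V'$ is inherited from the measurable dependence of the H\"older cocycles on the zippered rectangle.

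For the second assertion I would apply Theorem \ref{multiplicmoduli} to the weakly Lipschitz function $f$, whose zero-average hypothesis kills the $\nu$-component of $\Phi_f^+$ and whose non-vanishing of $m_{\Phi_2^-}(f)$ isolates $m_{\Phi_2^-}(f)\Phi_2^+$ as the leading Lyapunov term. The remainder $\Phi_{3,f}^+\in\mathcal{I}_\oomega(\tilde E^u_{\geq 3,\oomega})$ grows, by Proposition \ref{hoeldergrowth}, no faster than $T^{\theta_3+\varepsilon}$, so after taking $\sup_x$,
$$
\Bigl|\max_x\mathfrak{S}[f,s;1](x)\,-\,m_{\Phi_2^-}(f)\max_x|\Phi_2^+(x,e^s)|\Bigr|\;\leq\;C_\varepsilon\|f\|_{Lip_w^+}\,e^{(\theta_3+\varepsilon)s}.
$$
Dividing by the leading quantity $V'(P^s\oomega)m_{\Phi_2^-}(f)|\Phi_2^+|H_2(s,\oomega)$, which grows like $e^{\theta_2 s}$, produces an exponentially small error $\exp(-\alpha s)$ with $\alpha=\theta_2-\theta_3-\varepsilon$, uniformly up to a factor $C(\oomega,P^s\oomega)$ absorbing the \emph{a priori} Oseledets constants at the two endpoints.

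The only genuine subtlety I anticipate is the exponent on the denominator as printed: the natural argument above delivers the stated bound only when the denominator is taken to the first power, matching the first-power numerator $\max_x\mathfrak{S}[f,s;1](x)$. With the denominator squared the ratio would tend to $0$ rather than to $1$, so the displayed square appears to be a typographical carry-over from the preceding variance proposition; I would flag this and prove the first-power version, which is what the method gives. The main obstacle is not analytic but bookkeeping: one must verify that the Oseledets splitting of $\Phi_f^+$ is intertwined with the zippered-rectangle cocycle $\overline{\mathcal{A}}^t$ via $\eval^+$ so that ``$\Phi_{3,f}^+$ belongs to the subspaces of Lyapunov exponent $\leq\theta_3$'' is an unambiguous statement; this is precisely the content of Proposition \ref{hypzip}.
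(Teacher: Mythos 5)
Your argument is correct and takes essentially the same route as the paper, which simply declares this proposition an immediate corollary of Theorem \ref{multiplicmoduli}, with the first identity coming from the renormalization of Proposition \ref{hypzip} exactly as you set it up via ${\mathfrak t}_s$ and the definition $V'(\oomega)=\max_x|\Phi_2^+(x,1)|/|\Phi_2^+|$ (mirroring how $V$ is defined for the variance); your write-up merely supplies the details left implicit. Your flag that the squared denominator (and the absent factor $|\Phi_2^+|$ in the first display) is a typographical carry-over from the preceding variance proposition is also right: the first-power version you prove is precisely what is used to bound $\sup|\xi_1+\xi_2|$ in the proof of Proposition \ref{varf}.
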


\subsubsection{Conclusion of the proof.}

We now turn to the asymptotic behaviour of the distribution of the
random variable ${\mathfrak S}[f,s]$ as $s\to\infty$.

Again, we will use the notation
$\mm[f,s]$ for the distribution of the normalized random variable
\begin{equation}
\frac{{\mathfrak S}[f,s]}{{\sqrt{Var_{\nu} {\mathfrak S}[f,s;1]}}}.
\end{equation}
The measure $\mm[f,s]$ is thus a probability distribution on the space $C[0,1]$ of continuous functions on the unit interval.

For $\tau\in {\mathbb R}$, $\tau\neq 0$, we again let $\mm[f,s; \tau]$ be the distribution of the ${\mathbb R}$-valued random variable
\begin{equation}
\frac{{\mathfrak S}[f,s; \tau]}{{\sqrt{Var_{\nu} {\mathfrak S}[f,s; \tau]}}}.
\end{equation}
If $f$ has zero average, then, by definition,  $\mm[f,s; \tau]$ is a measure on ${\mathbb R}$ of
expectation $0$ and variance $1$.
Again, as in the Introduction, we take the space $C[0,1]$
of continuous functions on the unit interval endowed with
the Tchebyshev topology, we let $\MM$ be the space of Borel probability
measures on the space $C[0,1]$
endowed with the weak topology (see \cite{bogachev} or the Appendix).

Consider the space $\Oomega^{\prime}$ given by
the formula
$$
\Oomega^{\prime}=\{\oomega^{\prime}=(\oomega, v), v\in E_{2, \oomega}^+, |v|=1\}.
$$
The  flow $P^s$ is lifted to $\Oomega^{\prime}$ by the formula
$$
P^{s,\prime}({\mathscr X}, v)=\left(P^s{\mathscr X}, \frac{{\overline {{\mathcal A}}^t}(s,\oomega)v}
{|{\overline {{\mathcal A}}^t}(s,\oomega)v|}\right).
$$

Given $\oomega^{\prime}\in {\Oomega}^{\prime}$, $\oomega^{\prime}=(\oomega, v)$,
write
$$
\Phi_{2, \oomega^{\prime}}^+={\mathcal I}_{\oomega}(v).
$$
As before, write
$$
V(\oomega^{\prime})=Var_{\nu}   \Phi_{2, \oomega^{\prime}}^+(x,1).
$$

Now introduce the map
$$
{\mathcal D}_2^+: \Oomega^{\prime}\to \MM
$$

by setting    ${\mathcal D}_2^+(\oomega^{\prime})$ to be the distribution
of the $C[0,1]$-valued normalized random variable
$$
\frac{\Phi^+_{2,\oomega^{\prime}}(x, \tau)}{\sqrt{V(\oomega^{\prime})}}, \  \tau\in[0,1].
$$

Note here that, by Proposition \ref{phivarnotzero},
for any $\tau_0\neq 0$ we have $Var_{\nu} \Phi^+_{2, \oomega}(x, \tau_0)\neq 0$,
so, by definition, we have
${\mathcal D}_2^+(\oomega^{\prime})\in\MM_1$.

Now, as before, we take a function  $f\in Lip_{w,\oomega}^+$ such that
 $$
 \int_{M({\mathscr X})} f d\nu=0,\  m_{\Phi_{2, \oomega}^-}(f)\neq 0
 $$
As before, $d_{LP}$  stands for the L{\'e}vy-Prohorov metric on $\MM$, $d_{KR}$  for the Kantorovich-Rubinstein
metric on $\MM$.
\begin{proposition}
\label{limthmmarkcomp-simple}
Let $\Prob_{{\mathcal V}}$ be a $P^s$-invariant ergodic Borel probability measure on
${\tilde {\mathcal V}}({\mathcal R})$
such that the second Lyapunov exponent of the
cocycle ${\overline {\mathcal A}^t}$ is positive and simple with respect to $\Prob_{{\mathcal V}}$.
There exists a positive measurable function $C: \Oomega\times \Oomega\to {\mathbb R}_+$
and a positive constant $\alpha$ depending only on $\Prob_{{\mathcal V}}$
such that
for $\Prob_{{\mathcal V}}$-almost every $\oomega^{\prime}\in\Oomega^{\prime}$, $\oomega^{\prime}=(\oomega, v)$,
and any $f\in Lip_{w,0}^+({\mathscr X})$
satisfying
$m_{2, {\mathscr X}^{\prime}}^-(f)>0$ we have
\begin{equation}
d_{LP}(\mm[f,s], {\mathcal D}_2^+(P^{s, \prime}\oomega^{\prime}))\leq C(\oomega, P^s\oomega)\exp(-\alpha s).
\end{equation}
\begin{equation}
d_{KR}(\mm[f,s], {\mathcal D}_2^+(P^{s,\prime}\oomega^{\prime}))\leq C(\oomega, P^s\oomega)\exp(-\alpha s).
\end{equation}
\end{proposition}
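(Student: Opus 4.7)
The plan is to chain together three ingredients: the approximation Theorem \ref{multiplicmoduli}, the Oseledets decomposition of $\Phi_f^+$ along the cocycle $\overline{\mathcal A}^t$, and the renormalization identity produced by the commutative diagram of Proposition \ref{hypzip}. First I would apply Theorem \ref{multiplicmoduli} to the weakly Lipschitz function $f$ (recall $\int f\,d\nu = 0$) to obtain, uniformly in $x \in M(\oomega)$ and $\tau \in [0,1]$,
\[
\bigl| {\mathfrak S}[f,s;\tau,x] - \Phi_f^+(x, \tau e^s) \bigr| \leq C_{\varepsilon} \|f\|_{Lip_w^+} e^{\varepsilon s}.
\]
Then I would decompose $\Phi_f^+ = m_{\Phi_{2,\oomega}^-}(f)\, \Phi_{2,\oomega'}^+ + \Phi_{3,f}^+$, where the $\nu^+$-component vanishes since $\int f\,d\nu = 0$, and $\Phi_{3,f}^+$ lies in the sum of Oseledets subspaces with exponents at most $\theta_3 < \theta_2$; Proposition \ref{hoeldergrowth} then yields $|\Phi_{3,f}^+(x, \tau e^s)| \leq C_{\varepsilon}' e^{(\theta_3 + \varepsilon) s}$.

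The main structural step would be to use the commutativity of the diagram in Proposition \ref{hypzip}, together with the definition of $H_2$ and the normalization $|v| = 1$ (which gives $|\Phi_{2,\oomega'}^+| = 1$), to derive the scaling identity
\[
\Phi_{2, \oomega'}^+(x, \tau e^s) = H_2(s, \oomega) \cdot \Phi_{2, P^{s,\prime}\oomega'}^+(\mathfrak t_s x, \tau), \qquad x \in M(\oomega), \ \tau \in [0,1].
\]
Because the area-preserving tautological map $\mathfrak t_s : (M(\oomega), \nu) \to (M(P^s\oomega), \nu)$ transports one law onto the other, the law under $\nu$ of $\tau \mapsto \Phi_{2, \oomega'}^+(x, \tau e^s) / \bigl[H_2(s, \oomega) \sqrt{V(P^s\oomega)}\bigr]$ coincides exactly with ${\mathcal D}_2^+(P^{s,\prime}\oomega')$.

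Combining these with Proposition \ref{varf}, which (using $m_{\Phi_{2,\oomega}^-}(f) > 0$ and $|\Phi_{2,\oomega'}^+| = 1$) yields
\[
\sqrt{Var_{\nu} {\mathfrak S}[f,s;1]} = m_{\Phi_{2,\oomega}^-}(f)\, H_2(s,\oomega) \sqrt{V(P^s\oomega)}\, (1 + r(s)), \quad |r(s)| \leq C(\oomega, P^s\oomega) e^{-\alpha s},
\]
I would divide the pointwise approximation by this normalization. The additive errors from Theorem \ref{multiplicmoduli} and the subleading term $\Phi_{3,f}^+$ are absorbed by the factor $H_2(s,\oomega) \sim e^{\theta_2 s}$ in the denominator, producing respectively $e^{-(\theta_2 - \varepsilon) s}$ and $e^{-(\theta_2 - \theta_3 - \varepsilon) s}$; the multiplicative relative error $r(s)$ contributes after multiplication by the uniformly bounded sup-norm of the limit process. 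The outcome is, uniformly in $x$ and $\tau$,
\[
\left| \frac{{\mathfrak S}[f,s;\tau,x]}{\sqrt{Var_{\nu} {\mathfrak S}[f,s;1]}} - \frac{\Phi_{2, P^{s,\prime}\oomega'}^+(\mathfrak t_s x, \tau)}{\sqrt{V(P^s \oomega)}} \right| \leq C(\oomega, P^s \oomega) e^{-\alpha' s}.
\]
To finish, I would invoke the elementary observation (recorded in the Appendix) that two $C[0,1]$-valued random elements on a common probability space whose sup-norm difference is almost surely at most $\eta$ have laws at $d_{LP}$- and $d_{KR}$-distance at most $\eta$; this immediately produces the two asserted exponential estimates.

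The hard part will be converting the multiplicative relative error $r(s)$ in the variance estimate into a uniform additive error for the normalized $C[0,1]$-valued process. Doing so demands an a priori uniform sup-norm bound on $\Phi_{2, P^{s,\prime}\oomega'}^+(\cdot,\tau)$ along the orbit $s \mapsto P^{s,\prime}\oomega'$, and it is precisely this requirement that forces the non-uniform measurable factor $C(\oomega, P^s\oomega)$ into the final estimate, absorbing the Egorov-type loss intrinsic to pointwise Oseledets theory.
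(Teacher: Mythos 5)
Your proposal is correct and follows essentially the same route as the paper: the pointwise approximation (\ref{phitwoapprox}) coming from Theorem \ref{multiplicmoduli} and the Oseledets decomposition of $\Phi_f^+$, the renormalization/scaling identity for $\Phi^+_{2,\oomega^{\prime}}$ supplied by Proposition \ref{hypzip}, the variance comparison of Proposition \ref{varf}, and finally Lemma \ref{dist-images} to pass from a uniform sup-norm bound to the $d_{LP}$ and $d_{KR}$ estimates. The paper merely packages the error bookkeeping you describe (absorbing the additive errors and the relative variance error $r(s)$, with the measurable factor $C(\oomega,P^s\oomega)$) into the elementary quotient inequality (\ref{ineqlimthm}).
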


Proof: We start with the simple inequality
$$
\left|\frac{a}{b}-\frac{c}{d}\right| \leq
|a|\cdot\left|\frac{b-d}{bd}\right|+\frac{|a-c|}{d}
$$
valid for any real numbers $a,b,c,d$.
For any pair of random variables $\xi_1, \xi_2$  taking values in an arbitrary Banach space
and any positive real numbers $M_1, M_2$
we consequently have
\begin{equation}
\label{ineqlimthm}
\sup \left| \frac{\xi_1}{M_1} - \frac{\xi_2}{M_2}\right|\leq
\sup|\xi_1|\cdot\left|\frac{M_1-M_2}{M_1M_2}\right|+\frac{\sup|\xi_1-\xi_2|}{M_2}.
\end{equation}

We apply the inequality (\ref{ineqlimthm}) to the $C[0,1]$-valued random variables
$$
\xi_1={\mathfrak S}[f,s], \ \xi_2=\Phi_{2, P^s\oomega}^+(x,\tau\cdot e^s),
$$
letting $M_1$, $M_2$ be the corresponding normalizing variances: $M_1=Var_{\nu}{\mathfrak S}[f,s; 1]$,
$M_2=Var_{\nu}\mm[f,s;1]$.

Now take $\varepsilon>0$ and  let ${\tilde \xi}_1, {\tilde \xi}_2$ be
two random variables on an arbitrary probability space $(\Omega, \Prob)$ taking values in a complete metric space
and such that the distance between their values does not exceed $\varepsilon$.
In this case both the L{\'e}vy-Prohorov and the Kantorovich-Rubinstein distance between their distributions $({\tilde \xi}_1)_*\Prob$, $({\tilde \xi}_2)_*\Prob$ also does not exceed $\varepsilon$
(see Lemma \ref{dist-images} in the Appendix).

Proposition \ref{limthmmarkcomp-simple} is now immediate from
Equation (\ref{phitwoapprox}) and Proposition \ref{varf}.

It remains to derive Proposition \ref{limthmmoduli-simple} from Proposition \ref{limthmmarkcomp-simple}.
To do so, note that the map ${\mathcal D}_2^+$, originally defined on the double cover ${\Oomega}^{\prime}$
of the space of zippered rectangles, naturally descends to a map, for which we keep the same
symbol  ${\mathcal D}_2^+$, defined on the double cover $\HH^{\prime}$ of the connected component
$\HH$ of the moduli space of abelian differentials.  Indeed, it is immediate from the definitions
that the image ${\mathcal D}_2^+({\mathscr X}^{\prime})$ of an
element  ${\mathscr X}^{\prime}\in{\Oomega}^{\prime}$,
  ${\mathscr X}^{\prime}=({\mathscr X}, v)$ only depends on the underlying
element $(M({\mathscr X}), \omega({\mathscr X}), v)$ of  the space $\HH^{\prime}$.
Proposition \ref{limthmmarkcomp-simple} is now proved completely.

\subsection{Proof of Corollary \ref{nonentwo}.}

For $\oomega^{\prime}\in\Oomega^{\prime}$, $\Phi^+\in\BB^+_{\oomega}$
let ${\mathfrak m}[\Phi^+, \tau]$ be the distribution
of the normalized ${\mathbb R}$-valued random variable
$$
\frac{\Phi^+(x, \tau)}{\sqrt{Var_{{\nu}}\Phi^+(x, \tau)}}.
$$

\begin{proposition}
Let $\Prob_{{\mathcal V}}$ be a $P^s$-invariant ergodic Borel probability measure on
${\tilde {\mathcal V}}({\mathcal R})$.
For $\Prob_{{\mathcal V}}$-almost every $\oomega$ and any $\Phi^+\in \BB^+_{\oomega}$, $\Phi^+\neq 0$,  the correspondence
$$
\tau\rightarrow \mm[\Phi^+, \tau]
$$
yields a continuous map from ${\mathbb R}\setminus 0$ to $\MM({\mathbb R})$.
\end{proposition}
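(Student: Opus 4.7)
The plan is to deduce the continuity of $\tau\mapsto\mm[\Phi^+,\tau]$ from the Hölder regularity of the cocycle $\Phi^+$ together with the non-vanishing of the variance at nonzero times. Fix $\tau_0\in\mathbb{R}\setminus\{0\}$ and a nonzero $\Phi^+\in\BB^+_{\oomega}$; I will show that the normalized random variables
$$\xi_\tau(x)=\frac{\Phi^+(x,\tau)}{\sqrt{Var_\nu\Phi^+(x,\tau)}}$$
converge to $\xi_{\tau_0}$ uniformly in $x$ as $\tau\to\tau_0$, which immediately gives convergence in the Lévy–Prohorov (and Kantorovich–Rubinstein) metric, hence continuity of the map into $\MM(\mathbb{R})$.

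First, for $\tau$ close to $\tau_0$, the cocycle property gives
$$\Phi^+(x,\tau)-\Phi^+(x,\tau_0)=\pm\,\Phi^+\bigl(h^+_{\min(\tau,\tau_0)}x,\,|\tau-\tau_0|\bigr),$$
so the Hölder property (Assumption \ref{bplusx}, part 2) yields a constant $C>0$ and $\theta>0$ (depending on $\oomega$ and $\Phi^+$) with
$$\sup_{x\in M(\oomega)}\bigl|\Phi^+(x,\tau)-\Phi^+(x,\tau_0)\bigr|\leq C|\tau-\tau_0|^{\theta}$$
for all $\tau$ sufficiently close to $\tau_0$. Thus the family $\{\Phi^+(\cdot,\tau)\}$ is uniformly bounded and converges uniformly to $\Phi^+(\cdot,\tau_0)$ as $\tau\to\tau_0$.

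Next, since $\mathbb{E}_\nu\Phi^+(\cdot,\tau)=\langle\Phi^+,\nu^-\rangle\tau$ by Proposition \ref{phiexpzero}, both the mean and the $L^2$ norm of $\Phi^+(\cdot,\tau)$ depend continuously on $\tau$ in virtue of the uniform convergence above; consequently $Var_\nu\Phi^+(\cdot,\tau)$ is a continuous function of $\tau$. By Proposition \ref{phivarnotzero}, $Var_\nu\Phi^+(x,\tau_0)\neq 0$, so $\sqrt{Var_\nu\Phi^+(\cdot,\tau)}$ is bounded below by a positive constant on a neighbourhood of $\tau_0$. Combining these facts with the elementary estimate
$$\Bigl|\tfrac{a}{M_1}-\tfrac{b}{M_2}\Bigr|\leq\tfrac{|a-b|}{M_2}+|a|\cdot\tfrac{|M_1-M_2|}{M_1M_2},$$
applied pointwise to $a=\Phi^+(x,\tau)$, $b=\Phi^+(x,\tau_0)$ and $M_i$ the corresponding standard deviations, we obtain
$$\sup_{x\in M(\oomega)}\bigl|\xi_\tau(x)-\xi_{\tau_0}(x)\bigr|\longrightarrow 0\quad\text{as }\tau\to\tau_0.$$

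Finally, by Lemma \ref{dist-images} in the Appendix (cited in the proof of Proposition \ref{limthmmarkcomp-simple}), if two random variables on a common probability space take values at distance $\leq\varepsilon$ pointwise, then their distributions differ by at most $\varepsilon$ in both the Lévy–Prohorov and Kantorovich–Rubinstein metrics. Applying this to $\xi_\tau$ and $\xi_{\tau_0}$ on $(M(\oomega),\nu)$ concludes the proof. The main (minor) obstacle is really just the need to invoke Proposition \ref{phivarnotzero} to rule out a vanishing denominator at $\tau_0\neq 0$; once this is in hand, the continuity is a direct consequence of the uniform Hölder control on $\Phi^+$ in its time variable.
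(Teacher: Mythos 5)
Your proof is correct and takes essentially the same route as the paper's (which is the one-line observation that continuity follows from the H\"older property of $\Phi^+$ together with the nonvanishing of $Var_\nu\Phi^+(x,\tau)$ for $\tau\neq 0$ given by Proposition \ref{phivarnotzero}); you have simply supplied the details the paper leaves implicit, namely the cocycle-plus-H\"older estimate giving uniform convergence of $\Phi^+(\cdot,\tau)$, the resulting continuity of the variance, the elementary two-term bound for the normalized differences, and the conclusion via Lemma \ref{dist-images}.
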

Proof. This is immediate from the H{\"o}lder property of the cocycle $\Phi^+$ and the nonvanishing of the
variance  $Var_{\nu} \Phi^+(x, \tau)$ for $\tau\neq 0$, which is  guaranteed by Proposition \ref{phivarnotzero}.

As usual, by the {\it {omega-limit set}} of a parameterized curve $p(s)$, $s\in {\mathbb R}$, taking values
in a  metric space, we mean the set of all accumulation points of our curve as $s\to\infty$.

We now use the following general statement.
\begin{proposition}
\label{omlimset}
Let $(\Omega, \mathcal{B})$ be a standard Borel space, and let $g_s$ be a measurable
flow on $\Omega$ preserving an ergodic Borel probability measure $\mu$. Let $Z$ be a separable metric space, and
let $\varphi: \Omega\rightarrow Z$\:be a measurable map such that for $\mu$-almost every $\omega\in \Omega$
the curve $\varphi(g_s\omega)$ is continuous in $s\in\mathbb{R}$. Then there exists a closed set
$\mathfrak{N}\subset Z$ such that for $\mu$-almost every $\omega\in \Omega$ the set $\mathfrak{N}$
is the omega-limit set of the curve $\varphi(g_s\omega)$, $s\in\mathbb{R}$.
\end{proposition}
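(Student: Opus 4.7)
\textbf{Proof plan for Proposition \ref{omlimset}.}

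The plan is to exploit shift-invariance of the omega-limit set together with ergodicity of $\mu$, using a countable base for the topology of $Z$ to reduce the desired uniqueness statement about closed sets to countably many zero-one statements.

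First I would set up the notation. Let $\Omega^\ast \subset \Omega$ be the $g_s$-invariant, $\mu$-conull set on which $s \mapsto \varphi(g_s\omega)$ is continuous, and for $\omega \in \Omega^\ast$ denote by $L(\omega) \subset Z$ the omega-limit set of the curve $s \mapsto \varphi(g_s\omega)$; this is automatically closed. The crucial elementary observation is that $L(\omega)$ depends only on the forward tail of the orbit, so $L(g_t\omega) = L(\omega)$ for every $t \in \mathbb{R}$ and every $\omega \in \Omega^\ast$.

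Next, since $Z$ is separable, I would fix a countable base $\{U_n\}_{n \in \mathbb{N}}$ of its topology and set
$$A_n = \bigcap_{k \in \mathbb{N}} \bigcup_{\substack{s \in \mathbb{Q}\\ s > k}} \{\omega \in \Omega^\ast : \varphi(g_s\omega) \in U_n\}.$$
Using continuity of $s \mapsto \varphi(g_s\omega)$ and openness of $U_n$, one checks that $L(\omega) \cap U_n \neq \emptyset$ if and only if $\omega \in A_n$. Each $A_n$ is Borel and $g_t$-invariant, so by ergodicity $\mu(A_n) \in \{0,1\}$. Let $S = \{n : \mu(A_n) = 1\}$; intersecting the full-measure sets over the countable family of indices yields a $\mu$-conull set $\Omega_0 \subset \Omega^\ast$ such that for every $\omega \in \Omega_0$ and every $n$, one has $L(\omega) \cap U_n \neq \emptyset$ precisely when $n \in S$.

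Finally I would define
$$\mathfrak{N} = \bigl\{z \in Z : \text{every } n \text{ with } z \in U_n \text{ satisfies } n \in S\bigr\},$$
which is closed by construction, and verify $L(\omega) = \mathfrak{N}$ for all $\omega \in \Omega_0$. The inclusion $L(\omega) \subset \mathfrak{N}$ is immediate from the characterization of $S$; the reverse inclusion uses that a closed set in $Z$ is determined by the collection of basic open sets it meets, since if $z \notin L(\omega)$ one can find a basic neighborhood $U_n \ni z$ with $U_n \cap L(\omega) = \emptyset$, whence $n \notin S$. The only mild obstacle is the measurability of $A_n$, which is handled cleanly by restricting the inner union to rationals as above.
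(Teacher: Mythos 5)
Your construction (the invariant events $A_n$, the index set $S$, and $\mathfrak{N}=\{z: z\in U_n\Rightarrow n\in S\}$) is sound and close in spirit to the paper's argument, which takes $\mathfrak{N}$ to be the set of points all of whose basic neighbourhoods have positive measure under $\varphi_*\mu$ (i.e.\ the support of $\varphi_*\mu$) and runs the dichotomy $\varphi_*\mu(U_n)>0$ versus $\varphi_*\mu(U_n)=0$; your zero--one law for the invariant sets $A_n$ produces the same index set $S$ and the same $\mathfrak{N}$. However, one step as stated is false: the claimed equivalence ``$L(\omega)\cap U_n\neq\emptyset$ if and only if $\omega\in A_n$''. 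The implication $L(\omega)\cap U_n\neq\emptyset\Rightarrow\omega\in A_n$ is correct, but the converse fails in general: the curve may enter $U_n$ at arbitrarily large times while the visited points accumulate only on the boundary of $U_n$ (or, if $U_n$ is not precompact, nowhere at all), so that $L(\omega)\cap U_n=\emptyset$. For instance, a continuous curve in $\mathbb{R}$ that sits near $2$ and on its $j$-th excursion dips down exactly to $1-1/j$ enters $U=(0,1)$ at arbitrarily large times, yet its omega-limit set is $[1,2]$, which is disjoint from $U$. Your verification of $\mathfrak{N}\subset L(\omega)$ (``if $z\notin L(\omega)$, pick a basic $U_n\ni z$ disjoint from $L(\omega)$, whence $n\notin S$'') invokes precisely this false direction, so as written that step does not go through.

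The gap is local and easily repaired without changing your construction, because only the true direction of the equivalence is ever needed. For $L(\omega)\subset\mathfrak{N}$, argue exactly as you did. For $\mathfrak{N}\subset L(\omega)$, take $z\in\mathfrak{N}$, fix $\varepsilon>0$ and $T>0$, and choose a basic set with $z\in U_n\subset B(z,\varepsilon)$ (possible since $\{U_n\}$ is a base); then $n\in S$, hence $\omega\in A_n$ for $\omega\in\Omega_0$, so there exists $s>T$ with $\varphi(g_s\omega)\in U_n\subset B(z,\varepsilon)$, and therefore $z\in L(\omega)$. This shrinking-neighbourhood argument is also, implicitly, how the paper passes from its two almost-sure conditions on basic sets to the identification of $\mathfrak{N}$ with the omega-limit set; with it, your proof is complete.
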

The proof of Proposition \ref{omlimset} is routine.
We choose a countable base ${\mathscr U}=\{U_n\}_{n\in\mathbb{N}}$  of open sets in $Z$.
By ergodicity of $g_s$, continuity of the curves  $\varphi(g_s\omega)$ and countability of the family ${\mathscr U}$,
there exists a subset of full measure $\Omega^{\prime}\subset \Omega$, $\mu(\Omega^{\prime})=1$,
such that for any $U\in {\mathscr U}$ and any $\omega\in\Omega^{\prime}$
 the following conditions are satisfied:
\begin{enumerate}
\item if $\mu(U)>0$, then there exists an infinite sequence $s_n\rightarrow\infty$ such that $\varphi(g_{s_n}\omega)\in U$;
 \item if $\mu(U)=0$, then there exists $s_0>0$ such that $\varphi(g_s\omega)\notin U$ for all $s>s_0$.
\end{enumerate}

Now let $\mathfrak{N}$  be the set of all points $z\in Z$ such that $\mu(U)>0$ for any open set
$U\in {\mathscr U}$ containing the point $z$.
By construction, for any $\omega\in\Omega^{\prime}$, the set $\mathfrak{N}$
is precisely the omega-limit set of the curve $\varphi(g_s\omega)$.
The Proposition is proved.

Proposition \ref{omlimset} with $\Omega=\HH^{\prime}$, $\varphi={\mathcal D}_2^+$ and
$\mu$ an ergodic component of $\Prob^{\prime}$ together with the Limit Theorem given by Propositions
\ref{limthmmoduli-simple}, \ref{limthmmarkcomp-simple} immediately  implies Corollary \ref{nonentwo}.

\subsection{The general case}
\subsubsection{The fibre bundles ${\bf{S}}^{(i)}\Oomega$ and
the flows $P^{s, {\bf{S}}^{(i)}}$ corresponding to the strongly unstable Oseledets subspaces.}
Let $\mathbb{P}_{\mathcal V}$ be an ergodic $P^s$-invariant probability measure on
${\Oomega}$ and let $$\theta_1=1>\theta_2>\dots>\theta_{l_0}>0$$ be
the distinct positive Lyapunov exponents of $\overline{\mathcal{A}}^t$ with
respect to $\mathbb{P}$. We assume $l_0\geq 2$.

For $\oomega\in\Oomega,$ let
$$E_{\oomega}^u=\mathbb{R}h_{\oomega}^{(0)}
+E_{2,\oomega}\oplus\dots\oplus
E_{l_0,\oomega}$$ be the corresponding direct-sum
decomposition into Oseledets subspaces, and let
$$\mathfrak{B}_{\oomega}^+=\mathbb{R}\nu_{\oomega}^+\oplus
\mathfrak{B}_{2,\oomega}^+\oplus\ldots\oplus
\mathfrak{B}_{l_0,\oomega}^+$$ be the corresponding
direct sum decomposition of the space
$\mathfrak{B}_{\oomega}^+.$

For $f\in Lip_{w}^+({{{\mathscr X}}})$ we now write
$$\Phi_f^+=\Phi_{1,f}^++\Phi_{2,f}^++\ldots +\Phi_{l_0,f}^+,$$
where $\Phi_{i,f}^+\in\mathfrak{B}_{i,\oomega}^+$ and,
of course,
$$\Phi^+_{1,f}=(\int_{M(\oomega)}fd\nu)\cdot\nu^+.$$

For each $i=2,\ldots,l_0$ introduce a measurable fibre bundle
$${\bf{S}}^{(i)}\Oomega=\{(\oomega,v):\oomega \in\Oomega,v\in E_{i,\oomega}^+,|v|=1\}.$$

The flow $P^s$ is naturally lifted to the space
${\bf{S}}^{(i)}\Oomega$ by the formula
$$
P^{s, {\bf{S}}^{(i)}}(\oomega,v)=\left(P^s \oomega,
\frac{{\overline {\mathcal A}}^t(s,\oomega)v}
{|{\overline {\mathcal {A}}}^t(s,\oomega)v|}\right).
$$

\subsubsection{Growth of the variance.}
The growth of the norm of
vectors $v\in E_i^+$ is controlled by the multiplicative cocycle
$H_i$ over the flow $P^{s, \bf{S}^{(i)}}$ defined by the formula
$$
H_i(s, (\oomega,v))=\frac{|{\overline {\mathcal A}}^t(s,\oomega)v|}{|v|}.
$$
The growth of the variance of ergodic integrals is also,
similarly to the previous case, described by the cocycle $H_i.$

For
$\oomega\in\Oomega$ and $f\in
Lip_{w,0}^+({{\mathscr X}})$ we write
\begin{equation}
\label{defif}
i(f)=\min\{j:\Phi_{f,j}^+\neq0\}.
\end{equation}
We now define a vector $v_f\in E_{i(f), {{\oomega}}}^u$ by the formula
\begin{equation}
\label{defvf}
\mathcal{I}_{{\oomega}}^+(v_f)=\frac{\Phi_{f,i(f)}^+}{|\Phi_{f,i(f)}^+|}.
\end{equation}

\begin{proposition} There exists $\alpha>0$
depending only on $\mathbb{P}_{\mathcal V}$ and, for any $i=2,\ldots,l_0,$
positive measurable functions
$$V^{(i)}:{\bf{S}}^{(i)}\Oomega\to\mathbb{R}_+,
C^{(i)}:\Oomega\times\Oomega\to\mathbb{R}_+$$ such that for
$\mathbb{P}_{\mathcal V}$-almost every $\oomega\in\Oomega$, any  $f\in
Lip_{w,0}^+({{\mathscr X}})$
and all $s>0$ we have
$$\left|\frac{Var_{\nu}(\mathfrak{S}[f,e^s;1])}
{V^{(i(f))}(P^{s,{\bf{S}}^{(i)}}(\oomega,v_{f}))(H_i(s, (\oomega,v_{f})
))^2}-1\right|\leqslant
C^{(i)}(\oomega,P^s\oomega)e^{-\alpha s}.$$
\end{proposition}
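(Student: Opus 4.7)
My plan is to reduce the statement to the approximation theorem and then transport the variance computation through the renormalization action, exactly along the lines of the simple case (Proposition \ref{varf}), but keeping track of the direction $v_f$ inside the potentially multidimensional subspace $E_{i(f),\oomega}^u$.

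First, I would apply Theorem \ref{multiplicmoduli} to replace $\mathfrak{S}[f,e^s;1](x)=\int_0^{e^s} f\circ h_t^+(x)\,dt$ by the cocycle value $\Phi_f^+(x,e^s)$, incurring an error bounded by $C_\varepsilon\|f\|_{Lip_w^+}(1+e^{\varepsilon s})$. Next, using the Oseledets decomposition $\Phi_f^+=\sum_{j\geq 1}\Phi_{j,f}^+$ together with the observation that $\int_M f\,d\nu=0$ forces $\Phi_{1,f}^+=0$, I would single out the leading term $\Phi_{i(f),f}^+=|\Phi_{i(f),f}^+|\cdot\mathcal I_{\oomega}^+(v_f)$. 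By Proposition \ref{hoeldergrowth}, every component $\Phi_{j,f}^+$ with $j>i(f)$ satisfies a uniform bound of the form $|\Phi_{j,f}^+(x,e^s)|\leq C_\varepsilon\|f\|_{Lip_w^+}e^{(\theta_j+\varepsilon)s}$, while the leading term grows like $H_{i(f)}(s,(\oomega,v_f))\asymp e^{\theta_{i(f)}s}$; fixing $\varepsilon$ small enough in terms of the spectral gap $\theta_{i(f)}-\theta_{i(f)+1}$, the tail $\sum_{j>i(f)}\Phi_{j,f}^++\bigl(\int_0^{e^s}f\circ h_t^+\,dt-\Phi_f^+(x,e^s)\bigr)$ is uniformly $O(\|f\|_{Lip_w^+}e^{(\theta_{i(f)}-\alpha)s})$ for some $\alpha>0$ depending only on the spectrum (hence only on $\mathbb P_{\mathcal V}$).

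Next, I would define $V^{(i)}:{\bf S}^{(i)}\Oomega\to\mathbb R_+$ by
$$V^{(i)}(\oomega,v)=Var_{\nu}\bigl(\mathcal I_{\oomega}^+(v)(x,1)\bigr),\qquad |v|=1,$$
which is a positive function by Proposition \ref{phivarnotzero} and is measurable on the fibre bundle. The key algebraic identity is the renormalization covariance: the map $\eval_0^+$ intertwines $P^{s,{\mathfrak B}^+}$ with $P^{s,\overline{\mathcal A}^t}$ (Proposition \ref{hypzip}), so denoting $(\oomega_s,v_s)=P^{s,{\bf S}^{(i(f))}}(\oomega,v_f)$ one obtains
$$\Phi_{i(f),f}^+(x,e^s)=|\Phi_{i(f),f}^+|\cdot H_{i(f)}(s,(\oomega,v_f))\cdot\mathcal I_{\oomega_s}^+(v_s)\bigl(\mathfrak t_s x,\,1\bigr),$$
and hence, using that $(\mathfrak t_s)_*\nu=\nu$,
$$Var_{\nu}\bigl(\Phi_{i(f),f}^+(\,\cdot\,,e^s)\bigr)=|\Phi_{i(f),f}^+|^2\cdot H_{i(f)}(s,(\oomega,v_f))^2\cdot V^{(i(f))}(P^{s,{\bf S}^{(i(f))}}(\oomega,v_f)).$$

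Finally, to turn this identity into the desired ratio estimate, I would combine the bound $|\xi^2|_{L^1}-|\eta^2|_{L^1}\leq\sup|\xi+\eta|\cdot \mathbb E|\xi-\eta|$ (exactly as in the derivation of Proposition \ref{varf}) applied to $\xi=\mathfrak S[f,e^s;1]$ and $\eta=\Phi_{i(f),f}^+(\,\cdot\,,e^s)$, together with the sup-norm control of the leading term $\sup_x|\Phi_{i(f),f}^+(x,e^s)|\leq C(\oomega,P^s\oomega)H_{i(f)}(s,(\oomega,v_f))|\Phi_{i(f),f}^+|$ coming again from Proposition \ref{hoeldergrowth} and the renormalization identity. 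The resulting factor $C^{(i)}(\oomega,P^s\oomega)e^{-\alpha s}$ absorbs all the subdominant contributions. The main obstacle I anticipate is purely bookkeeping: one has to verify that $V^{(i)}$ is uniformly bounded and bounded away from zero on compact subsets of positive measure (which follows from Luzin's theorem applied to the measurable functions $\oomega\mapsto Var_\nu\Phi_2^+(x,1)$ and its analogs) so that the ratio $Var_\nu(\mathfrak S)/\bigl(V^{(i(f))}\cdot H_{i(f)}^2\bigr)$ is well-defined and the relative error can indeed be bounded by $C^{(i)}(\oomega,P^s\oomega)e^{-\alpha s}$; modulo this point, the argument is a direct transcription of the simple-exponent case to the fibre bundle ${\bf S}^{(i)}\Oomega$ of unit vectors.
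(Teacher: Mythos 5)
Your argument coincides with the paper's own proof: the paper sets $V^{(i)}(\oomega,v)=Var_{\nu}\,\mathcal I_{\oomega}^+(v)(x,1)$ and reduces the statement to the simple-exponent case, i.e.\ pointwise approximation of the ergodic integral by the leading H{\"o}lder cocycle via Theorem \ref{multiplicmoduli}, the renormalization identity of Proposition \ref{hypzip} for the cocycle's variance, and the elementary bound $|\ee(\xi_1^2)-\ee(\xi_2^2)|\le \sup|\xi_1+\xi_2|\cdot\ee|\xi_1-\xi_2|$ together with sup-norm control of the leading term --- exactly the steps you spell out. The only deviation is that you carry the normalizing factor $|\Phi_{i(f),f}^+|^2$ explicitly, which is harmless (and in fact clarifies the scaling in $f$).
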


Indeed, similarly to the case of a simple Lyapunov exponent, for $v\in E_{\oomega}^i$ we
write $\Phi_v^+=\mathcal{I}_{\oomega}^+(v)$ and set
$$V^{(i)}(\oomega,v)=Var_{\nu}
\Phi_v^+(x,1).$$

The Proposition follows now in the same way as in the case of the simple second Lyapunov exponent:
the pointwise approximation of the ergodic integral by the corresponding H{\"o}lder cocycle implies also
that the variances of these random variables are exponentially close.

\subsection{Proof of Theorem \ref{limthmmoduli}.}
For $i=2,\ldots,l_0,$
introduce a map
$$\mathcal{D}_i^+:{\bf{S}}^{(i)}\Oomega\to\mathfrak{M}$$
by setting $\mathcal{D}_i^+(\oomega,v)$ to be the
distribution of the $C[0,1]$-valued random variable
$$\frac{\Phi^+_v(x,\tau)}{\sqrt{Var_{\nu}
(\Phi_v^+(x,1))}},\tau\in[0,1].$$

As before, by definition we have
$\mathcal{D}_i^+(\oomega,v)\in\mathfrak{M}_1.$ The measure
$\mathfrak{m}[f,s]\in \MM$ is, as before, the distribution of the
$C[0,1]$-valued random variable
$$
{\displaystyle{\frac{\int\limits_0^ {\tau\exp(s)} f\circ h_t^+(x)dt}
{\sqrt{Var_ {\nu}(\int\limits_0^ {\exp(s)}f\circ h_t^+(x)dt)}}}},\ \tau\in[0,1].
$$

As before, let
$l_0=l_0(\mathbb{P}_{\mathcal V})$ be the number of distinct positive Lyapunov
exponents of the measure $\mathbb{P}_{\mathcal V}$. For $f\in
Lip_{w,0}^+({{\mathscr X}})$ we define the number $i(f)$ by (\ref{defif})
and the vector $v_f$ by (\ref{defvf}).

\begin{theorem}\label{limthmmarkcomp}
Let $\mathbb{P}_{\mathcal V}$ be a Borel $P^s$-invariant ergodic probability
measure on ${\Oomega}$ satisfying $l_0(\mathbb{P}_{\mathcal V})\geq 2$.
There exists a constant $\alpha>0$ depending only on $\mathbb{P}$
and a positive measurable map
$C:\Oomega\times\Oomega\to\mathbb{R}_+$ such
that for $\mathbb{P}_{\mathcal V}$-almost every
$\oomega\in\Oomega$ and any $f\in
Lip_{w,0}^+({{\mathscr X}})$ we have
$$d_{LP}(\mathfrak{m}[f,s],D_{i(f)}^+(P^{s, {\bf{S}}^{(i(f))}}(\oomega,v_f)))
\leqslant C(\oomega,P^s\oomega)e^{-\alpha
s},$$
$$d_{KR}(\mathfrak{m}[f,s],D_{i(f)}^+(P^{s,{\bf{S}}^{(i(f))}}(\oomega,v_f)))
\leqslant C(\oomega,P^s\oomega)e^{-\alpha
s}.
$$
\end{theorem}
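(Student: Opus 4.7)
The plan is to reduce Theorem \ref{limthmmarkcomp} to the approximation Theorem \ref{multiplicmoduli} combined with a careful bookkeeping of the Oseledets decomposition and the renormalization diagram from Proposition \ref{hypzip}. First, by Theorem \ref{multiplicmoduli} we have the uniform pointwise estimate
$$
\sup_{x\in M(\oomega),\,\tau\in[0,1]} \bigl|\mathfrak{S}[f,s;\tau,x]-\Phi_f^+(x,\tau e^s)\bigr|\leq C_\varepsilon\|f\|_{Lip_w^+}(1+e^{\varepsilon s}).
$$
Writing $\Phi_f^+=\sum_{j=1}^{l_0}\Phi_{j,f}^+$ with $\Phi_{j,f}^+\in\mathfrak{B}_{j,\oomega}^+$ and recalling $i(f)=\min\{j:\Phi_{j,f}^+\neq 0\}$, I would observe that by the equivariant-sequence construction underlying $\mathrm{eval}_0^+$ (Lemmas \ref{arc-dec-lem} and \ref{sb-bplus}), every $\Phi_{j,f}^+$ admits a \emph{uniform} upper bound $\sup_{x,\tau\in[0,1]}|\Phi_{j,f}^+(x,\tau e^s)|\leq C_\varepsilon'\|f\|_{Lip_w^+}\,e^{(\theta_j+\varepsilon)s}$; summing the contributions $j>i(f)$ gives
$$
\sup_{x,\tau}\bigl|\mathfrak{S}[f,s;\tau,x]-|\Phi_{i(f),f}^+|\,\Phi_{v_f}^+(x,\tau e^s)\bigr|\leq C_\varepsilon''\|f\|_{Lip_w^+}\,e^{(\theta_{i(f)+1}+\varepsilon)s}
$$
(with the convention $\theta_{l_0+1}=0$). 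This is the \emph{numerator} estimate.

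Next I would handle the \emph{denominator}. By the preceding proposition on the growth of the variance we have
$$
\sqrt{Var_\nu\mathfrak{S}[f,s;1]}=|\Phi_{i(f),f}^+|\,H_{i(f)}(s,(\oomega,v_f))\sqrt{V^{(i(f))}(P^{s,\mathbf{S}^{(i(f))}}(\oomega,v_f))}\,\bigl(1+O(e^{-\alpha s})\bigr),
$$
and, by Proposition \ref{hypzip} applied to $\Phi_{v_f}^+$ (the renormalization identity $(\mathfrak{t}_s)_*\Phi_{v_f}^+=H_{i(f)}(s,(\oomega,v_f))\cdot\Phi_{v_f^{(s)}}^+$ with $v_f^{(s)}=\overline{\mathcal{A}}^t(s,\oomega)v_f/H_{i(f)}$), the same expression \emph{without} the $(1+O(e^{-\alpha s}))$ factor equals $|\Phi_{i(f),f}^+|\sqrt{Var_\nu\Phi_{v_f}^+(x,e^s)}$. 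Combining the numerator and denominator estimates in the elementary inequality
$$
\sup\Bigl|\tfrac{\xi_1}{M_1}-\tfrac{\xi_2}{M_2}\Bigr|\leq\sup|\xi_1|\cdot\Bigl|\tfrac{M_1-M_2}{M_1M_2}\Bigr|+\tfrac{\sup|\xi_1-\xi_2|}{M_2}
$$
applied to $\xi_1=\mathfrak{S}[f,s;\cdot,x]$, $\xi_2=|\Phi_{i(f),f}^+|\Phi_{v_f}^+(x,e^s\cdot)$, $M_1=\sqrt{Var_\nu\mathfrak{S}[f,s;1]}$, $M_2=|\Phi_{i(f),f}^+|\sqrt{Var_\nu\Phi_{v_f}^+(x,e^s)}$, both terms are exponentially small in $s$: the first because $\sup|\xi_1|\asymp M_1\asymp M_2\asymp e^{\theta_{i(f)}s}$ and $|M_1-M_2|=O(M_2\,e^{-\alpha s})$, the second because the numerator is $O(e^{(\theta_{i(f)+1}+\varepsilon)s})$ while $M_2\asymp e^{\theta_{i(f)}s}$, so the ratio decays at rate $\theta_{i(f)}-\theta_{i(f)+1}-\varepsilon>0$.

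Finally, the pointwise closeness of the two $C[0,1]$-valued random variables transfers directly to closeness of their laws in the Lévy-Prohorov and Kantorovich-Rubinstein metrics via Lemma \ref{dist-images} of the Appendix. Identifying the law of $\Phi_{v_f}^+(x,e^s\tau)/\sqrt{Var_\nu\Phi_{v_f}^+(x,e^s)}$ on $(M(\oomega),\nu)$ with the law of $\Phi_{v_f^{(s)}}^+(y,\tau)/\sqrt{Var_\nu\Phi_{v_f^{(s)}}^+(y,1)}$ on $(M(P^s\oomega),\nu)$ via the area-preserving tautological map $\mathfrak{t}_s$ and the commutative diagram of Proposition \ref{hypzip}, one recognizes this latter law as exactly $\mathcal{D}_{i(f)}^+(P^{s,\mathbf{S}^{(i(f))}}(\oomega,v_f))$, and Theorem \ref{limthmmarkcomp} follows; Proposition \ref{cor-mes} then transports the conclusion from the Veech space to $\mathcal{H}$, yielding Theorem \ref{limthmmoduli}. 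The main technical obstacle is the \emph{uniform in $x$} upper bound $\sup_x|\Phi_{j,f}^+(x,T)|\leq C_\varepsilon T^{\theta_j+\varepsilon}$ for lower-order Oseledets components, since Proposition \ref{convhoeldprop} as stated only gives the pointwise $\limsup$; passing to a uniform bound requires revisiting the SB-sequence construction of Section 2 and verifying that the approximation by Markovian arcs in Lemma \ref{arc-dec-lem} yields sub-exponentially many pieces each of uniformly controlled size, so the combined estimate is uniform in $x$.
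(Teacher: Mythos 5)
Your proposal is correct and follows essentially the same route as the paper: uniform approximation of the ergodic integral by the leading Oseledets component of $\Phi_f^+$ via Theorem \ref{multiplicmoduli}, comparison of variances, the elementary inequality (\ref{ineqlimthm}), transfer to the L{\'e}vy-Prohorov and Kantorovich-Rubinstein metrics by Lemma \ref{dist-images}, and identification of the limiting law through the renormalization diagram of Proposition \ref{hypzip}. The uniform-in-$x$ upper bound on the lower-order components that you flag is indeed the point the paper leaves implicit, and it is supplied by exactly the SB-sequence and Markovian-arc approximation machinery (Lemmas \ref{arc-dec-lem}, \ref{sb-bplus}) that you invoke.
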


The proof is similar to the proof of Proposition \ref{limthmmarkcomp-simple}.
Again, the ergodic integral is uniformly approximated by the
corresponding cocycle; the uniform bound on the difference yields
the uniform bound on the difference and the ratio of variances of
the ergodic integral and the cocycle considered as random
variables; we proceed, as before, by using the inequality
(\ref{ineqlimthm})
with $\xi_1=\mathfrak{m}[f,s],\ \xi_2=\Phi^+_{f,i(f)}(x, \tau)$, and
$M_1,M_2$ the corresponding normalizing variances. We conclude, again, by
noting that a uniform bound on the difference between two random
variables implies the same bound on the L{\'e}vy-Prohorov or
Kantorovich-Rubinstein distance between the distributions of the
random variables (using Lemma \ref{dist-images} in the Appendix ).

Theorem \ref{limthmmarkcomp} now implies Theorem \ref{limthmmoduli} in the same way in which
Proposition \ref{limthmmarkcomp-simple} implies Proposition \ref{limthmmoduli-simple}.

\subsection{Atoms of limit distributions.}
Let ${\mathscr X}$ be a zippered rectangle, and let $(M, \omega)=(M({\mathscr X}), \omega({\mathscr X}))$
be the underlying abelian differential.
For $x\in M({\mathscr X})$, let $\gamma_{\infty}^+(x)$ stand for
the leaf of the vertical foliation containing $x$, and let  $\gamma_{\infty}^-(x)$ stand for
the leaf of the horizontal foliation containing $x$.
Our next aim is to show that atoms of limit distributions
occur at all ``homoclinic times'', that is,
moments of time $t_0$ such that there exists a  point
${\tilde x}\in M$ satisfying $h_{t_0}^+(x)\in \gamma_{\infty}^-({\tilde x})$.

\begin{proposition}
\label{atom-eq-pr}
Let ${\mathscr X}$ be a zippered rectangle, and let $(M, \omega)=(M({\mathscr X}), \omega({\mathscr X}))$
be the underlying abelian differential.
Let ${\tilde x}\in M$ and assume that ${\tilde x}$ does  lies  neither on
a horizontal nor on a vertical leaf passing through a singularity
of the abelian differential $\omega({\mathscr X})$.
Let $t_0\in {\mathbb R}$  be such that
$h_{t_0}^+{\tilde x}\in\gamma_{\infty}^-({\tilde x})$.
Then there exists a rectangle $\Pi$ of positive area such that
for any $x\in\Pi$ and any $\Phi^+\in\B^+({\mathscr X})$ we have
\begin{equation}
\label{atomeq}
\Phi^+(x,t_0)=\Phi^+({\tilde x}, t_0).
\end{equation}
\end{proposition}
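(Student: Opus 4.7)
By hypothesis there exists $s_0 \in {\mathbb R}$ with $h_{t_0}^+{\tilde x} = h_{s_0}^-{\tilde x}$; after possibly replacing ${\tilde x}$ by $h_{t_0}^+{\tilde x}$ one may assume $s_0 > 0$. Since ${\tilde x}$ lies on neither a vertical nor a horizontal separatrix, the compact arcs $[{\tilde x}, h_{t_0}^+{\tilde x}]$ and $[{\tilde x}, h_{s_0}^-{\tilde x}]$ stay uniformly bounded away from the zeros of $\omega$. Consequently, one may choose $\delta_1, \delta_2 > 0$ small enough that both flanking rectangles $\Pi({\tilde x}, t_0, \delta_2)$ and $\Pi({\tilde x}, \delta_1, s_0 + \delta_2)$ are admissible. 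Set $\Pi := \Pi({\tilde x}, \delta_1, \delta_2)$, which has positive area $\delta_1 \delta_2$.

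Now take an arbitrary $x \in \Pi$, write $x = h^+_{\tau_1} h^-_{\tau_2} {\tilde x}$ with $0 \le \tau_1 < \delta_1$, $0 \le \tau_2 < \delta_2$, and put $y := h^-_{\tau_2} {\tilde x}$. The horizontal holonomy invariance of $\Phi^+$ applied to $\Pi({\tilde x}, t_0, \tau_2) \subset \Pi({\tilde x}, t_0, \delta_2)$ immediately gives
$$
\Phi^+(y, t_0) = \Phi^+({\tilde x}, t_0).
$$
It remains to verify that $\Phi^+(x, t_0) = \Phi^+(y, t_0)$ even though $x$ and $y$ sit at different heights on the same vertical leaf.

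The key observation is that at regular points the horizontal and vertical flows commute, so the homoclinic identity passes to $y$: $h_{t_0}^+ y = h_{s_0}^- y$. Evaluate $\Phi^+(y, t_0 + \tau_1)$ in two ways via the cocycle identity,
$$
\Phi^+(y, \tau_1) + \Phi^+(h^+_{\tau_1} y, t_0) = \Phi^+(y, t_0 + \tau_1) = \Phi^+(y, t_0) + \Phi^+(h^+_{t_0} y, \tau_1).
$$
Since $h^+_{t_0} y = h^-_{s_0} y$, and the rectangle $\Pi(y, \tau_1, s_0)$ lies inside the admissible rectangle $\Pi({\tilde x}, \delta_1, s_0 + \delta_2)$, horizontal holonomy invariance gives $\Phi^+(h^-_{s_0} y, \tau_1) = \Phi^+(y, \tau_1)$. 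Cancelling yields $\Phi^+(h^+_{\tau_1} y, t_0) = \Phi^+(y, t_0) = \Phi^+({\tilde x}, t_0)$, which is the desired equality.

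The main idea -- and the only nontrivial point -- is this collapse of the cocycle variation in the vertical direction. A naive use of the cocycle identity alone forces $\Phi^+(\cdot, t_0)$ to genuinely depend on the vertical coordinate (this would be the obstacle if $t_0$ were a generic time); what rescues the argument at a \emph{homoclinic} time $t_0$ is precisely that the extra term $\Phi^+(h^+_{t_0} y, \tau_1) - \Phi^+(y, \tau_1)$ vanishes by holonomy invariance across the horizontal arc of width $s_0$. The remaining admissibility checks for the flanking rectangles $\Pi({\tilde x}, t_0, \delta_2)$ and $\Pi({\tilde x}, \delta_1, s_0 + \delta_2)$ are routine consequences of the assumption that ${\tilde x}$ avoids the separatrices, together with compactness.
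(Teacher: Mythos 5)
Your proof is correct and follows essentially the same route as the paper: the same two flanking admissible rectangles (one of height $t_0$ and small width, one of small height and width extending past the homoclinic horizontal distance), the same small rectangle $\Pi$ based at ${\tilde x}$, first moving to the foot point on the bottom boundary by holonomy and then absorbing the vertical offset via the cocycle identity combined with holonomy across the homoclinic horizontal arc. The only difference is cosmetic (you cancel terms in a two-sided expansion of $\Phi^+(y,t_0+\tau_1)$ where the paper substitutes directly), and, like the paper, you only treat the case of positive times in detail.
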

Proof.
Let ${\hat x}=h_{t_0}^+({\tilde x})$ and write ${\hat x}=h_{t_1}^-({\tilde x})$.
Start with the case $t_0>0, t_1>0$. By our assumptions, for sufficiently small positive $t_2$, $t_3$,
the rectangles
$$
\Pi_1=\Pi({\tilde x}, t_2, t_1+t_3), \Pi_2={\Pi}({\tilde x}, t_0+t_2, t_3)
$$
are both admissible.

The desired rectangle $\Pi$ can now be taken of the form
$$
\Pi=\Pi({\tilde x}, t_2, t_3).
$$
Indeed, take $x\in\Pi$. Our aim is to check the equality  (\ref{atomeq}). Write $x=h_t^+x_1$, where
$x_1\in \partial_h^0(\Pi)$. We first check the equality
   \begin{equation}
\label{atomeq2}
\Phi^+(x,t_0)=\Phi^+(x_1, t_0).
\end{equation}
But indeed, $\Phi^+(x_1, t)=\Phi^+(h_{t_0-t}^+x, t)$ since $\Pi_1$ is admissible, whence
$$
\Phi^+(x, t_0)=\Phi^+(x, t_0-t)+\Phi^+(h_{t_0-t}^+x, t)=\Phi^+(x_1, t)+\Phi^+(h_t^+x_1, t_0-t)=\Phi^+(x_1, t_0),
$$
as desired.
The equality
 \begin{equation}
\label{atomeq3}
\Phi^+(x_1,t_0)=\Phi^+({\tilde x}, t_0)
\end{equation}
is a direct corollary of admissibility of $\Pi_2$. Combining (\ref{atomeq2}) with (\ref{atomeq3}),
we arrive at the desired equality  (\ref{atomeq2}), and Proposition \ref{atom-eq-pr} is proved.

For a fixed zippered rectangle $\oomega$ both whose vertical and horizontal flows are minimal,
the set of ``homoclinic times'' $t_0$ for which there
exist ${\tilde x}, {\hat x}\in X$  satisfying
${\tilde x}\in \gamma_{\infty}^+({\hat x})$,
${\tilde x}\in \gamma_{\infty}^-({\hat x})$, ${\hat x}=h_{t_0}^+{\tilde x}$, is
countable and dense in ${\mathbb R}$.
Proposition \ref{atom-eq-pr} now implies the following
\begin{corollary}
Let $\mathbb{P}_{\mathcal V}$ be a Borel $P^s$-invariant ergodic probability  measure on
${\Oomega}$.
For $\Prob_{\mathcal V}$-almost every $\oomega\in\Oomega$, there exists a dense set of times $t_0\in {\mathbb R}$ such that for
any $\Phi^+\in\B^+$  the distribution of the random variable $\Phi^+(x, t_0)$ has an atom.
\end{corollary}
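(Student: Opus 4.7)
The plan is to derive the corollary as a direct consequence of Proposition \ref{atom-eq-pr} together with the density of homoclinic times asserted in the paragraph preceding the corollary.

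First, I would argue that for $\Prob_{\mathcal V}$-almost every $\oomega$ both the vertical and horizontal flows on $M({\mathscr X})$ are minimal: this follows from ergodicity of $\Prob_{\mathcal V}$ under $P^s$ combined with the Masur--Veech unique ergodicity criterion (any $P^s$-invariant ergodic measure is supported on zippered rectangles whose IET has a normal-form Rauzy--Veech expansion, for which both foliations are uniquely ergodic and in particular minimal). Let $\Oomega_0\subset\Oomega$ be the full-measure subset of such $\oomega$.

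Fix $\oomega\in\Oomega_0$. Call $t_0\in\mathbb{R}$ a \emph{homoclinic time} if there exists a point $\tilde x\in M({\mathscr X})$, lying neither on a vertical nor on a horizontal separatrix, such that $h^+_{t_0}\tilde x\in\gamma_\infty^-(\tilde x)$. By minimality of both foliations, for any nonsingular $\tilde x$ the forward and backward vertical orbit of $\tilde x$ meets every horizontal arc through $\tilde x$ in a countable dense set of times; hence the set $T_{\mathrm{hom}}(\oomega)\subset\mathbb{R}$ of homoclinic times is dense in $\mathbb{R}$ (this is the dense countable set referred to immediately before the corollary statement).

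Now for any $t_0\in T_{\mathrm{hom}}(\oomega)$, Proposition \ref{atom-eq-pr} produces a rectangle $\Pi=\Pi(\tilde x,t_2,t_3)$ of positive area on which every $\Phi^+\in\mathfrak{B}^+({\mathscr X})$ satisfies
\[
\Phi^+(x,t_0)=\Phi^+(\tilde x,t_0)\quad\text{for every }x\in\Pi.
\]
Consequently the pushforward of $\nu$ under the map $x\mapsto\Phi^+(x,t_0)$ assigns mass at least $\nu(\Pi)>0$ to the single value $\Phi^+(\tilde x,t_0)$, and hence the distribution of the random variable $\Phi^+(x,t_0)$ has an atom of mass at least $\nu(\Pi)$. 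Since this argument applies to any $\Phi^+\in\mathfrak{B}^+$ simultaneously and $T_{\mathrm{hom}}(\oomega)$ is dense, the corollary follows.

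The only mildly subtle point, and hence the main thing to verify carefully, is the density of $T_{\mathrm{hom}}(\oomega)$: one must exhibit, for each open interval $J\subset\mathbb{R}$, a nonsingular $\tilde x$ whose vertical orbit re-enters its own horizontal leaf at some time in $J$. This is however immediate from minimality: starting from any nonsingular $\tilde x$, the return times of $h^+_t\tilde x$ to a small horizontal transversal through $\tilde x$ form a sequence accumulating on an additive subgroup of $\mathbb{R}$, and by minimality this subgroup is dense. Once density is in hand, the rest of the argument is purely a restatement of Proposition \ref{atom-eq-pr}.
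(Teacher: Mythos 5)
Your proposal follows essentially the same route as the paper: the paper's proof of this corollary consists precisely of the observation that for a zippered rectangle with both flows minimal the set of homoclinic times is countable and dense, plus an application of Proposition \ref{atom-eq-pr}, whose rectangle $\Pi$ of positive area forces the law of $\Phi^+(\cdot,t_0)$ to carry an atom of mass at least $\nu(\Pi)$ simultaneously for all $\Phi^+$. One caveat on your verification of density: the claim that the return times of $h^+_t\tilde x$ to a small horizontal transversal ``accumulate on an additive subgroup of $\mathbb{R}$'' is not correct as stated (return times to a compact transverse arc form a discrete set with no group structure); the clean argument is the one implicit in your first paragraph on density, namely that by minimality the full horizontal leaf $\gamma_\infty^-(\tilde x)$ is dense, hence it enters a flow box around any interior point of the vertical segment $\{h^+_t\tilde x:\ t\in J\}$ and, crossing that box horizontally, must intersect the segment, so the set of intersection times is dense in $\mathbb{R}$. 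With that substitution (and noting that minimality almost surely needs only the bi-infinite Rauzy--Veech expansion, i.e.\ Keane's condition, rather than unique ergodicity), your argument is complete and matches the paper's.
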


Our next step is to show that atoms of weight arbitrarily close to
$1$ occur for limit distributions of our H{\"o}lder cocycles.
Informally, such atoms exist when one admissible rectangle
occupies most of our surface.
More precisely, we have the following
\begin{proposition}
\label{bigatom}
Let $\oomega\in\Oomega$ satisfy $\la_{1}^{(0, \oomega)}>1/2$. Then there exists a set ${\Pi}\subset M(\oomega)$
such that
\begin{enumerate}
\item $\nu_{\oomega}({\Pi})\geq (2\la_1^{(0, \oomega)}-1)h_1^{(0, \oomega)}$;
\item for any $\Phi^+\in {\mathfrak B}^+(\oomega)$, the function $\Phi^+(x, h_1^{(0, \oomega)})$ is constant
on ${\Pi}$.
\end{enumerate}
\end{proposition}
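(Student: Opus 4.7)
The plan is to take $\Pi$ to be the sub-rectangle of the Markov rectangle $\Pi_1^{(0)}$ consisting of those points whose forward vertical orbit of length $h_1^{(0,\oomega)}$ re-enters $\Pi_1^{(0)}$ after crossing the base $I^{(0)}(\oomega)$ once. For such a point the orbit ``wraps around'' exactly once inside $\Pi_1^{(0)}$, and for any $\Phi^+\in\mathfrak{B}^+(\oomega)$ the value on this arc reduces, via horizontal holonomy invariance, to the Markovian value $\Phi^+(\gamma_1^{(0)})$.

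Let $\mathbf{T}$ be the first-return interval exchange of $h_t^+$ on the base $I^{(0)}(\oomega)=I_1\sqcup\cdots\sqcup I_m$, so that $h_{h_1^{(0,\oomega)}}^+ y=\mathbf{T}(y)$ for every $y\in I_1$. Set
$$J:=I_1\cap\mathbf{T}^{-1}(I_1),\qquad \Pi:=\{h_\tau^+ y:y\in J,\ 0\leq\tau<h_1^{(0,\oomega)}\}.$$
Since $\mathbf{T}$ preserves Lebesgue measure on $I^{(0)}$ (of total length $1$), both $I_1$ and $\mathbf{T}^{-1}(I_1)$ have length $\la_1^{(0,\oomega)}$, and inclusion--exclusion gives $|J|\geq 2\la_1^{(0,\oomega)}-1$. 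In the natural coordinates on $\Pi_1^{(0)}$ the set $\Pi$ is the product of $J\subset I_1$ with the vertical interval of length $h_1^{(0,\oomega)}$, so $\nu_{\oomega}(\Pi)=|J|\cdot h_1^{(0,\oomega)}\geq(2\la_1^{(0,\oomega)}-1)\,h_1^{(0,\oomega)}$, which is conclusion (1).

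For (2), fix $x=h_\tau^+ y\in\Pi$ with $y\in J$ and $0\leq\tau<h_1^{(0,\oomega)}$. Two applications of the cocycle identity of Assumption \ref{bplusx}(1) decompose the vertical arc of length $h_1^{(0,\oomega)}$ emanating from $x$ and yield
$$\Phi^+(x,h_1^{(0,\oomega)})=\Phi^+(y,h_1^{(0,\oomega)})+\Phi^+(\mathbf{T}(y),\tau)-\Phi^+(y,\tau).$$
The first term equals $\Phi^+(\gamma_1^{(0)})$ independently of $y\in I_1$ by horizontal holonomy invariance applied to the weakly admissible rectangle $\Pi_1^{(0)}$. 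For the remaining difference, both $y$ and $\mathbf{T}(y)$ lie in the single open atom $I_1$ of the partition of $I^{(0)}$, so they are joined by a horizontal segment inside $I_1$; the rectangle based on this segment with vertical side of length $\tau<h_1^{(0,\oomega)}$ is a sub-rectangle of $\Pi_1^{(0)}$ whose closure avoids every zero of $\omega$, and horizontal holonomy invariance (Assumption \ref{bplusx}(3)) gives $\Phi^+(\mathbf{T}(y),\tau)=\Phi^+(y,\tau)$. Hence $\Phi^+(x,h_1^{(0,\oomega)})=\Phi^+(\gamma_1^{(0)})$ for every $x\in\Pi$.

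The only delicate point is admissibility of the small sub-rectangle used in the last holonomy step: zeros of $\omega$ may lie on the boundary of $\Pi_1^{(0)}$, and one has to verify that the closure of this rectangle avoids them. This is automatic because $I_1$ is a single open interval of the partition of the base (singularities occur only at the endpoints of the atoms $I_i$) and because $\tau<h_1^{(0,\oomega)}$ keeps the rectangle strictly below the upper horizontal boundary. If extra care is required at the endpoints of $I_1$, one first proves the identity for $y$ at distance at least $\varepsilon$ from the endpoints of $I_1$ and for $\tau\leq h_1^{(0,\oomega)}-\varepsilon$, and then removes $\varepsilon$ by continuity of $\Phi^+$.
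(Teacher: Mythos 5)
Your construction is exactly the paper's: the set $\Pi$ you define (points over $J=I_1\cap\mathbf{T}^{-1}(I_1)$ with vertical coordinate $<h_1^{(0)}$) coincides with the paper's $\Pi=\{h^+_{\tau}x,\ 0<\tau<t_1,\ x\in I_1,\ h_{t_1}x\in I_1\}$, the measure estimate by inclusion--exclusion is the same, and the reduction of $\Phi^+(x,t_1)$ to the Markovian value $\Phi^+(\gamma_1^{(0)})$ via the cocycle identity plus the fact that $\Phi^+(\cdot,\tau)$ is constant on $I_1$ is the paper's argument, just written with the algebra slightly rearranged. Correct, same approach.
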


Proof. We consider $\oomega$ fixed and omit it from notation.
Consider the partition
$$
\pi_0({\mathscr X})=\Pi^{(0)}_1\sqcup\dots\sqcup \Pi^{(0)}_m
$$
of the zippered rectangle ${\mathscr X}$.
Let $I_k$ be the interval forming lower horizontal boundaries of the rectangles $\Pi^{(0)}_k$,
$k=1, \dots, m$, and set
$$
I=I_1\sqcup\dots\sqcup I_m.
$$

The flow transversal $I$ carries the Lebesgue measure $\nu_I$ invariant under
the first-return map of the flow $h_t^+$ on $I$. We recall that the first return map
is simply the interval exchange transformation $(\la,\pi)$ of the zippered rectangle
${\mathscr X}=(\la,\pi,\delta)$.
We recall that $\la^{(0)}_k$ is the length of $I_k$, and that $h_k^{(0)}$ is the height of
$\Pi^{(0)}_k$.
For brevity, denote $t_1=h_1^{(0)}$. By definition,  $h^+_{t_1}I_1\subset I$ and we have
$$
\nu_I(I_1\bigcap h^+_{t_1}I_1)\geq 2\la_1^{(0)}-1>0.
$$
Introduce the set
$$
\Pi=\{h^+_{\tau}x, 0<\tau<t_1, x\in I_1, h_{t_1}x\in I_1\}.
$$
The first statement of the Proposition is clear, and we proceed to the proof of the second.
Note first that for any $\Phi^+\in {\mathfrak B}^+(\oomega)$ and any $\tau, 0\leq\tau\leq t_1$
the quantity $\Phi^+(x, \tau)$
is constant as long as $x$ varies in $I_1$.

Fix $\Phi^+\in {\mathfrak B}^+(\oomega)$ and take an arbitrary ${\tilde x}\in \Pi$.
Write ${\tilde x}=h^+_{\tau_1}x_1$, where $x_1\in I_1$,
$0<\tau_1<t_1$. We have $h^+_{t_1-\tau_1}{\tilde x}\in I_1$, whence
$$
\Phi^+(h^+_{t_1-\tau_1}{\tilde x}, \tau_1)=\Phi^+(x_1, \tau_1)
$$
and
$$
\Phi^+({\tilde x}, t_1)=\Phi^+({\tilde x}, t_1-\tau_1)+\Phi^+(h^+_{t_1-\tau_1}{\tilde x}, \tau_1)=
\Phi^+(h^+_{\tau_1}x_1, t_1-\tau_1)+\Phi^+(x_1, \tau_1)=\Phi^+(x_1, t_1),
$$
which concludes the proof of  the Proposition.

\begin{figure}
\begin{center}
\includegraphics{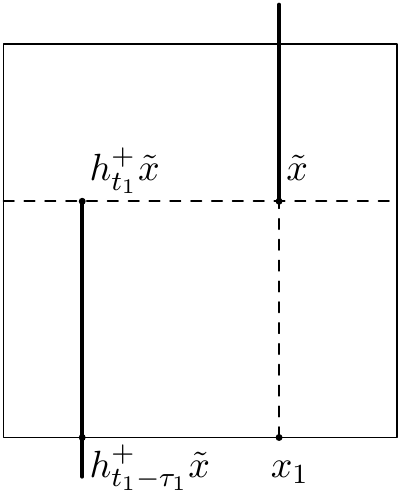}\\
\caption{Atoms of limit distributions}\label{fig:two}
\end{center}
\end{figure}
We illustrate the proof by Figure \ref{fig:two}.

\subsection{Accumulation at zero for limit distributions}

Recall that for $\oomega^{\prime}\in\Oomega^{\prime}$, $\Phi^+\in\BB^+_{\oomega}$, $\Phi^+\neq 0$,
and $\tau\in {\mathbb R}$, $\tau\neq 0$,
the measure ${\mathfrak m}[\Phi^+, \tau]$ is the distribution
of the normalized ${\mathbb R}$-valued random variable
$$
\frac{\Phi^+(x, \tau)}{\sqrt{Var_{{\nu}}\Phi^+(x, \tau)}}.
$$
As before, let $\MM({\mathbb R})$ be the space of probability measures on ${\mathbb R}$ endowed with the weak topology,
and let $\delta_0\in\MM({\mathbb R})$ stand for the delta-measure at zero.
Similarly to the Introduction, we need the following additional assumption on our
$P^s$-invariant ergodic probability  measure $\mathbb{P}_{\mathcal V}$ on ${\Oomega}$.
\begin{assumption}
\label{fatzip}
For any $\varepsilon>0$ we have
$$
\Prob_{\mathcal V}(\{\oomega: \la_1^{(\oomega)}>1-\varepsilon, h_1^{(\oomega)}>1-\varepsilon \})>0.
$$
\end{assumption}

By Proposition \ref{bigatom}, in view of the ergodicity of $\Prob_{\mathcal V}$,
for almost every $\oomega\in\Oomega$
and every $\Phi^+\in\BB^+_{\oomega}$, $\Phi^+\neq 0$, the sequence of measures ${\mathfrak m}[\Phi^+, \tau]$
admits atoms of weight arbitrarily close to $1$.
The next simple Proposition shows that the corresponding measures must then accumulate at {\it zero}
(rather than at another point of the real line).

\begin{proposition} \label{atomatzero}
Let $\mu_0$ be a probability
measure on $\mathbb{R}$ such that
$$\int_{\mathbb{R}}xd\mu_0(x)=0,\ \ \int_{\mathbb{R}}x^2d\mu_0(x)=1.$$
Let $x_0\in\mathbb{R}$ and assume that
$$\mu_0(\{x_0\})=\beta.$$
Then $$|x_0|^2\leqslant\frac{1-\beta}{\beta^2}.$$
\end{proposition}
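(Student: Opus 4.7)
The plan is to decompose $\mu_0$ according to its atom at $x_0$ and exploit the two moment conditions.

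First I would write $\mu_0 = \beta \delta_{x_0} + (1-\beta)\nu$, where $\nu$ is a probability measure on $\mathbb{R}$ (if $\beta=1$ the inequality is vacuous, so assume $\beta<1$). Let $m = \int_{\mathbb{R}} x\,d\nu(x)$ and $s^2 = \int_{\mathbb{R}} x^2\,d\nu(x)$. The mean-zero condition gives $\beta x_0 + (1-\beta) m = 0$, i.e.\ $m = -\beta x_0/(1-\beta)$, and the unit-variance condition gives $\beta x_0^2 + (1-\beta)s^2 = 1$, so $s^2 = (1 - \beta x_0^2)/(1-\beta)$.

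Next I would square the first identity to get $\beta^2 x_0^2 = (1-\beta)^2 m^2$, and then apply the Cauchy--Schwarz inequality $m^2 \leq s^2$ (equivalently, the variance of $\nu$ is non-negative). This yields
\[
\beta^2 x_0^2 \;=\; (1-\beta)^2 m^2 \;\leq\; (1-\beta)^2 s^2 \;=\; (1-\beta)(1 - \beta x_0^2) \;\leq\; 1-\beta,
\]
where in the last step I simply used $\beta x_0^2 \geq 0$. Dividing by $\beta^2$ gives the desired estimate $|x_0|^2 \leq (1-\beta)/\beta^2$.

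There is no real obstacle here: the argument is a short second-moment calculation once the decomposition $\mu_0 = \beta \delta_{x_0} + (1-\beta)\nu$ is in place. The only point worth noting is that one obtains a slightly sharper bound $x_0^2 \leq (1-\beta)/\beta$ by keeping the term $\beta x_0^2$ rather than discarding it, but the weaker form stated in the proposition suffices for the subsequent application to accumulation at $\delta_0$ for the measures $\mathfrak{m}[\Phi^+,\tau]$.
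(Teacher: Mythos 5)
Your argument is correct, and it proves a slightly sharper statement, but it takes a genuinely different route from the one in the paper. You decompose $\mu_0 = \beta\delta_{x_0} + (1-\beta)\nu$, extract the first two moments of the residual measure $\nu$ from the normalization constraints, and then apply Cauchy--Schwarz to $\nu$ over the whole real line in the form $\bigl(\int x\,d\nu\bigr)^2 \leq \int x^2\,d\nu$. The paper instead assumes $x_0 > 0$ without loss of generality, observes that the atom forces $\int_{-\infty}^0 x\,d\mu_0 \leq -\beta x_0$, and then applies Cauchy--Schwarz to the restriction of $\mu_0$ to the negative half-line, in the normalized (conditional) form
\[
\frac{1}{\mu_0((-\infty,0))}\int_{-\infty}^0 x^2\,d\mu_0 \;\geq\; \Bigl(\frac{1}{\mu_0((-\infty,0))}\int_{-\infty}^0 x\,d\mu_0\Bigr)^2,
\]
combined with $\int_{-\infty}^0 x^2\,d\mu_0 \leq 1$ and $\mu_0((-\infty,0)) \leq 1-\beta$. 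Both arguments are Cauchy--Schwarz applied after a decomposition of $\mu_0$, but the decompositions are different: you split off the atom, the paper splits by sign. Your version is arguably cleaner, it avoids the case distinction on the sign of $x_0$, and, as you note, it delivers the stronger estimate $|x_0|^2 \leq (1-\beta)/\beta$ if one keeps the term $\beta x_0^2$; the paper's half-line argument naturally gives the weaker $(1-\beta)/\beta^2$ because it discards information about the contribution of the positive half-line beyond the atom. Both bounds serve equally well for Proposition \ref{convtodelta}, since all that is needed is $|x_0| \to 0$ as $\beta \to 1$.
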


Proof. If $x_0=0$, then there is nothing to
prove, so assume $x_0>0$ (the remaining case $x_0<0$ follows by
symmetry). We have $$\int_0^{+\infty}xd\mu_0(x)\geqslant\beta
x_0,$$ and, consequently,
$$\int_{-\infty}^0xd\mu_0(x)\leqslant-\beta x_0.$$

Using the Cauchy-Bunyakovsky-Schwarz inequality, write
$$\frac{1}{\mu_0((-\infty,0))}\int_{-\infty}^0x^2d\mu_0(x)\geqslant\left(\frac{1}{\mu_0((-\infty,0))}\int_{-\infty}^0xd\mu_0(x)\right)^2,$$
whence, recalling that the variance of $\mu_0$ is equal to $1$, we obtain
$$\mu_0((-\infty,0))\geqslant\left(\int_{-\infty}^0xd\mu_0(x)\right)^2$$
and, finally, $$1-\beta\geqslant\beta^2x_0^2,$$ which is what we
had to prove.

As before, the
symbol $\Rightarrow$ denotes weak convergence of probability
measures.

\begin{proposition}\label{convtodelta} Let $\mathbb{P}_{\mathcal V}$ be a Borel  ergodic
$P^s$-invariant probability measure on $\Oomega$ satisfying
Assumption \ref{fatzip}. Then for $\mathbb{P}_{\mathcal V}$-almost
every ${{{\mathscr X}}}\in\Oomega$ there exists a sequence
$\tau_n\in\mathbb{R}_+$ such that for any
$\Phi^+\in{\mathfrak{B}}^+({{{\mathscr X}}})$ we have
$$\mathfrak{m}[\Phi^+,\tau_n]\Rightarrow\delta_0\ \mathrm{as}\
n\to\infty.$$
\end{proposition}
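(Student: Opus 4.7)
The plan is to exploit recurrence of the Teichm\"uller flow $P^s$ to zippered rectangles where one rectangle occupies almost the whole surface (guaranteed by Assumption \ref{fatzip}), apply Proposition \ref{bigatom} there to produce an atom of mass close to $1$ in the distribution of any H\"older cocycle, and then transfer this atom back to $\oomega$ using the renormalization equivariance of Proposition \ref{hypzip}. Proposition \ref{atomatzero} will then force the atom location toward zero, which suffices for weak convergence to $\delta_0$.

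First I would construct $\tau_n$. For each $n\in\mathbb N$, set
\[
B_n=\bigl\{\oomega\in\Oomega\colon \la_1^{(\oomega)}>1-1/n,\ h_1^{(\oomega)}>1-1/n\bigr\};
\]
by Assumption \ref{fatzip}, $\Prob_{\mathcal V}(B_n)>0$. Ergodicity of $P^s$ and the Birkhoff theorem applied to $\mathbf 1_{B_n}$ imply that for $\Prob_{\mathcal V}$-almost every $\oomega$ the orbit $\{P^s\oomega\}_{s\geq 0}$ visits $B_n$ with positive frequency; taking a countable intersection yields a full-measure set on which, for every $n$, one may select $s_n=s_n(\oomega)\geq n$ with $\oomega_n:=P^{s_n}\oomega\in B_n$. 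Set $\tau_n:=e^{s_n}h_1^{(\oomega_n)}\in\mathbb R_+$; note $\tau_n\to\infty$.

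Next I would extract the atom and pull it back. Fix $\oomega$ in the full-measure set above and any nonzero $\Phi^+\in\mathfrak B^+(\oomega)$ with $\langle\Phi^+,\nu^-\rangle=0$; the statement is vacuous along the $\nu^+$-direction, where the variance vanishes identically. Let $\Psi_n^+:=(\mathfrak t_{s_n})_*\Phi^+\in\mathfrak B^+(\oomega_n)$ be the renormalized cocycle furnished by Proposition \ref{hypzip}. Since $\la_1^{(\oomega_n)}>1-1/n>1/2$ for $n\geq 3$, Proposition \ref{bigatom} produces a set $\Pi_n\subset M(\oomega_n)$ with
\[
\nu_{\oomega_n}(\Pi_n)\geq (2\la_1^{(\oomega_n)}-1)\,h_1^{(\oomega_n)}\geq (1-2/n)(1-1/n)=:\beta_n,
\]
and $\beta_n\to 1$, on which $x\mapsto\Psi_n^+\bigl(x,h_1^{(\oomega_n)}\bigr)$ takes a constant value, call it $c_n$. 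Crucially, $\Pi_n$ is a purely geometric feature of the rectangle $\Pi_1^{(\oomega_n)}$ and does \emph{not} depend on $\Phi^+$, so the same $\tau_n$ will work for every cocycle. Using the renormalization identity
\[
\Phi^+(x,\tau_n)=\Psi_n^+\bigl(\mathfrak t_{s_n}x,\,e^{-s_n}\tau_n\bigr)=\Psi_n^+\bigl(\mathfrak t_{s_n}x,\,h_1^{(\oomega_n)}\bigr)
\]
together with the area-preservation of $\mathfrak t_{s_n}$, the function $\Phi^+(\cdot,\tau_n)$ equals the constant $c_n$ on $A_n:=\mathfrak t_{s_n}^{-1}(\Pi_n)$ and $\nu_\oomega(A_n)\geq\beta_n$.

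Finally I would invoke Proposition \ref{atomatzero}. With $V_n:=Var_\nu\Phi^+(\cdot,\tau_n)>0$ by Proposition \ref{phivarnotzero} and $\mathbb E_\nu\Phi^+(\cdot,\tau_n)=\langle\Phi^+,\nu^-\rangle\tau_n=0$ by Proposition \ref{phiexpzero}, the normalized measure $\mathfrak m[\Phi^+,\tau_n]$ has mean $0$, variance $1$, and carries an atom of mass $\geq\beta_n$ at $a_n:=c_n/\sqrt{V_n}$. Proposition \ref{atomatzero} gives $|a_n|^2\leq(1-\beta_n)/\beta_n^2\to 0$, so $a_n\to 0$; and for any bounded continuous $\varphi\colon\mathbb R\to\mathbb R$,
\[
\int\varphi\,d\mathfrak m[\Phi^+,\tau_n]=\beta_n\,\varphi(a_n)+R_n,\qquad |R_n|\leq(1-\beta_n)\|\varphi\|_\infty,
\]
the right-hand side tends to $\varphi(0)$, yielding $\mathfrak m[\Phi^+,\tau_n]\Rightarrow\delta_0$. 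The only real point of care in the argument is the uniformity in $\Phi^+$ of the chosen sequence $\tau_n(\oomega)$, and this is automatic from the cocycle-independence of $\Pi_n$ in Proposition \ref{bigatom}.
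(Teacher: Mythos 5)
Your argument is correct and is exactly the chain the paper compresses into ``immediate from Propositions \ref{bigatom} and \ref{atomatzero}'': Birkhoff recurrence to the positive-measure sets furnished by Assumption \ref{fatzip}, renormalization of the cocycle along the orbit via ${\mathfrak t}_{s_n}$ and Proposition \ref{hypzip}, the cocycle-independent constancy set of Proposition \ref{bigatom} (which gives a single sequence $\tau_n=e^{s_n}h_1^{(P^{s_n}{\mathscr X})}$ valid for all $\Phi^+$), and the atom-location bound of Proposition \ref{atomatzero}. Your explicit restriction to cocycles with $\langle\Phi^+,\nu^-\rangle=0$ coincides with the paper's implicit scope --- Proposition \ref{atomatzero} requires mean zero, and the application in Corollary \ref{nonconvergence} only involves zero-average functions --- so nothing essential is lost.
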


This is immediate from Proposition \ref{bigatom} and Proposition \ref{atomatzero}.

\begin{corollary}\label{nonconvergence}
Let $\mathbb{P}_{\mathcal V}$ be a Borel ergodic
$P^s$-invariant probability measure on $\Oomega$ satisfying
Assumption \ref{fatzip}. Then for $\mathbb{P}_{\mathcal V}$-almost
every ${{{\mathscr X}}}\in\Oomega$ there exists a sequence
$s_n\in\mathbb{R}_+$ such that for any $f\in
Lip_{w,0}^+({{{\mathscr X}}})$ satisfying $\Phi_f^+\neq0$ we have
$$\mathfrak{m}[f,s_n;1]\Rightarrow\delta_0\ \mathrm{as}\
n\to\infty.$$

Consequently, if $f\in Lip_{w,0}^+({{{\mathscr X}}})$ satisfies
$\Phi_f^+\neq0,$ then the family of measures
$\mathfrak{m}[f,s;1]$ does not converge in the weak topology on $\MM({\mathbb R})$ as
$s\to\infty$ and the family of measures
$\mathfrak{m}[f,s]$ does not converge in the weak topology on $\MM(C[0,1])$ as
$s\to\infty$.
\end{corollary}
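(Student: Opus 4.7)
The plan is to deduce the Corollary from Proposition \ref{convtodelta} by transferring the accumulation at $\delta_0$ from H\"older cocycles to ergodic integrals via the approximation Theorem \ref{multiplicmoduli}, and then to rule out convergence by combining this with the limit Theorem \ref{limthmmoduli} (equivalently Theorem \ref{limthmmarkcomp}), whose limit points all lie in $\MM_1$ and hence all have variance $1$.

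First I would fix a generic zippered rectangle ${\mathscr X}$ in the full-measure set on which Proposition \ref{convtodelta} and Theorem \ref{limthmmoduli} both hold, and let $\tau_n \to \infty$ be the sequence produced by Proposition \ref{convtodelta}. Set $s_n = \log \tau_n$. Given $f \in Lip_{w,0}^+({\mathscr X})$ with $\Phi_f^+ \neq 0$, let $i = i(f)$ be the smallest index with $\Phi_{f,i}^+ \neq 0$, and put $\Phi^+ := \Phi_{f,i}^+$. The key quantitative estimate is then to compare $\mathfrak{m}[f,s_n;1]$ with $\mathfrak{m}[\Phi^+,\tau_n]$. Combining Theorem \ref{multiplicmoduli} (giving $|\int_0^T f\circ h_t^+(x)\,dt - \Phi_f^+(x,T)| \leq C_\varepsilon T^\varepsilon$) with Proposition \ref{hoeldergrowth} applied to each of the strictly sub-leading components $\Phi_{f,j}^+$, $j>i$, shows that
\[
\left|\int_0^T f\circ h_t^+(x)\,dt - \Phi^+(x,T)\right| \leq C_\varepsilon' \bigl(T^{\theta_{i+1}+\varepsilon} + T^\varepsilon\bigr),
\]
while Proposition \ref{varf} (in its general $i$-th exponent form stated just before Theorem \ref{limthmmarkcomp}) together with Proposition \ref{phivarnotzero} gives $\mathrm{Var}_\nu \mathfrak{S}[f,s;1]$ and $\mathrm{Var}_\nu \Phi^+(x,e^s)$ of the same asymptotic order $H_i(s,{\mathscr X})^2$, which grows at the exponential rate $2\theta_i s$. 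Thus the uniform difference of the \emph{normalized} random variables is $O(e^{-\alpha s})$ for some $\alpha > 0$.

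Applying Lemma \ref{dist-images} of the Appendix, the L\'evy--Prohorov (and Kantorovich--Rubinstein) distance between $\mathfrak{m}[f,s_n;1]$ and $\mathfrak{m}[\Phi^+,\tau_n]$ tends to $0$. Since $\mathfrak{m}[\Phi^+,\tau_n] \Rightarrow \delta_0$ by Proposition \ref{convtodelta}, it follows that $\mathfrak{m}[f,s_n;1] \Rightarrow \delta_0$ as well, which is the first assertion. The $C[0,1]$-valued statement is obtained in exactly the same way, replacing $\tau=1$ by the whole parameter range $\tau \in [0,1]$: the uniform-in-$\tau$ bound from Theorem \ref{multiplicmoduli} combined with Proposition \ref{bigatom} (from which Proposition \ref{convtodelta} is derived) yields the concentration of the $C[0,1]$-valued distribution near the constant function $0$.

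For the non-convergence, I would argue by contradiction: suppose $\mathfrak{m}[f,s;1]$ converges in $\MM(\mathbb{R})$ as $s \to \infty$. Then along the sequence $s_n$ the limit is $\delta_0$ by what was just proved. On the other hand, Theorem \ref{limthmmoduli} asserts that $\mathfrak{m}[f,s;1]$ is exponentially close to $\mathcal{D}_{i(f)}^+(g_s^{{\bf S}^{(i(f))}}({\bf X}, v_f))$, evaluated at $\tau = 1$; the latter is a trajectory in $\MM_1$, so every accumulation point has variance $1$ and therefore cannot equal $\delta_0$. Ergodicity of the flow ${\bf g}_s^{{\bf S}^{(i(f))}}$ ensures the trajectory recurs to a compact set of $\MM_1$ bounded away from $\delta_0$, producing a second accumulation point distinct from $\delta_0$ and contradicting convergence. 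The same argument, now in $\MM(C[0,1])$, gives the non-convergence of $\mathfrak{m}[f,s]$. The main obstacle in this plan is the bookkeeping in the first step: one must ensure that the error from truncating $\Phi_f^+$ at its leading Oseledets component is strictly smaller than the normalizing variance, and this requires choosing $\varepsilon$ smaller than the spectral gap $\theta_{i(f)} - \theta_{i(f)+1}$ (with the convention $\theta_{l_0+1}=0$), which is possible precisely because the Lyapunov spectrum on $E^u$ is finite and the exponents are distinct.
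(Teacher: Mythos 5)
Your treatment of the first assertion is essentially the paper's: the sequence $s_n=\log\tau_n$ comes from Proposition \ref{convtodelta}, and the comparison of $\mathfrak{m}[f,s_n;1]$ with $\mathfrak{m}[\Phi^+_{f,i(f)},\tau_n]$ that you re-derive by hand (truncation at the leading Oseledets component plus the variance growth) is precisely the content of the Limit Theorem \ref{limthmmarkcomp} (Theorem \ref{limthmmoduli}), so that part is sound, modulo the same routine control of the measurable factors along the return times that the paper also leaves implicit.

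The non-convergence step, however, has a genuine gap. The assertion that the trajectory lies in $\MM_1$, ``so every accumulation point has variance $1$ and therefore cannot equal $\delta_0$,'' is false: the second moment is only lower semicontinuous under weak convergence, so measures of variance $1$ can perfectly well accumulate at $\delta_0$ --- indeed this very trajectory does, and that is exactly the mechanism behind the first assertion (Propositions \ref{bigatom}, \ref{atomatzero}, \ref{convtodelta}). What is really needed is a sequence of times along which ${\mathcal D}^+_{i(f)}(P^{s,{\bf S}^{(i(f))}}({\mathscr X},v_f))$, evaluated at $\tau=1$, stays a definite distance away from $\delta_0$, and your source for such times is ``ergodicity of the flow $g_s^{{\bf S}^{(i(f))}}$'' on the bundle. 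That ergodicity is not established anywhere in the paper --- in the analogous double-cover situation the paper explicitly allows two ergodic components (Corollary \ref{nonentwo}) and even poses ergodicity of $\Prob^{\prime}$ as an open question --- and in any case the orbit of the particular pair $({\mathscr X},v_f)$, with $v_f$ depending on $f$, is not reached by an almost-everywhere statement on the bundle. The paper's proof avoids both difficulties: for fixed ${\mathscr X}$ it shows that the set $\{\mathfrak{m}[\Phi^+,1]:\Phi^+\in\BB^+({\mathscr X}),\ |\Phi^+|=1\}$ is compact (continuity of $\Phi^+\mapsto\mathfrak{m}[\Phi^+,1]$ on the unit sphere, via the uniform convergence on spheres in the Oseledets theorem), hence, being a compact set not containing $\delta_0$, it is bounded away from $\delta_0$; then $\kappa({\mathscr X})=\inf_{|\Phi^+|=1}d_{LP}(\mathfrak{m}[\Phi^+,1],\delta_0)$ is a positive measurable function on the base, $\{\kappa>\kappa_0\}$ has positive measure for some $\kappa_0>0$, and only ergodicity of the base flow $P^s$ with respect to $\Prob_{\mathcal V}$ is used to produce return times at which, by Theorem \ref{limthmmarkcomp}, $\mathfrak{m}[f,s;1]$ stays away from $\delta_0$; combined with the accumulation at $\delta_0$ along $s_n$ this excludes convergence, and the $C[0,1]$ statement follows a fortiori by pushing forward under evaluation at $\tau=1$. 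Without this uniform-over-the-sphere compactness step (or some substitute for it), your recurrence argument does not go through.
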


Proof. The first claim is clear from
Proposition \ref{convtodelta} and the Limit Theorem \ref{limthmmarkcomp}.
The second claim is obtained from the Limit
Theorem \ref{limthmmarkcomp}  in the following way.

First note that the set
\begin{equation}
\label{phitausphere}
\{\mathfrak{m}[\Phi^+,1], \Phi^+\in\mathfrak{B}^+({{{\mathscr X}}}),|\Phi^+|=1\}
\end{equation}
is compact in the weak topology (indeed, it is clear from the {\it uniform}
convergence on spheres in the Oseledets Multiplicative Ergodic Theorem
that the map $$\Phi^+\rightarrow\mathfrak{m}[\Phi^+,\tau]$$ is
continuous in restriction to the set $\{\Phi^+: |\Phi^+|=1\}$ whose image is therefore compact).
In particular, the set (\ref{phitausphere}) is bounded away from $\delta_0$, and the function
$$
\kappa(\oomega)=\inf\limits_{{\Phi^+: |\Phi^+|=1}} d_{LP}(\mm[\Phi^+, 1], \delta_0)
$$
is a positive measurable function on $\Oomega$. Consequently, there exists $\kappa_0>0$ such that
$$
\Prob_{\mathcal V}(\{\oomega: \kappa(\oomega)>\kappa_0\})>0.
$$
From ergodicity of the measure $\Prob_{\mathcal V}$ and the Limit  Theorem \ref{limthmmarkcomp} it follows that
the family  $\mathfrak{m}[f,s;1]$, $s\in {\mathbb R}$, does not converge to $\delta_0$.
On the other hand, as we have seen, the measure $\delta_0$
is an accumulation point for the family. It follows that the measures  $\mathfrak{m}[f,s;1]$ do
not converge in $\MM({\mathbb R})$ as $s\to\infty$, and, a fortiori,
that the measures $\mathfrak{m}[f,s]$ do not converge in $\MM(C[0,1])$ as $s\to\infty$.

Corollary \ref{nonconvergence} is proved completely.

\section{Appendix: Metrics on the Space of Probability  Measures.}
\subsection{The Weak Topology.}

In this Appendix, we collect some standard facts about the weak
topology on the space of probability measures. For a detailed
treatment, see, e.g., \cite{bogachev}.

Let $(\emph{X}, {d})$ be a complete separable metric
space, and let  $\mathfrak{M}(\emph{X})$ be the space of
Borel probability measures on \emph{X}. The {\it weak topology}
on $\mathfrak{M}(\emph{X})$ is defined as follows. Let
$\varepsilon>0$, $\nu_0\in \mathfrak{M}(\emph{X})$, and let
$f_1,..,f_k: \emph{X} \rightarrow \mathbb{R} $ be bounded
continuous functions. Introduce the set
$$U(\nu_0, \varepsilon,f_1,..,f_k)=\{\nu\in\mathfrak{M}(\emph{X}):
 |  \int\limits_X f_i d\nu-\int\limits_X f_i d\nu_0|<\varepsilon,
 i=1,..,k\}.$$

The basis of neighbourhoods for the weak topology is given
precisely by sets of the form $U(\nu_0, \varepsilon,f_1,..,f_k)$, for
all $\varepsilon>0,$ $\nu_0\in \mathfrak{M}(\emph{X}),$
$f_1,..,f_k$ continuous and bounded.

The weak topology is metrizable and there are several natural
metrics on $\mathfrak{M}(\emph{X})$ inducing the weak topology.

\subsection{The Kantorovich-Rubinstein metric}

Let $$Lip_1^1=\{f:\emph{X}\rightarrow \mathbb{R}\ \ :\
\sup_X|f|\leqslant 1,\  |f(x_1)-f(x_2)|\leqslant d(x_1,x_2)\  \mathrm{for}\
\mathrm{all} \ x_1,x_2\in\emph{X}\}.$$

The Kantorovich-Rubinstein metric is defined, for
$\nu_1,\nu_2\in\mathfrak{M}(\emph{X}),$ by the formula
$$d_{KR}(\nu_1,\nu_2)=\sup_{f\in Lip_1^1(\emph{X})} |
\int\limits_{\emph{X}} f d\nu_1-\int\limits_{\emph{X}} f
d\nu_2|.$$

The Kantorovich-Rubinstein metric induces the weak topology on
$\mathfrak{M}(\emph{X}).$ By the Kantorovich-Rubinstein
Theorem, for bounded metric spaces, the Kantorovich-Rubinstein metric admits the following
equivalent dual description. Given
$\nu_1,\nu_2\in\mathfrak{M}(\emph{X}),$ let $\rm
Join(\nu_1,\nu_2)\in \mathfrak{M}(\emph{X}\times\emph{X})$ be the
set of probability measures $\eta$ on $\emph{X}\times\emph{X}$
such that projection of $\eta$ on the first coordinate is equal to
$\nu_1,$ the projection of $\eta$ on the second coordinate is
equal to $\nu_2.$ The Kantorovich-Rubinstein Theorem claims that
$$d_{KR}(\nu_1,\nu_2)=\inf_{\eta\in\rm
Join(\nu_1,\nu_2)}\int\limits_{\emph{X}\times\emph{X}}d(x_1,x_2)d\eta.$$

\subsection {The L{\'e}vy-Prohorov metric.}

Let $\mathcal{B}_\emph{X}$ be the $\sigma$-algebra of Borel
subsets of $\emph{X}.$ For $B\in\mathcal{B}_\emph{X},
\varepsilon>0,$ set $$B^\varepsilon=\{x\in\emph{X}:\ \inf_{y\in
B}d(x,y)\leqslant\varepsilon\}.$$

Given $\nu_1,\nu_2\in \mathfrak{M}(\emph{X})$, introduce the
L{\'e}vy-Prohorov distance between them by the formula
$$d_{LP}(\nu_1,\nu_2)=\inf\{\varepsilon>0: \nu_1(B)\leqslant\nu_2(B^\varepsilon)+\varepsilon,
\nu_2(B)\leqslant\nu_1(B^\varepsilon)+\varepsilon \ \mathrm{for}\ \mathrm{any}\ B\in\mathcal{B}\}.$$

The L{\'e}vy-Prohorov metric also induces the weak topology on
$\mathfrak {M}(\emph{X}).$

\subsection{ An estimate on the distance between images of
measures.}

Let $(\Omega,\mathfrak{B}_\Omega,\mathbb{P})$ be a probability
space, and let $\xi_1,\xi_2:\Omega\rightarrow\emph{X}$ be two
measurable maps.

In the proof of the limit theorems, we use the following
simple estimate on the L{\'e}vy-Prohorov and the Kantorovich-Rubinstein
distance between the push-forwards
$(\xi_1)_\ast\mathbb{P},(\xi_2)_\ast\mathbb{P}$ of the measure
$\mathbb{P}$ under the mappings $\xi_1,\xi_2.$

\begin{lemma}\label{dist-images} Let $\varepsilon>0$ and assume
that for $\mathbb{P}-almost$ all $\omega\in\Omega$ we have
$d(\xi_1(\omega),\xi_2(\omega))\leqslant\varepsilon.$

Then we have
$$d_{KR}((\xi_1)_\ast\mathbb{P},(\xi_2)_\ast\mathbb{P})\leqslant\varepsilon,$$

$$d_{LP}((\xi_1)_\ast\mathbb{P},(\xi_2)_\ast\mathbb{P})\leqslant\varepsilon.$$
\end{lemma}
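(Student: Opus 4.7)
The plan is to handle the two metrics separately, using the coupling $\eta = (\xi_1,\xi_2)_*\mathbb{P}$ on $X\times X$ as the common tool, since by construction this joining projects to $(\xi_1)_*\mathbb{P}$ and $(\xi_2)_*\mathbb{P}$ respectively, and assigns full mass to the ``$\varepsilon$-diagonal'' $\{(x_1,x_2):d(x_1,x_2)\leq\varepsilon\}$.

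For the Kantorovich-Rubinstein estimate I would simply invoke the dual formulation recalled in the Appendix: since $\eta\in\mathrm{Join}((\xi_1)_*\mathbb{P},(\xi_2)_*\mathbb{P})$, one has
$$d_{KR}((\xi_1)_*\mathbb{P},(\xi_2)_*\mathbb{P})\leq\int_{X\times X}d(x_1,x_2)\,d\eta = \int_\Omega d(\xi_1(\omega),\xi_2(\omega))\,d\mathbb{P}(\omega)\leq\varepsilon.$$
Alternatively, and without appealing to the duality theorem, the same bound follows directly from the primal definition: for any $f\in Lip_1^1$, by the Lipschitz property $|f(\xi_1(\omega))-f(\xi_2(\omega))|\leq d(\xi_1(\omega),\xi_2(\omega))\leq\varepsilon$ almost surely, and integrating gives $|\int f\,d(\xi_1)_*\mathbb{P}-\int f\,d(\xi_2)_*\mathbb{P}|\leq\varepsilon$.

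For the L{\'e}vy-Prohorov estimate I would argue pointwise on Borel sets. Fix $B\in\mathcal{B}_X$ and let $\Omega_0\subset\Omega$ be the full-measure set on which $d(\xi_1,\xi_2)\leq\varepsilon$. If $\omega\in\Omega_0\cap\xi_1^{-1}(B)$, then $\xi_1(\omega)\in B$ and $\xi_2(\omega)$ lies within distance $\varepsilon$ of $\xi_1(\omega)$, hence $\xi_2(\omega)\in B^\varepsilon$. Therefore $\xi_1^{-1}(B)\subseteq\xi_2^{-1}(B^\varepsilon)$ up to a $\mathbb{P}$-null set, which yields $(\xi_1)_*\mathbb{P}(B)\leq(\xi_2)_*\mathbb{P}(B^\varepsilon)\leq(\xi_2)_*\mathbb{P}(B^\varepsilon)+\varepsilon$. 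The symmetric inclusion $\xi_2^{-1}(B)\subseteq\xi_1^{-1}(B^\varepsilon)$ gives the reverse inequality, so by the very definition of $d_{LP}$ we conclude $d_{LP}((\xi_1)_*\mathbb{P},(\xi_2)_*\mathbb{P})\leq\varepsilon$.

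There is no real obstacle here; the statement is a standard soft fact whose proof is essentially unpacking the definitions. The only minor point requiring a moment of care is measurability of $\omega\mapsto d(\xi_1(\omega),\xi_2(\omega))$ (needed for the integral in the $d_{KR}$ argument) and of $B^\varepsilon$ (needed so that $(\xi_2)_*\mathbb{P}(B^\varepsilon)$ makes sense); both are automatic since $X$ is a separable metric space, $\xi_1,\xi_2$ are measurable, and $B^\varepsilon$ is always Borel (in fact closed when $B$ is, and in general an $F_\sigma$ enlargement).
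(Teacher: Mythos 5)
Your proof is correct and matches the paper, which simply declares the lemma ``immediate from the definitions'' — your coupling/Lipschitz argument for $d_{KR}$ and the inclusion $\xi_1^{-1}(B)\subseteq\xi_2^{-1}(B^\varepsilon)$ (mod null sets) for $d_{LP}$ are exactly the intended unpacking of those definitions. (Minor aside: with the paper's definition $B^\varepsilon=\{x:\inf_{y\in B}d(x,y)\leqslant\varepsilon\}$, the set $B^\varepsilon$ is closed for every Borel $B$, so the measurability point is even more immediate than you suggest.)
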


The proof of the lemma is immediate from the definitions of the
Kantorovich-Rubinstein and the L{\'e}vy-Prohorov metric.

\end{document}